\definecolor{miverde}{rgb}{0,0.6,0}
\definecolor{migris}{rgb}{0.5,0.5,0.5}
\definecolor{mimalva}{rgb}{0.58,0,0.82}
\definecolor{miazul}{rgb}{0.2,0.1,1}
\lstdefinestyle{consola}{basicstyle=\scriptsize\bf\ttfamily,background = \color{gray75},}
\providecommand{\U}[1]{\protect\rule{.1in}{.1in}}
\providecommand{\U}[1]{\protect\rule{.1in}{.1in}}
\newtheorem{theorem}{Theorem}
\newtheorem{definition}[theorem]{Definition}
\newtheorem{proposition}[theorem]{Proposition}
\newtheorem{remark}[theorem]{Remark}
\newenvironment{proof}[1][Proof]{\noindent\textbf{#1.} }{\ \rule{0.5em}{0.5em}}
\begin{document}

\title{\textbf{On regularization methods based on R\'enyi's pseudodistances for
sparse high-dimensional linear regression models}}
\author{E. Castilla$^{1}$, A. Ghosh$^{2},$ M. Jaenada$^{1}$ and L. Pardo$^{1}$\\$^{1}${\small Department of Statistics and O.R., Complutense University of
Madrid, 28040 Madrid, Spain}\\$^{2}${\small Indian Statistical Institute, Kolkata 700108, India}}
\date{}
\maketitle

\begin{abstract}
Several regularization methods have been considered over the last decade for sparse high-dimensional linear regression models, but the most common ones use the least square (quadratic) or likelihood loss and hence are not robust against data contamination. Some authors have overcome the problem of non-robustness by considering suitable loss function based on divergence measures (e.g., density power divergence, $\gamma-$divergence, etc.) instead of the quadratic loss. In this paper we shall consider a loss function based on the R\'enyi's pseudodistance jointly with non-concave penalties in order to simultaneously perform variable selection and get robust estimators of the parameters in a high-dimensional linear regression model of nonpolynomial dimensionality. The desired oracle properties of our proposed method are derived theoretically and its usefulness is illustustrated numerically through simulations and real data examples.

\end{abstract}

\bigskip\bigskip

\noindent\underline{\textbf{AMS 2001 Subject Classification}}\textbf{: Primary 62F12; secondary 62F30 }

\noindent\underline{\textbf{Keywords}}\textbf{: }High-dimensional
linear regression models, LASSO\ estimator, influence function, nonpolynomial
dimensionality, oracle property, SCAD penalty, MCP penalty,
variable selection, non-concave penalized R\'{e}nyi's pseudodistance.

\section{Introduction\label{sec1}}

We consider the high-dimensional linear regression model (LRM) given by
\begin{equation}
Y_{i}=\boldsymbol{X}_{i}^{T}\boldsymbol{\beta}+U_{i},\quad i=1,\dots,n,
\label{0.1}%
\end{equation}
where $\boldsymbol{X}_{i}=(X_{i1},..,X_{ip})^{T}$ are the explanatory
variables, $\boldsymbol{\beta=}(\beta_{1},..,\beta_{p})^{T}$ $\in
\mathbb{R}^{p}$ is the vector of unknown regression coefficients and $U_{i}$s
are random noise with $\boldsymbol{U}=\left(  U_{1},...,U_{n}\right)
\in\mathbb{R}^{n}$ being normally distributed with null mean vector and variance
covariance matrix $\sigma^{2}\boldsymbol{I}_{n}$. Assume that the
explanatory variables are stochastic in nature; in other words, $\left(
\boldsymbol{X}_{i}^{T},Y_{i}\right)  ,$ $i=1,...,n$ are independent and
identically distributed. Without loss of generality, we may assume that the
model does not have any intercept terms by mean-centering all the response and
covariates. We denote by $\boldsymbol{\mathbb{X}}$ the $(n\times
p)$-dimensional matrix $\boldsymbol{\mathbb{X}}=(\boldsymbol{X}_{1}%
,..,\boldsymbol{X}_{n})^{T}$. Therefore, we can write (\ref{0.1}) in a
matricial form by
\begin{equation}
\boldsymbol{Y}=\boldsymbol{\mathbb{X}}\boldsymbol{\beta}+\boldsymbol{U},
\label{0.2}%
\end{equation}
being $\boldsymbol{Y=}\left(  Y_{1},...,Y_{n}\right)  ^{T}.$ We shall assume,
in the context of sparse high-dimensional LRM, that the number of explanatory
variables, $p,$ is greater than the number of observations. More concretely, in this paper, we consider nonpolynomial dimensionality, i.e., $\log
p=O(n^{\alpha})$ for some $\alpha\in\left(  0,1\right) $; see Fan and Lv
(2010). In many applications most explanatory variables do not provide
relevant information to predict the response, i.e., most of the true regression coefficients are zero. In this situation we say that the
regression parameter $\boldsymbol{\beta}$ is sparse, in the sense that many of its elements are zero and the corresponding LRM is called ``sparse high-dimensional LRM''.

Regularization methods for sparse high-dimensional data analysis are
characterized by loss functions measuring data fits and penalty terms
constraining model parameters. In LRM, regularization estimates of the  parameter vector
$\left(  \boldsymbol{\beta},\sigma\right)  \in\mathbb{R}^{p+1}$ is obtained by minimizing
a criterion function or objective function of the form
\begin{equation}
Q_{n,\lambda}\left(  \boldsymbol{\beta},\sigma\right)  =L_{n}\left(
\boldsymbol{\beta},\sigma\right)  +%
{\textstyle\sum_{j=1}^{p}}
p_{\lambda_{n}}\left(  \left\vert \beta_{j}\right\vert \right)  , \label{0.21}%
\end{equation}
which consists of a data fit functional $L_{n}\left(  \boldsymbol{\beta}%
,\sigma\right)  $, called loss function, and a penalty function $%
{\textstyle\sum_{j=1}^{p}}
p_{\lambda_{n}}\left(  \left\vert \beta_{j}\right\vert \right),$
assessing the physical plausibility of $\boldsymbol{\beta}$. The loss function measures how well $\boldsymbol{\beta}$ fits the observed set of data; on the other hand, the penalty is used to
control the complexity of the fitted model in order to avoid overfitting. A regularization parameter $\lambda_{n}$ $\left(  \lambda_{n}\geq0\right)$  regulates the penalty. From a practical point of view, the regularization parameter is chosen using some information criterion, e.g., AIC or BIC, or sorts of
cross-validation. The former emphasizes the model's fit to the data, while the
latter is more focused on its predictive performance. If $L_{n}\left(
\boldsymbol{\beta},\sigma\right)  $ corresponds to the loss function
associated to an M-estimator,  the minimizers of an objective
function like (\ref{0.21}) are called ``penalized regression M-estimators''.
Such an estimator verifies the oracle properties, see Fan and Li (2001), if it
estimates zero components of the true parameter vector exactly as zero with
probability approaching one as sample size increases.

Let us consider the $l_{q}$ norm $\Vert\boldsymbol{\beta}\Vert_{q}=\left(
\sum_{j=1}^{p} |\beta_{j}|^{q} \right) ^{1/q}$. The most common data fit
functional is the quadratic loss function%
\begin{equation}
L_{n}\left(  \boldsymbol{\beta},\sigma\right)  =\frac{1}{n}\left\Vert
\boldsymbol{Y}-\boldsymbol{\mathbb{X}}\boldsymbol{\beta}\right\Vert _{2}^{2}.
\label{0.3}%
\end{equation}
If we consider jointly with (\ref{0.3}) the penalty function $%
{\textstyle\sum_{j=1}^{p}}
p_{\lambda_{n}}\left(  \left\vert \beta_{j}\right\vert \right)  =\lambda
\left\Vert \boldsymbol{\beta}\right\Vert _{q}^{q}$ for a given $\lambda$,
where $q>0$, its minimization leads us to Bridge estimators (Frank and
Friedman, 1993). For $q=2$, we get the Ridge estimator considered in Hoerl and
Kennard (1970), while for $q=1$, we get the well-known LASSO estimator
introduced by Tibshirani (1996). However, Zou (2006) provided some
examples where the LASSO is inconsistent for variable selection.
Estimators obtained using $l_{2}$ penalty function or smooth penalty
functions, in general, are unable to detect the null regression coefficients, see Fan and Li (2001). On the other hand, $l_{1}$ penalty function produces sparse estimators for the regressions parameters. Knight and Fu (2000) showed that, the estimators corresponding to a penalty function with $q<1$ have the oracle properties, but for $q=1$, the asymptotic distribution of the LASSO estimator corresponding to zero coefficients of the true parameter vector can put positive probability at zero. More details about
the previous regularization procedures can be seen in B\"{u}hlmann and van de Geer (2011) as well as in the reviews by Fan and Lv (2010) and Tibshirani (2011).

To address the problem of high false positives in LASSO, there have been several generalizations of it yielding consistent estimator of the active set under much weaker conditions. Some of the most popular are: the adaptive LASSO (Zou, 2006); the relaxed LASSO (related to the adaptive LASSO discussed by Meinshausen, 2007); the group LASSO (Yuan and Lin, 2006);\ Multi-step adaptive LASSO, considered in B\"{u}hlmann and Meier (2008); Dantzig selector (Candes
and Tao, 2007); Fused LASSO (Tibshirani et al., 2005); Graphical LASSO, studied in Yuan and Lin (2007) and Friedman et al. (2007); etc.

A further limitation of the estimators based on minimizing the objective function, $Q_{n,\lambda}\left(  \boldsymbol{\beta},\sigma\right)  ,$ with quadratic loss function is their lack of robustness with regard to outliers. Alfons et al. (2013) established that the breakpoint of the LASSO estimator is
$1/n$, i.e., only one single outlier can make the estimate completely
unreliable. Subsequently different procedures are developed for obtaining sparse estimators that limit the impact of contamination in the data. In general, these procedures rely on the intuition that a loss function yielding robust estimators in simple (classical) statistical set-up (Hampel et al., 1986) should also define robust
estimators when it is penalized by a deterministic function. More concretely, the idea is to replace the quadratic loss function by a loss function based on an M-estimator, i.e., to consider ``penalized regression M-estimators''. Let us briefly summarize the penalized M-estimators previously studied in the literature: Wang et al. (2007) considered the least absolute deviation (LAD) loss function, namely $L_{n}\left(  \boldsymbol{\beta},\sigma\right)  =\frac
{1}{n}\left\Vert \boldsymbol{Y}-\boldsymbol{\mathbb{X}}\boldsymbol{\beta
}\right\Vert _{1},$ jointly with $l_{1}$-penalty function (LAD-LASSO
estimators). These estimators are only resistant to the outliers in the response variable but not to the outliers in predictors. Arslan (2012) presented a weighted version of LAD-LASSO estimator that combine robust parameter estimation and variable selection simultaneously. Alfons et al. (2013) considered the least trimmed square (LTS) loss function given by
$L_{n}\left(  \boldsymbol{\beta},\sigma\right)  =\frac{1}{n}
{\textstyle\sum_{j=1}^{h}}
r_{\left(  i\right)  }^{2}(\boldsymbol{\beta}),$ with $r_{i}^{2}%
(\boldsymbol{\beta})$ =$\left( y_{i}-\boldsymbol{x}_{i}^{T}\boldsymbol{\beta
}\right)  ^{2}$ denoting squared residuals errors, $r_{\left(  1\right)  }^{2}(\boldsymbol{\beta}),...,r_{\left(  n\right)  }^{2}(\boldsymbol{\beta})$ being their order statistics and $h\leq n$ being the size of the subsample that is considered to consist of non-outliying observations. Combining the LTS with
LASSO penalty function we get the LTS-LASSO estimator. Alfons et al. (2013) established that it has a high breakdown point. Other results in relation to the LTS-LASSO estimator can be seen in Alfons et al. (2016) and Olleres et al. (2015). Li et al. (2011) considered a general class of loss functions of the form $L_{n}\left(
\boldsymbol{\beta},\sigma\right)  =\frac{1}{n}
{\textstyle\sum_{j=1}^{n}} \rho\left(  y_{i}-\boldsymbol{x}_{i}^{T}\boldsymbol{\beta}\right)$, for some
$\rho:\mathbb{R\rightarrow R},$ and penalty function $2\lambda
{\textstyle\sum_{j=1}^{p}} J(\beta_{j})$, for suitable $J:\mathbb{R\rightarrow R.}$ While LASSO and Ridge have a
quadratic loss function $\rho(x)=x^{2},$ LAD-LASSO use $\rho(x)=\left\vert x\right\vert.$ The penalty function of Ridge is quadratic $J(z)=z^{2}$, whereas LASSO and LAD-LASSO uses the $l_{1}-$penalty. Wang et al (2013) proposed the exponential loss function (ESL) to get the ESL-LASSO estimator.
Smucler and Yohai (2017) considered the $l_{1}$-penalized MM-estimators. Fan and Li (2001) considered the SCAD (smoothly clipped absolute deviatin) penalty function jointly the quadratic loss function; here we will also pay special attention to the SCAD penalty.

As pointed out in Avella-Medina (2017), only the papers of Alfons et al. (2013) and Wang et al. (2013) established formal robustness properties for their proposed regularized estimators. In Avella-Medina (2017) local robustness properties of general penalized M-estimators are studied on the basis of their influence functions (IF). The IF are obtained not only in the cases where the penalty function is twice differentiable but also for non-differentiable penalty functions. Avella-Medina and Ronchetti (2018) have studied a class of robust penalized M-estimators for sparse high-dimensional LRM establishing that the estimators satisfy the oracle properties and are stable in a neighborhood of the model.

The regression M-estimators based on minimum distance approach have
played an important role because it has been observed that they produce highly efficient robust inference under classical low-dimensional set-up. Under the high-dimensional regime, departing from the likelihood-based methods, Lozano et al. (2016) have first developed a penalized minimum distance criterion for robust and consistent estimation of sparse high-dimensional regression using
the $L_{2}$-distance. Zang et al. (2017) have then sparsified the density power divergence (DPD) loss (Basu et al., 1998; Ghosh and Basu, 2013) based regression, and Kawashima and Fujisawa (2017) have done the same for the $\gamma$-divergence loss function; but both of them are restricted to the $l_{1}$-penalty and LRM. Zhang et al. (2010) used loss functions based on Bregman divergences. Ghosh and Mujandar (2017) have combined the strengths of non-concave penalties (e.g., SCAD) and the DPD loss function to simultaneously perform variable selection and obtain robust estimates of $\boldsymbol{\beta}$ under sparse high-dimensional LRM with general location-scale errors. They ensured robustness against contamination of infinitesimal magnitude using influence function analysis, and established theoretical consistency and oracle properties of their proposed estimator under nonpolynomial dimensionality.

The R\'{e}nyi's pseudodistance (RP) was introduced for the first time in Jones et al. (2001) and later additional properties were studied in Broniatowski et al. (2012).
In this paper, we shall consider a loss function based on RP, to which we call RP loss function, jointly with
non-concave penalties in order to simultaneously perform variable selection and to obtain robust estimators of $\boldsymbol{\beta}$ and $\sigma$ in high-dimensional LRM with nonpolynomial dimensionality. 
 
 This RP loss function was earlier considered for a low-dimensional LRM with $p<n$ in Castilla et al. (2020) establishing their nice robust properties. Here we present a nonconcave penalized version of the PR loss function for the LRM. This method achieves simultaneously robust parameter estimation and variable selection in an ultra-high dimensional setting. 
 It is worthwhile to note that Kawashima and Fujisawa (2017) considered the $\gamma-$divergence loss function, which has the same expression as the RP loss function for the LRM, but they only considered the LASSO penalty function (with no theory).
 Considering nonconcave penalties is the most important (empirical) difference with respect to Kawashima and Fujisama 's work, where only LASSO penalty was contemplate. Additionally, we also develop detailed theory of the proposed estimators, proving their oracle model selection property as well as consistency and asymptotic normality of the non-zero estimates. Performances of the proposed estimators are illustrated and compared with the state-of-the-art procedures via extensive simulation studies and interesting real data examples.  For brevity, all the proofs are presented in the Online Supplement along with additional numerical results. The R codes for the computation of the proposed estimator is also provided in the Online Supplement enabling any practitioner to apply this procedure in future researches.

\section{The proposed RP based regularization method in sparse high-dimensional LRM}

\subsection{The RP loss function}

Based on (\ref{0.1}), we define $U_{i}=Y_{i}-\boldsymbol{X}_{i}^{T}%
\boldsymbol{\beta},$ for $i=1,...,n$ . Let $G_{n}^{\boldsymbol{\beta}}%
(u)=\sum_{i=1}^{n}\frac{1}{n}\operatorname{I}(u_{i}\leq u)$ is the empirical distribution function corresponding to the random sample $u_{1},...,u_{n}$ from
$U_{1},...,U_{n}$; here $I(\cdot)$ denotes the indicator function. The probability mass function associated to $G_{n}%
^{\boldsymbol{\beta}}(u)$ is given by $p_{n}^{\boldsymbol{\beta}}%
(u)=G_{n}^{\boldsymbol{\beta}}(u)-G_{n}^{\boldsymbol{\beta}}(u^{-})=\sum_{i=1}^{n}\frac{1}{n}\operatorname{I}(U_{i}=u).$ On the other hand, $U_{i}$ is normally distributed with mean zero and variance $\sigma^{2}.$
Therefore, the density function for $U_{i}$ is given by
\[ f_{\boldsymbol{\beta},\sigma}(u)=f_{\boldsymbol{\beta},\sigma}%
(y-\boldsymbol{x}^{T}\boldsymbol{\beta})=\frac{1}{\sqrt{2\pi}\sigma}%
\exp\left(  -\frac{1}{2}\left(  \frac{y-\boldsymbol{x}^{T}\boldsymbol{\beta}%
}{\sigma}\right)  ^{2}\right)  .
\]
If we denote by $P_{\boldsymbol{\beta,}\sigma}$ the measure of probability associated to the density function $f_{\boldsymbol{\beta},\sigma}(u)$ and by
$P_{n}^{\boldsymbol{\beta}}$ the measure of probability associated to the empirical distribution function $G_{n}^{\boldsymbol{\beta}}(u),$ the RP between $P_{\boldsymbol{\beta,\sigma}}$ and $P_{n}^{\boldsymbol{\beta}}$, in accordance with Formula (7) in Broniatowski et al. (2012), can be written by
\[
\mathcal{R}_{\alpha}\left(  P_{\boldsymbol{\beta},\sigma},P_{n}%
^{\boldsymbol{\beta}}\right)  =\frac{1}{\alpha+1}\log\int f_{\boldsymbol{\beta
},\sigma}(u)^{\alpha}dP_{\boldsymbol{\beta},\sigma}\left(  u\right)  +\frac
{1}{\alpha(\alpha+1)}\log\int p_{n}^{\boldsymbol{\beta}}(u)^{\alpha}%
dP_{n}^{\boldsymbol{\beta}}(u)-\frac{1}{\alpha}\log\int f_{\boldsymbol{\beta
},\sigma}(u)^{\alpha}dP_{n}^{\boldsymbol{\beta}}(u),
\]
where $\alpha$ is a non-negative tuning parameter controlling  the compromises
between efficiency and robustness.

Taking into account that
\[
\int p_{n}^{\boldsymbol{\beta}}(u)^{\alpha}dG_{n}^{\boldsymbol{\beta}}%
(u)=\int\left(  \frac{1}{n}\sum_{i=1}^{n}\operatorname{I}(u_{i}=u)\right)
^{\alpha}dG_{n}^{\boldsymbol{\beta}}(u)= \frac{1}{n}\sum_{j=1}^{n}\left(\sum_{i=1}^{n}\left(
\frac{1}{n}\operatorname{I}(u_{i}=u_{j})\right)  ^{\alpha} \right)=\left(  \frac{1}%
{n}\right)  ^{\alpha},
\]
we have%
\[
\mathcal{R}_{\alpha}\left(  P_{\boldsymbol{\beta},\sigma},P_{n}%
^{\boldsymbol{\beta}}\right)  =\frac{1}{\alpha+1}\log\int f_{\boldsymbol{\beta
},\sigma}(u)^{\alpha+1}du+\frac{1}{(\alpha+1)}\log\left(\frac{1}{n}\right)^{\alpha}-\frac{1}{\alpha
}\log\frac{1}{n}\sum_{i=1}^{n}f_{\boldsymbol{\beta},\sigma}(u_{i})^{\alpha}%
\]
for $\alpha>0.$ For $\alpha=0$ it is given by the limit as
\[
R_{0}\left(  P_{\boldsymbol{\beta},\sigma},P_{n}^{\boldsymbol{\beta}}\right)
=\lim_{\alpha\downarrow0}R_{\alpha}\left(  P_{\boldsymbol{\beta},\sigma}%
,P_{n}^{\boldsymbol{\beta}}\right)  =\log\frac{1}{n}-\frac{1}{n}\sum_{i=1}%
^{n}\log f_{\boldsymbol{\beta},\sigma}(u_{i}).
\]
 We are going to simplify the expression of $\mathcal{R}_{\alpha}\left(  P_{\boldsymbol{\beta
},\sigma},P_{n}^{\boldsymbol{\beta}}\right)  .$ It is immediate to see that,%

\[
\frac{1}{\alpha+1}\log\int f_{\beta,\sigma}(u)^{\alpha+1}du=\frac{1}{\alpha
+1}\log\left\{  \left(  \frac{1}{\sqrt{2\pi}\sigma}\right)  ^{\alpha}\frac
{1}{\sqrt{\alpha+1}}\right\}
\]
and
\[
\frac{1}{\alpha}\log\frac{1}{n}\sum_{i=1}^{n}f_{\boldsymbol{\beta},\sigma
}(u_{i})^{\alpha}=\frac{1}{\alpha}\log\left\{  \frac{1}{n}\sum_{i=1}%
^{n}\left(  \frac{1}{\sqrt{2\pi}\sigma}\right)  ^{\alpha}\exp\left(
-\frac{\alpha}{2}\left(  \frac{y_{i}-\boldsymbol{x}_{i}^{T}\boldsymbol{\beta}%
}{\sigma}\right)  ^{2}\right)  \right\}  .
\]
Therefore we have,
\begin{equation} \label{RPL}
\begin{aligned}
\mathcal{R}_{\alpha}\left(  P_{\boldsymbol{\beta},\sigma},P_{n}%
^{\boldsymbol{\beta}}\right)   &  =\frac{1}{\alpha+1}\log\left\{  \left(
\frac{1}{\sqrt{2\pi}\sigma}\right)  ^{\alpha}\frac{1}{\sqrt{\alpha+1}%
}\right\}  +\frac{1}{\alpha(\alpha+1)}\log\left(  \frac{1}{n}\right)
^{\alpha}\\
&  -\frac{1}{\alpha}\log\left\{  \frac{1}{n}\sum_{i=1}^{n}\left(  \frac
{1}{\sqrt{2\pi}\sigma}\right)  ^{\alpha}\exp\left(  -\frac{\alpha}{2}\left(
\frac{y_{i}-\boldsymbol{x}_{i}^{T}\boldsymbol{\beta}}{\sigma}\right)
^{2}\right)  \right\}  .
\end{aligned}
\end{equation}
An estimator for $\ \boldsymbol{\beta}$ and $\sigma$ can be defined by
minimizing $\mathcal{R}_{\alpha}\left(
P_{\boldsymbol{\beta},\sigma},P_{n}^{\boldsymbol{\beta}}\right)$ with respect to $\boldsymbol{\beta}$ and $\sigma$, i.e. for
$\alpha>0$,%

\begin{equation}
(\widehat{\boldsymbol{\beta}}^{\alpha},\widehat{\sigma}^{\alpha}%
)=\operatorname{arg}\min_{\boldsymbol{\beta},\sigma}\left\{  -\frac{1}%
{n}\sigma^{\frac{-\alpha}{\alpha+1}}\sum_{i=1}^{n}\exp\left(  -\frac{\alpha
}{2}\left(  \frac{y_{i}-\boldsymbol{x}_{i}^{T}\boldsymbol{\beta}}{\sigma
}\right)  ^{2}\right)  \right\}  , \label{0.4}%
\end{equation}
and for $\alpha=0$ we get the maximum likelihood estimator (MLE).

\begin{remark}
\label{remark1} If we consider the loss function given by the RP between the
density function associated to our model, $f_{\beta,\sigma}(y|\boldsymbol{x}%
),$ and the true density function for the model $g(y|\boldsymbol{x}),$ the
loss function associated with the RP is
\[
L_{Y|\boldsymbol{X}}^{\alpha}(\boldsymbol{\beta},\sigma)=-\int f_{\beta
,\sigma}(y|\boldsymbol{x})^{\alpha}g(y|\boldsymbol{x})dy\left[  \int
f_{\beta,\sigma}(y|\boldsymbol{x})^{\alpha+1}dy\right]  ^{\frac{-\alpha
}{\alpha+1}}.
\]
If we assume that the distribution function of the random variable
$\boldsymbol{X}$ is given by $G(\boldsymbol{x}),$ under some regularity conditions, we have
\begin{equation}
\frac{1}{n}\sum_{i=1}^{n}h(x_{i})\underset{n\rightarrow\infty
}{\overset{P}{\rightarrow}}\int h(\boldsymbol{x})dG(\boldsymbol{x}%
)=\mathbb{E}_{\boldsymbol{X}}[h(\boldsymbol{X})]. \label{ec aprox esperanza}%
\end{equation}
Ghosh and Basu (2013) proposed, on the basis of the density power divergence (DPD), to
minimize the expectation of the DPD expression between
$g(y|\boldsymbol{x})$ and $f_{\boldsymbol{\beta},\sigma}(y|\boldsymbol{x})$.
In our situation we can consider the same but using the RP instead of DPD, i.e,
\begin{equation}
\mathbb{E}_{\boldsymbol{X}}\left[  L_{Y|\boldsymbol{X}}^{\alpha}%
(\boldsymbol{\beta},\sigma)\right]  =-\left[  \int\int f_{\beta,\sigma
}(y|\boldsymbol{x})^{\alpha+1}dyg(\boldsymbol{x})d\boldsymbol{x}\right]^{\frac{-\alpha}{\alpha+1}}\int\int f_{\beta,\sigma}(y|\boldsymbol{x}%
)^{\alpha}g(y,\boldsymbol{x})dyd\boldsymbol{x}.\label{eq:1}%
\end{equation}
where $g(y,\boldsymbol{x})$ denotes the  joint density function. Now, expression (\ref{eq:1}) can be approximated by
\[
-\frac{1}{n}\left(  \sum_{i=1}^{n}\int f_{\beta,\sigma}(y|\boldsymbol{x_{i}%
})^{\alpha+1}dy\right)  ^{\frac{-\alpha}{\alpha+1}}\left(  \frac{1}{n}%
\sum_{i=1}^{n}f_{\beta,\sigma}(y_{i}|\boldsymbol{x_{i}})^{\alpha}\right)
=-\left(  \int f_{\beta,\sigma}(y|\boldsymbol{x})^{\alpha+1}dy\right)
^{\frac{-\alpha}{\alpha+1}}\left(  \frac{1}{n}\sum_{i=1}^{n}f_{\beta,\sigma
}(y_{i}|\boldsymbol{x_{i}})^{\alpha}\right)  .
\]

\end{remark}

\bigskip

Based on (\ref{0.4}) and Remark \ref{remark1}, we can consider the loss
function for the LRM based on RP by%

\begin{equation}
L_{n}^{\alpha}(\boldsymbol{\beta},\sigma)=\left\{
\begin{array}
[c]{lcc}%
\frac{1}{n}\sum_{i=1}^{n}-\sigma^{\frac{-\alpha}{\alpha+1}}\exp\left(
-\frac{\alpha}{2}\left(  \frac{y_{i}-\boldsymbol{x}_{i}^{T}\boldsymbol{\beta}%
}{\sigma}\right)  ^{2}\right)  & \text{if} & \alpha>0;\\
\log(\sigma\sqrt{2\pi})+\frac{1}{n}\sum_{i=1}^{n}\frac{1}{2}\left(
\frac{y_{i}-\boldsymbol{x}_{i}^{T}\boldsymbol{\beta}}{\sigma}\right)  ^{2} &
\text{if} & \alpha=0.
\end{array}
\right.  \label{0.8}
\end{equation}
Again, we can observe that for $\alpha=0$, $L_{n}^{\alpha}(\boldsymbol{\beta
},\sigma)$ coincides with the negative loglikelihood function. Therefore, the MLE is a particular case of the minimum RP estimator.

Based on (\ref{0.8}) the estimating equations are given for $\alpha>0$ by%

\[
\left\{
\begin{array}
[c]{ll}%
\sum_{i=1}^{n}\exp\left(  \frac{-\alpha}{2\sigma^{2}}(y_{i}-\boldsymbol{x}%
_{i}^{T}\boldsymbol{\beta})^{2}\right)  \left(  \frac{y_{i}-\boldsymbol{x}%
_{i}^{T}\boldsymbol{\beta}}{\sigma}\right)  \boldsymbol{x}_{i} &
=\mathbf{0}_{p},\\
\sum_{i=1}^{n}\exp\left(  \frac{-\alpha}{2\sigma^{2}}\left(  y_{i}%
-\boldsymbol{x}_{i}^{T}\boldsymbol{\beta}\right)  ^{2}\right)  \left[  \left(
\frac{y_{i}-\boldsymbol{x}_{i}^{T}\boldsymbol{\beta}}{\sigma}\right)
^{2}-\frac{1}{\alpha+1}\right]  & =0,
\end{array}
\right.
\]
and for $\alpha=0$%

\[
\left\{
\begin{array}
[c]{ll}%
\sum_{i=1}^{n}\left(  \frac{y_{i}-\boldsymbol{x}_{i}^{T}\boldsymbol{\beta}%
}{\sigma}\right)  \boldsymbol{x}_{i} & =\boldsymbol{0}_{p},\\
\sum_{i=1}^{n}-\frac{1}{\sigma}+\frac{1}{\sigma}\left(  \frac{y_{i}%
-\boldsymbol{x}_{i}^{T}\boldsymbol{\beta}}{\sigma}\right)  ^{2} & =0.
\end{array}
\right.
\]
It is clear that the estimating equations of the minimum RP estimator, for
$\alpha>0,$ can be written as%

\[
\sum_{i=1}^{n}\boldsymbol{\psi}_{\alpha}(\boldsymbol{x}_{i},y_{i}%
,\boldsymbol{\beta},\sigma)=\boldsymbol{0}_{p+1},
\]
with
\begin{equation}
\boldsymbol{\psi}_{\alpha}(\boldsymbol{x},y,\boldsymbol{\beta},\sigma)=\left(
\psi_{\alpha,1}\left(  \boldsymbol{x},y,\boldsymbol{\beta},\sigma\right)
,\psi_{\alpha,2}\left(  \boldsymbol{x},y,\boldsymbol{\beta},\sigma\right)
\right)  =\left(  \phi_{\alpha,1}\left(  \frac{y-\boldsymbol{x}^{T}%
\boldsymbol{\beta}}{\sigma}\right)  \boldsymbol{x},\phi_{\alpha,2}\left(
\frac{y-\boldsymbol{x}^{T}\boldsymbol{\beta}}{\sigma}\right)  \right)  ,
\label{0.9}%
\end{equation}
where \begin{equation} \label{phi1}
	\phi_{\alpha,1}(u)=u\exp\left(\frac{-\alpha}{2}u^{2}\right)
\end{equation} and \begin{equation} \label{phi2}
\phi_{\alpha
	,2}(u)=\left(u^{2}-\frac{1}{\alpha+1}\right)\exp\left(\frac{-\alpha}{2}u^{2}\right).
\end{equation}
Thus, the minimum RP estimator is an M-estimator and its asymptotic distribution can be obtained on the basis of the asymptotic distribution of an M-estimator (see Maronna, et al., 2006). More details about the asymptotic distribution can be
found in Broniatosky et al. (2012).

\subsection{Non-concave penalty functions}

Several penalty functions have been considered in regularization methods for
high-dimensional LRM. In addition to the $l_{i}$-penalties $(i=1,2)$
associated to LASSO and Ridge methods, respectively, we can define the $l_{0}$-penalty as $\ p_{\lambda}\left(  \left\vert \beta_{j}\right\vert \right) =\lambda\operatorname{I}(\beta_{j}\neq0)$, or consider the $l_{q}$-penalty functions given by $p_{\lambda}\left(  \left\vert \beta_{j}\right\vert \right)
=\lambda\left\vert \beta_{j}\right\vert ^{q}$, which have been examined for this purpose over the choices $0<q<2$. Some combinations of such penalties are also used; for example, the combination of $l_{1}$ and $l_{2}$ penalties are referred to as the elastic net penalty. The $l_{1}$ penalty is increasing and therefore imposes larger penalty for larger $\left\vert \beta_{j}\right\vert
$; hence it induces biased estimator for $\boldsymbol{\beta}$ even when the true $\boldsymbol{\beta}$ is sufficiently large. To remedy this flaw, the nonconcave penalties, such as SCAD (smoothly clipped absolute deviation) considered by Fan (1997) and Fan and Li (2001) and MCP (minimax concave penalty) introduced by Zhang (2010), transmit from $l_{1}$ function to constant function as $\boldsymbol{\beta}$ increases, in the sense that
$p_{\lambda}\left(  \left\vert \beta_{j}\right\vert \right)  $ is an absolute linear function around the $0$ and it becomes a constant when $\left\vert \beta_{j}\right\vert $ is larger than some threshold.

Fan and Li (2001) advocated three characteristics properties of a
\textquotedblleft good" penalty function, namely
\textit{Unbiasedness, Sparsity} and \textit{Continuity}. It has been verified that the
$l_{q}$-penalty with $q>1$ does not satisfy the sparsity condition, whereas the $l_{1}$-penalty does not satisfy the unbiasedness condition; also the concave $l_{q}$-penalty having $0\leq q<1$ does not satisfy the continuity condition. In other words, none of the $l_{q}$-penalties satisfy the three conditions simultaneously. The SCAD penalty verifies the three properties and the MCP penalty verifies the unbiasedness and sparsity but not continuity.

In this paper we shall consider non-concave penalties $p_{\lambda}(.)$ that admits a decomposition of the form
\begin{equation}
p_{\lambda}(|s|)=\tilde{J}_{\lambda}(|s|)+\lambda|s|, \label{Des}%
\end{equation}
where $\tilde{J}_{\lambda}(|s|)$ is a differentiable concave function. 
It is immediate to see that the penalties SCAD and MCP verify the
decomposition (\ref{Des}) with the function $\tilde{J}_{\lambda}(|s|)$ being given, respectively, by

\[
\tilde{J}_{\lambda}(|\beta_{j}|)=\left\{
\begin{array}
[c]{lcc}%
-\frac{\beta_{j}^{2}-2\lambda|\beta_{j}|+\lambda^{2}}{2(a-1)} & \text{if } &
\lambda\leq\left\vert \beta_{j}\right\vert <a\lambda;\\
\frac{(a+1)\lambda^{2}}{2}-\lambda|\beta_{j}| & \text{if} & a\lambda
<\left\vert \beta_{j}\right\vert
\end{array}
\right.  \text{ and }\tilde{J}_{\lambda}(|\beta_{j}|)=\left\{
\begin{array}
[c]{lcc}%
\frac{\beta_{j}^{2}}{2a} & \text{if} & 0\leq\left\vert \beta_{j}\right\vert
<a\lambda;\\
\frac{a\lambda^{2}}{2}-\lambda|\beta_{j}| & \text{if} & a\lambda<\left\vert
\beta_{j}\right\vert
\end{array}
\right.  .
\]

\subsection{ The proposed estimation procedure}	
The criterion function for the nonconcave penalized RP estimator has the form
\begin{equation}
Q_{n,\lambda}^{\alpha}(\boldsymbol{\beta},\sigma)=L_{n}^{\alpha}%
(\boldsymbol{\beta},\sigma)+\sum_{j=1}^{p}p_{\lambda}(|\beta_{j}|), \label{CF}%
\end{equation}
with  $L_{n}^{\alpha}(\boldsymbol{\beta},\sigma)$ the loss function and $p_{\lambda}(.)$ any nonconcave penalty function. Using the expression of $L_{n}^{\alpha}
(\boldsymbol{\beta},\sigma)$ in (\ref{0.8}), $Q_{n,\lambda}^{\alpha}(\boldsymbol{\beta},\sigma)$ is given by
%

\begin{equation}
Q_{n,\lambda}^{\alpha}(\boldsymbol{\beta},\sigma)=\left\{
\begin{array}
[c]{lcc}%
\frac{1}{n}\sum_{i=1}^{n}-\sigma^{\frac{-\alpha}{\alpha+1}}\exp\left(
-\frac{\alpha}{2}\left(  \frac{y_{i}-\boldsymbol{x}_{i}^{T}\boldsymbol{\beta}%
}{\sigma}\right)  ^{2}\right)  +\sum_{j=1}^{p}p_{\lambda}(|\beta_{j}|) &
\text{if} & \alpha>0;\\
\log(\sigma\sqrt{2\pi})+\frac{1}{2n}\sum_{i=1}^{n}\left(  \frac{y_{i}%
-\boldsymbol{x}_{i}^{T}\boldsymbol{\beta}}{\sigma}\right)  ^{2}+\sum_{j=1}%
^{p}p_{\lambda}(|\beta_{j}|) & \text{if} & \alpha=0.
\end{array}
\right.  \label{0.Qn}%
\end{equation}

In the following, the estimator obtained by minimizing the objective function (\ref{0.Qn}) with respect to $\boldsymbol{\beta}$ and $\sigma$ will be called Minimum
Non-concave Penalized RP estimator (MNPRPE).

We could also define, in the same way, the statistical functional corresponding to  the MNPRPE.
For this purpose, let $G$ be the true distribution function of the random vector $\left(
Y,\boldsymbol{X}\right)  $ and $g(y,\boldsymbol{x})$ the corresponding density
function, which can be expressed by $g(y,\boldsymbol{x})=g(y/\boldsymbol{x}%
)g(\boldsymbol{x})$. Given a random sample $(y_{1},\boldsymbol{x}%
_{1}),...,(y_{n},\boldsymbol{x}_{n})$ from $\ \left(  Y,\boldsymbol{X}\right)
$ we shall denote by \ $G_{n}(y,\boldsymbol{x})=\sum_{i=1}^{n}\frac{1}%
{n}\operatorname{I}(y_{i}\leq y,\boldsymbol{x}_{i}\leq\boldsymbol{x})$ its
empirical distribution function. Here, the inequality $\boldsymbol{x}_{i}%
\leq\boldsymbol{x}$ refers to the vector ordering in $\mathbb{R}^{p}$. We
then define the MNPRPE  functional, $\boldsymbol{T}_{\alpha}(G)$, at the
true joint distribution function, $G$, as the minimizer of
\begin{equation}
Q_{\lambda}^{\alpha}(\boldsymbol{\beta},\sigma)=L^{\alpha}(\boldsymbol{\beta
},\sigma)+ \boldsymbol{1}^T\tilde{\boldsymbol{p}}_{\lambda}(\boldsymbol{\beta}), \label{Avella}%
\end{equation}
with
\[
L^{\alpha}(\boldsymbol{\beta},\sigma)=\int-\left(  \frac{1}{\sigma}\right)
^{\frac{\alpha}{\alpha+1}}\exp\left(  \frac{-\alpha}{2\sigma^{2}%
}(y-\boldsymbol{x}^{T}\boldsymbol{\beta})^{2}\right)  g(y,\boldsymbol{x}%
)dyd\boldsymbol{x} =\int L^{\ast\alpha}(\boldsymbol{\beta},\sigma
)dG(y,\boldsymbol{x})
\]
and $\tilde{\boldsymbol{p}}_{\lambda}(\boldsymbol{\beta})=\left(  p_{\lambda
}(\beta_{1}),..,p_{\lambda}(\beta_{p})\right) ^{T} $ the penalty function.
We denote the resulting penalized M-estimator as $T_{\alpha}(G)=(\boldsymbol{\beta}_{\ast},\sigma_{\ast})^{T}$, with
$\boldsymbol{\beta}_{\ast}\in\mathbb{R}^{p}$ and $\sigma_{\ast}\in\mathbb{R}$,
and the MNPRPE will be \ $\boldsymbol{T}_{\alpha}(G_{n})$ with $$\boldsymbol{T}%
_{\alpha}(G_{n})\underset{n\rightarrow\infty}{\overset{P}{\rightarrow}%
}\boldsymbol{T}_{\alpha}(G).$$

\section{Influence function of the MNPRPE}

We compute the IF of the MNPRPE, following the notation of
Avella-Medina (2017), depending on  whether the penalty function is twice
differentiable or not. For example, the $l_{2}-$penalty function is twice
differentiable but $l_{1}$, SCAD and MCP penalty functions are not twice
differentiable. We pay special attention to these last two non-concave
penalties.  We follow the same steps as in Section 3 in Ghosh and Majunder (2020). Note that equality (\ref{Avella}) is equivalent to equation (1) in Avella-Medina (2017) with $L^{\ast\alpha}(\boldsymbol{\beta},\sigma)=L(\boldsymbol{Z}%
,\boldsymbol{\theta}).$
Then, the IF of the functional $\boldsymbol{T}_{\alpha}(G)$, corresponding to the MNPRPE, is the
Gateaux derivative given by (Hampel, 1974)
\[
IF((y_{t},\boldsymbol{x}_{t}),G,\boldsymbol{T}_{\alpha})=\lim_{\varepsilon
\rightarrow0}\frac{\boldsymbol{T}_{\alpha}(G_{\varepsilon})-\boldsymbol{T}%
_{\alpha}(G)}{\varepsilon},
\]
where $G_{\varepsilon}=(1-\varepsilon)G+\varepsilon\Delta_{(y_{t}%
,\boldsymbol{x}_{t})}$ being $\varepsilon$ the contamination proportion and
$\Delta_{(y_{t},\boldsymbol{x}_{t})}$ the distribution that
assigns mass $1$ at point $(y_{t},\boldsymbol{x}_{t})$ and $0$ elsewhere. Clearly, the IF describes the effect of an infinitesimal contamination, at the point $(y_{t},\boldsymbol{x}_{t}),$ on the estimate, standardized by the mass of contamination.

\subsection{Twice differentiable functions}

In case we assume that the penalty function $\tilde{\boldsymbol{p}}_{\lambda
}(\boldsymbol{\beta})=\left( p_{\lambda}(\beta_{1}),..,p_{\lambda}(\beta
_{p})\right) ^{T} $ is twice differentiable, we shall use Lemma 1 in Avella
Medina (2017) in order to get the IF of the MNPRPE functional.

First note that, denoting $\boldsymbol{\Psi}_{\alpha}(\boldsymbol{\beta},\sigma)=\nabla
L_{\alpha}^{\ast}(\boldsymbol{\beta},\sigma)$, with $\nabla$ being the gradient with respect to $(\boldsymbol{\beta}, \sigma)$, we have%

\begin{equation}
\boldsymbol{\Psi}_{\alpha}(\boldsymbol{\beta},\sigma) 
  =-\alpha\sigma^{-\frac{2\alpha+1}{\alpha+1}}\left(
\begin{array}
[c]{c}%
\phi_{1,\alpha}\left(  \frac{y-\boldsymbol{x}^{T}\boldsymbol{\beta}}{\sigma
}\right)  \boldsymbol{x}\\
\phi_{2,\alpha}\left(  \frac{y-\boldsymbol{x}^{T}\boldsymbol{\beta}}{\sigma
}\right)
\end{array}
\right)  , \label{psi}%
\end{equation}
where $\phi_{1,\alpha}(u)$ and
$\phi_{2,\alpha}(u)$ are as defined in Equations (\ref{phi1}) and (\ref{phi2}), respectively.
On the other hand, let us denote $\tilde{\boldsymbol{p}}_{\lambda}^{\ast
}(\boldsymbol{\beta})=\left(  p_{\lambda}^{\prime}(\beta_{1}),..,p_{\lambda
}^{\prime}(\beta_{p})\right)  ^{T}$. The Jacobian matrix associated to the
penalty term is $\nabla\tilde{\boldsymbol{p}}_{\lambda}(\beta_{j}%
)=diag(\tilde{\boldsymbol{p}}_{\lambda}^{\ast}(\boldsymbol{\beta}),0).$ The
estimating equations associated to the functional $\boldsymbol{T}_{\alpha}(G)$ are%

\[
\left\{
\begin{split}
-\alpha\sigma^{-\frac{2\alpha+1}{\alpha+1}}\int\phi_{1,\alpha}\left(
\frac{y-\boldsymbol{x}^{T}\boldsymbol{\beta}}{\sigma}\right)  \boldsymbol{x}%
dG(y,\boldsymbol{x})+diag(\tilde{\boldsymbol{p}}_{\lambda}^{\ast
}(\boldsymbol{\beta}))  &  =\boldsymbol{0}_{p},\\
-\alpha\sigma^{-\frac{2\alpha+1}{\alpha+1}}\int\phi_{2,\alpha}\left(
\frac{y-\boldsymbol{x}^{T}\boldsymbol{\beta}}{\sigma}\right)
dG(y,\boldsymbol{x})  &  =0.
\end{split}
\right.
\]

Now, using Lemma 1 in Avella-Medina (2017), we have the following result:

\begin{theorem}
\label{thm421} Let $\tilde{\boldsymbol{p}}_{\lambda}(s)$ be twice
differentiable in $s$. We denote,
\[
\boldsymbol{J}_{\alpha}(G;\boldsymbol{\beta},\sigma)=E_{Y,\boldsymbol{X}%
}\left[  \nabla\boldsymbol{\Psi}_{\alpha}(\boldsymbol{\beta},\sigma)\right]
=\int\nabla\boldsymbol{\Psi}_{\alpha}(\boldsymbol{\beta},\sigma
)dG(y,\boldsymbol{x}),
\]
where $\boldsymbol{\Psi}_{\alpha}(\boldsymbol{\beta},\sigma)$ was defined in
(\ref{psi}), $\boldsymbol{T}_{\alpha}(G)=$\ $(\boldsymbol{\beta}_{\ast}%
,\sigma_{\ast})^{T}$ and $\tilde{\boldsymbol{p}}_{\lambda}^{\ast\ast
}(\boldsymbol{\beta})=(p_{\lambda}^{\prime\prime}(\beta_{1}),..,p_{\lambda
}^{\prime\prime}(\beta_{p})).$ If the matrix $\boldsymbol{J}_{\alpha}^{\ast
}(G;\boldsymbol{\beta},\sigma)=\boldsymbol{J}_{\alpha}(G;\boldsymbol{\beta
},\sigma)+diag(\tilde{\boldsymbol{p}}_{\lambda}^{\ast\ast}(\boldsymbol{\beta
}),0)$ is invertible at $(\boldsymbol{\beta}_{\ast},\sigma_{\ast})$, the IF
associated to the MNPRPE exists and its expression is given by
\[
\operatorname{IF}\left(  (y_{t},\boldsymbol{x}_{t}),\boldsymbol{T}_{\alpha
},G\right)  =-\boldsymbol{J}_{\alpha}^{\ast}\left(  G,\left(
\boldsymbol{\beta}_{\ast},\sigma_{\ast}\right)  \right)  ^{-1}\left(
\begin{array}
[c]{c}%
-\alpha\sigma_{\ast}^{-\frac{2\alpha+1}{\alpha+1}}\phi_{1,\alpha}\left(
\frac{y-\boldsymbol{x}^{T}\boldsymbol{\beta}_{\ast}}{\sigma\ast}\right)
\boldsymbol{x}+\tilde{\boldsymbol{p}}_{\lambda}^{\ast}(\boldsymbol{\beta
}_{\ast})\\
-\alpha\sigma_{\ast}^{-\frac{2\alpha+1}{\alpha+1}}\phi_{2,\alpha}\left(
\frac{y-\boldsymbol{x}^{T}\boldsymbol{\beta}_{\ast}}{\sigma\ast}\right)
\end{array}
\right)  .
\]

\end{theorem}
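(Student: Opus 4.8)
The plan is to obtain this statement as a direct specialization of Lemma 1 in Avella-Medina (2017) to our loss functional $L^{\alpha}$. That general lemma computes the influence function of a penalized M-estimator defined by minimizing $E_G[L(\boldsymbol{Z},\boldsymbol{\theta})] + \boldsymbol{1}^T\tilde{\boldsymbol{p}}_\lambda(\boldsymbol{\theta})$, provided the penalty is twice differentiable; the formula it gives is
$\operatorname{IF}((y_t,\boldsymbol{x}_t),\boldsymbol{T}_\alpha,G) = -\bigl(\boldsymbol{J}_\alpha(G;\boldsymbol{\beta}_\ast,\sigma_\ast)+\operatorname{diag}(\tilde{\boldsymbol{p}}_\lambda^{\ast\ast}(\boldsymbol{\beta}_\ast),0)\bigr)^{-1}\bigl(\boldsymbol{\Psi}_\alpha(\boldsymbol{\beta}_\ast,\sigma_\ast) + \tilde{\boldsymbol{p}}_\lambda^\ast(\boldsymbol{\theta}_\ast)\bigr)$ evaluated at the contamination point. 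So the proof reduces to three verifications: (i) identify $L(\boldsymbol{Z},\boldsymbol{\theta})$ with $L^{\ast\alpha}(\boldsymbol{\beta},\sigma)$, which is already noted in the text preceding the theorem; (ii) check that the score $\boldsymbol{\Psi}_\alpha = \nabla L^{\ast\alpha}$ has the form displayed in (\ref{psi}); (iii) assemble the pieces into the stated expression, using the hypothesis that $\boldsymbol{J}_\alpha^\ast$ is invertible at $(\boldsymbol{\beta}_\ast,\sigma_\ast)$ so that the implicit function theorem underlying Avella-Medina's lemma applies.

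First I would carry out the gradient computation in (ii). Writing $u = (y-\boldsymbol{x}^T\boldsymbol{\beta})/\sigma$ and $L^{\ast\alpha}(\boldsymbol{\beta},\sigma) = -\sigma^{-\alpha/(\alpha+1)}\exp(-\tfrac{\alpha}{2}u^2)$, differentiation with respect to $\boldsymbol{\beta}$ brings down a factor $-\alpha u \cdot(-\boldsymbol{x}/\sigma)$ times the exponential, i.e. $\alpha\sigma^{-\alpha/(\alpha+1)-1}u\exp(-\tfrac{\alpha}{2}u^2)\boldsymbol{x}$, and collecting powers of $\sigma$ gives the prefactor $-\alpha\sigma^{-(2\alpha+1)/(\alpha+1)}$ multiplying $\phi_{1,\alpha}(u)\boldsymbol{x}$ with $\phi_{1,\alpha}$ as in (\ref{phi1}). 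The $\sigma$-derivative is slightly more delicate: one must differentiate both the explicit $\sigma^{-\alpha/(\alpha+1)}$ factor and the $\sigma$ inside $u$; after combining the two terms the scale-equation component becomes $-\alpha\sigma^{-(2\alpha+1)/(\alpha+1)}\phi_{2,\alpha}(u)$ with $\phi_{2,\alpha}$ as in (\ref{phi2}), where the constant $1/(\alpha+1)$ is exactly the piece coming from the explicit power of $\sigma$. This matches (\ref{psi}), and taking expectations under $G$ gives the estimating equations already displayed in the text.

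Then I would invoke Avella-Medina's Lemma 1 verbatim: under its regularity conditions plus the stated invertibility of $\boldsymbol{J}_\alpha^\ast(G;\boldsymbol{\beta},\sigma)$ at the solution, the IF exists and equals $-\boldsymbol{J}_\alpha^\ast(G;(\boldsymbol{\beta}_\ast,\sigma_\ast))^{-1}$ applied to the vector whose first block is $\boldsymbol{\Psi}_{\alpha,1}(\boldsymbol{\beta}_\ast,\sigma_\ast)+\tilde{\boldsymbol{p}}_\lambda^\ast(\boldsymbol{\beta}_\ast)$ and whose last coordinate is $\boldsymbol{\Psi}_{\alpha,2}(\boldsymbol{\beta}_\ast,\sigma_\ast)$ (no penalty contribution to the scale component, since $\operatorname{diag}(\tilde{\boldsymbol{p}}_\lambda^\ast(\boldsymbol{\beta}),0)$ has a zero in the last slot). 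Substituting the explicit form of $\boldsymbol{\Psi}_\alpha$ from (\ref{psi}) evaluated at the contaminating point $(y_t,\boldsymbol{x}_t)$ yields precisely the displayed formula, completing the proof.

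The only genuine obstacle is the bookkeeping of powers of $\sigma$ in the $\sigma$-derivative in step (ii) — making sure the term $-\tfrac{\alpha}{\alpha+1}$ inside $\phi_{2,\alpha}$ is correctly accounted for as the contribution of $\partial_\sigma\,\sigma^{-\alpha/(\alpha+1)}$ relative to the $\partial_\sigma u$ contribution, and that the overall scalar prefactor $-\alpha\sigma^{-(2\alpha+1)/(\alpha+1)}$ is common to both blocks. Everything else is a mechanical transcription of Avella-Medina's lemma with $\boldsymbol{Z}=(Y,\boldsymbol{X})$ and $\boldsymbol{\theta}=(\boldsymbol{\beta},\sigma)$.
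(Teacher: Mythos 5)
Your proposal is correct and follows exactly the route the paper takes: the paper states this theorem as a direct consequence of Lemma 1 in Avella-Medina (2017) after identifying $L^{\ast\alpha}(\boldsymbol{\beta},\sigma)$ with the generic loss $L(\boldsymbol{Z},\boldsymbol{\theta})$, and your gradient computation of $\boldsymbol{\Psi}_{\alpha}=\nabla L^{\ast\alpha}$ (including the bookkeeping giving the common prefactor $-\alpha\sigma^{-(2\alpha+1)/(\alpha+1)}$ and the constant $1/(\alpha+1)$ in $\phi_{2,\alpha}$) agrees with the paper's displayed form (\ref{psi}) and with the supplementary computation of $\boldsymbol{J}_{\alpha}$.
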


\begin{remark}
If we assume that there exist $\boldsymbol{\beta}_{0}$ and $\sigma_{0}$ so
that the conditional density of $Y$ given $\boldsymbol{X}=\boldsymbol{x}$,
$g(y/\boldsymbol{x})$, belongs to the LRM with parameters $\boldsymbol{\beta
}_{0}$ and $\sigma_{0}$; i.e., we assume that $\boldsymbol{\beta}_{0}$ and
$\sigma_{0}$ are the true value of the parameters, we have%

\[
\boldsymbol{J}_{\alpha}(G,\left(  \boldsymbol{\beta}_{0},\sigma_{0}\right)
)=E_{\boldsymbol{X}}E_{Y/\boldsymbol{X}}\left[  \nabla\boldsymbol{\Psi
}_{\alpha}(\boldsymbol{\beta}_{0},\sigma_{0})\right]  =-\alpha\sigma
_{0}^{-\frac{2\alpha+1}{\alpha+1}-1}\left[
\begin{matrix}
\frac{-1}{(\alpha+1)^{\frac{3}{2}}}\mathbb{E}_{\boldsymbol{X}}[\boldsymbol{X}%
\boldsymbol{X}^{T}] & \boldsymbol{0}\\
\boldsymbol{0} & \frac{-2}{(\alpha+1)^{\frac{5}{2}}}%
\end{matrix}
\right]  .
\]
For brevity, the computation of the above matrix $\boldsymbol{J}%
_{\alpha}(G,\left(  \boldsymbol{\beta},\sigma\right)  )$ is presented in the Online Supplement (Section 1).
\end{remark}

\subsection{Non-concave penalty functions}

Fan and Li (2001) stated that a desirable property of the penalty function is
not to be differentiable at zero. This property is satisfied by the SCAD and
MCP penalties. If the penalty function is not differentiable, the conditions of Theorem \ref{thm421} do not hold. In this case we are going to study, following
Avella-Medina (2017), the limiting form of the IF of the
MNPRPE using a sequence of continuous and infinitely differentiable functions,
$p_{m,\lambda}(s)$, that converge in the Sobolev space $W^{2,2}(\Theta)$ to
$p_{\lambda}(|s|)$, i.e., $\lim_{m\rightarrow\infty}p_{m,\lambda
}(s)=p_{\lambda}(|s|).$ \ We denote by $\boldsymbol{T}_{m,\alpha}(G)$ the
 MNPRPE functional obtained with the penalty $p_{m,\lambda
}(\cdot)$, and $\boldsymbol{T}_{\alpha}(G)$ the MNPRPE functional obtained with the
penalty $p_{\lambda}(\cdot)$. The IF of the functional
$\boldsymbol{T}_{m,\alpha}(G)$ is given by Theorem \ref{thm421} and the
IF of the functional $\boldsymbol{T}_{\alpha}(G)$ is then defined as
\begin{equation}
\operatorname{IF}\left(  (y_{t},\boldsymbol{x}_{t}),\boldsymbol{T}_{\alpha
},G\right)  =\lim_{m\rightarrow\infty}\operatorname{IF}\left(  (y_{t}%
,\boldsymbol{x}_{t}),\boldsymbol{T}_{m,\alpha},G\right)  . \label{eqlimIF}%
\end{equation}

\begin{theorem}
\label{thm422}Consider the above-mentioned set-up with the general penalty
function $p_{\lambda}(|s|)$ where $p_{\lambda}(s)$ is twice differentiable in
$s$. We assume that $L^{\ast\alpha}(\boldsymbol{\beta},\sigma),$
$\mathbb{E}_{Y,\boldsymbol{X}}\left[  \boldsymbol{\Psi}_{\alpha}%
(\boldsymbol{\beta},\sigma)\right]  $ and $\boldsymbol{J}_{\alpha
}(G;\boldsymbol{\beta},\sigma)=\mathbb{E}_{Y,\boldsymbol{X}}\left[
\nabla\boldsymbol{\Psi}_{\alpha}(\boldsymbol{\beta},\sigma)\right]  $ 
exist and are finite. For any $\boldsymbol{v}=\left(  v_{1},...,v_{p}\right)
^{T}$ with $v_{j}\neq0,$ $j=1,...,p,$ we define ,%
\[
\tilde{\boldsymbol{p}}_{\lambda}^{\ast}(\boldsymbol{v})=\left(  p_{\lambda
}^{\prime}(|v_{1}|)\operatorname{sgn}(v_{1}),..,p_{\lambda}^{\prime}%
(|v_{p}|)\operatorname{sg}(v_{p})\right)  ^{T}\text{ and }\tilde
{\boldsymbol{p}}_{\lambda}^{\ast\ast}(\boldsymbol{v})=\operatorname{diag}%
\left(  p_{\lambda}^{^{\prime\prime}}(|v_{1}|),..,p_{\lambda}^{^{\prime\prime
}}(|v_{p}|)\right)  ^{T}.%
\]
 Then,

\begin{enumerate}
\item[i)] Denote $\boldsymbol{\beta}_{\ast}=\boldsymbol{T}_{\alpha
}^{\boldsymbol{\beta}}(G)$ and assume that it has no null components
($p\leq n$). Then, the IF of the MNPRPE functional $T_{\alpha}$ $(G)$ is given by
\[
\operatorname{IF}\left(  (y_{t},\boldsymbol{x}_{t}),\boldsymbol{T}_{\alpha
},G\right)  =-\boldsymbol{J}_{\alpha}^{\ast}\left(  G,\left(
\boldsymbol{\beta}_{\ast},\sigma_{\ast}\right)  \right)  ^{-1}\left(
\begin{array}
[c]{l}%
-\alpha\sigma_{\ast}^{-\frac{2\alpha+1}{\alpha+1}}\phi_{1,\alpha}\left(
\frac{y-\boldsymbol{x}^{T}\boldsymbol{\beta}_{\ast}}{\sigma\ast}\right)
\boldsymbol{x}+\tilde{\boldsymbol{p}}_{\lambda}^{\ast}(\boldsymbol{\beta
}_{\ast})\\
-\alpha\sigma_{\ast}^{-\frac{2\alpha+1}{\alpha+1}}\phi_{2,\alpha}\left(
\frac{y-\boldsymbol{x}^{T}\boldsymbol{\beta}_{\ast}}{\sigma_{\ast}}\right)
\end{array}
\right)  ,
\]
with $\boldsymbol{J}_{\alpha}^{\ast}(G,\left(  \boldsymbol{\beta}_{\ast
},\sigma_{\ast}\right)  )=\boldsymbol{J}_{\alpha}(G,\left(  \boldsymbol{\beta
}_{\ast},\sigma_{\ast}\right)  )+diag(\tilde{\boldsymbol{p}}_{\lambda}%
^{\ast\ast}(\boldsymbol{\beta}),0).$

\item[ii)] If $\boldsymbol{\beta}_{\ast}$ has $s$ ($s<n)$ non zero
components, i.e., $\boldsymbol{\beta}_{\ast}=\left(  \left(  \boldsymbol{\beta
}_{1}^{\ast}\right)  ^{T},\boldsymbol{0}_{p-s}^{T}\right)  ^{T}$ ( where
$\boldsymbol{\beta}_{1}^{\ast}$ contains all and only s-non-zero elements of
$\boldsymbol{\beta}_{\ast})$, the
corresponding partition of the MNPRPE functional $\boldsymbol{T}_{\alpha
}\left(  G\right)  $ by $\left(  \boldsymbol{T}_{1,\alpha}^{\boldsymbol{\beta
}}(G)^{T},\boldsymbol{T}_{2,\alpha}^{\boldsymbol{\beta}}(G)^{T},T_{\alpha
}^{\sigma}(G)\right)  ^{T}$. Then, whenever the associated quantities exists,
the IF of $\boldsymbol{T}_{2,\alpha}^{\boldsymbol{\beta}}(G)$ is identically
zero \ and the IF of $\left(  \boldsymbol{T}_{1,\alpha
}^{\boldsymbol{\beta}}(G)^{T},T_{\alpha}^{\sigma}(G)\right)  ^{T}$ is given
by
\[
\operatorname{IF}\left(  (y_{t},\boldsymbol{x}_{t}),\left(  \boldsymbol{T}%
_{1,\alpha}^{\boldsymbol{\beta}},T_{\alpha}^{\sigma}\right)  ,G\right)
=-\boldsymbol{J}_{\alpha}^{\ast}\left(  G,\left(  \boldsymbol{\beta}_{1}%
^{\ast},\sigma_{\ast}\right)  \right)  ^{-1}\left(
\begin{array}
[c]{c}%
-\alpha\sigma_{\ast}^{-\frac{2\alpha+1}{\alpha+1}}\phi_{1,\alpha
}\left(  \frac{y-\boldsymbol{x}^{T}\boldsymbol{\beta}_{1}^{\ast}}{\sigma
_{\ast}}\right)  \boldsymbol{x}+\tilde{\boldsymbol{p}}_{\lambda}^{\ast
}(\boldsymbol{\beta}_{1}^{\ast})\\
-\sigma_{\ast}{}^{-\frac{2\alpha+1}{\alpha+1}}\phi_{2,\alpha}\left(
\frac{y-\boldsymbol{x}^{T}\boldsymbol{\beta}_{1}^{\ast}}{\sigma_{\ast}}\right)
\end{array}
\right)  .
\]
\end{enumerate}
\end{theorem}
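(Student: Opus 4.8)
\textbf{Approach.} The strategy is to reduce the non-differentiable case to the twice-differentiable case (Theorem \ref{thm421}) by the smoothing argument already set up in the paper: take the sequence $p_{m,\lambda}$ of infinitely differentiable functions converging to $p_\lambda(|\cdot|)$ in $W^{2,2}(\Theta)$, apply Theorem \ref{thm421} to each $\boldsymbol{T}_{m,\alpha}(G)$, and then pass to the limit using the definition \eqref{eqlimIF}. This is precisely the path taken in Avella-Medina (2017) and followed in Ghosh and Majunder (2020), so the bulk of the work is checking that the RP-specific quantities behave well in the limit.

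\textbf{Key steps.} First I would write down the conclusion of Theorem \ref{thm421} for the smoothed functional: since $p_{m,\lambda}$ is twice differentiable, the IF of $\boldsymbol{T}_{m,\alpha}(G)$ is
$-\boldsymbol{J}^{\ast}_{m,\alpha}(G,(\boldsymbol{\beta}_{m,\ast},\sigma_{m,\ast}))^{-1}$ applied to the score-plus-penalty vector, where $\boldsymbol{J}^{\ast}_{m,\alpha}=\boldsymbol{J}_{\alpha}+\operatorname{diag}(\tilde{\boldsymbol{p}}^{\ast\ast}_{m,\lambda}(\boldsymbol{\beta}_{m,\ast}),0)$ and the penalty vector contributes $\tilde{\boldsymbol{p}}^{\ast}_{m,\lambda}(\boldsymbol{\beta}_{m,\ast})$. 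Second, for part (i), I would invoke continuity of the minimizer: $\boldsymbol{\beta}_{m,\ast}\to\boldsymbol{\beta}_{\ast}=\boldsymbol{T}^{\boldsymbol{\beta}}_{\alpha}(G)$ as $m\to\infty$ (this needs the uniqueness/non-degeneracy of the limiting minimizer and the $W^{2,2}$-convergence of the penalties). Since $\boldsymbol{\beta}_{\ast}$ has no null components, evaluation of $p'_{m,\lambda}$ and $p''_{m,\lambda}$ takes place at points bounded away from $0$, where $p_\lambda(|s|)$ is genuinely $C^2$; hence $\tilde{\boldsymbol{p}}^{\ast}_{m,\lambda}(\boldsymbol{\beta}_{m,\ast})\to\tilde{\boldsymbol{p}}^{\ast}_{\lambda}(\boldsymbol{\beta}_{\ast})$ and $\tilde{\boldsymbol{p}}^{\ast\ast}_{m,\lambda}(\boldsymbol{\beta}_{m,\ast})\to\tilde{\boldsymbol{p}}^{\ast\ast}_{\lambda}(\boldsymbol{\beta}_{\ast})$, using the one-sided derivatives with the $\operatorname{sgn}$ factors as in the statement. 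The matrix $\boldsymbol{J}_{\alpha}(G;\boldsymbol{\beta},\sigma)$ is continuous in $(\boldsymbol{\beta},\sigma)$ by the finiteness/dominated-convergence hypotheses placed on $\boldsymbol{\Psi}_{\alpha}$ and $\nabla\boldsymbol{\Psi}_{\alpha}$, so $\boldsymbol{J}^{\ast}_{m,\alpha}\to\boldsymbol{J}^{\ast}_{\alpha}(G,(\boldsymbol{\beta}_{\ast},\sigma_{\ast}))$; assuming the limit matrix is invertible, matrix inversion is continuous there, and \eqref{eqlimIF} delivers the stated formula. Third, for part (ii), I would partition $\mathbb{R}^p$ into the $s$ active coordinates and the $p-s$ inactive ones. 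On the inactive block, $\boldsymbol{\beta}_{\ast}$ sits exactly at a kink of $p_\lambda$; the smoothed estimates $\boldsymbol{\beta}_{m,\ast}$ on that block shrink to $0$, and in the IF expression the corresponding rows of $\boldsymbol{J}^{\ast}_{m,\alpha}{}^{-1}$ are dominated by the blowing-up second-derivative term $p''_{m,\lambda}(\beta_{m,\ast,j})\to\infty$ as the smoothing relaxes at the kink; this forces the inactive-block component of the limiting IF to vanish, i.e. $\operatorname{IF}$ of $\boldsymbol{T}^{\boldsymbol{\beta}}_{2,\alpha}(G)$ is identically zero. Restricting the remaining equations to the active block and $\sigma$ then reproduces exactly the reduced system, giving the claimed formula for $(\boldsymbol{T}^{\boldsymbol{\beta}}_{1,\alpha}(G)^T,T^{\sigma}_{\alpha}(G))^T$ with the $s\times s$ version of $\boldsymbol{J}^{\ast}_{\alpha}$.

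\textbf{Main obstacle.} The delicate point is the limiting behavior at the kinks, i.e. making rigorous that the inactive-block IF vanishes in part (ii). One must control the rate at which $p''_{m,\lambda}$ diverges at the non-differentiability point relative to the rate at which $\boldsymbol{\beta}_{m,\ast}$ approaches $0$, and argue via the block structure of $\boldsymbol{J}^{\ast}_{m,\alpha}{}^{-1}$ (e.g. a Schur-complement / partitioned-inverse computation) that the off-diagonal coupling to the active block does not survive the limit. This is exactly the step handled in Avella-Medina (2017, Section 3) and in Ghosh and Majunder (2020), so I would cite their argument; the RP-specific ingredients ($\phi_{1,\alpha}$, $\phi_{2,\alpha}$ and the factor $-\alpha\sigma^{-(2\alpha+1)/(\alpha+1)}$) enter only through the already-finite expectations $\mathbb{E}_{Y,\boldsymbol{X}}[\boldsymbol{\Psi}_\alpha]$ and $\boldsymbol{J}_\alpha$, which are continuous in the parameters by the stated hypotheses, so no new analytic difficulty arises from replacing the DPD loss by the RP loss. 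A secondary, more routine point is justifying the parameter continuity $\boldsymbol{\beta}_{m,\ast}\to\boldsymbol{\beta}_{\ast}$ (and the analogous statement for $\sigma$), which follows from $\Gamma$-convergence-type arguments for the $W^{2,2}$-approximating penalized objectives together with the assumed uniqueness of the limiting minimizer.
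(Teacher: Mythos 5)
Your proposal is correct and takes essentially the same route as the paper's own proof: the paper also smooths the penalty (explicitly via $p_{m,\lambda}(s)=p_{\lambda}(\sqrt{s^{2}+1/m})$), applies Theorem \ref{thm421} to each $\boldsymbol{T}_{m,\alpha}(G)$, and passes to the limit in \eqref{eqlimIF} using the convergence $\partial p_{m,\lambda}/\partial s\to p_{\lambda}'(|s|)\operatorname{sgn}(s)$, $\partial^{2}p_{m,\lambda}/\partial s^{2}\to p_{\lambda}''(|s|)$ and $(\boldsymbol{\beta}_{m},\sigma_{m})\to(\boldsymbol{\beta}_{\ast},\sigma_{\ast})$, citing Avella-Medina (2017) for the validity of the interchange. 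Your discussion of the vanishing inactive-block IF in part (ii) is in fact more explicit than the paper's proof, which leaves that step to the cited references.
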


Note that the boundedness of the IF of the model parameters does not depend on the penalty function. Figure \ref{IFfigure} shows the IF of the functionals associated to $\boldsymbol{\beta}$ and $\sigma$ for different tunning parameters $\alpha$. Explanatory variables have been generated under a standard normal disbribution, and the true parameters are fixed as $\boldsymbol{\beta}_0 = (0.5,0.5)^T$ and $\sigma_0 = 0.1$. The abcissa axis contains variables $u=\frac{y-\boldsymbol{x}^T\boldsymbol{\beta}}{\sigma}.$ 
The increasing robustness of the MNPRPE with the tunning parameter $\alpha$ is highlighted, as well as the lack of robustness of the MLE, corresponding to the value $\alpha=0$, having unbounded IF.
\begin{figure}[H]
	\centering
	\includegraphics[scale=0.4]{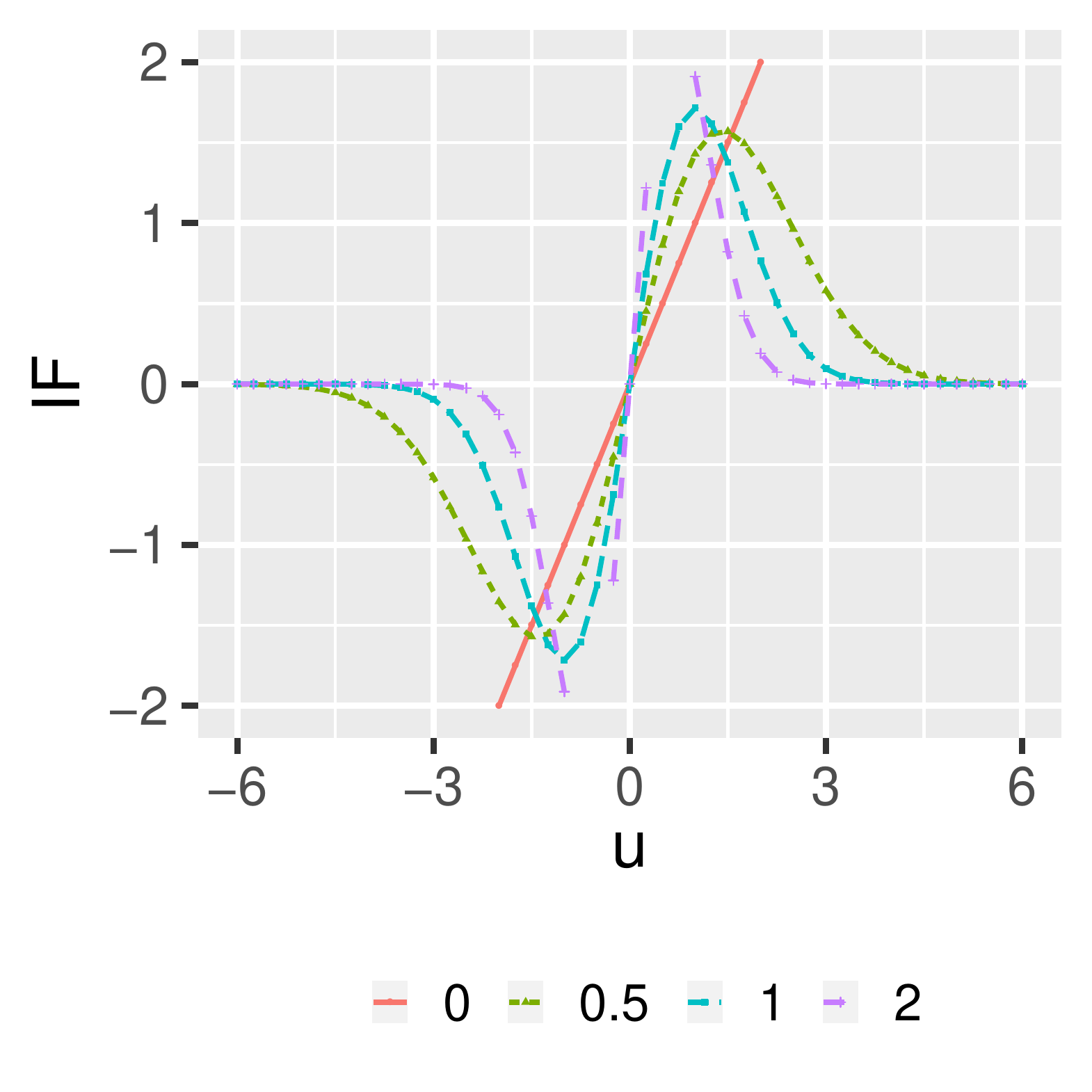}
	\includegraphics[scale=0.4]{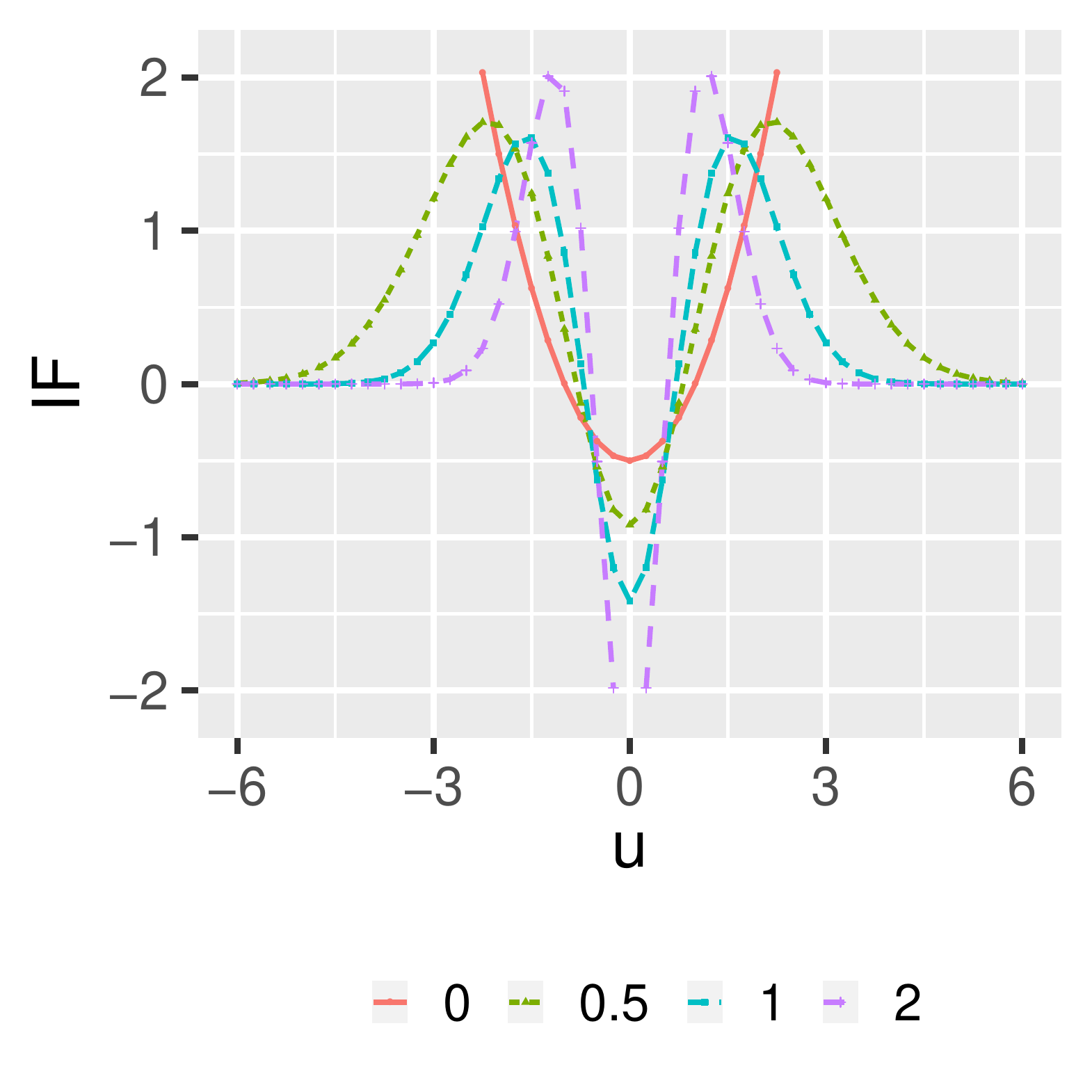}
	\caption{IF of the functional associated to $\boldsymbol{\beta}$ (left) and $\sigma$ (right)}
	\label{IFfigure}
\end{figure}

\section{\bigskip Asymptotic properties for the MNPRPE}

In this section we present the asymptotic theory for the MNPRPE. The proofs are developed in the Online Supplement with
special attention to the oracle properties. Let $\boldsymbol{\theta}_0^T = (\boldsymbol{\beta}%
_{0},\sigma_{0})$ be the true value of the parameters for the LRM with
$\boldsymbol{\beta}_{0}=\left(  \beta_{10},...,\beta_{p0}\right)  ^{T}$ and we
denote $\mathcal{S}=\{j|$ $\boldsymbol{\beta}_{j0}\neq0\}$ $\ $with
cardinality $s<p,$ i.e., $\left\vert \mathcal{S}\right\vert =s.$ An estimator,
$\widehat{\boldsymbol{\theta}}^{T}=\left(  \widehat{\boldsymbol{\beta}%
},\widehat{\sigma}\right)  ,$ obtained by minimizing the objective function
$Q_{n,\lambda}\left(  \boldsymbol{\beta},\sigma\right)  ,$ $\ $given in
(\ref{0.21}), has the oracle properties, if it identifies  the true subset model, i.e.,
$\{j|\widehat{\boldsymbol{\beta}}_{j}\neq0\}=\mathcal{S}$, with
probability tending to $1$ as $n\rightarrow\infty.$


We shall assume in accordance with Fan and Lv (2011) and Ghosh and Majunder
(2020) that the penalty function $\tilde{\boldsymbol{p}}_{\lambda
}(\boldsymbol{\beta})=\sum_{j=1}^{p}p_{\lambda}(\beta_{j})$ verify the
following condition:

\begin{enumerate}
\item[(C1)] \label{(C1)} $p_{\lambda}(s)$ is increasing, continuously
differentiable and concave in $s\in\lbrack0,\infty)$. Also $p_{\lambda
}^{\prime}(s)/\lambda$ is an increasing function of $\lambda$ with
$\rho(p_{\lambda}):=p_{\lambda}^{\prime}(0+)/\lambda$ being positive and
independent of $\lambda$.
\end{enumerate}

It is not difficult to see, \ Li and Fan (2009), that the penalties $\ell_{1}%
$, SCAD and MCP verify condition (C1).

Following Lv and Fan (2009) and Zhang (2010) we define the local
and maximum concavity of a penalty function:

\begin{definition}
The local concavity of the penalty function $p_{\lambda}(\cdot)$ at
$\boldsymbol{b}=(b_{1},..,b_{p})^{T}\in\mathbb{R}^{p}$ is defined as
\[
\xi(p_{\lambda},\boldsymbol{b})=\lim_{\varepsilon\downarrow0}\max_{1\leq j\leq
p}\left[  \hspace{0.1cm}\sup_{t_{1}<t_{2}\in(|b_{j}|\pm\varepsilon)}%
-\frac{p_{\lambda}^{\prime}(t_{2})-p_{\lambda}^{\prime}(t_{1})}{t_{2}-t_{1}%
}\right]
\]

and the maximum concavity is defined as $\xi(p_{\lambda})=\sup_{t_{1}<t_{2}%
\in(0,\infty)}-\frac{p_{\lambda}^{\prime}(t_{2})-p_{\lambda}^{\prime}(t_{1}%
)}{t_{2}-t_{1}}.$
\end{definition}

It is not difficult to establish, using Condition (C1), that $\xi(p_{\lambda
},\boldsymbol{b})\geq0$. Additionally, $\xi(p_{\lambda})\geq0$ and using the mean-value theorem and assuming that the second derivative of $p_{\lambda}(\cdot)$ $\ $is
continuous, we have $\xi(p_{\lambda},\boldsymbol{b})=\max_{j}\left(
-p_{\lambda}^{^{\prime\prime}}(|b_{j}|)\right)  .$ In the case of the
SCAD\ penalty $\xi(p_{\lambda},\boldsymbol{b})=0$ except if some component of
the vector $\boldsymbol{b}$ varies in the interval $\left[  \lambda,\lambda
a\right]  $ for which  $\xi(p_{\lambda},\boldsymbol{b}%
)=(a-1)^{-1}\lambda^{-1}.$ For more details see Fan and Lv (2011).

Let $\boldsymbol{\theta=}$ $(\boldsymbol{\beta},\sigma)$ be the unknown
parameters of the LRM and we denote, following Ghosh and Majunder (2020),
$r_{i}(\boldsymbol{\theta})=\left(  y_{i}-\boldsymbol{x}_{i}^{T}%
\boldsymbol{\beta}\right)  /\sigma,$ $i=1,...,n$ and $\boldsymbol{r}%
(\boldsymbol{\theta})=\left(  r_{1}(\boldsymbol{\theta}),...,r_{n}%
(\boldsymbol{\theta})\right)  ^{T}.$ We shall establish necessary and sufficient
conditions for the existence of a local minimizer of the objective function,
$Q_{n,\lambda}^{\alpha}(\boldsymbol{\theta}),$ given in (\ref{CF}).

\begin{theorem}
\label{thm4220}Assume that the penalty function verifies Condition C1. Then,
$\widehat{\boldsymbol{\theta}}^{T}=\left(  \widehat{\boldsymbol{\beta}%
},\widehat{\sigma}\right)  ,$ is a strict minimizer of the objective function,
$Q_{n,\lambda}^{\alpha}(\boldsymbol{\theta}),$ given in (\ref{CF}), for a
fixed $\alpha\geq0$, if and only if,
\begin{align}
\alpha\left(  \widehat{\sigma}^{\alpha}\right)  ^{-\frac{2\alpha+1}{\alpha+1}}%
{\textstyle\sum_{i=1}^{n}}
\boldsymbol{\phi}_{1,\alpha}(r_{i}(\widehat{\boldsymbol{\theta}}%
))\boldsymbol{x}_{1i}+\tilde{\boldsymbol{p}}_{\lambda}^{\ast}%
(\widehat{\boldsymbol{\beta}}_{1})  &  =\boldsymbol{0}\label{4.1}\\
\left\Vert \frac{1}{\lambda}\alpha\left(  \widehat{\sigma}^{\alpha}\right)
^{-\frac{2\alpha+1}{\alpha+1}}%
{\textstyle\sum_{i=1}^{n}}
\boldsymbol{\phi}_{1,\alpha}(r_{i}(\widehat{\boldsymbol{\theta}}%
))\boldsymbol{x}_{2i}\right\Vert _{\infty}  &  <\rho(p_{\lambda})\label{4.2}\\
\alpha\left(  \widehat{\sigma}^{\alpha}\right)  ^{-\frac{2\alpha+1}{\alpha+1}}%
{\textstyle\sum_{i=1}^{n}}
\boldsymbol{\phi}_{2,\alpha}(r_{i}(\widehat{\boldsymbol{\theta}}))  &
=0\label{4.3}\\
\Lambda_{\min}\left(  -\alpha\left(  \widehat{\sigma}^{\alpha}\right)
^{-\frac{2\alpha+1}{\alpha+1}}%
{\textstyle\sum_{i=1}^{n}}
\left[
\begin{matrix}
\boldsymbol{J}_{11,\alpha}\left(  r_{i}(\widehat{\boldsymbol{\theta}})\right)
\boldsymbol{x}_{1i}\boldsymbol{x}_{1i}^{T} & \boldsymbol{J}_{12,\alpha}\left(
r_{i}(\widehat{\boldsymbol{\theta}})\right)  \boldsymbol{x}_{1i}^{T}\\
\boldsymbol{J}_{21,\alpha}\left(  r_{i}(\widehat{\boldsymbol{\theta}})\right)
\boldsymbol{x}_{1i} & \boldsymbol{J}_{22,\alpha}\left(  r_{i}%
(\widehat{\boldsymbol{\theta}})\right)
\end{matrix}
\right]  \right)   &  >\xi(p_{\lambda},\widehat{\boldsymbol{\beta}}_{1})
\label{4.4}%
\end{align}
where $\widehat{\boldsymbol{\beta}}_{1}$ is the subvector of
$\widehat{\boldsymbol{\beta}}$ formed by all noncero components,
$\boldsymbol{x}_{i}=(\boldsymbol{x}_{1i}^{T},\boldsymbol{x}_{2i}^{T})^{T}$ is
the corresponding partition of $\boldsymbol{x}_{i}$ in such a way that the
number of components of $\boldsymbol{x}_{1i}$ coincides with the components of
$\widehat{\boldsymbol{\beta}}_{1}$, the matrices $\boldsymbol{J}_{ij,\alpha
}\left(  \cdot\right)  $ are the derivatives of $\phi_{i,\alpha}$, $i=1,2$ ,
with respect to $\boldsymbol{\beta}$ for $j=1$ and $\sigma$ for $j=2$ ,
$\tilde{\boldsymbol{p}}_{\lambda}^{\ast}(\widehat{\boldsymbol{\beta}}%
_{1})=\left(  p_{\lambda}^{\prime}(\beta_{1}),..,p_{\lambda}^{\prime}%
(\beta_{p})\right)  ^{T}$ and $\Lambda_{\min}(\boldsymbol{A})$ denotes the
minimum eigenvalue of the symmetric matrix $\boldsymbol{A}$.
\end{theorem}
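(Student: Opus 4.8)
The plan is to establish this as the standard first- and second-order (KKT-type) characterization of a strict local minimum of an objective that splits into a smooth (possibly nonconvex) part plus an $\ell_1$-type kink at the origin, adapting the arguments of Fan and Lv (2011) and Ghosh and Majunder (2020) to the RP loss $L_n^{\alpha}$. For definiteness I take $\alpha>0$; the case $\alpha=0$ is identical using the negative log-likelihood branch of (\ref{0.8}). Write $\widehat{\boldsymbol{\beta}}=((\widehat{\boldsymbol{\beta}}_1)^{T},\boldsymbol{0}^{T})^{T}$, with $\widehat{\boldsymbol{\beta}}_1$ collecting the nonzero coordinates indexed by a set $A$, and partition $\boldsymbol{x}_i=(\boldsymbol{x}_{1i}^{T},\boldsymbol{x}_{2i}^{T})^{T}$ and a generic parameter $\boldsymbol{\theta}=(\boldsymbol{\beta}_1,\boldsymbol{\beta}_2,\sigma)$ accordingly. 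Using the decomposition (\ref{Des}), in a neighbourhood of $\widehat{\boldsymbol{\theta}}$ the objective can be written
\[
Q_{n,\lambda}^{\alpha}(\boldsymbol{\theta})=\underbrace{L_n^{\alpha}(\boldsymbol{\beta},\sigma)+\sum_{j=1}^{p}\tilde{J}_{\lambda}(|\beta_j|)+\lambda\sum_{j\in A}|\beta_j|}_{=:\;\mathcal{L}(\boldsymbol{\theta})}+\lambda\sum_{j\notin A}|\beta_j|,
\]
where $\mathcal{L}$ is twice continuously differentiable near $\widehat{\boldsymbol{\theta}}$, because $\widehat{\beta}_j\neq0$ for $j\in A$ and $\widehat{\sigma}>0$ (away from the finitely many break points of $\tilde{J}_{\lambda}$, an issue I return to at the end). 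I would then show that $\widehat{\boldsymbol{\theta}}$ is a strict local minimizer if and only if simultaneously: (a) $(\widehat{\boldsymbol{\beta}}_1,\widehat{\sigma})$ is a strict local minimizer of $Q_{n,\lambda}^{\alpha}$ restricted to the active block $\{\boldsymbol{\beta}_2=\boldsymbol{0}\}$; and (b) switching on any of the inactive coordinates strictly increases the value. The two halves are analysed separately and then recombined.

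For part (a): on $\{\boldsymbol{\beta}_2=\boldsymbol{0}\}$ the function $\mathcal{L}$ is $C^2$ near $\widehat{\boldsymbol{\theta}}$, so vanishing of $\nabla_{(\boldsymbol{\beta}_1,\sigma)}Q_{n,\lambda}^{\alpha}$ is necessary and, together with positive definiteness of the restricted Hessian, sufficient for a strict minimum there. Differentiating $L_n^{\alpha}$ in (\ref{0.8}) --- the same computation that produced the estimating equations of Section 2 and the gradient $\boldsymbol{\Psi}_{\alpha}$ in (\ref{psi}), phrased through $\phi_{1,\alpha},\phi_{2,\alpha}$ of (\ref{phi1})--(\ref{phi2}) --- the $\boldsymbol{\beta}_1$-block of the stationarity equation yields (\ref{4.1}) and the $\sigma$-component yields (\ref{4.3}). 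For the second-order test, $\nabla^2_{(\boldsymbol{\beta}_1,\sigma)}Q_{n,\lambda}^{\alpha}$ at $\widehat{\boldsymbol{\theta}}$ equals the matrix inside $\Lambda_{\min}(\cdot)$ in (\ref{4.4}) plus $\mathrm{diag}(p_{\lambda}^{\prime\prime}(|\widehat{\beta}_j|)_{j\in A},0)$; since the definition of local concavity gives $p_{\lambda}^{\prime\prime}(|\widehat{\beta}_j|)\geq-\xi(p_{\lambda},\widehat{\boldsymbol{\beta}}_1)$ for each $j\in A$, this Hessian is positive definite whenever (\ref{4.4}) holds, which gives a strict minimum on the active block; conversely a strict minimum forces the Hessian to be positive semidefinite, i.e.\ the non-strict form of (\ref{4.4}).

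For part (b): I would perturb $\boldsymbol{\beta}_2$ to a small nonzero $\boldsymbol{\delta}$ while letting $(\boldsymbol{\beta}_1,\sigma)$ readjust, and write $\boldsymbol{\theta}$ for the perturbed point. A Taylor expansion of $L_n^{\alpha}$ at $\widehat{\boldsymbol{\theta}}$, together with $p_{\lambda}(t)=p_{\lambda}^{\prime}(0+)\,t+o(t)=\lambda\rho(p_{\lambda})\,t+o(t)$ from Condition C1, gives, to first order in $\boldsymbol{\delta}$ and second order in the active-block part of $\boldsymbol{\theta}-\widehat{\boldsymbol{\theta}}$,
\[
Q_{n,\lambda}^{\alpha}(\boldsymbol{\theta})-Q_{n,\lambda}^{\alpha}(\widehat{\boldsymbol{\theta}})=\sum_{j\notin A}\bigl(\partial_{\beta_j}L_n^{\alpha}(\widehat{\boldsymbol{\theta}})\,\delta_j+\lambda\rho(p_{\lambda})|\delta_j|\bigr)+(\text{active-block quadratic})+(\text{cross terms})+o(\cdot),
\]
where $\partial_{\beta_j}L_n^{\alpha}(\widehat{\boldsymbol{\theta}})$ is proportional to $\alpha(\widehat{\sigma}^{\alpha})^{-\frac{2\alpha+1}{\alpha+1}}\sum_{i}\phi_{1,\alpha}(r_i(\widehat{\boldsymbol{\theta}}))\,x_{2ij}$. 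Choosing each $\delta_j$ with sign opposite to $\partial_{\beta_j}L_n^{\alpha}(\widehat{\boldsymbol{\theta}})$ isolates the worst direction, and the linear-in-$\boldsymbol{\delta}$ term is strictly positive for all small $\boldsymbol{\delta}\neq\boldsymbol{0}$ exactly when $|\partial_{\beta_j}L_n^{\alpha}(\widehat{\boldsymbol{\theta}})|<\lambda\rho(p_{\lambda})$ for every $j\notin A$, which is precisely (\ref{4.2}). The active-block quadratic is already nonnegative by (\ref{4.4}), while the cross terms are $O(\|\boldsymbol{\theta}-\widehat{\boldsymbol{\theta}}\|)\cdot\|\boldsymbol{\delta}\|$ and hence dominated by $(\lambda\rho(p_{\lambda})-\max_{j\notin A}|\partial_{\beta_j}L_n^{\alpha}(\widehat{\boldsymbol{\theta}})|)\,\|\boldsymbol{\delta}\|_1$ once the neighbourhood is shrunk. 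Combining (a) and (b) yields both implications.

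The step I expect to be the real obstacle is the bookkeeping around non-smoothness. First, $p_{\lambda}$ (hence $\tilde{J}_{\lambda}$) fails to be twice differentiable at the SCAD/MCP break points $|s|=\lambda$ and $|s|=a\lambda$, so the Hessian argument in (a) must be carried out through one-sided second differences controlled by the local concavity $\xi(p_{\lambda},\widehat{\boldsymbol{\beta}}_1)$ rather than through $p_{\lambda}^{\prime\prime}$ directly. Second, fusing the second-order (smooth, active) estimate with the first-order (nonsmooth, inactive) estimate and the cross terms into one inequality that is strictly positive on a full punctured neighbourhood of $\widehat{\boldsymbol{\theta}}$ requires care about which remainder terms are negligible relative to $\|\boldsymbol{\delta}\|_1$ versus the squared active-block increment. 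As is standard for characterizations of this type (cf.\ Fan and Lv, 2011), sufficiency uses the strict inequalities in (\ref{4.2}) and (\ref{4.4}) while necessity uses only their non-strict analogues; everything else reduces to the differentiation of (\ref{0.8}) already displayed in Section 2 and to elementary linear algebra.
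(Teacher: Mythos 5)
Your proposal is correct and follows essentially the same route as the paper's proof: necessity via the KKT/subgradient conditions on the active and inactive blocks together with the second-order condition on the constrained subspace, and sufficiency by first establishing a strict minimum on $\{\boldsymbol{\beta}_{2}=\boldsymbol{0}\}$ from (\ref{4.1}), (\ref{4.3}), (\ref{4.4}) and then using (\ref{4.2}) with the monotonicity of $p_{\lambda}'$ to rule out descent in the inactive directions. The only cosmetic difference is that the paper controls the off-support comparison by an exact mean-value-theorem identity between $\boldsymbol{\gamma}_{1}$ and its projection onto the active subspace (so no cross terms arise), whereas you use a Taylor expansion with explicit domination of the cross terms; both are instances of the Fan--Lv (2011) argument, and your remark that necessity really only yields the non-strict forms of (\ref{4.2}) and (\ref{4.4}) is a fair (and correct) caveat that the paper glosses over.
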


Conditions (\ref{4.1}), (\ref{4.3}) and (\ref{4.4}) ensure that
$\widehat{\boldsymbol{\theta}}^{T}=\left(  \widehat{\boldsymbol{\beta}%
},\widehat{\sigma}\right)  $ is a strict local minimizer of $Q_{n,\lambda
}^{\alpha}(\boldsymbol{\theta})$ when constrained on the subspace
$\mathcal{B}=\{(\boldsymbol{\beta}^{T},\sigma)^{T}\in\mathbb{R}^{p}%
\times\mathbb{R}^{+}:\beta_{j}=0\hspace{0.2cm}\forall j>s\}.$ Condition
(\ref{4.2}) ensure that $\ \left(  \widehat{\boldsymbol{\beta}}%
,\widehat{\sigma}^{\alpha}\right)  $ is a strict local minimizer of
$Q_{n,\lambda}^{\alpha}(\boldsymbol{\theta})$ in the whole space.

Now we are going to give some conditions in order to establish the oracle
properties of MNPRPE, $\widehat{\boldsymbol{\theta}}.$ It is necessary to
introduce some notation:
Assume that the first $s$ components of
$\boldsymbol{\beta}_{0}$ $\ $\ are non-zero and the vector
$\boldsymbol{\beta}_{0}$ can be written as $\boldsymbol{\beta}_{0}^{T}=\left(
\boldsymbol{\beta}_{\mathcal{S}0},\boldsymbol{0}_{p-s}\right)  $ with
$\boldsymbol{\beta}_{\mathcal{S}0}\in\mathbb{R}^{s}.$ In the following we
denote $\boldsymbol{\beta}^{T}=\left(  \boldsymbol{\beta}_{\mathcal{S}%
},\boldsymbol{\beta}_{\mathcal{N}}\right)  $ and $\mathbb{X}=\left[
\mathbb{X}_{\mathcal{S}},\mathbb{X}_{\mathcal{N}}\right]  $ where
$\mathbb{X}_{\mathcal{S}}\in\mathbb{R}^{n\times s}$ and $\mathbb{X}%
_{\mathcal{N}}\in\mathbb{R}^{n\times(p-s)}$ and we define the following
matrices:%
\[
\mathbb{X}_{h}^{\ast}=\text{Block-diag}\left(  \mathbb{X}_{h},\boldsymbol{1}%
_{n}\right),  \hspace{0.3cm}h=\mathcal{S},\mathcal{N};\text{ }%
\boldsymbol{\operatorname{J}}_{ij}^{(\alpha)}(\boldsymbol{\theta
})=\operatorname{diag}\{\operatorname{J}_{ij,\alpha}(r_{1}(\boldsymbol{\theta
})),...,\operatorname{J}_{ij,\alpha}(r_{n}(\boldsymbol{\theta}))\}\hspace
{0.3cm}i,j=1,2\text{ }%
\]
and%
\[
\text{ }\boldsymbol{\Sigma}_{\alpha}(\boldsymbol{\theta})=\left[
\begin{matrix}
\boldsymbol{\operatorname{J}}_{11}^{\alpha}(\boldsymbol{\theta}) &
\boldsymbol{\operatorname{J}}_{12}^{\alpha}(\boldsymbol{\theta})\\
\boldsymbol{\operatorname{J}}_{21}^{\alpha}(\boldsymbol{\theta}) &
\boldsymbol{\operatorname{J}}_{22}^{\alpha}(\boldsymbol{\theta})
\end{matrix}
\right]  ,
\]
where $\boldsymbol{1}_{n}=(1,...,1)^T\in\mathbb{R}^{n}.$ Based on this notation, Equation (\ref{4.4}) can be written as $\Lambda_{\min}\left(
\mathbb{X}_{\mathcal{S}}^{\ast T}\boldsymbol{\Sigma}_{\alpha}%
(\widehat{\boldsymbol{\theta}})\mathbb{X}_{\mathcal{S}}^{\ast}\right)
>\xi(p_{\lambda},\widehat{\boldsymbol{\beta}}_{1}).$

\begin{enumerate}
\item[(A1)] Let $\boldsymbol{x}^{(j)}$ be the $j$-th column of matrix
$\mathbb{X}$, \hspace{0.1cm} $j=1,..,p.$ 
Then $||\boldsymbol{x}^{(j)}%
||_{2}=O(\sqrt{n})$.

\item[(A2)] The design matrix $\mathbb{X}$ verifies:
\begin{align}
||\left(  \mathbb{X}_{\mathcal{S}}^{\ast T}\boldsymbol{\Sigma}_{\alpha
}(\boldsymbol{\theta}_{0})\mathbb{X}_{\mathcal{S}}^{\ast}\right)
^{-1}||_{\infty}  &  =O\left(  \frac{b_{s}}{n}\right) \label{4.5}\\
||\left(  \mathbb{X}_{\mathcal{N}}^{\ast T}\boldsymbol{\Sigma}_{\alpha
}(\boldsymbol{\theta}_{0})\mathbb{X}_{\mathcal{S}}^{\ast}\right)  \left(
\mathbb{X}_{\mathcal{S}}^{\ast T}\boldsymbol{\Sigma}_{\alpha}%
(\boldsymbol{\theta}_{0})\mathbb{X}_{\mathcal{S}}^{\ast}\right)
^{-1}||_{\infty}  &  <\min\left\{  \frac{Cp_{\lambda}^{\prime}(0+)}%
{p_{\lambda}^{\prime}(d_{n})},O(n^{\tau_{1}})\right\} \label{4.6}\\
\max_{(\boldsymbol{\delta},\sigma)\in\mathcal{N}_{0}}\max_{1\leq j\leq
p+1}\left\{  \Lambda_{\max}\left(  \nabla_{(\boldsymbol{\delta},\sigma)}%
^{2}\gamma_{j,\alpha}(\boldsymbol{\delta},\sigma)\right)  \right\}   &  =O(n)
\label{4.7}%
\end{align}
for $C\in(0,1),$ $\tau_{1}\in\lbrack0,0.5],$ and $\mathcal{N}_{0}=\{\left(
\boldsymbol{\delta},\sigma\right)  \in\mathbb{R}^{s}\times\mathbb{R}%
^{+}:||\boldsymbol{\delta}-\boldsymbol{\beta}_{\mathcal{S}0}||_{\infty}\leq
d_{n},|\sigma-\sigma_{0}|\leq d_{n}\}.$ \ By $\nabla_{(\boldsymbol{\delta
},\sigma)}^{2}$ denote the second order derivative with respect to
$(\boldsymbol{\delta},\sigma)$ and
\begin{equation}%
\begin{split}
\gamma_{j,\alpha}(\boldsymbol{\delta},\sigma)  &  =\alpha\sigma^{-\frac
{2\alpha+1}{\alpha+1}}%
{\textstyle\sum_{i=1}^{n}}
\boldsymbol{\phi}_{1,\alpha}(r_{i}(\boldsymbol{\delta}_{\ast}))x_{i}%
^{(j)}\hspace{0.3cm}j=1,2,...,p,\\
\gamma_{p+1,\alpha}(\boldsymbol{\delta},\sigma)  &  =\alpha\sigma
^{-\frac{2\alpha+1}{\alpha+1}}%
{\textstyle\sum_{i=1}^{n}}
\boldsymbol{\phi}_{2,\alpha}(r_{i}(\boldsymbol{\delta}_{\ast})),
\end{split}
\label{4.8}%
\end{equation}

$\boldsymbol{\delta}$$_{\ast}^{T}=\left(  \boldsymbol{\delta},\boldsymbol{0}%
_{p-s},\sigma\right)  ,$ $b_{s}$ is a diverging sequence of positive numbers
depending on $s$ $\ $\ and hence depend on $n$, $d_{n}=\min_{j\in\mathcal{S}%
}|\beta_{0,j}|/2$ and $\left\Vert \boldsymbol{A}\right\Vert _{\infty}$ the
maximum of $\ell_{1}$ norm of each row of $\boldsymbol{A}$.

\item[(A3)] Assume that $d_{n}\geq\log n/n^\tau$ and
\begin{equation}
b_{s}=o\left(  \min\left(  n^{1/2-\tau}\sqrt{\log n},\frac{n^{\tau}}{(s+1)\log
n}\right)  \right)  \text{ for }\tau\in(0,0.5]. \label{4.8.1}%
\end{equation}
In addition, assume if $s=O(n^{\tau_{0}})$ that the regularization parameter
$\lambda$ satisfy
\begin{equation}
p_{\lambda}^{\prime}(d_{n})=o\left(  b_{s}^{-1}n^{-\tau}\log n\right)  \text{
and }\hspace{0.3cm}\lambda\geq(\log n)^{2}n^{-\tau^{\ast}} \label{4.9}%
\end{equation}
with $\tau^{\ast}=\min\left(  0.5,2\tau-\tau_{0}\right)  -\tau_{1}$. Also,
$\max_{\boldsymbol{\delta}\in\mathcal{N}_{0}}\xi(p_{\lambda}%
,\boldsymbol{\delta})=o\left(  \max_{\boldsymbol{\delta}\in\mathcal{N}_{0}%
}\Lambda_{\min}\left[  \frac{1}{n}\mathbb{X}_{\mathcal{S}}^{\ast
T}\boldsymbol{\Sigma}_{\alpha}(\boldsymbol{\delta})\mathbb{X}_{\mathcal{S}%
}^{\ast}\right]  \right)  $ and $\max_{1\leq j\leq p}||\boldsymbol{x}%
^{(j)}||_{\infty}=o\left(  n^{\tau^{\ast}}/\sqrt{\log n}\right)  .$
\end{enumerate}

Based on the previous assumptions we are going to establish a weak oracle
property of the MNPRPE. Note that these assumptions are in line with those used by Ghosh and Majumder (2020). We start with the following proposition.

\begin{proposition} \label{Hoeffding}
\label{propo1}For all $\boldsymbol{a}\in\mathbb{R}^{n}$ and $0<\varepsilon
<\frac{||\boldsymbol{a}||_{2}}{||\boldsymbol{a}||_{\infty}}$, we have,
\[
\Pr\left(  \left\vert \alpha\sigma^{-\frac{2\alpha+1}{\alpha+1}}%
{\textstyle\sum_{i=1}^{n}}
a_{i}\boldsymbol{\phi}_{1,\alpha}(r_{i}(\boldsymbol{\theta}_{0}))\right\vert
>||\boldsymbol{a}||_{2}\varepsilon\right)  \leq2\exp(-c_{1}\varepsilon^{2}).
\]

\end{proposition}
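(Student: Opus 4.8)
The plan is to reduce the claim to a Hoeffding-type concentration bound for a weighted sum of independent, centered, uniformly bounded random variables. At the true parameter $\boldsymbol{\theta}_0=(\boldsymbol{\beta}_0,\sigma_0)$ the standardized residuals are $r_i(\boldsymbol{\theta}_0)=(Y_i-\boldsymbol{X}_i^{T}\boldsymbol{\beta}_0)/\sigma_0=U_i/\sigma_0$, which are i.i.d.\ $N(0,1)$ and independent of the design (since $U_i$ is independent of $\boldsymbol{X}_i$); hence the summands $W_i:=\alpha\sigma^{-\frac{2\alpha+1}{\alpha+1}}a_i\boldsymbol{\phi}_{1,\alpha}(r_i(\boldsymbol{\theta}_0))$ are independent, the $a_i$ being deterministic. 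Two elementary facts about $\boldsymbol{\phi}_{1,\alpha}(u)=u\exp(-\tfrac{\alpha}{2}u^{2})$ then do all the work. First, $\boldsymbol{\phi}_{1,\alpha}$ is odd, so by the symmetry of the standard normal law $\mathbb{E}[\boldsymbol{\phi}_{1,\alpha}(r_i(\boldsymbol{\theta}_0))]=0$ and each $W_i$ is centered. Second, $\boldsymbol{\phi}_{1,\alpha}$ is uniformly bounded: since $\tfrac{d}{du}\{u\exp(-\tfrac{\alpha}{2}u^{2})\}=(1-\alpha u^{2})\exp(-\tfrac{\alpha}{2}u^{2})$ vanishes only at $u=\pm1/\sqrt{\alpha}$, one gets $\sup_{u\in\mathbb{R}}|\boldsymbol{\phi}_{1,\alpha}(u)|=(\alpha e)^{-1/2}$. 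Therefore $|W_i|\le M_i:=\alpha\sigma^{-\frac{2\alpha+1}{\alpha+1}}(\alpha e)^{-1/2}|a_i|$, whence $\sum_{i=1}^{n}M_i^{2}=\tfrac{\alpha}{e}\sigma^{-\frac{2(2\alpha+1)}{\alpha+1}}\|\boldsymbol{a}\|_2^{2}$. (If $\alpha=0$ the quantity inside the probability is identically $0$ and the bound is trivial, so we may assume $\alpha>0$.)

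Then Hoeffding's inequality, applied to the independent, centered, bounded variables $W_1,\dots,W_n$, will give, for every $t>0$,
\[
\Pr\!\left(\left|\sum_{i=1}^{n}W_i\right|>t\right)\le 2\exp\!\left(-\frac{t^{2}}{2\sum_{i=1}^{n}M_i^{2}}\right).
\]
Choosing $t=\|\boldsymbol{a}\|_2\,\varepsilon$ and substituting the value of $\sum_i M_i^{2}$ yields exactly
\[
\Pr\!\left(\left|\alpha\sigma^{-\frac{2\alpha+1}{\alpha+1}}\sum_{i=1}^{n}a_i\boldsymbol{\phi}_{1,\alpha}(r_i(\boldsymbol{\theta}_0))\right|>\|\boldsymbol{a}\|_2\,\varepsilon\right)\le 2\exp(-c_1\varepsilon^{2}),\qquad c_1=\frac{e}{2\alpha}\,\sigma^{\frac{2(2\alpha+1)}{\alpha+1}},
\]
which is the asserted estimate with a fixed constant $c_1>0$ depending only on $\alpha$ and the (fixed) scale $\sigma$. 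This direct Hoeffding bound in fact holds for all $\varepsilon>0$; the restriction $0<\varepsilon<\|\boldsymbol{a}\|_2/\|\boldsymbol{a}\|_\infty$ in the statement is what one needs if the argument is instead routed through Bernstein's inequality, where the exponent is $-\|\boldsymbol{a}\|_2^{2}\varepsilon^{2}\big/\big[2\big(v\|\boldsymbol{a}\|_2^{2}+\tfrac{1}{3}M_\ast\|\boldsymbol{a}\|_\infty\|\boldsymbol{a}\|_2\varepsilon\big)\big]$ with $v=\alpha^{2}\sigma^{-\frac{2(2\alpha+1)}{\alpha+1}}\mathbb{E}[\boldsymbol{\phi}_{1,\alpha}(Z)^{2}]$, $Z\sim N(0,1)$, and $M_\ast=\alpha\sigma^{-\frac{2\alpha+1}{\alpha+1}}(\alpha e)^{-1/2}$: there the constraint $\varepsilon\|\boldsymbol{a}\|_\infty\le\|\boldsymbol{a}\|_2$ collapses the denominator into a constant multiple of $\|\boldsymbol{a}\|_2^{2}$ and again produces the $2\exp(-c_1\varepsilon^{2})$ form with a (slightly larger) constant $c_1$.

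I do not anticipate any serious obstacle. The only two points that will require a word of care are: (i) the verification that at $\boldsymbol{\theta}_0$ the residuals are i.i.d.\ $N(0,1)$ — which is exactly the modelling assumption that $g(y\mid\boldsymbol{x})$ belongs to the LRM with parameters $(\boldsymbol{\beta}_0,\sigma_0)$ — so that the $W_i$ are independent and, by oddness of $\boldsymbol{\phi}_{1,\alpha}$, centered; and (ii) the sharp uniform bound $\|\boldsymbol{\phi}_{1,\alpha}\|_\infty=(\alpha e)^{-1/2}$, which makes $\sum_i M_i^{2}$ proportional to $\|\boldsymbol{a}\|_2^{2}$ and thus renders the exponent $-c_1\varepsilon^{2}$ with $c_1$ independent of $n$ and of $\boldsymbol{a}$. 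It is precisely this uniform‑in‑$\boldsymbol{a}$ sub‑Gaussian tail — the reason the deviation is normalized by $\|\boldsymbol{a}\|_2$ on the right‑hand side — that will be used later, via union bounds over the $p$ columns of $\mathbb{X}$, in the proof of the weak oracle property.
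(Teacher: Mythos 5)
Your proof is correct and takes essentially the same route as the paper's: both arguments center the summands via the oddness of $\phi_{1,\alpha}$ together with the symmetry of the normal errors, bound $|\phi_{1,\alpha}|$ uniformly by $(\alpha e)^{-1/2}$, and apply Hoeffding's inequality to the weighted sum $\sum_i a_i Z_i$. Your use of the form of Hoeffding with $\sum_i (b_i-a_i)^2$ in the exponent is in fact slightly cleaner, since it yields a constant $c_1=\frac{e}{2\alpha}\sigma^{2(2\alpha+1)/(\alpha+1)}$ genuinely independent of $n$ and of $\boldsymbol{a}$ (the paper's normalization produces $c_1=\alpha e/(4n)$), and your observation that the restriction $0<\varepsilon<\Vert\boldsymbol{a}\Vert_2/\Vert\boldsymbol{a}\Vert_\infty$ is not actually needed for this Hoeffding route is accurate as well.
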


In Ghosh and Majunder (2020), the result in Proposition \ref{Hoeffding} is considered as an assumption,
namely (A4); however, in our case, it always holds as can be seen from the proof of Proposition 7. 

\begin{theorem}
\label{thm531}Let us consider the objective function, $Q_{n,\lambda}^{\alpha
}(\boldsymbol{\theta}),$ given in (\ref{CF}) for a fixed $\alpha\geq0$, with
$p_{\lambda}(|.|)$ verifying Condition C1. We shall assume that $s=o(n)$,
$\log p=O(n^{1-2\tau^{\ast}})$ and conditions (A1)-(A3) are verified. Then,
there exists a MNPRPE , $\widehat{\boldsymbol{\beta}}^{T}=\left(
\widehat{\boldsymbol{\beta}}_{\mathcal{S}},\widehat{\boldsymbol{\beta}%
}_{\mathcal{N}}\right)  $ of parameter $\boldsymbol{\beta}$,
$\widehat{\boldsymbol{\beta}}_{\mathcal{S}}\in\mathbb{R}^{s}$, and
$\widehat{\sigma}^{{}}$ of $\sigma$ in such a way that
$\widehat{\boldsymbol{\theta}}^{T}=(\widehat{\boldsymbol{\beta}}%
,\widehat{\sigma})$ is an strict local minimizer of $Q_{n,\lambda}^{\alpha
}(\boldsymbol{\theta})$, with
\end{theorem}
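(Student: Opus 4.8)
The plan is to establish the existence of a strict local minimizer $\widehat{\boldsymbol{\theta}}$ by exhibiting a candidate that lies on the ``oracle subspace'' $\mathcal{B}=\{(\boldsymbol{\beta}^{T},\sigma)^{T}:\beta_{j}=0\ \forall j>s\}$ and then verifying conditions \eqref{4.1}--\eqref{4.4} of Theorem \ref{thm4220} for it with high probability. First I would restrict attention to $\mathcal{B}$ and study the map $(\boldsymbol{\delta},\sigma)\mapsto Q_{n,\lambda}^{\alpha}(\boldsymbol{\delta}_{\ast})$ on the shrinking neighbourhood $\mathcal{N}_{0}$ of $(\boldsymbol{\beta}_{\mathcal{S}0},\sigma_{0})$. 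Using the decomposition \eqref{Des} and a Taylor expansion of $L_{n}^{\alpha}$ around $\boldsymbol{\theta}_{0}$, the gradient of the restricted objective is controlled by the estimating-equation quantities $\gamma_{j,\alpha}$ of \eqref{4.8}; Proposition \ref{Hoeffding} (the built-in Hoeffding/sub-Gaussian bound, replacing assumption (A4) of Ghosh--Majumder) gives $\sum_{i}a_{i}\boldsymbol{\phi}_{1,\alpha}(r_{i}(\boldsymbol{\theta}_{0}))=O_{P}(\|\boldsymbol{a}\|_{2}\sqrt{\log n})$ uniformly over the relevant vectors $\boldsymbol{a}$, and likewise for $\boldsymbol{\phi}_{2,\alpha}$. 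Combined with (A1), (A2)\eqref{4.5} and \eqref{4.7}, this shows the restricted score is small relative to the curvature, so by a Brouwer fixed-point / continuity argument (as in Fan and Lv, 2011) there is a solution $(\widehat{\boldsymbol{\beta}}_{\mathcal{S}},\widehat{\sigma})$ of \eqref{4.1} and \eqref{4.3} inside $\mathcal{N}_{0}$, with $\|\widehat{\boldsymbol{\beta}}_{\mathcal{S}}-\boldsymbol{\beta}_{\mathcal{S}0}\|_{\infty}$ and $|\widehat{\sigma}-\sigma_{0}|$ of order $b_{s}n^{-\tau}\log n=o(d_{n})$ by (A3)\eqref{4.8.1}.

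Next I would check the strict-minimality condition \eqref{4.4} at this candidate: since $\|\widehat{\boldsymbol{\beta}}_{\mathcal{S}}-\boldsymbol{\beta}_{\mathcal{S}0}\|_{\infty}\le d_{n}$, the point lies in $\mathcal{N}_{0}$, so the last part of (A3) gives $\xi(p_{\lambda},\widehat{\boldsymbol{\beta}}_{\mathcal{S}})=o(\Lambda_{\min}[\tfrac1n\mathbb{X}_{\mathcal{S}}^{\ast T}\boldsymbol{\Sigma}_{\alpha}(\widehat{\boldsymbol{\theta}})\mathbb{X}_{\mathcal{S}}^{\ast}])$, which after multiplying by $n$ yields $\Lambda_{\min}(\mathbb{X}_{\mathcal{S}}^{\ast T}\boldsymbol{\Sigma}_{\alpha}(\widehat{\boldsymbol{\theta}})\mathbb{X}_{\mathcal{S}}^{\ast})>\xi(p_{\lambda},\widehat{\boldsymbol{\beta}}_{\mathcal{S}})$ (here one also uses continuity of $\boldsymbol{\Sigma}_{\alpha}$ in $\boldsymbol{\theta}$ and the uniform eigenvalue lower bound implicit in (A2)). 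This establishes that $\widehat{\boldsymbol{\theta}}$ is a strict local minimizer of $Q_{n,\lambda}^{\alpha}$ constrained to $\mathcal{B}$.

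The crux is then condition \eqref{4.2}, the ``dual feasibility'' inequality $\|\tfrac1\lambda\alpha\widehat{\sigma}^{-(2\alpha+1)/(\alpha+1)}\sum_{i}\boldsymbol{\phi}_{1,\alpha}(r_{i}(\widehat{\boldsymbol{\theta}}))\boldsymbol{x}_{2i}\|_{\infty}<\rho(p_{\lambda})$, which forces the sign constraints on the inactive coordinates and is what actually delivers the sparsity/oracle behaviour. I would handle it by writing $\sum_{i}\boldsymbol{\phi}_{1,\alpha}(r_{i}(\widehat{\boldsymbol{\theta}}))\boldsymbol{x}_{2i}$ as the value at $\boldsymbol{\theta}_{0}$ plus a remainder: the first term is bounded via Proposition \ref{Hoeffding} together with a union bound over the $p-s$ inactive columns, whose $\ell_{2}$-norms are $O(\sqrt n)$ by (A1); since $\log p=O(n^{1-2\tau^{\ast}})$ and $\lambda\ge(\log n)^{2}n^{-\tau^{\ast}}$, the ratio is $o_{P}(1)$. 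The remainder term is controlled by a mean-value expansion in $(\boldsymbol{\delta},\sigma)$ over $\mathcal{N}_{0}$, where the Hessian bound \eqref{4.7} and the ``irrepresentable-type'' bound \eqref{4.6} relating $\mathbb{X}_{\mathcal{N}}^{\ast T}\boldsymbol{\Sigma}_{\alpha}\mathbb{X}_{\mathcal{S}}^{\ast}$ to $(\mathbb{X}_{\mathcal{S}}^{\ast T}\boldsymbol{\Sigma}_{\alpha}\mathbb{X}_{\mathcal{S}}^{\ast})^{-1}$ are exactly what makes the propagated estimation error on $\widehat{\boldsymbol{\beta}}_{\mathcal{S}}$, $\widehat{\sigma}$ negligible compared with $\lambda\rho(p_{\lambda})$. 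I expect this last step — propagating the estimation error through the off-diagonal block and the nonlinear $\boldsymbol{\phi}_{1,\alpha}$ via \eqref{4.6} and \eqref{4.7} while keeping the final bound strictly below $\rho(p_{\lambda})$ — to be the main technical obstacle; the scale parameter $\sigma$ and the $\sigma^{-(2\alpha+1)/(\alpha+1)}$ prefactor add bookkeeping but no essential new difficulty since $\widehat{\sigma}$ is already pinned near $\sigma_{0}$. Finally, collecting the high-probability events on which \eqref{4.1}--\eqref{4.4} all hold, and invoking Theorem \ref{thm4220}, gives the asserted strict local minimizer.
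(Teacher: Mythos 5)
Your proposal follows essentially the same route as the paper's own proof: concentration of the score at $\boldsymbol{\theta}_{0}$ via Proposition \ref{Hoeffding} with a Bonferroni/union bound over the active and inactive coordinates, existence of a solution of \eqref{4.1} and \eqref{4.3} on the oracle subspace inside a hypercube of radius $n^{-\tau}\log n$ by a second-order Taylor expansion plus a topological existence argument (the paper invokes Miranda's theorem, the equivalent of your Brouwer/continuity step), verification of \eqref{4.4} directly from the last part of (A3), and the dual-feasibility bound \eqref{4.2} by splitting into the value at $\boldsymbol{\theta}_{0}$ plus a Taylor remainder controlled through \eqref{4.6} and \eqref{4.7}. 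The plan is correct and matches the paper's argument in structure and in all the key ingredients.
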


\begin{enumerate}
\item $\widehat{\boldsymbol{\beta}}_{\mathcal{N}}=\boldsymbol{0}_{p-s}$, and

\item $||\widehat{\boldsymbol{\beta}}_{\mathcal{S}}-\boldsymbol{\beta
}_{\mathcal{S}0}||_{\infty}=O\left(  n^{-\tau}\log n\right)  $ and
$|\widehat{\sigma}-\sigma_{0}|=O\left(  n^{-\tau}\log n\right)  $ with
probability at least $$1-2\left[ \frac{1+s}{n} +(p-s)\exp(-n^{1-2\tau^{\ast}%
}\log n)\right]  .$$
\end{enumerate}

It is possible to get stronger results if we consider stronger conditions than
(A2) and (A3).

\bigskip


\begin{enumerate}
\item[(A2)$^{\ast}$] The design matrix $\mathbb{X}$ verifies 
\begin{align}
& \label{A2510}\min_{(\boldsymbol{\delta},\sigma) \in \mathcal{N}_0} \Lambda_{\min}\left[\mathbb{X}_\mathcal{S}^{*T} \boldsymbol{\Sigma}_\alpha\left((\boldsymbol{\delta}^T,\boldsymbol{0}_{p-s}, \sigma)^T\right) \mathbb{X}^T_\mathcal{S}\right] \geq cn\\
& \bigg| \bigg| \left(\mathbb{X}_\mathcal{N}^{*T} \boldsymbol{\Sigma}_\alpha\left(\boldsymbol{\theta}_0\right) \mathbb{X}^T_\mathcal{S}\right) \bigg|\bigg|_{2,\infty} = \mathcal{O}(n)\\
& \max_{(\boldsymbol{\delta},\sigma)\in \mathcal{N}_0} \max_{1 \leq j \leq p+1} \Lambda_{\max} \left(\mathbb{X}_\mathcal{S}^{*T} \left[\nabla_{\boldsymbol{\theta}}^2 \boldsymbol{\gamma}_{j,\alpha}(\boldsymbol{\delta},\sigma)\right] \mathbb{X}_\mathcal{S}^{*}\right) = \mathcal{O}(n) \label{512}
\end{align}

for some $c>0$ and $\mathcal{N}_{0}=\{(\boldsymbol{\delta},\sigma
)\in\mathbb{R}^{s}\times\mathbb{R}^{+}:||\boldsymbol{\delta}-\boldsymbol{\beta
}_{\mathcal{S0}}||_{\infty}\leq d_{n},|\sigma-\sigma_{0}|\leq d_{n}\}$ and
$\left\Vert \boldsymbol{A}\right\Vert _{2,\infty}=\max_{\left\Vert
\boldsymbol{v}\right\Vert _{2,}=1}\left\Vert \boldsymbol{Av}\right\Vert
_{\infty}$. Further
\[
\mathbb{E}\hspace{0.1cm}\left[  \left\Vert \alpha\sigma_{0}^{-\frac{2\alpha
+1}{\alpha+1}}\sum_{i=1}^{n}\phi_{1,\alpha}(r_{i}(\boldsymbol{\theta}%
_{0}))\boldsymbol{x}_{\mathcal{S}i}\right\Vert _{2}^{2}\right]  =O\left(
\frac{s}{n}\right)  \text{ and }\mathbb{E}\hspace{0.1cm}\left[  \left\vert
\alpha\sigma_{0}^{-\frac{2\alpha+1}{\alpha+1}}%
{\textstyle\sum_{i=1}^{n}}
\phi_{2,\alpha}(r_{i}(\boldsymbol{\theta}_{0}))\boldsymbol{x}_{\mathcal{S}%
i}\right\vert ^{2}\right]  =O\left(  \frac{1}{n}\right)  .
\]

\item[(A3)$^{\ast}$] We have $p_{\lambda}^{\prime}(d_{n})=O(n^{-1/2}%
);\hspace{0.3cm}d_{n}\gg\lambda\gg\min\{\sqrt{\frac{s}{n}},n^{\frac{\tau-1}%
{2}}\sqrt{\log n}\}$ and
\begin{equation}
\max_{(\boldsymbol{\delta},\sigma)\in\mathcal{N}_{0}}\xi(p_{\lambda
},\boldsymbol{\delta})=O(1). \label{A3513}%
\end{equation}
Further, $\max_{1\leq j\leq p}||\boldsymbol{x}||_{\infty}=O\left(
n^{(1-\tau)/2}/\sqrt{\log n}\right)  .$
\end{enumerate}

\begin{theorem}
\label{thm532} Let $s\ll n$ and $\log p=O(n^{\tau^{\ast}})$ for some
$\tau^{\ast}\in(0,0.5)$, we shall assume Condition (C1) and Assumptions (A1),
(A2)$^{\ast}$ and (A3)$^{\ast}$ are verified for some fixed $\alpha.$ Then,
there exists an strict local minimizer $\widehat{\boldsymbol{\theta}}%
^{T}=(\widehat{\boldsymbol{\beta}},\widehat{\sigma})$ of the objective
function $Q_{n}^{\alpha}(\boldsymbol{\theta}),$ verifying:

\begin{enumerate}
\item $\widehat{\boldsymbol{\beta}}_{\mathcal{N}}^{\alpha}=\boldsymbol{0}$,
where $\widehat{\boldsymbol{\beta}}^{T}=(\widehat{\boldsymbol{\beta}%
}_{\mathcal{S}},\widehat{\boldsymbol{\beta}}_{\mathcal{N}})$ and
$\widehat{\boldsymbol{\beta}}_{\mathcal{S}}\in\mathbb{R}^{s},$

\item $||\widehat{\boldsymbol{\beta}}-\boldsymbol{\beta}_{0}||=O(\sqrt{s/n})$
and $|\widehat{\sigma}-\sigma_{0}|=O(n^{-1/2}),$
\end{enumerate}

with probability tending to 1 when $n\rightarrow\infty.$
\end{theorem}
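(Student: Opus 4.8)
The plan is to establish the two assertions of Theorem \ref{thm532} by the same route used for Theorem \ref{thm531}, but sharpening every probabilistic bound from the sup-norm $O(n^{-\tau}\log n)$ scale to the $\ell_2$-norm $O(\sqrt{s/n})$ scale; the strengthened Assumptions (A2)$^\ast$ and (A3)$^\ast$ are precisely what makes this possible. First I would work on the oracle subspace $\mathcal B=\{(\boldsymbol\beta^T,\sigma)^T:\beta_j=0\ \forall j>s\}$ and show that, with probability tending to one, the restricted objective function $Q_{n,\lambda}^{\alpha}$ has a local minimizer $(\widehat{\boldsymbol\beta}_{\mathcal S},\widehat\sigma)$ inside the neighbourhood $\mathcal N_0$ with $\|\widehat{\boldsymbol\beta}_{\mathcal S}-\boldsymbol\beta_{\mathcal S0}\|=O(\sqrt{s/n})$ and $|\widehat\sigma-\sigma_0|=O(n^{-1/2})$. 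The standard device here is to consider the boundary of a ball of radius $r_n=M\sqrt{(s+1)/n}$ (for large constant $M$) around $\boldsymbol\theta_{\mathcal S0}=(\boldsymbol\beta_{\mathcal S0},\sigma_0)$ and show $Q_{n,\lambda}^{\alpha}(\boldsymbol\theta)-Q_{n,\lambda}^{\alpha}(\boldsymbol\theta_{\mathcal S0})>0$ on that boundary: a second-order Taylor expansion splits this difference into a gradient term, a Hessian term, and a penalty term. The gradient term is controlled by the second moment bounds in (A2)$^\ast$, namely $\mathbb E\|\alpha\sigma_0^{-(2\alpha+1)/(\alpha+1)}\sum_i\phi_{1,\alpha}(r_i(\boldsymbol\theta_0))\boldsymbol x_{\mathcal S i}\|_2^2=O(s/n)$ together with its $\phi_{2,\alpha}$ analogue, giving a gradient of order $\sqrt{s/n}$ by Markov's inequality; the Hessian term is bounded below by $cn\cdot r_n^2$ using the uniform eigenvalue lower bound $\Lambda_{\min}(\mathbb X_{\mathcal S}^{\ast T}\boldsymbol\Sigma_\alpha\mathbb X_{\mathcal S}^\ast)\ge cn$ on $\mathcal N_0$ from \eqref{A2510}, while the third-derivative remainder is controlled by \eqref{512}; the penalty term is negligible because $p_\lambda'(d_n)=O(n^{-1/2})$ and $\xi(p_\lambda,\boldsymbol\delta)=O(1)$ on $\mathcal N_0$ by (A3)$^\ast$. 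For $M$ large the Hessian term dominates, yielding the desired local minimizer and hence part 2.

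Next I would verify that this oracle minimizer, viewed as a point of the full parameter space $\mathbb R^p\times\mathbb R^+$ by padding with zeros, actually satisfies the four characterizing conditions \eqref{4.1}--\eqref{4.4} of Theorem \ref{thm4220}, so that it is a strict local minimizer of the \emph{unpenalized-in-the-null-block} problem; conditions \eqref{4.1}, \eqref{4.3}, \eqref{4.4} hold automatically since the point was constructed as an interior minimizer on $\mathcal B$ (using \eqref{A2510} and (A3)$^\ast$'s eigenvalue domination $\xi(p_\lambda,\boldsymbol\delta)=o(\Lambda_{\min}(\tfrac1n\mathbb X_{\mathcal S}^{\ast T}\boldsymbol\Sigma_\alpha\mathbb X_{\mathcal S}^\ast))$ for \eqref{4.4}). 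The crux is condition \eqref{4.2}: I must show
\[
\Bigl\|\tfrac1\lambda\,\alpha(\widehat\sigma^{\alpha})^{-\frac{2\alpha+1}{\alpha+1}}\textstyle\sum_{i=1}^n\boldsymbol\phi_{1,\alpha}(r_i(\widehat{\boldsymbol\theta}))\boldsymbol x_{2i}\Bigr\|_\infty<\rho(p_\lambda)
\]
with probability tending to one. Here I would use the first-order condition on the active block to write $\sum_i\boldsymbol\phi_{1,\alpha}(r_i(\widehat{\boldsymbol\theta}))\boldsymbol x_{\mathcal N i}$ in terms of $\tilde{\boldsymbol p}_\lambda^\ast(\widehat{\boldsymbol\beta}_{\mathcal S})$ plus a remainder, invoking the "irrepresentability"-type bound $\|(\mathbb X_{\mathcal N}^{\ast T}\boldsymbol\Sigma_\alpha\mathbb X_{\mathcal S}^\ast)\|_{2,\infty}=O(n)$ from (A2)$^\ast$ and the eigenvalue lower bound \eqref{A2510}; the stochastic fluctuation of $\sum_i\boldsymbol\phi_{1,\alpha}(r_i(\boldsymbol\theta_0))x_{\mathcal N i}^{(j)}$ around its mean is handled, uniformly over the $p-s$ inactive coordinates, by Proposition \ref{propo1} (the sub-Gaussian concentration for $\boldsymbol\phi_{1,\alpha}$) combined with $\max_j\|\boldsymbol x^{(j)}\|_\infty=O(n^{(1-\tau)/2}/\sqrt{\log n})$ and $\|\boldsymbol x^{(j)}\|_2=O(\sqrt n)$ from (A1), a union bound over $j$ costing only $\log p=O(n^{\tau^\ast})$ in the exponent, and the lower bound $\lambda\gg\min\{\sqrt{s/n},n^{(\tau-1)/2}\sqrt{\log n}\}$ from (A3)$^\ast$ to divide out. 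Together with $p_\lambda'(0+)=\rho(p_\lambda)\lambda$ this yields \eqref{4.2}, whence $\widehat{\boldsymbol\beta}_{\mathcal N}=\boldsymbol 0$, i.e. part 1.

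The main obstacle will be controlling the interplay in condition \eqref{4.2} between the deterministic "irrepresentability" term coming from $\tilde{\boldsymbol p}_\lambda^\ast(\widehat{\boldsymbol\beta}_{\mathcal S})$ and the stochastic term, while simultaneously keeping the estimation error $\|\widehat{\boldsymbol\beta}_{\mathcal S}-\boldsymbol\beta_{\mathcal S0}\|=O(\sqrt{s/n})$ small enough that the nonlinear weights $\phi_{1,\alpha}(r_i(\widehat{\boldsymbol\theta}))$ (which, unlike in the classical quadratic-loss analysis, are genuinely nonlinear in $r_i$ because of the $\exp(-\tfrac\alpha2 r_i^2)$ factor) can be linearized about $\boldsymbol\theta_0$ with a remainder that is uniformly $o(\lambda)$ over all $p$ coordinates; this is where the third-derivative control in \eqref{512} and the bound $\xi(p_\lambda,\boldsymbol\delta)=O(1)$ are essential, and where a careful choice of the radius $r_n$ and of the constants in (A3)$^\ast$ must be reconciled. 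A secondary technical point is that $\widehat\sigma$ appears through $\widehat\sigma^{-(2\alpha+1)/(\alpha+1)}$ in every equation, so one must first argue $\widehat\sigma$ is bounded away from $0$ and $\infty$ with high probability (which follows from part 2, $|\widehat\sigma-\sigma_0|=O(n^{-1/2})$) before the factor $\alpha(\widehat\sigma^\alpha)^{-(2\alpha+1)/(\alpha+1)}$ can be treated as a harmless $\Theta(1)$ multiplier; this is why the two parts of the theorem are proved in the order (estimation rate first, support recovery second).
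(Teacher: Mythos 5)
Your proposal follows essentially the same route as the paper's proof: first a restricted minimizer on the oracle subspace $\mathcal B$ is produced inside a ball of radius proportional to $\sqrt{s/n}$ (resp. $n^{-1/2}$ for $\sigma$) via a second-order Taylor expansion, with the gradient controlled by the moment bounds of (A2)$^{\ast}$ through Markov's inequality, the Hessian bounded below by the eigenvalue condition, and the penalty contribution made negligible by (A3)$^{\ast}$; then sparsity is obtained by verifying the strict-dual-feasibility condition \eqref{4.2} on the event $\{\|\boldsymbol\xi_{\mathcal N}\|_\infty\le u_n\sqrt n\}$ using Proposition \ref{propo1}, a union bound over the inactive coordinates, a Taylor expansion of the score about $\boldsymbol\theta_0$, and the bound $O(\lambda^{-1}\sqrt{s/n})=o(1)$. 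The decomposition, the key lemmas invoked, and even the order of the two steps coincide with the paper's argument.
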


To establish the asymptotic normality, we need an additional assumptions related to the Liapunov condition. We define the following matrices
\begin{align*}
\boldsymbol{V}_{\alpha}(\boldsymbol{\theta})  &  =\operatorname{Var}%
_{G}\left[  \boldsymbol{\Psi}_{\alpha}(\boldsymbol{\theta})\right] \\
&  =\alpha\sigma^{-\frac{2\alpha+1}{\alpha+1}}\mathbb{E}\left[  \left(
\begin{matrix}
\phi_{1,\alpha}^{2}(r(\boldsymbol{\theta}))\boldsymbol{X}\boldsymbol{X}^{T} &
\phi_{1,\alpha}(r(\boldsymbol{\theta}))\phi_{2,\alpha}(r(\boldsymbol{\theta
}))\boldsymbol{X}\\
\phi_{1,\alpha}(r(\boldsymbol{\theta}))\phi_{2,\alpha}(r(\boldsymbol{\theta
}))\boldsymbol{X}^{T} & \phi_{2,\alpha}^{2}(r_{1}(\boldsymbol{\theta}))
\end{matrix}
\right)  \right] \\
&  =\alpha\sigma^{-\frac{2\alpha+1}{\alpha+1}}\left[
\begin{matrix}
\phi_{1,\alpha}^{2}(r(\boldsymbol{\theta}))\mathbb{E}\left[  \boldsymbol{X}%
\boldsymbol{X}^{T}\right]  & \boldsymbol{0}\\
\boldsymbol{0} & \phi_{2,\alpha}^{2}(r_{1}(\boldsymbol{\theta}))
\end{matrix}
\right], \\
\boldsymbol{K}_{ij}^{\alpha}(\boldsymbol{\theta})  &  =\alpha\sigma
^{-\frac{2\alpha+1}{\alpha+1}}\operatorname{diag}\left(  \phi_{i,\alpha}%
(r_{1}(\boldsymbol{\theta}))\phi_{j,\alpha}(r_{1}(\boldsymbol{\theta}%
)),\cdot\cdot\cdot,\phi_{i,\alpha}(r_{n}(\boldsymbol{\theta}))\phi_{j,\alpha
}(r_{n}(\boldsymbol{\theta}))\right)  \hspace{0.3cm}i,j=1,2.
\end{align*}
A consistent estimator of $\boldsymbol{V}_{\alpha}(\boldsymbol{\theta})$ is
$\frac{1}{n}\mathbb{X}_{\mathcal{S}}^{\ast,T}\boldsymbol{\Sigma}_{\alpha
}^{\ast}(\boldsymbol{\theta})\mathbb{X}_{\mathcal{S}}^{\ast}$ with
\[
\boldsymbol{\Sigma}_{\alpha}^{\ast}(\boldsymbol{\theta})=\left(
\begin{matrix}
\boldsymbol{K}_{11}^{\alpha}(\boldsymbol{\theta}) & \boldsymbol{K}%
_{12}^{\alpha}(\boldsymbol{\theta})\\
\boldsymbol{K}_{11}^{\alpha}(\boldsymbol{\theta}) & \boldsymbol{K}%
_{22}^{\alpha}(\boldsymbol{\theta})
\end{matrix}
\right)  .
\]
We now need to assume the following additional assumption.
\begin{enumerate}
\item[(A5)] The penalty and loss function verify%
\[
p_{\lambda}^{\prime}(d_{n})=\mathcal{O}\left(  \left(  sn\right)
^{-1/2}\right)  \text{ and }\max_{1\leq i\leq n}\mathbb{E}\left[
|\phi_{k,\alpha}(r_{i}(\boldsymbol{\theta}_{0}))|\right]  ^{3}=O(1),\hspace
{0.3cm}k=1,2,
\]
and the design matrix verifies :%
\[
\min_{(\boldsymbol{\delta},\sigma)\in\mathcal{N}_{0}}\Lambda_{\min}\left[
\mathbb{X}_{\mathcal{S}}^{\ast,T}\boldsymbol{\Sigma}_{\alpha}^{\ast
}(\boldsymbol{\delta}_{\ast})\mathbb{X}_{\mathcal{S}}^{\ast}\right]  \geq
cn\text{ and }%
{\textstyle\sum_{i=1}^{n}}
\left[  \boldsymbol{x}_{\mathcal{S}i}^{\ast,T}\left(  \mathbb{X}_{\mathcal{S}%
}^{\ast,T}\boldsymbol{\Sigma}_{\alpha}^{\ast}(\boldsymbol{\theta}%
_{0})\mathbb{X}_{\mathcal{S}}^{\ast}\right)  ^{-1}\boldsymbol{x}%
_{\mathcal{S}i}^{\ast}\right]  ^{3/2}=o(1),
\]

\end{enumerate}

where $\boldsymbol{x}_{\mathcal{S}i}^{\ast}:=(\boldsymbol{x}_{\mathcal{S}%
i}^{T},1)^{T}$ .

\begin{theorem}
\label{thm533} In addition to the conditions of Theorem (\ref{thm532}), if Assumption
(A5) holds and $s=o(n^{1/3}),$ then with probability tending to 1 as
$n\rightarrow\infty,$ the MNPRPE, $\widehat{\boldsymbol{\theta}}%
^{T}=(\widehat{\boldsymbol{\beta}},\widehat{\sigma}),$ verifies:

\begin{enumerate}
\item $\widehat{\boldsymbol{\beta}}_{\mathcal{N}}^{\alpha}=\boldsymbol{0}$,
con $\widehat{\boldsymbol{\beta}}^{T}=(\widehat{\boldsymbol{\beta}%
}_{\mathcal{S}},\widehat{\boldsymbol{\beta}}_{\mathcal{N}})$ and
$\widehat{\boldsymbol{\beta}}_{\mathcal{S}}\in\mathbb{R}^{s}$

\item Let's $\boldsymbol{A}_{n}\in\mathbb{R}^{q\times(s+1)}$ a matrix \ such
that $\boldsymbol{A}_{n}\boldsymbol{A}_{n}^{T}\underset{n\rightarrow
\infty}{\rightarrow}\boldsymbol{G}$ , $\boldsymbol{G}$ is a symmetric positive
definite matrix,
\begin{equation}
\boldsymbol{A}_{n}\left(  \mathbb{X}_{\mathcal{S}}^{\ast,T}\boldsymbol{\Sigma
}_{\alpha}^{\ast}(\boldsymbol{\theta}_{0})\mathbb{X}_{\mathcal{S}}^{\ast
}\right)  ^{-\frac{1}{2}}\left(  \mathbb{X}_{\mathcal{S}}^{\ast,T}%
\boldsymbol{\Sigma}_{\alpha}^{\ast}(\boldsymbol{\theta}_{0})\mathbb{X}%
_{\mathcal{S}}^{\ast}\right)  \left[  (\widehat{\boldsymbol{\beta}%
}_{\mathcal{S}},\widehat{\sigma})^{T}-(\boldsymbol{\beta}_{\mathcal{S}%
0},\sigma_{0})^{T}\right]  \overset{L}{\underset{n\rightarrow\infty
}{\rightarrow}}\mathcal{N}_{q}(\boldsymbol{0}_{q},\boldsymbol{G})
\end{equation}

\end{enumerate}
\end{theorem}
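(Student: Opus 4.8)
The plan is to build on Theorem~\ref{thm532}, which already gives us a strict local minimizer $\widehat{\boldsymbol{\theta}}^{T}=(\widehat{\boldsymbol{\beta}},\widehat{\sigma})$ with $\widehat{\boldsymbol{\beta}}_{\mathcal{N}}=\boldsymbol{0}$ and $\|\widehat{\boldsymbol{\beta}}_{\mathcal{S}}-\boldsymbol{\beta}_{\mathcal{S}0}\|=O(\sqrt{s/n})$, $|\widehat{\sigma}-\sigma_{0}|=O(n^{-1/2})$, with probability tending to one; thus part~1 of the statement is immediate and we only need the asymptotic normality in part~2. First I would restrict attention to the selected coordinates: on the event where $\widehat{\boldsymbol{\beta}}_{\mathcal{N}}=\boldsymbol{0}$, the vector $(\widehat{\boldsymbol{\beta}}_{\mathcal{S}},\widehat{\sigma})$ satisfies the $(s+1)$ stationarity equations \eqref{4.1} and \eqref{4.3} of Theorem~\ref{thm4220}, i.e.
\[
\alpha\widehat{\sigma}^{-\frac{2\alpha+1}{\alpha+1}}\sum_{i=1}^{n}\boldsymbol{\psi}_{\alpha}(\boldsymbol{x}_{\mathcal{S}i},y_{i},\widehat{\boldsymbol{\beta}}_{\mathcal{S}},\widehat{\sigma})+\begin{pmatrix}\tilde{\boldsymbol{p}}_{\lambda}^{\ast}(\widehat{\boldsymbol{\beta}}_{\mathcal{S}})\\ 0\end{pmatrix}=\boldsymbol{0}_{s+1}.
\]
I would then Taylor-expand the loss part of this estimating equation around the true value $(\boldsymbol{\beta}_{\mathcal{S}0},\sigma_{0})$, writing the expansion as a linear term plus the Hessian evaluated at $\boldsymbol{\theta}_{0}$ times the deviation, plus a second-order remainder controlled by \eqref{512} and the consistency rate from Theorem~\ref{thm532}. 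The Hessian of $\sum_{i}L^{\ast\alpha}_{i}$ restricted to $\mathcal{S}$ is, up to sign and the scalar $\alpha\widehat\sigma^{-(2\alpha+1)/(\alpha+1)}$, exactly $\mathbb{X}_{\mathcal{S}}^{\ast,T}\boldsymbol{\Sigma}_{\alpha}(\boldsymbol{\theta}_{0})\mathbb{X}_{\mathcal{S}}^{\ast}$; I would replace it with the variance-type matrix $\mathbb{X}_{\mathcal{S}}^{\ast,T}\boldsymbol{\Sigma}_{\alpha}^{\ast}(\boldsymbol{\theta}_{0})\mathbb{X}_{\mathcal{S}}^{\ast}$ by an additional argument using that the residuals $r_{i}(\boldsymbol{\theta}_{0})$ are i.i.d.\ standard normal, so that $\mathbb{E}[\phi_{k,\alpha}'(r)]$ and $\mathbb{E}[\phi_{k,\alpha}(r)^{2}]$ are related and the two matrices have the same leading behaviour.

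Next I would deal with the penalty term: by Assumption (A5), $p_{\lambda}^{\prime}(d_{n})=\mathcal{O}((sn)^{-1/2})$, and since on the relevant event $\widehat{\boldsymbol{\beta}}_{\mathcal{S}}$ stays within the neighbourhood $\mathcal{N}_{0}$ where every component exceeds $d_{n}$ in absolute value, we have $\|\tilde{\boldsymbol{p}}_{\lambda}^{\ast}(\widehat{\boldsymbol{\beta}}_{\mathcal{S}})\|_{2}\le\sqrt{s}\,p_{\lambda}^{\prime}(d_{n})=\mathcal{O}(n^{-1/2})$, so after multiplying through by $(\mathbb{X}_{\mathcal{S}}^{\ast,T}\boldsymbol{\Sigma}_{\alpha}^{\ast}(\boldsymbol{\theta}_{0})\mathbb{X}_{\mathcal{S}}^{\ast})^{-1/2}$ — whose operator norm is $O(n^{-1/2})$ by the eigenvalue bound $\Lambda_{\min}\ge cn$ in (A5) — the penalty contribution is $o_{P}(1)$ and drops out. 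This reduces the problem to showing that
\[
\left(\mathbb{X}_{\mathcal{S}}^{\ast,T}\boldsymbol{\Sigma}_{\alpha}^{\ast}(\boldsymbol{\theta}_{0})\mathbb{X}_{\mathcal{S}}^{\ast}\right)^{-\frac12}\alpha\sigma_{0}^{-\frac{2\alpha+1}{\alpha+1}}\sum_{i=1}^{n}\boldsymbol{\psi}_{\alpha}(\boldsymbol{x}_{\mathcal{S}i},y_{i},\boldsymbol{\beta}_{\mathcal{S}0},\sigma_{0})
\]
is asymptotically $\mathcal{N}_{s+1}(\boldsymbol{0},\boldsymbol{I}_{s+1})$; pre-multiplying by $\boldsymbol{A}_{n}$ and using $\boldsymbol{A}_{n}\boldsymbol{A}_{n}^{T}\to\boldsymbol{G}$ then yields the stated conclusion. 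The score $\sum_{i}\boldsymbol{\psi}_{\alpha}(\boldsymbol{x}_{\mathcal{S}i},y_{i},\boldsymbol{\beta}_{\mathcal{S}0},\sigma_{0})$ is a sum of independent, mean-zero vectors (mean zero because $\mathbb{E}[\phi_{1,\alpha}(r)]=0$ by oddness and $\mathbb{E}[\phi_{2,\alpha}(r)]=0$ is exactly the $\sigma$-score equation at the truth), with covariance $\mathbb{X}_{\mathcal{S}}^{\ast,T}\boldsymbol{V}_{\alpha}^{\text{cell}}\mathbb{X}_{\mathcal{S}}^{\ast}$ matching the normalizing matrix; I would invoke a multivariate Lindeberg–Feller / Cramér–Wold argument, verifying the Liapunov-type condition $\sum_{i}[\boldsymbol{x}_{\mathcal{S}i}^{\ast,T}(\mathbb{X}_{\mathcal{S}}^{\ast,T}\boldsymbol{\Sigma}_{\alpha}^{\ast}(\boldsymbol{\theta}_{0})\mathbb{X}_{\mathcal{S}}^{\ast})^{-1}\boldsymbol{x}_{\mathcal{S}i}^{\ast}]^{3/2}=o(1)$ together with the bounded third-moment condition on $\phi_{k,\alpha}$, both of which are precisely the content of Assumption (A5).

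The main obstacle, and where I would spend most care, is controlling the remainder in the Taylor expansion uniformly and showing it is negligible relative to $\sqrt{s/n}$-sized deviations: the second-order term involves $\nabla_{\boldsymbol{\theta}}^{2}\boldsymbol{\gamma}_{j,\alpha}$ contracted against $(\widehat{\boldsymbol{\theta}}_{\mathcal{S}}-\boldsymbol{\theta}_{0})$ twice, and to push this below $o_{P}(n^{-1/2})$ after the $(\cdot)^{-1/2}$-normalization one needs the $s=o(n^{1/3})$ restriction together with the operator-norm bound \eqref{512} — the bookkeeping of how the factors of $\sqrt{s}$, $n^{-1/2}$, and the $O(n)$ Hessian bound combine is delicate and is exactly why the rate $s=o(n^{1/3})$ (rather than merely $s=o(n)$ as in Theorem~\ref{thm532}) is imposed here. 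A secondary technical point is the justification that $\widehat{\sigma}^{-(2\alpha+1)/(\alpha+1)}$ can be replaced by $\sigma_{0}^{-(2\alpha+1)/(\alpha+1)}$ throughout, which follows from $|\widehat{\sigma}-\sigma_{0}|=O(n^{-1/2})$ and continuity, contributing only a further $o_{P}(1)$ term after normalization. Once these remainder estimates are in place, the CLT step is routine.
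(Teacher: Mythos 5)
Your overall architecture tracks the paper's own proof almost step for step: part 1 is inherited from Theorem \ref{thm532}; the $(s+1)$-dimensional stationarity equation is Taylor-expanded around $(\boldsymbol{\beta}_{\mathcal{S}0},\sigma_{0})$ with the quadratic remainder bounded via \eqref{512} and the $\sqrt{s/n}$ consistency rate (this is exactly where $s=o(n^{1/3})$ enters, since the remainder is $O_{P}(s^{3/2}n^{-1})$ and must vanish after the $n^{-1/2}$-order normalization); the penalty gradient is killed by $\sqrt{s}\,p_{\lambda}^{\prime}(d_{n})=O(n^{-1/2})$; and the score at the truth, which is a sum of independent mean-zero vectors, is handled by Cram\'er--Wold plus Lyapunov using precisely the third-moment and trace conditions in (A5). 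All of that is the paper's argument.

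The genuine gap is your proposed replacement of the Hessian-type matrix $\mathbb{X}_{\mathcal{S}}^{\ast T}\boldsymbol{\Sigma}_{\alpha}(\boldsymbol{\theta}_{0})\mathbb{X}_{\mathcal{S}}^{\ast}$ by the variance-type matrix $\mathbb{X}_{\mathcal{S}}^{\ast T}\boldsymbol{\Sigma}_{\alpha}^{\ast}(\boldsymbol{\theta}_{0})\mathbb{X}_{\mathcal{S}}^{\ast}$ on the grounds that $\mathbb{E}[\phi_{k,\alpha}^{\prime}(r)]$ and $\mathbb{E}[\phi_{k,\alpha}(r)^{2}]$ ``have the same leading behaviour.'' They do not: for $r\sim\mathcal{N}(0,1)$ one computes $\mathbb{E}[\phi_{1,\alpha}^{\prime}(r)]=(\alpha+1)^{-3/2}$ while $\mathbb{E}[\phi_{1,\alpha}(r)^{2}]=(2\alpha+1)^{-3/2}$, and the corresponding constants in the $(2,2)$ block differ by yet another ratio, so there is no information equality here for $\alpha>0$ and the two matrices are not asymptotically interchangeable (not even up to a common scalar). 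The paper's proof never makes this substitution: it keeps the sandwich $\boldsymbol{S}_{2,n}^{-1/2}\boldsymbol{S}_{1,n}\bigl[(\widehat{\boldsymbol{\beta}}_{\mathcal{S}},\widehat{\sigma})-(\boldsymbol{\beta}_{\mathcal{S}0},\sigma_{0})\bigr]$ with $\boldsymbol{S}_{1,n}=\mathbb{X}_{\mathcal{S}}^{\ast T}\boldsymbol{\Sigma}_{\alpha}(\boldsymbol{\theta}_{0})\mathbb{X}_{\mathcal{S}}^{\ast}$ (Hessian) and $\boldsymbol{S}_{2,n}=\mathbb{X}_{\mathcal{S}}^{\ast T}\boldsymbol{\Sigma}_{\alpha}^{\ast}(\boldsymbol{\theta}_{0})\mathbb{X}_{\mathcal{S}}^{\ast}$ (score variance), and only the score is normalized by $\boldsymbol{S}_{2,n}^{-1/2}$. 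You were likely misled by the displayed theorem, in which both factors are printed as $\boldsymbol{\Sigma}_{\alpha}^{\ast}$; comparison with the proof shows the second factor should be $\boldsymbol{\Sigma}_{\alpha}$, and your argument as written proves a different (and false, for $\alpha>0$) normalization. Repair is easy -- simply do not make the substitution and carry $\boldsymbol{S}_{2,n}^{-1/2}\boldsymbol{S}_{1,n}$ through -- but as stated that step would fail.
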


\section{Computational Algorithm}

In this section, we discuss algorithms for minimizing the penalized objetive function $Q_n(\boldsymbol{\beta},\sigma)$, given in (\ref{CF}), with nonconcave penalties like SCAD and MCP. 

Recall that, the efficient algorithms for the least squares regression and group LASSO penalties, usually use the local convex nature of the objetive function. 
For non-convex objective functions involving penalties like SCAD or MCP, Fan and Li (2001) proposed the LQA  and Zou and Li (2008)  introduced the LLA algorithms, using  local quadratic and linear approximations, respectively. These algorithms are inherently inefficient to some extent, in that it uses the path-tracing least angle regression algorithm (LARS)  to produce updates to the regression coefficients.
Fan and  Lv (2011) used iterative coordinate ascent (ICA) optimization for penalized least squares with nonconcave penalty functions, which is especially appealing for large scale problems with both $n$ and $p$ large. Breheny and Huang (2011) established a coordinate descent algorithm for nonconcave penalized regression with squared loss and SCAD or MCP penalties. 
On the other hand, Kawashima and Fujisawa (2017) employed the Majorize-Minimization (MM) algorithm for the $\gamma-$divergence loss function penalized with LASSO penalty, which  iteratively bounds the loss, resulting in a weighted least squared regression.

In this paper, we propose to combine MM-algorithm and coordinate descent minimization for the RP loss function and the non-concave penalties including  SCAD and MCP. 
The proposed method is iterative, and it updates the estimates of the parameters $\boldsymbol{\beta}$ and $\sigma$ separately at each step. Before describing our proposal, let us briefly mention the MM and the coordinate descent algorithm to understand the underlying reasonings. 

\subsection{MM-algorithm}

The MM optimization algorithm iteratively updates a current solution by finding a surrogate function that majorizes the objective function. Optimizing the surrogate function will then drive the actual objective function downward until a local minimum is reached (Hunter and Lange, (2004)).

Mathematically, let be $h(\nu)$ a real-valued  objective function.  A function $h_{MM}(\nu|\nu^{(m)})$  is said to majorize $h(\nu)$ at a given  point $\nu^{(m)}$ (current solution) if 
\begin{equation} \label{MMeq}
h_{MM}(\nu^{(m)}|\nu^{(m)}) = h(\nu^{(m)}) \hspace{0.5cm} \text{and} \hspace{0.5 cm}
h_{MM}(\nu|\nu^{(m)}) \geq h(\nu).
\end{equation}

Then, in the MM-algorithm, the next updated solution is obtained as 
$$ \nu^{(m+1)} = \operatorname{arg}\operatorname{min}_{\nu} h_{MM}(\nu | \nu^{(m)}). $$
The process is repeated for $m=0, 1, 2,...$ until convergence is reached. 
The notation $h_{MM}(\nu^{(m)}|\nu^{(m)})$ emphasizes the dependence of the current solution $\nu^{(m)}$, a crucial requirement of the MM-algorithm. .

\begin{proposition}
	MM-algorithm, using $h_{MM}(\nu|\nu^{(m)})$ majorization function satisfying (\ref{MMeq}), converges to the required minimizer of $h(\nu)$.
\end{proposition}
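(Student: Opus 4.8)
The plan is to run the standard monotone-descent argument for MM schemes and then pass to the limit. The first and crucial step is the \emph{sandwich inequality}. Fix $m$; combining the two defining properties of the majorizer in (\ref{MMeq}) with the fact that $\nu^{(m+1)}$ minimizes $\nu\mapsto h_{MM}(\nu|\nu^{(m)})$ yields
\[
h(\nu^{(m+1)})\;\leq\; h_{MM}(\nu^{(m+1)}|\nu^{(m)})\;\leq\; h_{MM}(\nu^{(m)}|\nu^{(m)})\;=\;h(\nu^{(m)}),
\]
where the first inequality is the majorization property $h_{MM}(\cdot|\nu^{(m)})\geq h(\cdot)$, the second is the optimality of $\nu^{(m+1)}$, and the final equality is the tangency condition $h_{MM}(\nu^{(m)}|\nu^{(m)})=h(\nu^{(m)})$. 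Hence the real sequence $\{h(\nu^{(m)})\}_{m\geq 0}$ is nonincreasing; assuming $h$ is bounded below on the optimization domain (which in our setting can be taken compact and bounded away from $\sigma=0$), it converges to some limit $h^{\ast}$.

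The second step upgrades convergence of the objective values to convergence of the iterates and identifies the limit. Here I would invoke the usual regularity hypotheses underlying MM theory: continuity of $h$ and of $(\nu,\nu')\mapsto h_{MM}(\nu|\nu')$, compactness of the sublevel set $\{\nu: h(\nu)\leq h(\nu^{(0)})\}$, and, when both functions are smooth, the matching-gradient identity $\nabla_\nu h_{MM}(\nu|\nu)=\nabla h(\nu)$ that is forced by (\ref{MMeq}). Compactness gives a convergent subsequence $\nu^{(m_k)}\to\nu^{\ast}$; continuity gives $h(\nu^{\ast})=h^{\ast}$; and the descent inequality together with the matching-gradient identity forces $\nabla h(\nu^{\ast})=0$, so $\nu^{\ast}$ is a stationary point of $h$. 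When the surrogate is strictly convex --- which is the case for the weighted-least-squares surrogates we construct for the RP loss --- the minimizer $\nu^{(m+1)}$ is unique, one obtains $\|\nu^{(m+1)}-\nu^{(m)}\|\to 0$, and the subsequential statement can be promoted to convergence of the full sequence to the set of stationary points; if that set is moreover discrete (isolated stationary points), $\nu^{(m)}\to\nu^{\ast}$.

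The main obstacle is the last mile. Monotone descent by itself only guarantees convergence to a \emph{stationary point} (or to the set of such points) of $h$, not to a global minimizer; since the penalized objective with SCAD or MCP penalty is non-convex, the word ``minimizer'' in the statement must be read as ``local minimizer / fixed point of the algorithm''. I would therefore phrase the conclusion carefully --- monotone decrease of $\{h(\nu^{(m)})\}$, subsequential convergence of the iterates to stationary points, and full convergence under strict convexity of the surrogate together with isolation of the limit --- and refer to Hunter and Lange (2004) for the precise regularity conditions, reserving global-optimality claims for the convex (e.g.\ LASSO-penalized) case.
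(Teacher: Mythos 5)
Your proof is correct and its core is exactly the paper's argument: the sandwich inequality $h(\nu^{(m+1)})\leq h_{MM}(\nu^{(m+1)}|\nu^{(m)})\leq h_{MM}(\nu^{(m)}|\nu^{(m)})=h(\nu^{(m)})$, giving monotone decrease of the objective values. Your additional second step (boundedness below, compactness of sublevel sets, the matching-gradient identity, and the careful distinction between convergence to a stationary or local minimum point versus a global minimizer) is in fact \emph{more} complete than the paper's own proof, which passes directly from ``monotonically decreases at each step'' to ``converges to a local minimum'' without justification; your caveat that descent alone only yields stationarity, and that the word ``minimizer'' in the statement must be read locally for the non-convex SCAD/MCP objective, is well placed and fills a genuine gap that the paper leaves open.
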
 

\begin{proof}
	The first equation on (\ref{MMeq}) ensures both functions match at  $\nu^{(m)}$, while the second one guarantees the stricly downward. 
	The objective function $h(\nu)$ monotonically decreases at each step
	\begin{equation} \label{descentprop}
		h (\nu^{(m+1)}) \leq h_{MM}(\nu^{(m+1)} | \nu^{(m)}) \leq  h_{MM}(\nu^{(m)} | \nu^{(m)}) = h(\nu^{(m)}),
	\end{equation}  
	and hence, the MM-algorithm converges to a local minimum of $h(\nu)$. The descent property (\ref{descentprop}) lends an MM-algorithm remarkable numerical stability. 	
\end{proof}

Note that, in view of (\ref{descentprop}),  $\nu^{(m+1)}$ is not necessary a minimizer of  $h_{MM}(\nu | \nu^{(m)})$, but it will suffices if only 
$ h_{MM}(\nu^{(m)} | \nu^{(m)}) \geq h_{MM}(\nu^{(m+1)} | \nu^{(m)}).$

Thus, any other simple iterative algorithms may be used to minimize the majorization function. Moreover, Hunter and Lange (2004) proved that MM-algorithms boast a linear rate of convergence 
$$\lim_{m\rightarrow \infty} \frac{||\nu^{(m+1)}-\nu^{\ast}||}{||\nu^{(m)}-\nu^{\ast}||} = c < 1.$$ 

Kawashima and Fujisawa (2017) constructed the majorization function for  $\gamma-$divergence loss function, which coincides with our RP loss for the LRM, by Jensen's inequality
$$\kappa(\boldsymbol{z}^T \boldsymbol{\nu}) = \kappa\left( \frac{z_i\nu_i^{(m)}}{\boldsymbol{z}^T \boldsymbol{\nu}^{(m)}} \nu_i \frac{\boldsymbol{z}^T \boldsymbol{\nu}^{(m)}}{\nu_i^{(m)}}\right) \leq \sum_{i} \frac{z_i \nu_i^{(m)}}{\boldsymbol{z}^T \boldsymbol{\nu}^{(m)}} \kappa\left(\nu_i \frac{\boldsymbol{z}^T \boldsymbol{\nu}^{(m)}}{\nu_i^{(m)}}\right),$$
where $\kappa(\nu)$ is a convex function and  $\boldsymbol{z}$, $\boldsymbol{\nu}$ and $\boldsymbol{\nu^{(m)}}$ are postive vectors.
In our case of the RP loss function given in (\ref{RPL}), taking 
$\boldsymbol{z} = (\frac{1}{n},..,\frac{1}{n})$, $\nu_i = f_{\boldsymbol{\beta},\sigma}(y_i|\boldsymbol{x}_i)^\alpha$, $\nu^{(m)}_i = f_{\boldsymbol{\beta}^{(m)},\sigma^{(m)}}(y_i|\boldsymbol{x}_i)^\alpha$ and $\kappa(u) = - \log (u)$,
we get
\begin{equation} 
\begin{aligned}
\mathcal{R}_{\alpha}\left(  P_{\boldsymbol{\beta},\sigma},P_{n}%
^{\boldsymbol{\beta}}\right)   &  =\frac{1}{\alpha+1}\log\left\{  \left(
\frac{1}{\sqrt{2\pi}\sigma}\right)  ^{\alpha}\frac{1}{\sqrt{\alpha+1}%
}\right\}  +\frac{1}{\alpha(\alpha+1)}\log\left(  \frac{1}{n}\right)
^{\alpha}\\
&  -\frac{1}{\alpha}\log\left\{  \frac{1}{n}\sum_{i=1}^{n}\left(  \frac
{1}{\sqrt{2\pi}\sigma}\right)  ^{\alpha}\exp\left(  -\frac{\alpha}{2}\left(
\frac{y_{i}-\boldsymbol{x}_{i}^{T}\boldsymbol{\beta}}{\sigma}\right)
^{2}\right)  \right\}  .
\end{aligned}
\end{equation}
\begin{equation} \label{MMalg}
\begin{split}h(\boldsymbol{\beta},\sigma) 
&= \frac{1}{\alpha+1} \sum_{i=1}^n \frac{1}{n}\log \left(\int f_{\boldsymbol{\beta},\sigma}(y|\boldsymbol{x}_i)^{\alpha+1} dy \right)   +\frac{1}{\alpha(\alpha+1)}\log\left(  \frac{1}{n}\right)
^{\alpha} - \frac{1}{\alpha} \log \frac{1}{n}\sum_{i=1}^n f_{\boldsymbol{\beta},\sigma}(y_i|\boldsymbol{x}_i)^\alpha   \\ 
&  \leq	\frac{1}{\alpha+1} \sum_{i=1}^n \frac{1}{n}\log \left(\int f_{\boldsymbol{\beta},\sigma}(y|\boldsymbol{x}_i)^{\alpha+1} dy \right)   +\frac{1}{\alpha(\alpha+1)}\log\left(  \frac{1}{n}\right)
^{\alpha} \\
& - \frac{1}{\alpha}\sum_{i=1}^n \mu_i^{(m)} \log \left\{ f_{\boldsymbol{\beta},\sigma}(y_i|\boldsymbol{x}_i)^{\alpha} \frac{\frac{1}{n} \sum_{l=1}^n f_{\boldsymbol{\beta}^{(m)},\sigma^{(m)}}(y_l|\boldsymbol{x}_l)^{\alpha+1}}{f_{\boldsymbol{\beta}^{(m)},\sigma^{(m)}}(y_i|\boldsymbol{x}_i)^{\alpha}}\right\} \\
&= \frac{1}{\alpha+1} \sum_{i=1}^n \frac{1}{n}\log \left(\int f_{\boldsymbol{\beta},\sigma}(y|\boldsymbol{x}_i)^{\alpha+1} dy \right) 
   +\frac{1}{\alpha(\alpha+1)}\log\left(  \frac{1}{n}\right)
 ^{\alpha} - \sum_{i=1}^n \mu_i^{(m)} \log \left(f_{\boldsymbol{\beta},\sigma}(y_i|\boldsymbol{x}_i) \right) \\
&= \frac{1}{\alpha+1} \log \left[ \frac{1}{\sigma^\alpha (\sqrt{2 \pi})^\alpha \sqrt{\alpha+1}}\right]   +\frac{1}{\alpha(\alpha+1)}\log\left(  \frac{1}{n}\right)
^{\alpha} - \sum_{i=1}^n \mu_i^{(m)} \log \left(\frac{1}{(\sqrt{2\pi})^\alpha \sigma^\alpha} \right)\\
& +  \sum_{i=1}^n \mu_i^{(m)}  \frac{1}{2} \left( \frac{y_i-\boldsymbol{x}_i^T \boldsymbol{\beta}}{\sigma}\right)^2\\
&= h_{MM}(\boldsymbol{\beta}, \sigma | \boldsymbol{\beta}^{(m)}, \sigma^{(m)})
\end{split}
\end{equation}
with $$\mu_i^{(m)} = \frac{f_{\boldsymbol{\beta}^{(m)},\sigma^{(m)}}(y_i|\boldsymbol{x}_i)^{\alpha}}{\sum_{l=1}^n f_{\boldsymbol{\beta}^{(m)},\sigma^{(m)}}(y_l|\boldsymbol{x}_l)^{\alpha}}.$$
Therefore, it is enough to minimize the majorization function $h_{MM}(\boldsymbol{\beta}, \sigma | \boldsymbol{\beta}^{(m)}, \sigma^{(m)})$ to downward  $L_n^\alpha (\boldsymbol{\beta},\sigma)$ at each step. Note that only the last term of $h_{MM}$ depends on $\boldsymbol{\beta}$. The same is also true to their penalized versions as required to compute the MNPRPE. 

\subsection{Coordinate descent algorithm}

In order to minimize the majorization function in each step during the computation of the MNPRPE, we use the popular coordinate descent algorithm which optimizes the objective function with respect to every single parameter at a time, iteratively cycling through all parameters until convergence. 
This algorithm is specially appropriate for very high-dimensional problems, as each pass over the parameters requires only $O(np)$ operations, and  computational burden increases only linearly with $p$. 

While working with penalized objective functions with SCAD or MCP penalties, Breheny and Huang (2011)  showed that, for the squared error loss in a  univariate penalized regression, the minimization problem  has an explicit solution. Consider the soft-thresholding operator (Donoho and Johnstone (1994))
$$ S(z,\lambda) = \left\{ \begin{matrix}
z - \lambda & \text{if} & z > \lambda \\
0 & \text{if} & |z|<\lambda\\
z + \lambda & \text{if} & z<-\lambda\\
\end{matrix} \right. $$
and the simple linear regression model 
$$ y = x\beta + \varepsilon.$$
Given a random sample $\left((y_1,x_1),..,(y_n,x_n)\right)$ and assuming for simplicity that the explanatory variable $x$ is centered, the objective function for penalized least squares regression is
\begin{equation} \label{squaresimple}
	\frac{1}{n} \sum_{i=1}^n (y-x\beta)^2 + p_{\lambda}(\beta).
\end{equation}
If the $p_\lambda$ is the MCP penalty, then the minimizer of the objective function in (\ref{squaresimple}) has the explicit form  
$$\widehat{\beta} = f_{\text{MCP}}(z,\lambda) = \left\{\begin{matrix}
\frac{S(z,\lambda)}{1-1/a} & \text{if} & |z|\leq \lambda a,\\
z & \text{if} & |z| > \lambda a.
\end{matrix}\right. $$
and for the SCAD penalty, the corresponding minimizer of (\ref{squaresimple}) has the form 
$$\widehat{\beta} = f_{\text{SCAD}}(z,\lambda) = \left\{\begin{matrix}
S(z,\lambda) & \text{if} & |z| \leq 2\lambda \\
\frac{S(z,a\lambda/(a-1))}{1-1/(a-1)} & \text{if} & 2\lambda < |z|\leq \lambda a,\\
z & \text{if} & |z| > \lambda a.
\end{matrix}\right. $$
where $z= \frac{1}{n}\boldsymbol{x}^T\boldsymbol{y}$ is the solution of unpenalized univariate least squares regression.

Coordinate descent minimization considers, on each iteration, $p$ simple linear regression problems, and optimizes with respect to each and every parameter separately  employing the univariate solution. 
Introducing the notation ($-j$) to refer to the portion that remains after the $j$-th column or element is removed, 			
the partial residuals of $\boldsymbol{x}_j$ are $\boldsymbol{r}_{-j} = \boldsymbol{y} - \mathbb{X}_{-j}\widehat{\boldsymbol{\beta}}_{-j}$, where $\widehat{\boldsymbol{\beta}}$ is the most recently updated value of $\boldsymbol{\beta}$. Thus, for given ﬁxed value of parameters $\{ \widehat{\beta}_k : k\neq j\}$, at a current estimates $\widehat{\boldsymbol{\beta}}$, we wish to partially minimize the objective function 
$$U_{n,\lambda}(\boldsymbol{\beta}) := \frac{1}{2n}\sum_{i=1}^n\left(y_i-\boldsymbol{x}_i^T\boldsymbol{\beta}\right)^2 + \sum_{j=1}^p p_{\lambda}(|\beta_j|)$$ 
with respect to $\beta_j$ yielding  to the simple linear regression problem
$$\min_{\beta_j} \left[\frac{1}{n}\sum_{i=1}^n(r_{-j,i} - x_{j,i}\beta_j)^2 + p_\lambda(\beta_j) \right].$$
Therefore, the  Coordinate Descent Algorithm is constructed as follows:
\begin{enumerate}
	\item Set $m=0$. Fix initial value $\widehat{\boldsymbol{\beta}}^{0}$, tuning parameter $\lambda$ and tolerance $\varepsilon$ (for convergence).
	\item For $j=1,..,p$, update $\beta_j$ following three calculations
	\begin{enumerate}
		\item Calculate $z_j = \frac{1}{n} \boldsymbol{x}_j \boldsymbol{r}_{-j} = \frac{1}{n} \boldsymbol{x}_j \boldsymbol{r} +\widehat{\beta}^{(m)}. $
		\item Update $\widehat{\beta}^{(m+1)} \leftarrow f_{\text{MCP}} (z_j, \lambda)$ or or $f_{\text{SCAD}}(z_j, \lambda)$ [depending on the choice of penalty function].
		\item Update $\boldsymbol{r} \leftarrow \boldsymbol{r} - \left(\widehat{\beta}^{(m+1)}-\widehat{\beta}^{(m)}\right)\boldsymbol{x}_j.$
	\end{enumerate}
	\item If $\big|U_{n,\lambda}\left(\widehat{\boldsymbol{\beta}}^{(m+1)}\right) - U_{n,\lambda}\left(\widehat{\boldsymbol{\beta}}^{(m+1)}\right) \big| \leq \varepsilon$ : Stop\\
	Else : set $m \leftarrow m+1$ and go to step 2.
\end{enumerate}

Breheny and Huang (2011) showed that coordinate descent algorithm for the penalized squared loss with SCAD or MCP (with parameter $a>2$ or $a>1$ respectively) downward the objetive function at each iteration, i.e., 
$U_{n,\lambda}\left(\widehat{\boldsymbol{\beta}}^{(m)} \right) \geq U_{n,\lambda}\left(\widehat{\boldsymbol{\beta}}^{(m+1)} \right).$ 
Furthermore, the sequence is guaranteed to converge to a point that is both
a local minimum and a global coordinate-wise minimum of $U_{n,\lambda}$.

\begin{remark} If the simple regression problem has not an explicit solution, but the penalty admits a decomposition as in (\ref{Des}), then 
	Concave-Convex Procedure (CCCP) (An and Tao, (1997); Yuille and Rangarajan, (2003) ) may be used to bound the penalty with a convex approximation at which univariate regression possess an explicit solution and the coordinate descent algorithm can be applied (Lee (2015) \cite{Sangin}). In this case, the convergence of the method is guaranteed by the convexity of the objective function.
\end{remark}

\subsection{The proposed algorithm for computation of the MNPRPE}
We propose to combine both optimization algorithms in order to compute the proposed MNPRPE of the regression parameter $\boldsymbol{\beta}$  along with the error variance $\sigma$  at each step.
Let us consider the  objective function $Q_n(\boldsymbol{\beta},\sigma)$ defined  in (\ref{CF}), and denote by  $(\widehat{\boldsymbol{\beta}}^{(m)}, \sigma^{(m)})$ the current estimates at step $m$, $m=1,2,..$ We first apply MM-algorithm to bound $L_n(\boldsymbol{\beta},\sigma)$ as in (\ref{MMalg}). Then, the function to minimize
$$h_{MM}(\boldsymbol{\beta}, \sigma | \boldsymbol{\beta}^{(m)}, \sigma^{(m)}) + \sum_{j=1}^p p_\lambda(\beta_j)$$
is a weighted version of mean squared loss, so iterative coordinate descent algorithm can be used to update the current solution of $\boldsymbol{\beta}$ as $\boldsymbol{\beta}^{(m+1)}$. The convergence of the method is guaranteed by the convergence of both algorithms, as both decrease its objective function in each iteration.

Next to obtain $\sigma^{(m+1)}$, the update for $\sigma$,   we consider the following derivative
\begin{align*}
\frac{\partial Q_{n,\lambda}^\alpha(\boldsymbol{\beta}^{(m+1)},\sigma)}{\partial \sigma } &=\frac{\partial}{\partial \sigma} \sum_{i=1}^n-\sigma^\frac{-\alpha}{\alpha+1} \frac{1}{n} \exp\left(-\frac{\alpha}{2}\left(\frac{y_i-\boldsymbol{x}_i^T\boldsymbol{\beta}}{\sigma}\right)^2\right)\\
&=\frac{\alpha}{n} \sum_{i=1}^n-\sigma^{\frac{-\alpha}{\alpha+1}-1} \exp\left(-\frac{\alpha}{2}\left(\frac{y_i-\boldsymbol{x}_i^T\boldsymbol{\beta}}{\sigma}\right)^2\right) \left(-\frac{1}{\alpha+1} +\left(\frac{y_i-\boldsymbol{x}_i^T\boldsymbol{\beta}}{\sigma}\right)^2 \right).\\ 
\end{align*}
Note that the equation $$\frac{\partial Q_{n,\lambda}^\alpha(\boldsymbol{\beta}^{(m+1)},\sigma)}{\partial \sigma } =0$$ does not have an explicit solution. So, we should approximate it defining 
$$w_i^{(m)}= \exp\left(-\frac{\alpha}{2}\left(\frac{y_i-\boldsymbol{x}_i^T\widehat{\boldsymbol{\beta}}^{(m)}}{\widehat{\sigma}^{(m)}}\right)^2\right)$$
and then $\sigma^{(m+1)}$ is obtained as 
\begin{equation} \label{sigmak}
\widehat{\sigma}^{2(m+1)} = \sum_{i=1}^n \left(y_i - \boldsymbol{x}_i^T\widehat{\boldsymbol{\beta}}^{(m+1)} \right)^2\left[ \sum_{i=1}^n \frac{w_i^{(m)}}{\alpha+1} \right]^{-1}.
\end{equation}
The full algorithm is described on the following pseudocode. \\
\textbf{Algorithm 1.} (Robust non-concave penalized linear regression using RP)
\begin{enumerate}
	\item Set $m=0$. Fix initial values $\widehat{\boldsymbol{\beta}}^{(0)}$ and $\widehat{\sigma}^{(0)}$, tuning parameter $\lambda$ and tolerances $\varepsilon_1$,$\varepsilon_2$ (for convergence).
	\item \label{step2} Calculate $\mu_i^{(m)} \leftarrow \frac{f_{\boldsymbol{\beta}^{(m)},\sigma^{(m)}}(y_i|\boldsymbol{x}_i)^{\alpha}}{\sum_{l=1}^n f_{\boldsymbol{\beta}^{(m)},\sigma^{(m)}}(y_l|\boldsymbol{x}_l)^{\alpha}}$
	and  $ h_{MM}(\boldsymbol{\beta}, \sigma | \boldsymbol{\beta}^{(m)}, \sigma^{(m)}) \leftarrow \sum_{i=1}^n \mu_i^{(m)}  \frac{1}{2} \left( \frac{y_i-\boldsymbol{x}_i^T \boldsymbol{\beta}}{\sigma}\right)^2. $
	\item For $i=1,..,n$ define $\boldsymbol{x}_i^{w} := \frac{ \mu_i^{(m)}}{\widehat{\sigma}^{(m)}}\boldsymbol{x}_i$ and $y_i^{w} := \frac{\mu_i^{(m)}}{\widehat{\sigma}^{(m)}} y_i$
	and update $\widehat{\boldsymbol{\beta}}^{(m)}$ as follows.
	\begin{enumerate}
		\item \label{k=0} Set $k=0$ and $\widehat{\boldsymbol{\beta}}^{*0} = \widehat{\boldsymbol{\beta}}^{(m)}.$
		\item For $j=1,..,p$, 
		\begin{enumerate}
			\item Calculate $z_j = \frac{1}{n} \boldsymbol{x}_j^{w} \boldsymbol{r}_{-j} = \frac{1}{n} \boldsymbol{x}_j^{w} \boldsymbol{r} +\widehat{\beta}^{*(k)}. $
			\item Update $\widehat{\beta}_*^{(m+1)} \leftarrow f_{\text{MCP}} (z_j, \lambda)$ or $f_{\text{SCAD}}(z_j, \lambda)$ [depending on teh choice of penalty function].
			\item Update $\boldsymbol{r} - \left(\widehat{\beta}^{*(k+1)}-\widehat{\beta}^{*(k)}\right)\boldsymbol{x}_j^{w}.$
		\end{enumerate}
		\item If $\big|Q_n(\widehat{\boldsymbol{\beta}}^{(k+1)},\sigma^{(m)}) - Q_n(\widehat{\boldsymbol{\beta}}^{(k)},\sigma^{(m)}) \big| \leq \varepsilon_1$ : Update $\widehat{\boldsymbol{\beta}}^{(m+1)} := \widehat{\boldsymbol{\beta}}^{(k+1)}$\\
		Else : set $k \leftarrow k+1$ and go to step \ref{k=0}.
	\end{enumerate}
	\item For $i=1,..,n,$ define $w_i^{(m)} \leftarrow \exp\left(-\frac{\alpha}{2}\left(\frac{y_i-\boldsymbol{x}_i^T\widehat{\boldsymbol{\beta}}^{(m)}}{\widehat{\sigma}^{(m)}}\right)^2\right)$
	and update $\widehat{\sigma}^{(m)}$ using
	$$\widehat{\sigma}^{2(m+1)} \leftarrow \sum_{i=1}^n \left(y_i - \boldsymbol{x}_i^T\widehat{\boldsymbol{\beta}}^{(m+1)} \right)^2\left[ \sum_{i=1}^n \frac{w_i^{(m)}}{\alpha+1} \right]^{-1}.$$
	
	\item If $\big|Q_n(\widehat{\boldsymbol{\beta}}^{(m+1)},\sigma^{(m+1)}) - Q_n(\widehat{\boldsymbol{\beta}}^{(m)},\sigma^{(m)}) \big| \leq \varepsilon_2$ : Stop\\
	Else : set $m \leftarrow m+1$ and go to step \ref{step2}.
\end{enumerate}
The performance of Algorithm 1 depends on choice of initial values, and the tuning parameter $\lambda$. For the first we could apply any robust regression method such as RLARS, sLTS or RANSAC as a starting point. To select the best $\lambda$ we use the High-dimensional Bayesian Information Criterion (HBIC) (Kim et al., (2012) ; Wang et al., (2013)) which has demonstrably better performance compared to standard BIC in the case of NP- dimensionality (Fan and Tang, (2013)). We deﬁne a robust version of the HBIC as:
\begin{equation} \label{HBIC}
\text{HBIC}(\lambda) = \log(\widehat{\sigma}_\lambda^2) + \frac{\log \log(n)\log p}{n}\|\widehat{\boldsymbol{\beta}}_\lambda \|_0.
\end{equation}
and select the optimal $\lambda$ that minimizes the HBIC over a pre-determined set of values.

\section{Simulation study}
\subsection{Experimental Set-up}
We now present an extensive  simulation study so as to evaluate the robustness and efficiency of the proposal MNPRPE under the LRM.
We also estimate the regression parameters $(\boldsymbol{\beta}, \sigma)$ using other exiting  robust and non-robust methods of high-dimensional LRM to compare their performances with our proposed method.

The data are generated from the LRM (\ref{0.1}) following a set-up similar to the one considered in Ghosh and Basu (2020). We set the sample size $n=100$ and the true deviation error $\sigma_0 = 0.5$, and chose the number of explanatory variables to be $p=100,200,500$ and differenr values of the true regression coefficients $\boldsymbol{\beta}_0.$ We repeat the simulations over $R=100$ replications.
Rows of the design matrix $\mathbb{X}$ are drawn from the normal distribution $\mathcal{N}\left(\boldsymbol{0, \boldsymbol{\Sigma}}\right)$, where $\boldsymbol{\Sigma}$ is a positive deﬁnite matrix with $(i,j)$-th element given by $0.5^{|i-j|}$. Given a parameter dimension $p$, we consider two settings for the coefficient vector $\boldsymbol{\beta}_0$: 
\begin{itemize}
	\item  Setting A (strong signal): we set $\beta_j = j$ for $j\in \{ 1,2,4,7,11\}$ and $\beta_j = 0$ for the rest of $p-5$ components.
	\item  Setting B (weak signal): we set $\beta_1 = \beta_7= 1.5$, $\beta_2= 0.5$, $\beta_4 = \beta_{11} = 1$ and  the rest of the $p-5$ entries of $\boldsymbol{\beta}_0$  are set at 0. 
\end{itemize}
To evaluate the performance of our proposed method, we calculate the mean square error (MSE) for the true non-zero and zero coefficients separately, Absolute Prediction Bias (APrB) using an unused test sample of size $n=100$, denoted by  $(\boldsymbol{y}_{\text{test}},\mathbb{X}_{\text{test}}),$ generated in the same way as train data,  True Positive proportion (TP), True Negative proportion (TN) and Model Size (MS) of the estimated regression coefficient $\widehat{\boldsymbol{\beta}}$, and Estimation Error (EE) of the estimate $\widehat{\sigma}$ as follows.
\begin{align*}
\operatorname{MSES}(\widehat{\boldsymbol{\beta}}) &= \frac{1}{s} \parallel \widehat{\boldsymbol{\beta}}_{\mathcal{S}} - \boldsymbol{\beta}_{0\mathcal{S}} \parallel^2 \\
\operatorname{MSEN}(\widehat{\boldsymbol{\beta}}) &= \frac{1}{p-s} \parallel \widehat{\boldsymbol{\beta}}_{\mathcal{N}} ||^2\\
\operatorname{APrB}(\widehat{\boldsymbol{\beta}}) &= \parallel \boldsymbol{y}_{\text{test}}-\mathbb{X}_{\text{test}}\widehat{\boldsymbol{\beta}} \parallel_1\\
\operatorname{EE}(\widehat{\sigma}) &= |\widehat{\sigma}-\sigma_0|\\
\operatorname{TP}(\widehat{\boldsymbol{\beta}}) &= \frac{|\text{supp}(\widehat{\boldsymbol{\beta}})\cap\text{supp}(\boldsymbol{\beta}_0)|}{|\text{supp}(\boldsymbol{\beta}_0)|}\\
\operatorname{TN}(\widehat{\boldsymbol{\beta}}) &= \frac{|\text{supp}^c(\widehat{\boldsymbol{\beta}})\cap\text{supp}^c(\boldsymbol{\beta}_0)|}{|\text{supp}^c(\boldsymbol{\beta}_0)|}\\
\operatorname{MS}(\widehat{\boldsymbol{\beta}}) &= |\text{supp}(\widehat{\boldsymbol{\beta}})|
\end{align*}
Finally, in order to examine the efficiency loss against non-robust methods in absence of any contamination, as well as compare the performance in the presence of contamination in the data, we consider different scenarios:
\begin{itemize}
	\item Absence of contamination (pure data)
	\item Contaminated data
	\begin{itemize}
		\item $Y$-outliers : We add $20$ to the response variables of a random $10\%$ of samples.
		\item $\boldsymbol{X}$-outliers : We add $20$ to each of the elements in the ﬁrst $10$ rows of $\mathbb{X}$ for a random $10\%$ of samples. 
	\end{itemize}
\end{itemize}
\subsection{Competing methods}
In order to compare our results with existing competitors, we calculate the same performance  for measures the following estimation procedures under the same simulation experiments. In particular, we consider the robust least angle regression (RLARS; Khan et al. (2007)), sparse least trimmed squares (sLTS; Alfons et al.(2013)), random sample consensus (RANSAC), the LASSO penalized  regression using least absolute deviation loss (LAD-LASSO; Wang et al. (2007)), DPD loss (DPD-LASSO, Zhang et al. (2017)) and log DPD loss (LDPD-LASSO, Kawashima and Fujisawa (2017)), and the nonconcave penalized DPD loss with the SCAD penalty (DPD-ncv, Ghosh and Majundar (2020)). For the methods DPD-LASSO, log DPD-LASSO and DPD-ncv, the starting points are chosen as the RLARS estimates because of time computational efficiency. Moreover, we also use three standard non-robust methods, namely the ones considering the least squared loss with LASSO, SCAD and MCP penalties, which we will refer to as LS-LASSO, LS-SCAD and LS-MCP, respectively,  for comparison in terms of efficiency loss. We use 5-fold cross-validation for the selection of the regularized parameter $\lambda$ in all the above competing methods except LAD-Lasso, for which we use BIC, and DPD-lasso and LDPD-lasso  for which uses HBIC criterion.

For the proposed MNPRPE we use the two most common penalties: SCAD and MCP. The results are very similar and hence, for brevity, we only report the ﬁndings for the SCAD penalty. RLARS is used to initialize the computation of the MNPRPE  and HBIC  criterion (\ref{HBIC}) is applied to choose the regularizer parameter $\lambda$.

\subsection{Results}
Tables \ref{p500e0signal1}-\ref{p500e1signal0x1} summarize the simulation results  for $p=500$ covariates; the results for $p=100$ and $p=200$ are presented in the Online Supplement for brevity. 

The results evidence that our MNPRPE selects the true model  better than any other method, and it is also more accurate in the estimation of the vector $\boldsymbol{\beta}$. However, its indisputable advantage is its accuracy on the estimation of $\sigma$. The estimation error on $\sigma$ is lower than that of  any other method for all values of $\alpha$.

On the other hand, the optimum value of $\alpha$ hover around $\alpha = 0.3$, in keeping with the best values for the LDPD-lasso. Finally, from results it is apparent that the use of nonconcave penalization improves the global performance of the method.

To examine the performance of the proposed method with increasing dimensions, Figure \ref{pVSerror} shows the mean root square error (RMSE) in prediction against the number of covariates in absence of contamination and $10\%$ of $Y-$outliers respectively. The RMSE is calculated as 
$\operatorname{RMSE}(\widehat{\boldsymbol{\beta}}) = \sqrt{\frac{1}{n}\parallel \boldsymbol{y}_{\text{test}}-\mathbb{X}_{\text{test}}\widehat{\boldsymbol{\beta}} \parallel_2^2}$
 In both cases low values of the tuning parameter $\alpha$ register lower error. Moreover, the behavior of the method for the different tuning parameters is similar for any number of covariates, suggesting that the election of $\alpha$ should only be based on the compromise between efficiency and robustness (as described previously) .
\begin{figure}[H]
	\centering
	\includegraphics[height=6cm, width=8cm]{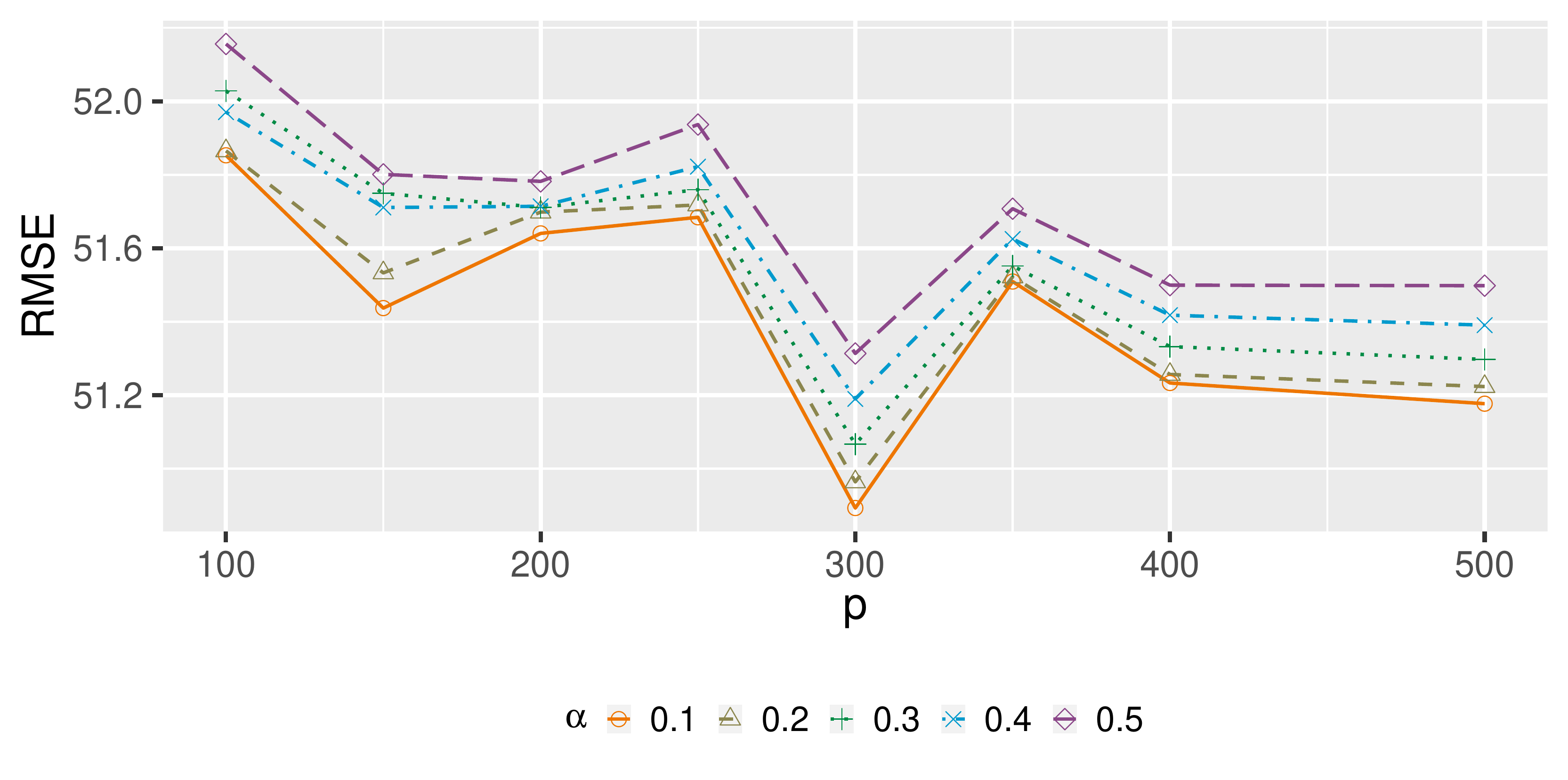}
	\includegraphics[height=6cm, width=8cm]{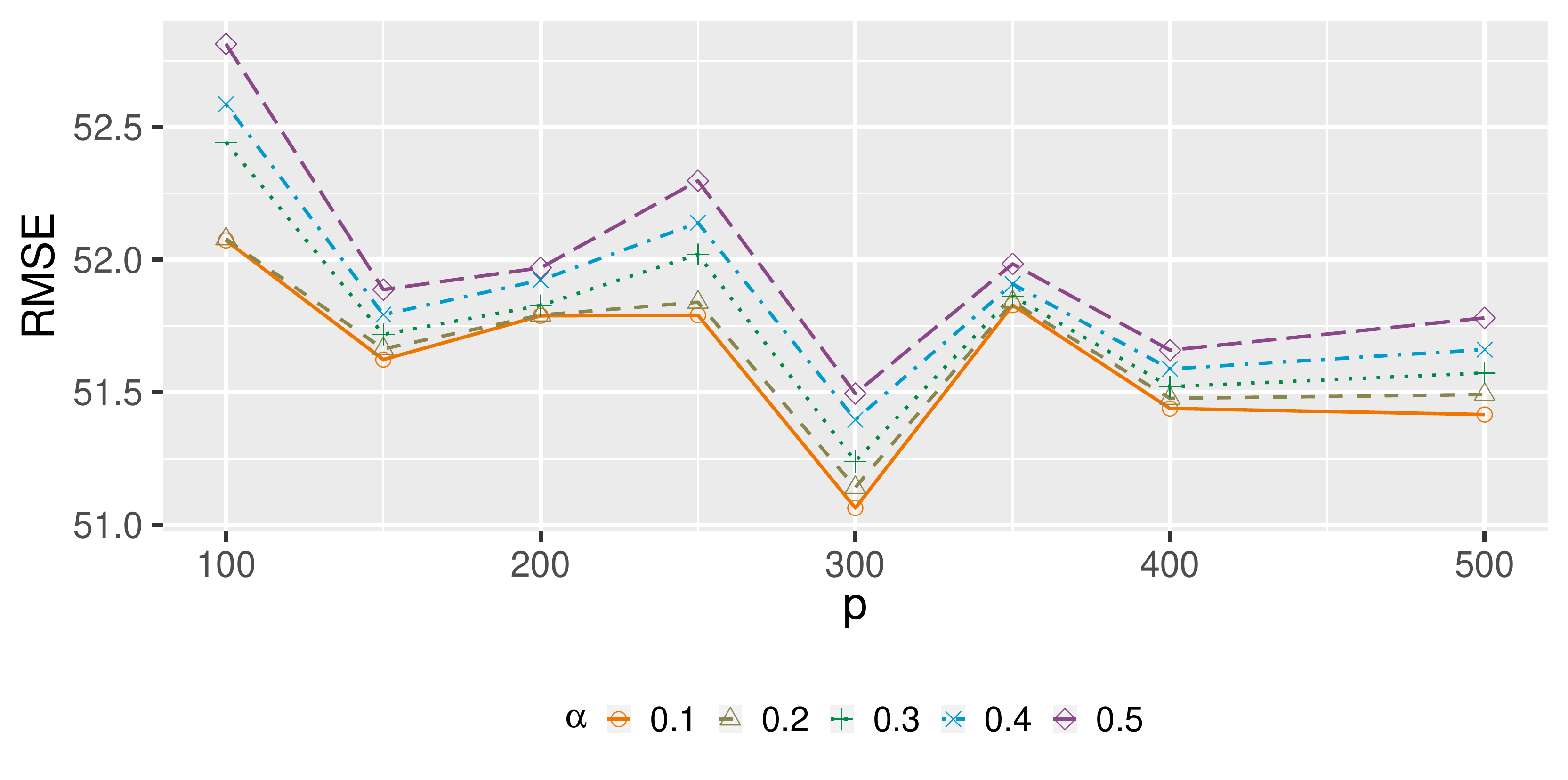}
	
	\caption{Number of covariates against RMSE in absence of contamination (right) and $10\%$ of $Y-$outliers (left)}
	\label{pVSerror}
\end{figure}
Finally, we present the RMSE against data contaminatination (Y-outliers) for $p=100$, $p=200$ and $p=500$ covariates in Figure \ref{contaminacionVSerror}, bringing to light the increasing robustness of the method with the tuning parameter $\alpha.$ In absence of contamination all tuning parameters yield low RMSE, although lower values register lower error, indicating its major efficiency. Nonetheless, from $10\%$ of $Y-$outliers, greater tuning parameters continue having low error while RMSE result with small values of $\alpha$ increases significantly.
\begin{figure}[H]
	\centering
	\includegraphics[height=5.5cm, width=8cm]{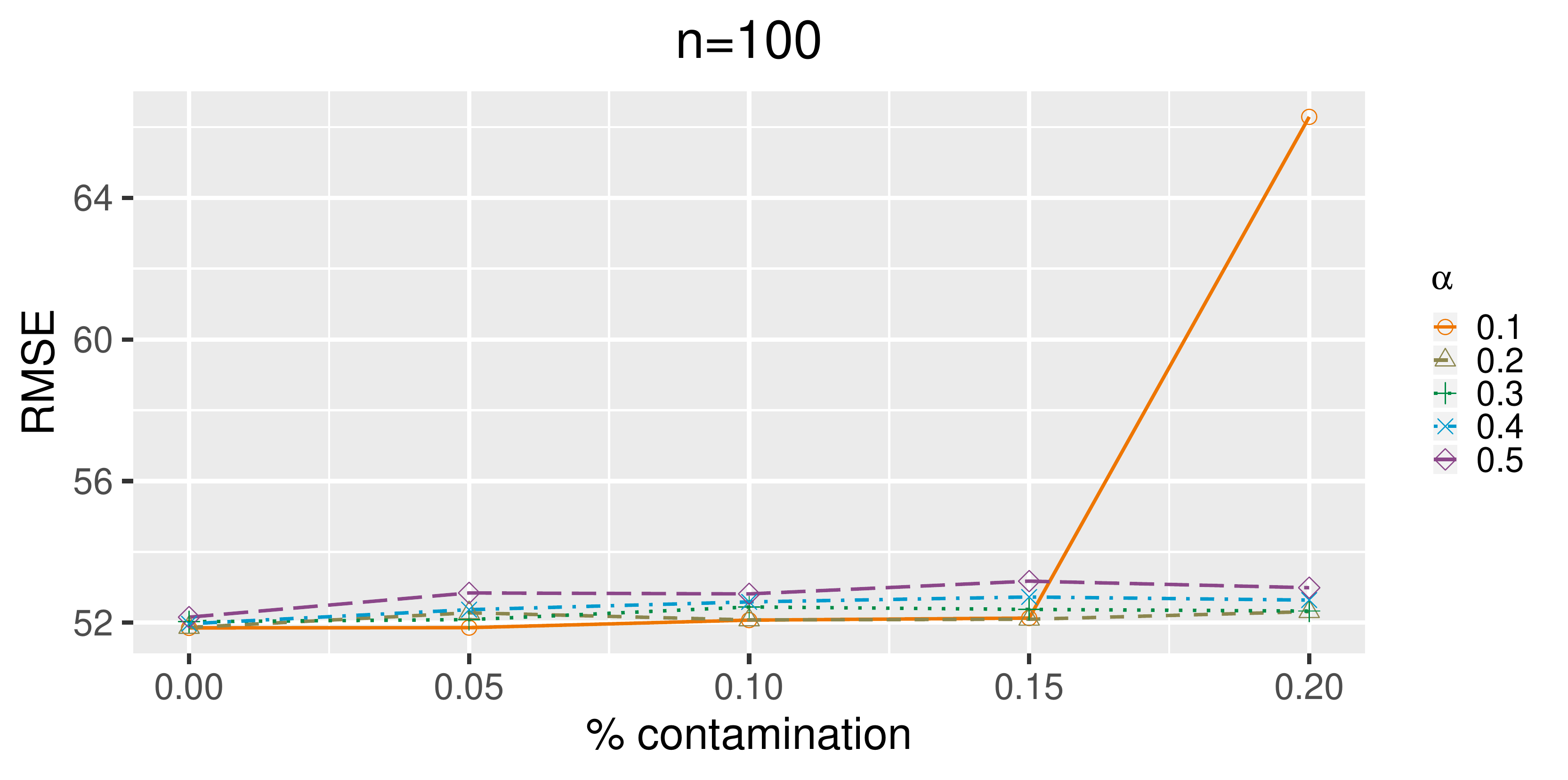}
	\includegraphics[height=5.5cm, width=8cm]{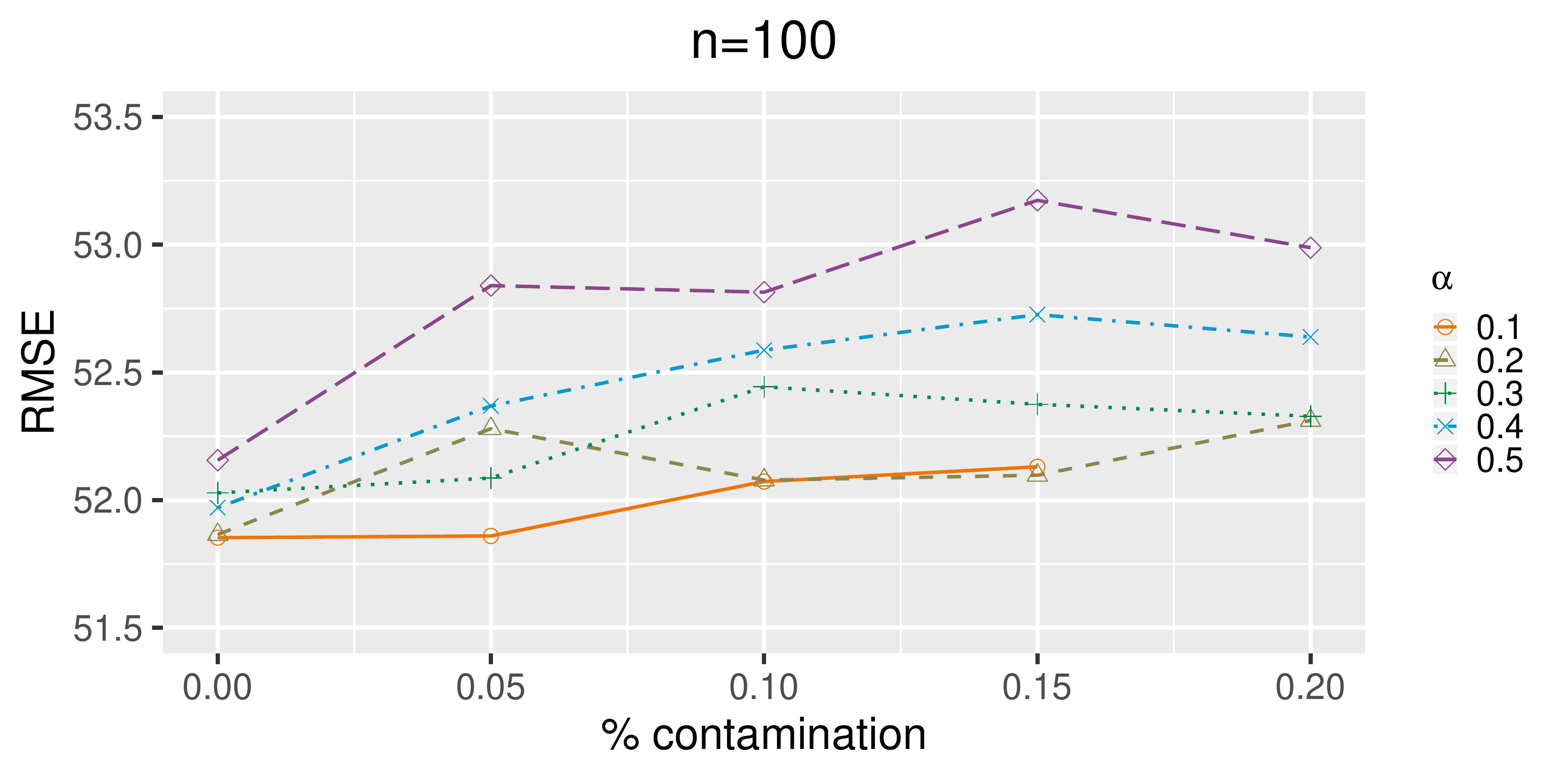}
	
	\includegraphics[height=5.5cm, width=8cm]{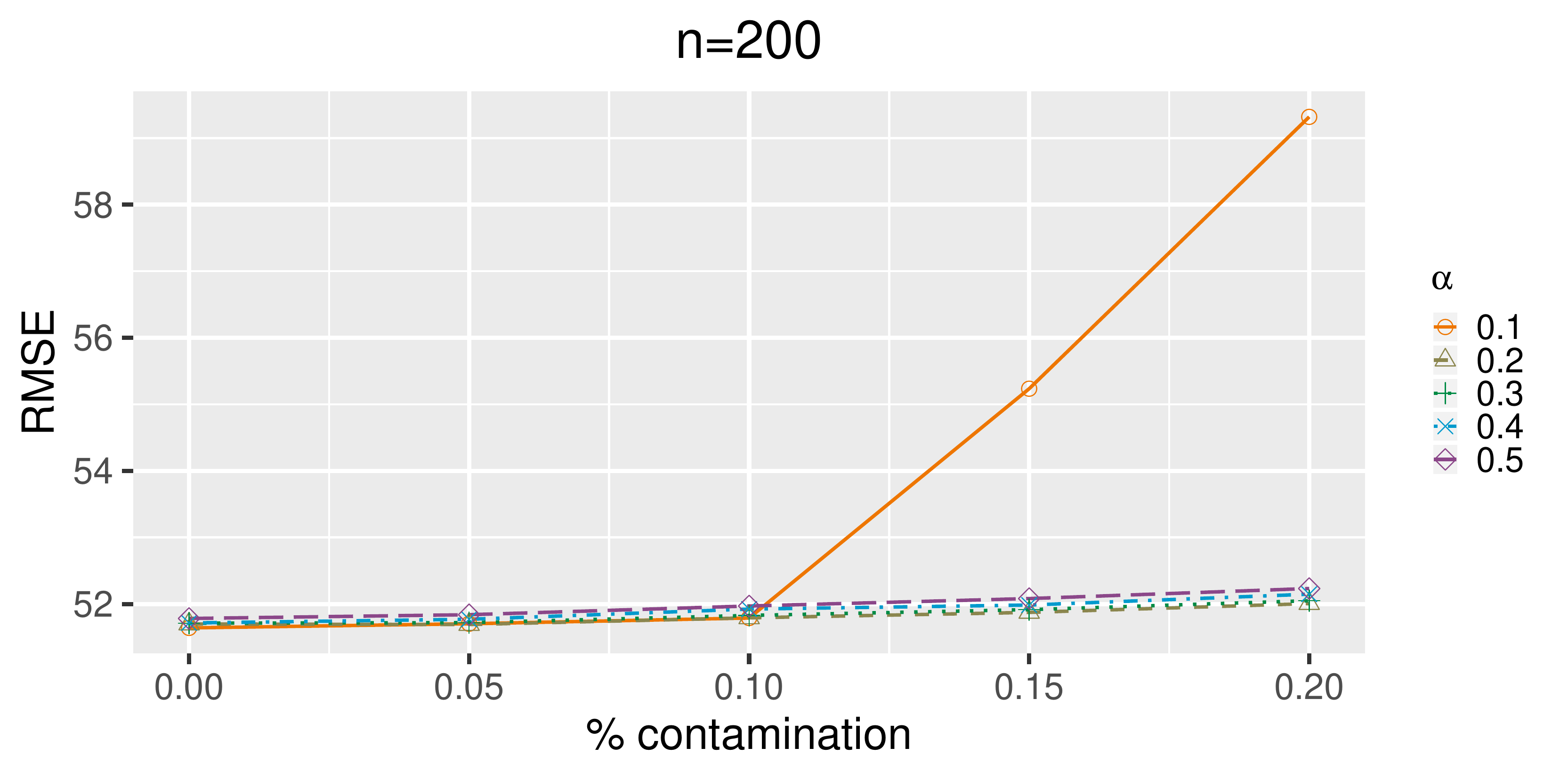}
	\includegraphics[height=5.5cm, width=8cm]{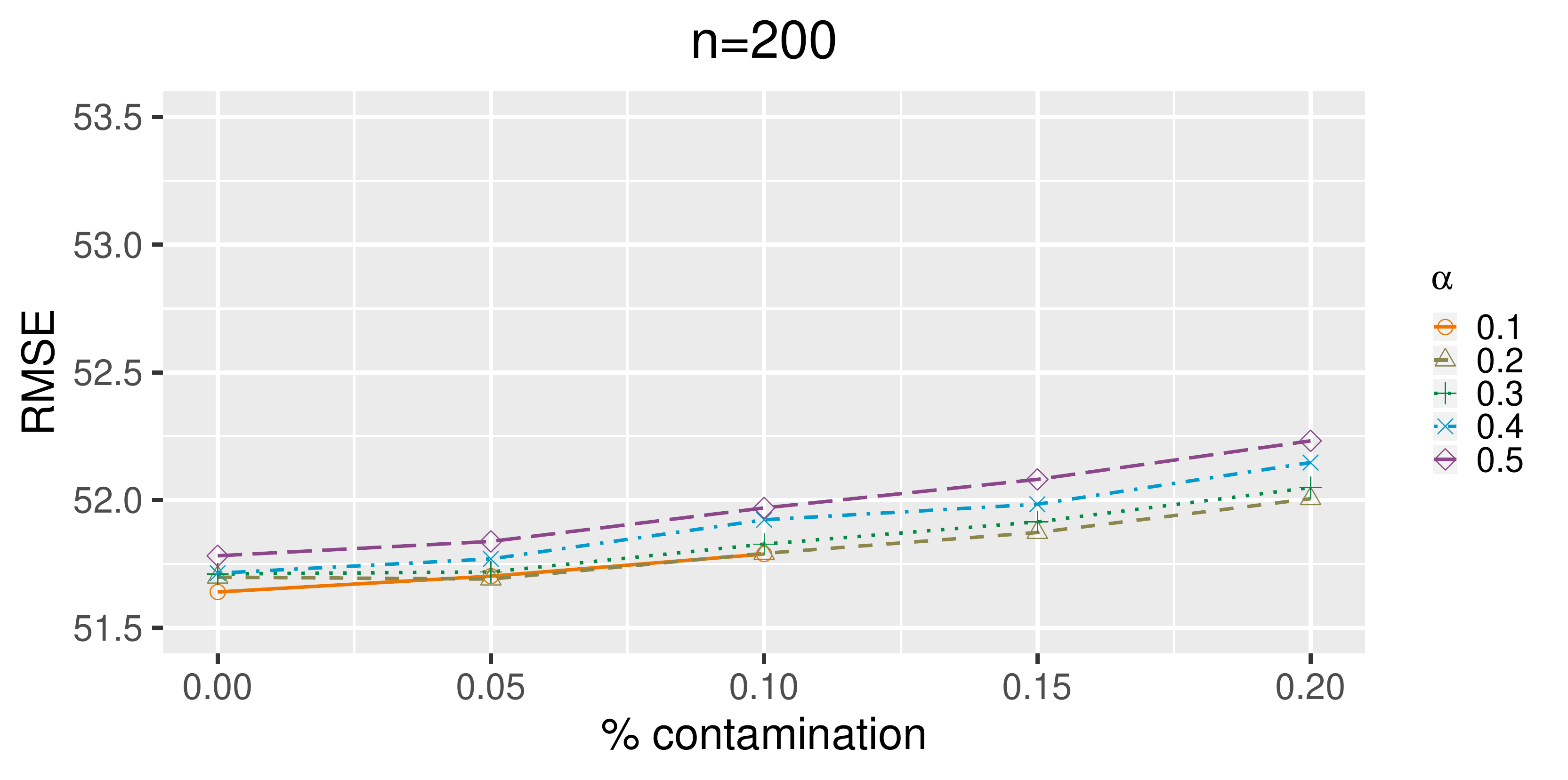}
	
	\includegraphics[height=5.5cm, width=8cm]{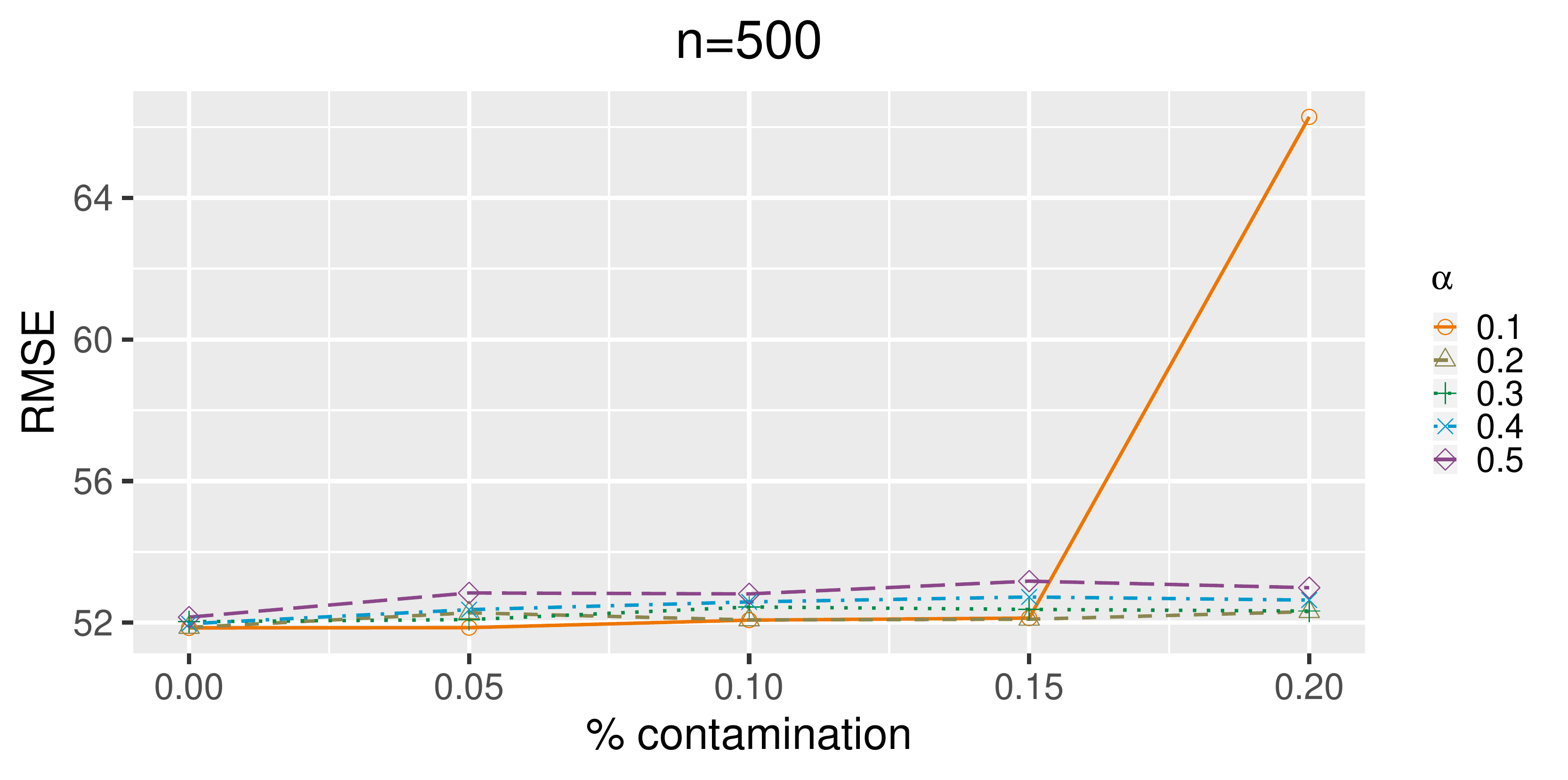}
	\includegraphics[height=5.5cm, width=8cm]{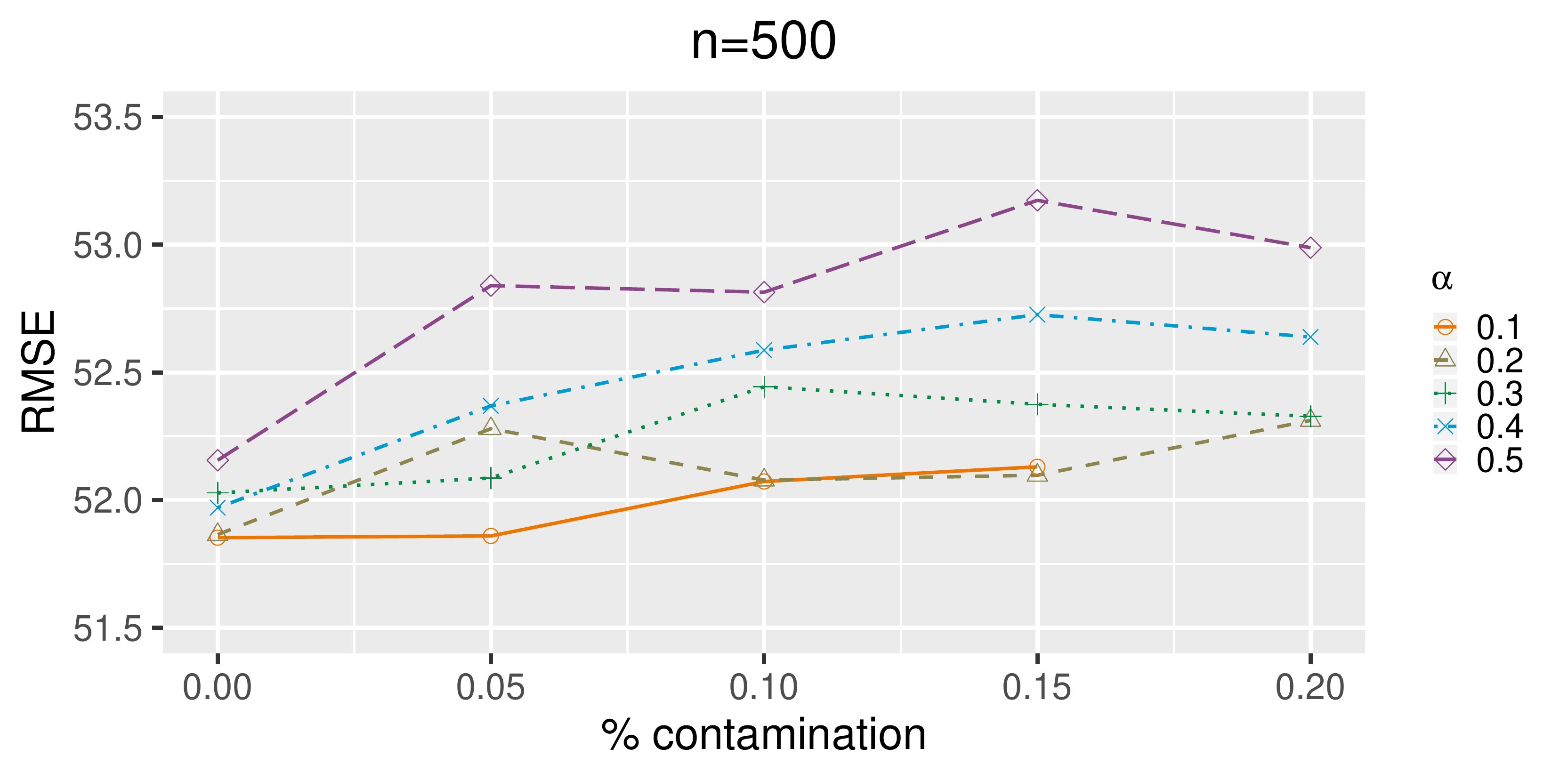}
	\caption{Data contamination agaisnt RMSE. On the right the figures are zoomed to $[51.5,53.5]$}
	\label{contaminacionVSerror}
\end{figure}

\begin{table}[H]
	\centering
	\caption{Performance measures obtained by different methods for $p=500$, strong signal and no outliers}
	\begin{tabular}{l|rrr|rrr|r}
	\hline
	Method	& MS($\widehat{\boldsymbol{\beta}}$) & TP($\widehat{\boldsymbol{\beta}}$) & TN($\widehat{\boldsymbol{\beta}}$) & MSES($\widehat{\boldsymbol{\beta}}$) &  MSEN($\widehat{\boldsymbol{\beta}}$)  &  EE($\widehat{\sigma}$) & APrB($\widehat{\boldsymbol{\beta}}$)  \\ 
	& & & &  $(10^{-2})$ & $(10^{-5})$ &  $(10^{-2})$ &  $(10^{-2})$ \\
	\hline
	L-lasso & 7.06 & 1.00 & 1.00 & 2.61 & 0.55 & 37.03 & 5.53  \\ 
	LS-SCAD & 4.94 & 0.99 & 1.00 & 16.26 & 0.00 & 72.30 & 7.33  \\ 
	LS-MCP & 4.94 & 0.99 & 1.00 & 11.18 & 0.00 & 55.69 & 6.47  \\ 
	LAD-lasso & 6.40 & 1.00 & 1.00 & 4.86 & 1.29 & 44.61 & 5.97  \\ 
	RLARS & 8.27 & 0.99 & 0.99 & 1.16 & 4.69 & 7.26 & 4.55  \\ 
	sLTS & 6.45 & 1.00 & 1.00 & 6.90 & 0.85 & 25.39 & 6.74  \\ 
	RANSAC & 10.99 & 1.00 & 0.99 & 6.90 & 12.89 & 11.17 & 6.78 \\ 
	 \hline
	DPD-lasso $\alpha = $ 0.1 & 8.15 & 1.00 & 0.99 & 4.80 & 0.69 & 18.48 & 5.92 \\ 
	DPD-lasso $\alpha = $ 0.3 & 8.57 & 1.00 & 0.99 & 4.90 & 1.05 & 18.84 & 5.95 \\ 
	DPD-lasso $\alpha = $ 0.5 & 11.38 & 0.99 & 0.99 & 77.40 & 77.35 & 19.31 & 9.88 \\ 
	DPD-lasso $\alpha = $ 0.7 & 6.47 & 1.00 & 1.00 & 28.03 & 51.73 & 23.73 & 7.63 \\ 
	DPD-lasso $\alpha = $ 1 & 9.91 & 0.98 & 0.99 & 48.91 & 125.45 & 20.27 & 9.27 \\ 
	 \hline
	LDPD-lasso $\alpha = $ 0.1 & 5.45 & 1.00 & 1.00 & 6.08 & 0.25 & 24.54 & 6.53  \\ 
	LDPD-lasso $\alpha = $ 0.2 & 5.37 & 1.00 & 1.00 & 6.19 & 0.27 & 24.92 & 6.55 \\ 
	LDPD-lasso $\alpha = $ 0.3 & 5.34 & 1.00 & 1.00 & 6.69 & 0.29 & 26.42 & 6.69  \\ 
	LDPD-lasso $\alpha = $ 0.4 & 5.31 & 1.00 & 1.00 & 8.50 & 0.32 & 31.29 & 7.18 \\ 
	LDPD-lasso $\alpha = $ 0.5 & 5.26 & 0.99 & 1.00 & 18.22 & 0.38 & 45.51 & 8.84 \\ 
	 \hline
	DPD-ncv $\alpha = $ 0.1 & 4.86 & 0.91 & 1.00 & 57.43 & 0.03 & 69.07 & 16.41 \\ 
	DPD-ncv $\alpha = $ 0.3 & 5.04 & 0.95 & 1.00 & 51.30 & 0.05 & 30.46 & 15.62 \\ 
	DPD-ncv $\alpha = $ 0.5 & 5.07 & 0.96 & 1.00 & 46.09 & 0.05 & 15.76 & 15.16 \\ 
	DPD-ncv $\alpha = $ 0.7 & 5.10 & 0.96 & 1.00 & 40.90 & 0.05 & 8.93 & 14.33 \\ 
	DPD-ncv $\alpha = $ 1 & 5.11 & 0.96 & 1.00 & 38.49 & 0.05 & 7.10 & 13.86 \\ 
	\hline
	MNPRPE-ncv $\alpha = $ 0.1 & 5.00 & 1.00 & 1.00 & 0.32 & 0.00 & 3.24 & 4.47  \\ 
	MNPRPE-ncv $\alpha = $ 0.2 & 5.00 & 1.00 & 1.00 & 0.34 & 0.00 & 3.43 & 4.48 \\ 
	MNPRPE-ncv $\alpha = $ 0.3 & 5.00 & 1.00 & 1.00 & 0.36 & 0.00 & 3.64 & 4.49  \\ 
	MNPRPE-ncv $\alpha = $ 0.4 & 5.00 & 1.00 & 1.00 & 0.37 & 0.00 & 3.88 & 4.50  \\ 
	MNPRPE-ncv $\alpha = $ 0.5 & 5.00 & 1.00 & 1.00 & 0.39 & 0.00 & 4.14 & 4.51  \\ 
	\hline
	\end{tabular}
	\label{p500e0signal1}
\end{table}

\begin{table}[H]
	\centering
	\caption{Performance measures obtained by different methods for $p=500$, weak signal and no outliers}
	\begin{tabular}{l|rrr|rrr|r}
	\hline
	Method	& MS($\widehat{\boldsymbol{\beta}}$) & TP($\widehat{\boldsymbol{\beta}}$) & TN($\widehat{\boldsymbol{\beta}}$) & MSES($\widehat{\boldsymbol{\beta}}$) &  MSES($\widehat{\boldsymbol{\beta}}$)  &  EE($\widehat{\sigma}$) & APrB($\widehat{\boldsymbol{\beta}}$)  \\ 
	& & & &  $(10^{-2})$ & $(10^{-5})$ &  $(10^{-2})$ &  $(10^{-2})$ \\
	\hline 
	L-lasso & 8.14 & 1.00 & 0.99 & 2.47 & 0.94 & 34.15 & 5.27  \\ 
	LS-SCAD & 9.69 & 1.00 & 0.99 & 0.41 & 0.60 & 18.26 & 4.47\\ 
	LS-MCP & 6.58 & 1.00 & 1.00 & 0.33 & 0.80 & 19.27 & 4.40  \\ 
	LAD-lasso & 6.38 & 1.00 & 1.00 & 4.92 & 1.30 & 43.61 & 5.80 \\ 
	RLARS & 14.27 & 1.00 & 0.98 & 0.76 & 16.16 & 13.46 & 5.17 \\ 
	sLTS & 37.70 & 0.99 & 0.93 & 8.16 & 16.92 & 20.54 & 6.87  \\ 
	RANSAC & 14.71 & 0.99 & 0.98 & 5.24 & 18.34 & 24.63 & 5.88 \\ 
	 \hline
	DPD-lasso $\alpha = $ 0.1 & 8.72 & 1.00 & 0.99 & 4.54 & 0.77 & 17.46 & 5.76 \\ 
	DPD-lasso $\alpha = $ 0.3 & 9.16 & 1.00 & 0.99 & 4.75 & 1.21 & 18.53 & 5.93 \\ 
	DPD-lasso $\alpha = $ 0.5 & 10.07 & 1.00 & 0.99 & 6.24 & 2.13 & 19.69 & 6.47 \\ 
	DPD-lasso $\alpha = $ 0.7 & 5.89 & 1.00 & 1.00 & 6.91 & 0.93 & 23.59 & 6.67 \\ 
	DPD-lasso $\alpha = $ 1 & 13.67 & 0.92 & 0.98 & 21.96 & 17.04 & 21.02 & 9.91 \\ 
	\hline
	LDPD-lasso $\alpha = $ 0.1 & 5.45 & 1.00 & 1.00 & 6.09 & 0.25 & 24.57 & 6.54  \\ 
	LDPD-lasso $\alpha = $ 0.2 & 5.38 & 1.00 & 1.00 & 6.19 & 0.27 & 24.92 & 6.55  \\ 
	LDPD-lasso $\alpha = $ 0.3 & 5.33 & 1.00 & 1.00 & 6.67 & 0.28 & 26.40 & 6.69 \\ 
	LDPD-lasso $\alpha = $ 0.4 & 5.29 & 1.00 & 1.00 & 8.24 & 0.31 & 31.00 & 7.14 \\ 
	LDPD-lasso $\alpha = $ 0.5 & 5.22 & 0.99 & 1.00 & 13.09 & 0.37 & 42.54 & 8.27 \\ 
	\hline
	DPD-ncv $\alpha = $ 0.1 & 5.12 & 0.99 & 1.00 & 0.89 & 0.04 & 5.70 & 4.52\\ 
	DPD-ncv $\alpha = $ 0.3 & 5.01 & 0.99 & 1.00 & 1.11 & 0.04 & 11.10 & 4.64  \\ 
	DPD-ncv $\alpha = $ 0.5 & 4.98 & 0.98 & 1.00 & 1.37 & 0.06 & 15.07 & 4.71  \\ 
	DPD-ncv $\alpha = $ 0.7 & 5.00 & 0.98 & 1.00 & 1.59 & 0.14 & 18.11 & 4.66  \\ 
	DPD-ncv $\alpha = $ 1 & 4.97 & 0.98 & 1.00 & 1.97 & 0.16 & 21.39 & 4.70  \\ 
	\hline
	MNPRPE-ncv $\alpha = $ 0.1 & 5.15 & 0.99 & 1.00 & 0.80 & 0.04 & 3.40 & 4.51  \\ 
	MNPRPE-ncv $\alpha = $ 0.2 & 5.06 & 0.99 & 1.00 & 0.78 & 0.02 & 3.65 & 4.55 \\ 
	MNPRPE-ncv $\alpha = $ 0.3 & 5.05 & 0.99 & 1.00 & 0.78 & 0.02 & 3.88 & 4.52  \\ 
	MNPRPE-ncv $\alpha = $ 0.4 & 5.01 & 0.99 & 1.00 & 0.80 & 0.01 & 4.10 & 4.54  \\ 
	MNPRPE-ncv $\alpha = $ 0.5 & 5.00 & 0.99 & 1.00 & 0.82 & 0.01 & 4.32 & 4.58  \\ 
	\hline
	
	\end{tabular}
	\label{p500e0signal0}
\end{table}

\begin{table}[H]
	\centering
	\caption{Performance measures obtained by different methods for $p=500$, strong signal and $Y-$outliers}
	\begin{tabular}{l|rrr|rrr|r}
	\hline
	Method	& MS($\widehat{\boldsymbol{\beta}}$) & TP($\widehat{\boldsymbol{\beta}}$) & TN($\widehat{\boldsymbol{\beta}}$) & MSES($\widehat{\boldsymbol{\beta}}$) &  MSES($\widehat{\boldsymbol{\beta}}$)  &  EE($\widehat{\sigma}$) & APrB($\widehat{\boldsymbol{\beta}}$)  \\ 
	& & & &  $(10^{-2})$ & $(10^{-5})$ &  $(10^{-2})$ &  $(10^{-2})$ \\
	\hline
	LS-lasso & 6.16 & 0.79 & 1.00 & 345.92 & 72.66 & 673.30 & 36.24  \\ 
	LS-SCAD & 13.74 & 0.88 & 0.98 & 104.87 & 246.36 & 311.34 & 20.26  \\ 
	LS-MCP & 6.52 & 0.79 & 0.99 & 103.49 & 175.28 & 317.72 & 19.80 \\ 
	LAD-lasso & 9.78 & 0.93 & 0.99 & 85.45 & 63.01 & 280.92 & 19.21 \\ 
	RLARS & 11.62 & 0.91 & 0.99 & 18.02 & 21.70 & 37.14 & 6.81  \\ 
	sLTS & 6.97 & 1.00 & 1.00 & 5.28 & 1.10 & 32.79 & 6.22  \\ 
	RANSAC & 11.90 & 1.00 & 0.99 & 10.39 & 20.98 & 11.85 & 7.77  \\ 
	  \hline
	 DPD-lasso $\alpha = $ 0.1 & 8.78 & 1.00 & 0.99 & 4.69 & 1.15 & 17.37 & 6.18 \\ 
	 DPD-lasso $\alpha = $ 0.3 & 8.49 & 1.00 & 0.99 & 4.90 & 1.16 & 19.12 & 5.97 \\ 
	 DPD-lasso $\alpha = $ 0.5 & 10.47 & 0.99 & 0.99 & 27.59 & 36.49 & 17.33 & 7.06 \\ 
	 DPD-lasso $\alpha = $ 0.7 & 6.89 & 1.00 & 1.00 & 6.13 & 1.59 & 22.42 & 6.44 \\ 
	 DPD-lasso $\alpha = $ 1 & 12.92 & 0.99 & 0.98 & 55.27 & 77.52 & 16.89 & 10.59 \\
	 \hline
	 LDPD-lasso $\alpha = $ 0.1 & 5.51 & 1.00 & 1.00 & 9.40 & 0.40 & 30.35 & 7.31  \\ 
	 LDPD-lasso $\alpha = $ 0.2 & 5.57 & 1.00 & 1.00 & 6.25 & 0.41 & 24.59 & 6.65 \\ 
	 LDPD-lasso $\alpha = $ 0.3 & 5.50 & 1.00 & 1.00 & 6.91 & 0.42 & 26.42 & 6.84  \\ 
	 LDPD-lasso $\alpha = $ 0.4 & 5.48 & 1.00 & 1.00 & 9.17 & 0.44 & 31.87 & 7.42  \\ 
	 LDPD-lasso $\alpha = $ 0.5 & 5.35 & 0.99 & 1.00 & 39.59 & 0.52 & 54.03 & 10.67  \\ 
	 \hline
	DPD-ncv $\alpha = $ 0.1 & 5.01 & 1.00 & 1.00 & 0.95 & 0.02 & 7.51 & 4.56  \\ 
	DPD-ncv $\alpha = $ 0.3 & 5.01 & 1.00 & 1.00 & 1.12 & 0.02 & 6.43 & 4.60 \\ 
	DPD-ncv $\alpha = $ 0.5 & 5.00 & 1.00 & 1.00 & 1.29 & 0.00 & 7.76 & 4.69  \\ 
	DPD-ncv $\alpha = $ 0.7 & 5.00 & 1.00 & 1.00 & 1.62 & 0.00 & 8.93 & 4.77 \\ 
	DPD-ncv $\alpha = $ 1 & 5.00 & 1.00 & 1.00 & 2.47 & 0.00 & 10.55 & 4.95  \\ 
	\hline
	MNPRPE-ncv $\alpha = $ 0.1 & 5.03 & 1.00 & 1.00 & 0.34 & 0.02 & 3.40 & 4.49  \\ 
	MNPRPE-ncv $\alpha = $ 0.2 & 5.02 & 1.00 & 1.00 & 0.35 & 0.00 & 3.61 & 4.51  \\ 
	MNPRPE-ncv $\alpha = $ 0.3 & 5.02 & 1.00 & 1.00 & 0.37 & 0.00 & 3.83 & 4.52 \\ 
	MNPRPE-ncv $\alpha = $ 0.4 & 5.01 & 1.00 & 1.00 & 0.39 & 0.00 & 4.10 & 4.54  \\ 
	MNPRPE-ncv $\alpha = $ 0.5 & 5.00 & 1.00 & 1.00 & 0.41 & 0.00 & 4.41 & 4.54  \\ 
	\hline
		\end{tabular}
	\label{p500e1signal1}
\end{table}
\begin{table}[H]
	\centering
	\caption{TPerformance measures obtained by different methods for $p=500$, weak signal and $Y-$outliers}
		\begin{tabular}{l|rrr|rrr|r}
		\hline
		Method	& MS($\widehat{\boldsymbol{\beta}}$) & TP($\widehat{\boldsymbol{\beta}}$) & TN($\widehat{\boldsymbol{\beta}}$) & MSES($\widehat{\boldsymbol{\beta}}$) &  MSES($\widehat{\boldsymbol{\beta}}$)  &  EE($\widehat{\sigma}$) & APrB($\widehat{\boldsymbol{\beta}}$)  \\ 
		& & & &  $(10^{-2})$ & $(10^{-5})$ &  $(10^{-2})$ &  $(10^{-2})$ \\
		\hline
		LS-lasso & 0.81 & 0.04 & 1.00 & 131.43 & 18.32 & 459.52 & 23.75  \\ 
		LS-SCAD & 10.14 & 0.34 & 0.98 & 101.51 & 255.24 & 353.54 & 22.00 \\ 
		LS-MCP & 4.29 & 0.25 & 0.99 & 104.64 & 203.06 & 364.78 & 20.91  \\ 
		LAD-lasso & 6.34 & 0.65 & 0.99 & 67.10 & 38.00 & 277.60 & 17.32  \\ 
		RLARS & 8.22 & 0.94 & 0.99 & 2.92 & 7.52 & 12.30 & 5.19  \\ 
		sLTS & 41.82 & 1.00 & 0.93 & 4.87 & 15.94 & 22.23 & 6.01  \\ 
		RANSAC & 14.56 & 0.97 & 0.98 & 7.38 & 25.21 & 23.99 & 7.58  \\ 
		 \hline
		DPD-lasso $\alpha = $ 0.1 & 8.15 & 1.00 & 0.99 & 4.80 & 0.69 & 18.48 & 5.92 \\ 
		DPD-lasso $\alpha = $ 0.3 & 8.57 & 1.00 & 0.99 & 4.90 & 1.05 & 18.84 & 5.95 \\ 
		DPD-lasso $\alpha = $ 0.5 & 11.38 & 0.99 & 0.99 & 77.40 & 77.35 & 19.31 & 9.88 \\ 
		DPD-lasso $\alpha = $ 0.7 & 6.47 & 1.00 & 1.00 & 28.03 & 51.73 & 23.73 & 7.63 \\ 
		DPD-lasso $\alpha = $ 1 & 9.91 & 0.98 & 0.99 & 48.91 & 125.45 & 20.27 & 9.27 \\ 
		 \hline
		LDPD-lasso $\alpha = $ 0.1 & 5.52 & 1.00 & 1.00 & 6.23 & 0.40 & 24.56 & 6.67  \\ 
		LDPD-lasso $\alpha = $ 0.2 & 5.56 & 1.00 & 1.00 & 6.29 & 0.41 & 24.65 & 6.67  \\ 
		LDPD-lasso $\alpha = $ 0.3 & 5.49 & 1.00 & 1.00 & 6.87 & 0.42 & 26.37 & 6.84  \\ 
		LDPD-lasso $\alpha = $ 0.4 & 5.47 & 1.00 & 1.00 & 8.83 & 0.44 & 31.53 & 7.37  \\ 
		LDPD-lasso $\alpha = $ 0.5 & 5.32 & 0.98 & 1.00 & 14.66 & 0.51 & 44.40 & 8.57 \\ 
		\hline
		DPD-ncv $\alpha = $ 0.1 & 5.26 & 0.99 & 1.00 & 1.15 & 0.06 & 4.15 & 4.51 \\ 
		DPD-ncv $\alpha = $ 0.3 & 5.08 & 0.99 & 1.00 & 1.21 & 0.03 & 5.87 & 4.63  \\ 
		DPD-ncv $\alpha = $ 0.5 & 5.06 & 0.99 & 1.00 & 1.27 & 0.02 & 7.77 & 4.65  \\ 
		DPD-ncv $\alpha = $ 0.7 & 4.98 & 0.98 & 1.00 & 1.61 & 0.01 & 9.61 & 4.65  \\ 
		DPD-ncv $\alpha = $ 1 & 4.88 & 0.97 & 1.00 & 2.15 & 0.02 & 12.03 & 4.75  \\ 
		 \hline
		MNPRPE-ncv $\alpha = $ 0.1 & 5.52 & 1.00 & 1.00 & 1.10 & 0.09 & 3.88 & 4.60  \\ 
		MNPRPE-ncv $\alpha = $ 0.2 & 5.27 & 0.99 & 1.00 & 1.14 & 0.05 & 4.04 & 4.63  \\ 
		MNPRPE-ncv $\alpha = $ 0.3 & 5.14 & 0.99 & 1.00 & 1.17 & 0.03 & 4.27 & 4.66  \\ 
		MNPRPE-ncv $\alpha = $ 0.4 & 5.00 & 0.98 & 1.00 & 1.28 & 0.01 & 4.72 & 4.71  \\ 
		MNPRPE-ncv $\alpha = $ 0.5 & 4.96 & 0.98 & 1.00 & 1.38 & 0.00 & 4.96 & 4.76  \\ 
		\hline
	\end{tabular}
	\label{p500e1signal0}
\end{table}

\begin{table}[H]
	\centering
	\caption{Performance measures obtained by different methods for $p=500$, strong signal and $\boldsymbol{X}-$outliers}
		\begin{tabular}{l|rrr|rrr|r}
		\hline
		Method	& MS($\widehat{\boldsymbol{\beta}}$) & TP($\widehat{\boldsymbol{\beta}}$) & TN($\widehat{\boldsymbol{\beta}}$) & MSES($\widehat{\boldsymbol{\beta}}$) &  MSES($\widehat{\boldsymbol{\beta}}$)  &  EE($\widehat{\sigma}$) & APrB($\widehat{\boldsymbol{\beta}}$)  \\ 
		& & & &  $(10^{-2})$ & $(10^{-5})$ &  $(10^{-2})$ &  $(10^{-2})$ \\
		\hline
		LS-lasso & 6.88 & 1.00 & 1.00 & 2.70 & 0.65 & 37.13 & 5.33  \\ 
		LS-SCAD & 4.95 & 0.99 & 1.00 & 15.92 & 0.00 & 71.73 & 6.83  \\ 
		LS-MCP & 4.95 & 0.99 & 1.00 & 10.62 & 0.00 & 55.53 & 5.98  \\ 
		LAD-lasso & 6.41 & 1.00 & 1.00 & 4.90 & 1.27 & 43.57 & 5.86  \\ 
		RLARS & 8.05 & 1.00 & 0.99 & 0.70 & 4.31 & 5.96 & 4.61  \\ 
		sLTS & 6.96 & 1.00 & 1.00 & 7.50 & 1.36 & 25.43 & 6.73 \\ 
		RANSAC & 10.54 & 1.00 & 0.99 & 6.33 & 9.54 & 15.11 & 6.10  \\ 
		 \hline
		 DPD-lasso $\alpha = $ 0.1 & 5.42 & 1.00 & 1.00 & 6.44 & 0.33 & 25.42 & 6.53 \\ 
		 DPD-lasso $\alpha = $ 0.3 & 5.47 & 1.00 & 1.00 & 6.96 & 0.37 & 26.54 & 6.65 \\ 
		 DPD-lasso  $\alpha = $ 0.5 & 5.36 & 1.00 & 1.00 & 17.44 & 0.60 & 46.84 & 8.89 \\ 
		 DPD-lasso $\alpha = $ 0.7 & 5.95 & 1.00 & 1.00 & 6.91 & 0.97 & 23.41 & 6.64 \\ 
		 DPD-lasso $\alpha = $ 1 & 10.57 & 0.99 & 0.99 & 50.44 & 58.10 & 19.78 & 9.68 \\ 
		\hline
		LDPD-lasso $\alpha = $ 0.1 & 5.45 & 1.00 & 1.00 & 6.08 & 0.25 & 24.54 & 6.53 \\ 
		LDPD-lasso $\alpha = $ 0.2 & 5.37 & 1.00 & 1.00 & 6.19 & 0.27 & 24.92 & 6.55  \\ 
		LDPD-lasso $\alpha = $ 0.3 & 5.34 & 1.00 & 1.00 & 6.69 & 0.29 & 26.42 & 6.69 \\ 
		LDPD-lasso $\alpha = $ 0.4 & 5.31 & 1.00 & 1.00 & 8.50 & 0.32 & 31.29 & 7.18 \\ 
		LDPD-lasso $\alpha = $ 0.5 & 5.26 & 0.99 & 1.00 & 18.22 & 0.38 & 45.51 & 8.84 \\ 
		 \hline
		DPD-ncv $\alpha = $ 0.1 & 5.00 & 1.00 & 1.00 & 0.34 & 0.00 & 4.53 & 4.47  \\ 
		DPD-ncv $\alpha = $ 0.3 & 5.00 & 1.00 & 1.00 & 0.43 & 0.00 & 8.49 & 4.47  \\ 
		DPD-ncv $\alpha = $ 0.5 & 5.00 & 1.00 & 1.00 & 0.59 & 0.00 & 11.79 & 4.47  \\ 
		DPD-ncv $\alpha = $ 0.7 & 4.99 & 1.00 & 1.00 & 1.03 & 0.00 & 14.42 & 4.57  \\ 
		DPD-ncv $\alpha = $ 1 & 4.99 & 1.00 & 1.00 & 1.37 & 0.00 & 17.44 & 4.56  \\ 
		\hline
		MNPRPE-ncv $\alpha = $ 0.1 & 5.00 & 1.00 & 1.00 & 0.32 & 0.00 & 3.26 & 4.49 \\ 
		MNPRPE-ncv $\alpha = $ 0.2 & 5.00 & 1.00 & 1.00 & 0.34 & 0.00 & 3.45 & 4.50  \\ 
		MNPRPE-ncv $\alpha = $ 0.3 & 5.00 & 1.00 & 1.00 & 0.36 & 0.00 & 3.67 & 4.50  \\ 
		MNPRPE-ncv $\alpha = $ 0.4 & 5.00 & 1.00 & 1.00 & 0.38 & 0.00 & 3.91 & 4.51  \\ 
		MNPRPE-ncv $\alpha = $ 0.5 & 5.00 & 1.00 & 1.00 & 0.40 & 0.00 & 4.17 & 4.53  \\ 
		\hline
	\end{tabular}
	\label{p500e1signal1x1}
\end{table}

\begin{table}[H]
	\centering
	\caption{Performance measures obtained by different methods for $p=500$, weak signal and $\boldsymbol{X}-$outliers}
	\begin{tabular}{l|rrr|rrr|r}
	\hline
	Method	& MS($\widehat{\boldsymbol{\beta}}$) & TP($\widehat{\boldsymbol{\beta}}$) & TN($\widehat{\boldsymbol{\beta}}$) & MSES($\widehat{\boldsymbol{\beta}}$) &  MSES($\widehat{\boldsymbol{\beta}}$)  &  EE($\widehat{\sigma}$) & APrB($\widehat{\boldsymbol{\beta}}$)  \\ 
	& & & &  $(10^{-2})$ & $(10^{-5})$ &  $(10^{-2})$ &  $(10^{-2})$ \\
	\hline
	LS-lasso & 8.18 & 1.00 & 0.99 & 2.47 & 0.94 & 34.17 & 5.27  \\ 
	LS-SCAD & 9.68 & 1.00 & 0.99 & 0.41 & 0.60 & 18.27 & 4.48  \\ 
	LS-MCP & 6.58 & 1.00 & 1.00 & 0.33 & 0.80 & 19.27 & 4.40  \\ 
	LAD-lasso & 6.39 & 1.00 & 1.00 & 4.92 & 1.30 & 43.61 & 5.80\\ 
	RLARS & 14.27 & 1.00 & 0.98 & 0.76 & 16.16 & 13.46 & 5.17 \\ 
	sLTS & 37.70 & 0.99 & 0.93 & 8.16 & 16.92 & 20.54 & 6.87  \\ 
	RANSAC & 14.58 & 1.00 & 0.98 & 4.94 & 19.09 & 24.44 & 6.58 \\ 
	\hline 
	DPD-lasso $\alpha = $ 0.1 & 5.42 & 1.00 & 1.00 & 6.43 & 0.33 & 25.39 & 6.53 \\ 
	DPD-lasso  $\alpha = $ 0.3 & 5.46 & 1.00 & 1.00 & 6.99 & 0.37 & 26.64 & 6.66 \\ 
	DPD-lasso $\alpha = $ 0.5 & 5.37 & 0.99 & 1.00 & 11.71 & 0.50 & 39.55 & 8.01 \\ 
	DPD-lasso  $\alpha = $ 0.7 & 6.11 & 0.99 & 1.00 & 7.70 & 1.48 & 23.58 & 6.69 \\ 
	DPD-lasso  $\alpha = $ 1 & 12.65 & 0.93 & 0.98 & 17.91 & 11.24 & 20.89 & 9.03 \\  
	\hline
	LDPD-lasso $\alpha = $ 0.1 & 5.45 & 1.00 & 1.00 & 6.09 & 0.25 & 24.57 & 6.54  \\ 
	LDPD-lasso $\alpha = $ 0.2 & 5.38 & 1.00 & 1.00 & 6.19 & 0.27 & 24.92 & 6.55 \\ 
	LDPD-lasso $\alpha = $ 0.3 & 5.33 & 1.00 & 1.00 & 6.67 & 0.28 & 26.40 & 6.69 \\ 
	LDPD-lasso $\alpha = $ 0.4 & 5.29 & 1.00 & 1.00 & 8.24 & 0.31 & 31.00 & 7.14 \\ 
	LDPD-lasso $\alpha = $ 0.5 & 5.22 & 0.99 & 1.00 & 13.09 & 0.37 & 42.54 & 8.27\\ 
	\hline
	DPD-ncv $\alpha = $ 0.1 & 5.11 & 0.99 & 1.00 & 0.92 & 0.04 & 5.67 & 4.54 \\ 
	DPD-ncv $\alpha = $ 0.3 & 4.98 & 0.98 & 1.00 & 1.31 & 0.04 & 11.02 & 4.66\\ 
	DPD-ncv $\alpha = $ 0.5 & 4.93 & 0.98 & 1.00 & 1.61 & 0.06 & 14.98 & 4.75 \\ 
	DPD-ncv $\alpha = $ 0.7 & 4.95 & 0.97 & 1.00 & 1.93 & 0.15 & 18.03 & 4.71  \\ 
	DPD-ncv $\alpha = $ 1 & 4.93 & 0.96 & 1.00 & 2.71 & 0.30 & 21.34 & 4.92 \\ 
	\hline
	MNPRPE-ncv $\alpha = $ 0.1 & 5.17 & 0.99 & 1.00 & 0.79 & 0.04 & 3.39 & 4.51\\ 
	MNPRPE-ncv $\alpha = $ 0.2 & 5.06 & 0.99 & 1.00 & 0.78 & 0.02 & 3.59 & 4.56  \\ 
	MNPRPE-ncv $\alpha = $ 0.3 & 5.06 & 0.99 & 1.00 & 0.75 & 0.03 & 3.82 & 4.54\\ 
	MNPRPE-ncv $\alpha = $ 0.4 & 5.05 & 0.99 & 1.00 & 0.72 & 0.02 & 4.04 & 4.55 \\ 
	MNPRPE-ncv $\alpha = $ 0.5 & 5.05 & 0.99 & 1.00 & 0.76 & 0.03 & 4.35 & 4.57 \\ 
	\hline
	\end{tabular}
	\label{p500e1signal0x1}
\end{table}

\section{Glioblastoma gene expression data analysis}

We now apply our proposed method to glioblastoma gene expression data from Hovarth et al. (2006). Glioblastoma is the most prevalent primary malignant brain tumor among adults and one of the most lethal cancers. Patients with such tumor have a median survival of 15 months from the time of diagnosis despite surgery, radiation, and chemotherapy. The dataset contains  global gene expression for 3600 genes on two independent groups of patients obtained by high-density Affymetrix arrays; Group 1 and Group 2 include $55$ and $65$ observations, respectively. However both groups contain few patients who were alive at the last followup and they must be excluded in our analysis, resulting in $n_1=50$ patients on Group 1 and $n_2=61$ on Group 2. Wang et al. (2011) and Rajaratnam et al. (2019) have used this dataset to test random LASSO and influence-LASSO  respectively.

To fit the LRM each patient's gene expression is scaled and logarithm (in base 10) transformation is applied on each observation. We use the logarithm of time to death as the response variable. 
We use Group 1 as train set to compute the parameter estimates $\widehat{\boldsymbol{\beta}}$ and $\hat{\sigma}$ and Group 2 as test set. Then we evaluate the Prediction Bias (BIAS), Mean absolute error (ABS), Mean Square Prediction Error (MSPE) and the maximum and minimum absolute error (MAXerror and MINerror) in both datasets to compare the estimate with observed data. These error measures are calculated as follows
\begin{equation*}
\operatorname{BIAS} = \frac{1}{n}\sum_{i=1}^n\left(y_i-\boldsymbol{x}_i^T\widehat{\boldsymbol{\beta}}\right),\hspace{0.5cm}
\operatorname{ABS} = \parallel \boldsymbol{y}-\mathbb{X} \widehat{\boldsymbol{\beta}} \parallel_1,\hspace{0.5cm}
\operatorname{MSPE} = \frac{1}{n} \parallel \boldsymbol{y}-\mathbb{X} \widehat{\boldsymbol{\beta}} \parallel^2_2, 
\end{equation*}
\begin{equation*}
\operatorname{MAX} = \operatorname{max}_{1\leq i \leq n}|y_i-\boldsymbol{x}_i^T\widehat{\boldsymbol{\beta}}|, \hspace{0.5cm}
\operatorname{MIN} = \operatorname{min}_{1\leq i \leq n}|y_i-\boldsymbol{x}_i^T\widehat{\boldsymbol{\beta}}|.
\end{equation*}
Due to scarce sample size the model is more sensitive to hyperparameter selection. If large values of the hyperparameter $\lambda$ are chosen, all $\boldsymbol{\beta}$ coefficients are estimates as zero. To avoid the null estimate, we select $\lambda$ over a grid from value $0.01$ to $0.037$ according to HBIC criterion. 

In order to assess the accuracy of the proposed method, the data are fitted on several competing methods including penalized least square methods such as LS-LASSO and LS-SCAD, robust methods like RLARS, LASSO penalized DPD and LDPD (with $\alpha= 0.3,0.6,0.9$), and the nonconcave penalized DPD  with SCAD penalty (DPD-ncv) and the hyperparameter values $\alpha= 0.3,0.6,0.9$. Moreover, our proposed MNPRPE is fitted for hyperparameter values $\alpha = 0.1,0.2,0.3,0.5$.

Tables \ref{resultsX1} and \ref{resultsX2} contain the five error measures for the seven methods to study model fitness on train data (Group 1) and test data (Group 2). DPD-ncv, LDPD-LASSO and MNPRPE are the best  estimating methods in all settings, for both train and test data.
The lowest error on train data corresponds to DPD-ncv, followed by our proposed method MNPRPE. However, on test set both DPD-ncv and MNPRPE have similar performance. 

\begin{table}[H]
	\centering
	\caption{Error measures for Group 1 (train) dataset}
	\begin{tabular}{r|rrrrr}
		\hline
		& BIAS &  ABS  & MSPE &  MAX & MIN  \\ 
		\hline
		LS-LASSO & -0.00 & 0.75 & 0.94 & 3.50 & 0.02 \\ 
		LS-SCAD & 0.00 & 0.72 & 0.87 & 3.41 & 0.01 \\ 
		RLARS & -0.11 & 0.34 & 0.44 & 3.83 & 0.00 \\ 
		\hline
		MNPRPE-SCAD  $\alpha = $ 0.1 & -0.00 & 0.21 & 0.26 & 3.35 & 0.00\\ 
		MNPRPE-SCAD  $\alpha = $ 0.2 & -0.00 & 0.39 & 0.52 & 4.12 & 0.04 \\ 
		MNPRPE-SCAD  $\alpha = $ 0.3 & -0.00 & 0.36 & 0.52 & 3.97 & 0.01 \\ 
		MNPRPE-SCAD $\alpha = $ 0.5 & -0.00 & 0.34 & 0.47 & 4.10 & 0.00  \\ 
		\hline
	\end{tabular}

\end{table}
\begin{table}
\centering
\begin{tabular}{r|rrrrr}
	\hline
	& BIAS &  ABS  & MSPE &  MAX & MIN  \\ 
	\hline
	DPD-SCAD  $\alpha = $ 0.3 & -0.00 & 0.12 & 0.12 & 2.36 & 0.01 \\ 
	DPD-SCAD  $\alpha = $ 0.6 & -0.00 & 0.12 & 0.09 & 1.67 & 0.00 \\ 
	DPD-SCAD $\alpha = $ 0.9 & -0.00 & 0.20 & 0.34 & 3.43 & 0.00 \\ 
	\hline
	DPD-LASSO $\alpha = $ 0.3 & -0.07 & 0.54 & 0.56 & 3.61 & 0.02 \\ 
	DPD-LASSO  $\alpha = $ 0.6 & -0.10 & 0.64 & 0.74 & 3.53 & 0.01 \\ 
	DPD-LASSO  $\alpha = $ 0.9 & -0.02 & 0.58 & 0.83 & 2.61 & 0.00 \\ 
	\hline
	LDPD-LASSO  $\alpha = $ 0.1 & -0.12 & 0.46 & 0.66 & 3.14 & 0.00 \\ 
	LDPD-LASSO  $\alpha = $  0.2 & -0.07 & 0.33 & 0.36 & 3.50 & 0.01 \\ 
	LDPD-LASSO $\alpha = $ 0.3 & -0.07 & 0.34 & 0.37 & 3.47 & 0.01 \\ 
	LDPD-LASSO  $\alpha = $ 0.5 & -0.09 & 0.64 & 0.74 & 3.56 & 0.00 \\
	\hline 
\end{tabular}
	\label{resultsX1}
\end{table}

\begin{table}[H]
	\centering
	\caption{Error measures for Group 2 (test)  dataset}
	\label{resultsX2}
	\begin{tabular}{r|rrrrr}
	 \hline
	& BIAS & ABS & MSPE & MAX & MIN \\ 
	\hline
	LS-LASSO & -0.00 & 0.68 & 0.78 & 3.37 & 0.02 \\ 
	LS-SCAD & -0.00 & 0.67 & 0.77 & 3.33 & 0.01 \\ 
	RLARS & -0.11 & 1.03 & 1.78 & 3.65 & 0.02 \\ 
	\hline
	MNPRPE-SCAD  $\alpha = $ 0.1 & -0.00 & 1.02 & 1.62 & 2.88 & 0.01 \\ 
	MNPRPE-SCAD  $\alpha = $ 0.2 & -0.00 & 0.96 & 1.43 & 3.21 & 0.02 \\ 
	MNPRPE-SCAD  $\alpha = $ 0.3 & -0.00 & 1.05 & 1.85 & 4.53 & 0.01 \\ 
	MNPRPE-SCAD  $\alpha = $ 0.5 & -0.00 & 0.97 & 1.46 & 3.52 & 0.03 \\ 
	\hline
	DPD-SCAD  $\alpha = $ 0.3 & -0.00 & 0.86 & 1.12 & 3.04 & 0.03 \\ 
	DPD-SCAD  $\alpha = $ 0.6 & -0.00 & 1.09 & 1.85 & 3.40 & 0.02 \\ 
	DPD-SCAD   $\alpha = $ 0.9 & -0.00 & 0.98 & 1.45 & 3.19 & 0.11 \\
	\hline
	DPD-LASSO $\alpha = $ 0.3 & -0.07 & 0.75 & 0.94 & 3.35 & 0.04 \\ 
	DPD-LASSO  $\alpha = $ 0.6 & -0.10 & 0.68 & 0.84 & 3.54 & 0.01 \\ 
	DPD-LASSO $\alpha = $ 0.9 & -0.02 & 0.86 & 1.20 & 3.22 & 0.01 \\ 
	\hline 
	LDPD-LASSO $\alpha = $ 0.1 & -0.12 & 0.93 & 1.51 & 4.07 & 0.01 \\ 
	LDPD-LASSO $\alpha = $ 0.2 & -0.07 & 0.86 & 1.17 & 3.32 & 0.02 \\ 
	LDPD-LASSO $\alpha = $ 0.3 & -0.07 & 0.83 & 1.11 & 3.25 & 0.02 \\ 
	LDPD-LASSO $\alpha = $ 0.5 & -0.09 & 0.69 & 0.85 & 3.53 & 0.01 \\
\hline
\end{tabular}
\end{table}

Finally, Rajaratnam et al. (2019) showed that observations 27 and 29 were outliers; patient 29 has the smallest survival time of 7 days, with the next smallest value being 43 days, and observation 27 was the observation with the single largest (in magnitude) covariate value. We could analyze the robustness of our method in high dimensional setting by fitting the model after removing these observations and compare these new results with the previous ones obtained from the full data. Table \ref{comparation} contains the error measures as employed before, but now for difference between the predictions obtained from the model fitted with the (full) contaminated and the clean data for each method; the lower the values of these error measures, greater the stability is for the corresponding method. The difference on estimation when deleting outlier observation is lower for the MNPRPE than for any other method, illustrating its robustness.

\begin{table}[H]
	\caption{Error measures for the difference between predictions under contaminated and clean data.}
	\centering
\begin{tabular}{r|rrrrr}
	\hline
	& BIAS & ABS & MSPE & MAX & MIN \\ 
	\hline
	LS-LASSO & -0.08 & 0.08 & 0.01 & 0.08 & 0.08 \\ 
	LS-SCAD & -0.08 & 0.08 & 0.01 & 0.23 & 0.00 \\ 
	RLARS & 0.04 & 0.22 & 0.10 & 1.11 & 0.01 \\ 
	\hline
	MNPRPE-SCAD  $\alpha = $ 0.1  & 0.00 & 0.04 & 0.00 & 0.13 & 0.00 \\
	MNPRPE-SCAD  $\alpha = $ 0.2 & -0.01 & 0.16 & 0.04 & 0.45 & 0.00 \\ 
	MNPRPE-SCAD  $\alpha = $ 0.3 & -0.01  & 0.23 & 0.08 & 0.67 & 0.02 \\ 
	MNPRPE-SCAD  $\alpha = $ 0.5  & -0.01 & 0.23 & 0.09 & 0.77 & 0.01 \\
	\hline
	DPD-SCAD  $\alpha = $ 0.3 & 0.05 & 0.23 & 0.11 & 1.21 & 0.01 \\ 
	DPD-SCAD  $\alpha = $ 0.6 & 0.06 & 0.17 & 0.05 & 0.63 & 0.00 \\ 
	DPD-SCAD  $\alpha = $ 0.9 & 0.03 & 0.21 & 0.11 & 1.18 & 0.00 \\  
	\hline
	DPD-LASSO  $\alpha = $ 0.3 & 0.07 & 0.33 & 0.15 & 0.79 & 0.00 \\ 
	DPD-LASSO  $\alpha = $ 0.6 & 0.10 & 0.20 & 0.06 & 0.51 & 0.00 \\ 
	DPD-LASSO  $\alpha = $ 0.9 & 0.02 & 0.48 & 0.36 & 1.78 & 0.05 \\ 
	\hline
	LDPD-LASSO  $\alpha = $ 0.1 & -0.01 & 0.18 & 0.10 & 1.08 & 0.00 \\ 
	LDPD-LASSO  $\alpha = $ 0.2 & -0.03 & 0.08 & 0.01 & 0.44 & 0.00 \\ 
	LDPD-LASSO  $\alpha = $ 0.3 & -0.01 & 0.06 & 0.01 & 0.24 & 0.01 \\ 
	LDPD-LASSO  $\alpha = $ 0.5 & 0.00 & 0.12 & 0.02 & 0.38 & 0.00 \\ 
	\hline
\end{tabular}
\label{comparation}
\end{table}

\section{Conclusions}
In this paper we have presented a robust estimating method for the LRM in ultra-high dimensional settings. As we have shown, the MNPRPE boasts oracle properties and it is asymptotically normal distributed. Moreover, we have proposed a computational algorithm, merging two efficient minimization techniques, MM-algorithm and coordinate descent algorithm. Our results show that MNPRPE performs better than other common methods existing in the literature and estimate the error deviation $\sigma$ more precisely the other nonconcave penalized methods.

The proposed method is based on the combination of a robust loss function and nonconcave penalties. This idea could be extended to other loss and penalty functions to obtain new estimators with similar convenient properties. Further, akin methods could be developed in particular for binary logistic regression, multiple logistic regression, Poisson regression, etc, and in general for generalized linear models. The theory could also be widen to generalized error distributions, i.e., considering a general distribution instead of normal errors, and specifically for heavy-tailed error distributions. Ensuing this objectives
we claim to extend the ideas presented in this paper to other methods existing in high-dimensional data, such as Adaptive LASSO, Relaxed LASSO or Group LASSO. The first goal is the adaptive LASSO procedure, considered by Zou (2006) using quadratic loss.

On the other hand, it is important to have measures controlling, in the problem of variable selection, a type I error (false positive selection), including $p-$values which are adjusted for large-scale multiple testing, or the construction of confidence intervals or regions. In this sense it would be interesting to enhace some robust Wald-type tests based on
MNPRPE for the LRM in ultra-high context, extending to this scenario the ideas considered in Castilla et al. (2020).\\

\textbf{Acknowledgments}: This research is supported by the Spanish Grants no. PGC2018-095 194-B-100 and  no. FPU16/03104. Additionally, the research of AG is also partially supported by the INSPIRE faculty research grant from Department of Science and Technology, Government of India.

\newpage
\appendix 
\section{Supplementary material for ``On regularization methods based on R\'enyi's pseudodistances for sparse high-dimensional linear regression models''}
\subsection{Computation of the matrix $\boldsymbol{J}_{\alpha}(G;\boldsymbol{\beta},\sigma)$}

In order to have the matrix $\boldsymbol{J}_{\alpha}(G;\boldsymbol{\beta
},\sigma)$ it is necessary to get
\[
\nabla\Psi_{\alpha}(\boldsymbol{\beta},\sigma)=\left(
\begin{matrix}
\frac{\partial\Psi_{1,\alpha}}{\partial\boldsymbol{\beta}} & \frac
{\partial\Psi_{1,\alpha}}{\partial\sigma}\\
\frac{\partial\Psi_{2,\alpha}}{\partial\boldsymbol{\beta}} & \frac
{\partial\Psi_{2,\alpha}}{\partial\sigma}%
\end{matrix}
\right)
\]

\begin{align*}
\frac{\partial\Psi_{1,\alpha}}{\partial\boldsymbol{\beta}}  &  =-\alpha
\sigma^{-\frac{2\alpha+1}{\alpha+1}}\exp\left(  \frac{-\alpha}{2}\left(
\frac{y-\boldsymbol{x}^{T}\boldsymbol{\beta}}{\sigma}\right)  ^{2}\right)
\left(  \frac{\alpha}{\sigma}\left(  \frac{y-\boldsymbol{x}^{T}%
	\boldsymbol{\beta}}{\sigma}\right)  ^{2}\boldsymbol{x}\cdot\boldsymbol{x}%
^{T}-\frac{1}{\sigma}\boldsymbol{x}\cdot\boldsymbol{x}^{T}\right) \\
\frac{\partial\Psi_{1,\alpha}}{\partial\sigma}  &  =-\alpha\sigma
^{-\frac{3\alpha+2}{\alpha+1}}\exp\left(  \frac{-\alpha}{2}\left(
\frac{y-\boldsymbol{x}^{T}\boldsymbol{\beta}}{\sigma}\right)  ^{2}\right)
\left(  -\frac{2\alpha+1}{\alpha+1}-\alpha\left(  \frac{y-\boldsymbol{x}%
	^{T}\boldsymbol{\beta}}{\sigma}\right)  ^{2}-1\right)  \left(  \frac
{y-\boldsymbol{x}^{T}\boldsymbol{\beta}}{\sigma}\right)  \boldsymbol{x}^{T}\\
\frac{\partial\Psi_{2,\alpha}}{\partial\boldsymbol{\beta}}  &  =-\alpha
\sigma^{-\frac{3\alpha+2}{\alpha+1}}\exp\left(  \frac{-\alpha}{2}\left(
\frac{y-\boldsymbol{x}^{T}\boldsymbol{\beta}}{\sigma}\right)  ^{2}\right)
\left(  \alpha\left(  \left(  \frac{y-\boldsymbol{x}^{T}\boldsymbol{\beta}%
}{\sigma}\right)  ^{2}-\frac{1}{\alpha+1}\right)  -2\right)  \left(
\frac{y-\boldsymbol{x}^{T}\boldsymbol{\beta}}{\sigma}\right)  \boldsymbol{x}\\
\frac{\partial\Psi_{2,\alpha}}{\partial\sigma}  &  =-\alpha\sigma
^{-\frac{3\alpha+2}{\alpha+1}}\exp\left(  \frac{-\alpha}{2}\left(
\frac{y-\boldsymbol{x}^{T}\boldsymbol{\beta}}{\sigma}\right)  ^{2}\right)
\left[  \alpha\left(  \frac{y-\boldsymbol{x}^{T}\boldsymbol{\beta}}{\sigma
}\right)  ^{4}-\frac{5\alpha+3}{\alpha+1}\left(  \frac{y-\boldsymbol{x}%
	^{T}\boldsymbol{\beta}}{\sigma}\right)  ^{2}+\frac{2\alpha+1}{(\alpha+1)^{2}%
}\right].
\end{align*}
Therefore,
\[
\nabla\Psi_{\alpha}(\boldsymbol{\beta},\sigma)=\left[
\begin{array}
[c]{l}%
\frac{\partial\Psi_{\alpha}(\boldsymbol{\beta},\sigma)}{\partial
	\boldsymbol{\beta}}\\
\frac{\partial\Psi_{\alpha}(\boldsymbol{\beta},\sigma)}{\partial
	\boldsymbol{\beta}}%
\end{array}
\right]  =-\alpha\sigma^{-\frac{2\alpha+1}{\alpha+1}-1}\left[
\begin{array}
[c]{ll}%
\left(  \frac{\partial\Psi_{\alpha}}{\partial\boldsymbol{\beta}}\right)
_{11}\boldsymbol{x}\boldsymbol{x}^{T} & \left(  \frac{\partial\Psi_{\alpha}%
}{\partial\boldsymbol{\beta}}\right)  _{12}\boldsymbol{x}^{T}\\
\left(  \frac{\partial\Psi_{\alpha}}{\partial\sigma}\right)  _{21}%
\boldsymbol{x} & \left(  \frac{\partial\Psi_{\alpha}}{\partial\sigma}\right)
_{22}%
\end{array}
\right].
\]
Now we are going to get the expectation of the random vector. We shall use
$\mathbb{E}_{Y,\boldsymbol{X}}=\mathbb{E}_{X}\left[  \mathbb{E}%
_{Y|\boldsymbol{X}}\right]  $. First we calculate the conditional expectations,
\begin{align*}
\mathbb{E}_{Y|\boldsymbol{X}}\left[  \left(  \frac{\partial\Psi_{\alpha}%
}{\partial\boldsymbol{\beta}}\right)  _{11}\right]   &  =\mathbb{E}%
_{Y|\boldsymbol{X}}\left[  \exp\left(  \frac{-\alpha}{2\sigma^{2}%
}(y-\boldsymbol{x}^{T}\boldsymbol{\beta})^{2}\right)  \left(  \alpha\left(
\frac{y-\boldsymbol{x}^{T}\boldsymbol{\beta}}{\sigma}\right)  ^{2}-1\right)
\right] \\
&  =\int\left(  \alpha\left(  \frac{y-\boldsymbol{x}^{T}\boldsymbol{\beta}%
}{\sigma}\right)  ^{2}-1\right)  \frac{1}{\sqrt{2\pi}\sigma}\exp\left(
-\frac{\alpha+1}{2\sigma^{2}}(y-\boldsymbol{x}^{T}\boldsymbol{\beta}%
)^{2}\right)  dy=\frac{-1}{(\alpha+1)^{\frac{3}{2}}},\\
\mathbb{E}_{Y|\boldsymbol{X}}\left[  \left(  \frac{\partial\Psi_{\alpha}%
}{\partial\boldsymbol{\beta}}\right)  _{12}\right]   &  =\mathbb{E}%
_{Y|\boldsymbol{X}}\left[  \exp\left(  \frac{-\alpha}{2\sigma^{2}%
}(y-\boldsymbol{x}^{T}\boldsymbol{\beta})^{2}\right)  \left(  \alpha\left(
\frac{y-\boldsymbol{x}^{T}\boldsymbol{\beta}}{\sigma}\right)  ^{3}-\left(
\frac{3\alpha+2}{\alpha+1}\right)  \left(  \frac{y-\boldsymbol{x}%
	^{T}\boldsymbol{\beta}}{\sigma}\right)  \right)  \right]  =0,
\end{align*}

\begin{align*}
\mathbb{E}_{Y|\boldsymbol{X}}\left[  \left(  \frac{\partial\Psi_{\alpha}%
}{\partial\sigma}\right)  _{21}\right]   &  =\mathbb{E}_{Y|\boldsymbol{X}%
}\left[  \exp\left(  \frac{-\alpha}{2\sigma^{2}}(y-\boldsymbol{x}%
^{T}\boldsymbol{\beta})^{2}\right)  \left(  -\frac{2\alpha+1}{\alpha
	+1}-1\right)  \left(  \frac{y-\boldsymbol{x}^{T}\boldsymbol{\beta}}{\sigma
}\right)  +\alpha\left(  \frac{y-\boldsymbol{x}^{T}\boldsymbol{\beta}}{\sigma
}\right)  ^{3}\right]  =0\\
\mathbb{E}_{Y|\boldsymbol{X}}\left[  \left(  \frac{\partial\Psi_{\alpha}%
}{\partial\sigma}\right)  _{22}\right]   &  =\mathbb{E}_{Y|\boldsymbol{X}%
}\left[  \exp\left(  \frac{-\alpha}{2\sigma^{2}}(y-\boldsymbol{x}%
^{T}\boldsymbol{\beta})^{2}\right)  \left(  \left(  \frac{y-\boldsymbol{x}%
	^{T}\boldsymbol{\beta}}{\sigma}\right)  ^{4}\alpha\right.  \right. \\
&  \hspace{2cm}\left.  -\left.  \left(  \frac{5\alpha+3}{\alpha+1}\right)
\left(  \frac{y-\boldsymbol{x}^{T}\boldsymbol{\beta}}{\sigma}\right)
^{2}+\frac{2\alpha+1}{(\alpha+1)^{2}}\right)  \right]  =\frac{-2}%
{(\alpha+1)^{\frac{5}{2}}}.
\end{align*}
Therefore we have
\[
\boldsymbol{J}_{\alpha}(G;\boldsymbol{\beta},\sigma)=\mathbb{E}%
_{Y,\boldsymbol{X}}\left[  \nabla\Psi_{\alpha}(\boldsymbol{\beta}%
,\sigma)\right]  =\mathbb{E}_{X}\left[  \mathbb{E}_{Y/\boldsymbol{X}}\left[
\nabla\Psi_{\alpha}(\boldsymbol{\beta},\sigma)\right]  \right]  =-\alpha
\sigma^{-\frac{2\alpha+1}{\alpha+1}-1}\left[
\begin{matrix}
\frac{-1}{(\alpha+1)^{\frac{3}{2}}}\mathbb{E}_{\boldsymbol{X}}[\boldsymbol{X}%
\boldsymbol{X}^{T}] & \boldsymbol{0}\\
\boldsymbol{0} & \frac{-2}{(\alpha+1)^{\frac{5}{2}}}%
\end{matrix}
\right]  .
\]

\subsection{Proof of the main results}
\subsubsection{Proof Theorem 4}
A infinitely approximation for the absolute value, $|s|,$ is $\left\{
\sqrt{s^{2}+1/m}\right\}  _{m\in\mathbb{N}}$and the penalty function
$p_{\lambda}(|s|)$ is the limit of the infinitely differentiable penalties
$\left\{  p_{m,\lambda}(s)\right\}  _{m\in\mathbb{N}}$ with $p_{m,\lambda
}(s)=p_{\lambda}\left(  \sqrt{s^{2}+\frac{1}{m}}\right)  .$ The first and
second order derivatives of\ $p_{m,\lambda}(s)$ are given by
\[
\frac{\partial p_{\lambda}}{\partial s}\left(  \sqrt{s^{2}+\frac{1}{m}%
}\right)  \frac{s}{\sqrt{s^{2}+\frac{1}{m}}}\text{ and }\frac{\partial
	^{2}p_{\lambda}}{\partial s^{2}}\left(  \sqrt{s^{2}+\frac{1}{m}}\right) \cdot
\left(  \frac{s}{\sqrt{s^{2}+\frac{1}{m}}}\right)  ^{2}+\frac{\partial
	p_{\lambda}}{\partial s}\left(  \sqrt{s^{2}+\frac{1}{m}}\right)  \frac
{1}{m\left(s^{2}+\frac{1}{m}\right)^{3/2}},
\]
respectively. Avella-Medina (2017) established that the IF
corresponding to the penalty $p_{\lambda}(|s|)$ can be obtained as the limit
of the IF associated to the penalties $\left\{  p_{m,\lambda
}(s)\right\}  _{m\in\mathbb{N}}.$ \ These penalty functions are twice
diffetrentiables and therefore the corresponding IF can be obtained by Theorem
\ref{thm421}. Denoting $(\boldsymbol{\beta}_{m},\sigma_{m})=\boldsymbol{T}%
_{\alpha}^{m}(F_{\boldsymbol{\beta}_{m},\sigma_{m}})$,
\[
\operatorname{IF}\left(  (y_{t},\boldsymbol{x}_{t}),\boldsymbol{T}_{\alpha
}^{m},F_{\boldsymbol{\beta}_{0},\sigma_{0}}\right)  =-\boldsymbol{J}_{\alpha
}^{\ast}\left(  F_{\boldsymbol{\beta}_{m},\sigma_{m}}%
,\widehat{\boldsymbol{\beta}}_{m}^{\alpha},(\boldsymbol{\beta}_{m},\sigma
_{m})\right)  ^{-1}\left(
\begin{array}
[c]{c}%
-\alpha(\sigma_{m})^{-\frac{2\alpha+1}{\alpha+1}}\phi_{1,\alpha}\left(
\frac{y-\boldsymbol{x}^{T}\boldsymbol{\beta}_{m}}{\widehat{\sigma}_{m}%
}\right)  \boldsymbol{x}+\boldsymbol{\tilde{p}}_{\lambda}^{\ast}%
(\boldsymbol{\beta}_{m})\\
-\alpha(\sigma_{m})^{-\frac{2\alpha+1}{\alpha+1}}\phi_{2,\alpha}\left(
\frac{y-\boldsymbol{x}^{T}\boldsymbol{\beta}_{m}}{\sigma_{m}}\right)
\end{array}
\right)  .
\]
When $m\rightarrow\infty$, we have
\[
\frac{\partial p_{m,\lambda}}{\partial s}\rightarrow\frac{\partial p_{\lambda
}}{\partial s}(|s|)\cdot\operatorname{sgn}(s)\text{ and }\frac{\partial
	^{2}p_{m,\lambda}}{\partial s^{2}}\rightarrow\frac{\partial^{2}p_{\lambda}%
}{\partial s^{2}}(|s|)
\]
where $\operatorname{sgn}(\cdot)$ denotes the sign function and $(\boldsymbol{\beta}_{m},\sigma_{m})\rightarrow(\boldsymbol{\beta}_{\ast
},\sigma_{\ast})=\boldsymbol{T}_{\alpha}(G).$

\subsubsection{Proof Theorem 6}

Necessary condition: The classical optimization theory establishes that if
$\widehat{\boldsymbol{\theta}}^{T}=(\widehat{\boldsymbol{\beta}}%
,\widehat{\sigma})$ is a local minimizer of the objective function
$Q_{n}^{\alpha}(\boldsymbol{\theta})$, then it verifies the Karush-Kuhn-Tucker
(KKT) conditions, i.e., there exists some $\boldsymbol{v}=(v_{1}%
,..,v_{p+1})\in\mathbb{R}^{p+1}$ such that
\begin{equation}%
{\textstyle\sum_{i=1}^{n}}
\boldsymbol{\Psi}_{\alpha}\left(  (y_{i},\boldsymbol{x}_{i}%
),\widehat{\boldsymbol{\theta}}\right)  +\boldsymbol{v}=\boldsymbol{0}_{p+1}
\label{A.2.1}%
\end{equation}
where $v_{p+1}=0,$ $v_{j}=p_{\lambda}^{^{\prime}}(|\widehat{\beta}%
_{j}|)\operatorname{sg}(\widehat{\beta}_{j})$ if $\widehat{\beta}_{j}\neq0$
and $v_{j}\in\lbrack-p_{\lambda}^{^{\prime}}(0+),p_{\lambda}^{^{\prime}}(0+)]$
if $\widehat{\beta}_{j}=0$, and $\boldsymbol{\Psi}_{\alpha}\left(
(y_{i},\boldsymbol{x}_{i}),\boldsymbol{\theta}\right)  $ was defined in Equation
(9) of the main paper. Therefore we have
\[
\nabla Q_{n}^{\alpha}(\widehat{\boldsymbol{\theta}})=-\alpha\widehat{\sigma
}^{-\frac{2\alpha+1}{\alpha+1}}\frac{1}{n}%
{\textstyle\sum_{i=1}^{n}}
\left(
\begin{array}
[c]{l}%
\phi_{1,\alpha}\left(  \widehat{\boldsymbol{\beta}}\right)  \boldsymbol{x}\\
\phi_{2,\alpha}\left(  \frac{y_{i}-\boldsymbol{x}_{i}^{T}%
	\widehat{\boldsymbol{\beta}}}{\widehat{\sigma}}\right)
\end{array}
\right)  +\left(
\begin{array}
[c]{l}%
\tilde{\boldsymbol{p}}_{\lambda}(\widehat{\boldsymbol{\beta}})^{T}%
\vspace{0.3cm}\\
\hspace{0.4cm}0
\end{array}
\right)  ,
\]
It is clear that Equations (18) and (20) of the statement are verified. On the
other hand, $$\left\Vert \alpha\left(  \widehat{\sigma}\right)  ^{-\frac
	{2\alpha+1}{\alpha+1}}\sum_{i=1}^{n}\boldsymbol{\phi}_{1,\alpha}%
(r_{i}(\widehat{\boldsymbol{\theta}}))\boldsymbol{x}_{2i}\right\Vert _{\infty
}<p_{\lambda}^{^{\prime}}(0+)=\lambda\rho(p_{\lambda})$$ \ and Equation
(19) of the statement is also verified.

The MNPRPE, $\widehat{\boldsymbol{\theta}}^{T}=(\widehat{\boldsymbol{\beta}%
},\widehat{\sigma}),$ is also a local minimizer of $Q_{n}^{\alpha
}(\boldsymbol{\theta})$ on the constrained subspace $\mathcal{B}%
=\{(\boldsymbol{\beta},\sigma):\beta_{j}=0\hspace{0.2cm}\forall j>s\}\subset$
$\mathbb{R}^{s}\times\mathbb{R}^{+}$and it follows from the second order
condition that $\mathbb{X}_{\mathcal{S}}^{\ast T}\boldsymbol{\Sigma}_{\alpha
}(\widehat{\boldsymbol{\theta}})\mathbb{X}_{\mathcal{S}}^{\ast}%
-\operatorname{diag}\left(  p_{\lambda}^{\prime\prime}(|\widehat{\beta}%
_{1}|),..,p_{\lambda}^{\prime\prime}(|\widehat{\beta}_{p}|)\right)  $ is
positive definite. Therefore $\Lambda_{\min}\left(  \mathbb{X}%
_{\mathcal{S}}^{\ast T}\boldsymbol{\Sigma}_{\alpha}%
(\widehat{\boldsymbol{\theta}})\mathbb{X}_{\mathcal{S}}^{\ast}\right)
\geq\max_{1\leq j\leq p}(-p_{\lambda}^{\prime\prime}(|\widehat{\beta}%
_{j}|))=\xi(p_{\lambda},\widehat{\boldsymbol{\beta}}_{1})$ and Equation
(21) of the statement is verified.

Sufficient condition: We shall assume that conditions (18)-(21) of the main paper
are verified. We first constrain $Q_{n}^{\alpha}(\boldsymbol{\theta})$ on the
subspace $\mathcal{B\subset}\mathbb{R}^{s}\times\mathbb{R}^{+}.$ Assumption
(21) of the statement establishes that $Q_{n}^{\alpha}(\boldsymbol{\theta})$ is strictly concave in a neighborhood $\mathcal{N}_{0}\subset\mathcal{B}$ $\mathcal{\ }$ centered at $\widehat{\boldsymbol{\theta}}.$ This fact, jointly with
(18) and (20) of the statement, establish that $\widehat{\boldsymbol{\theta}},$ as
a critical point of $Q_{n}^{\alpha}(\boldsymbol{\theta})$ in $\mathcal{B},$ is
the unique minimizer of $Q_{n}^{\alpha}(\boldsymbol{\theta})$ in the ball
$\mathcal{N}_{0}.$

Now it is necessary to prove that $\widehat{\boldsymbol{\theta}}%
^{T}=(\widehat{\boldsymbol{\beta}},\widehat{\sigma})$ is indeed a strict local
minimizer of $Q_{n}^{\alpha}(\boldsymbol{\theta})$ on $\mathbb{R}^{p}%
\times\mathbb{R}^{+}$. We consider a sufficiently small ball $\mathcal{N}%
_{1}\subset\mathbb{R}^{p}\times\mathbb{R}^{+}$ centered at
$\widehat{\boldsymbol{\theta}}$ such that $\mathcal{B}\cap\mathcal{N}%
_{1}\subset\mathcal{N}_{0}.$ Let $\boldsymbol{\gamma}_{2}$ be the projection
of $\boldsymbol{\gamma}_{1}$ onto $\mathcal{B}.$ Then $\boldsymbol{\gamma}%
_{2}\in\mathcal{N}_{0}$ and $Q_{n}^{\alpha}(\widehat{\boldsymbol{\theta}})$
$<Q_{n}^{\alpha}(\boldsymbol{\gamma}_{2})$ if $\boldsymbol{\gamma}_{2}%
\neq\widehat{\boldsymbol{\theta}},$ since $\widehat{\boldsymbol{\theta}}$ is
the strict minimizer of $Q_{n}^{\alpha}(\boldsymbol{\theta})$ in
$\mathcal{N}_{0},$ and it will be enough to prove that $Q_{n}^{\alpha
}(\boldsymbol{\gamma}_{2})<Q_{n}^{\alpha}(\boldsymbol{\gamma}_{1})$ for any
$\boldsymbol{\gamma}_{1}\in\mathcal{N}_{1}\setminus\mathcal{N}_{0}$. On the basis of
the mean-value theorem,
\begin{equation}
Q_{n}^{\alpha}(\boldsymbol{\gamma}_{2})-Q_{n}^{\alpha}(\boldsymbol{\gamma}%
_{1})=\nabla Q_{n}^{\alpha}(\boldsymbol{\gamma}_{0})(\boldsymbol{\gamma}%
_{2}-\boldsymbol{\gamma}_{1}), \tag{A.2.2}%
\end{equation}
where $\boldsymbol{\gamma}_{0}$ lies on the line segment jointly
$\boldsymbol{\gamma}_{2}$ and $\boldsymbol{\gamma}_{1}.$ The components of the
vector $\boldsymbol{\gamma}_{1}-\boldsymbol{\gamma}_{2}$ coincide in
$\mathcal{B}\cap\mathcal{N}_{1}$ because $\boldsymbol{\gamma}_{2}$ is the
projection of $\boldsymbol{\gamma}_{1}$ onto $\mathcal{B}$, and
$\gamma_{2j}=0$ for $s<j<p+1$ because it belongs to $\mathcal{B}.$ 
Moreover, $\operatorname{sg}(\gamma_{0,j})$ $=\operatorname{sg}(\gamma_{1,j})$ if
$s<j<p+1.$ Therefore, we have $$Q_{n}^{\alpha}(\boldsymbol{\gamma}_{2}%
)-Q_{n}^{\alpha}(\boldsymbol{\gamma}_{1})=-\alpha\gamma_{0,p+1}^{-\frac
	{2\alpha+1}{\alpha+1}}\sum_{i=1}^{n}\phi_{1,\alpha}\left(  r_{i}%
(\boldsymbol{\gamma}_{0})\right)  \boldsymbol{x}_{2i}^{T}\boldsymbol{\gamma
}_{12}-\sum_{j=s+1}^{p}p_{\lambda}^{\prime}(|\gamma_{0,j}|)|\gamma_{1,j}|,$$
\ where $\boldsymbol{\gamma}_{12}$ are the non null components of
$(\boldsymbol{\gamma}_{1}-\boldsymbol{\gamma}_{2}).$ By $\boldsymbol{\gamma
}_{1}\in\mathcal{N}_{1}-\mathcal{N}_{0}$ we have $\boldsymbol{\gamma}_{12}%
\neq0.$

From concavity of $p_{\lambda}(s)$ , applying Condition (C1) of the main paper, we have that
$p_{\lambda}^{\prime}(s)$ is decreasing in $s\in\left[  0,\infty\right)  $.
Therefore by Assumption (19) of the statement of the Theorem and continuity of $p_{\lambda}^{\prime
}(s)$ , there exist $\delta>0$ such that $\forall\boldsymbol{\theta}%
\in\mathbb{B}(\widehat{\boldsymbol{\theta}},\delta)$ with $\mathbb{B}%
(\widehat{\boldsymbol{\theta}},\delta)=\{\boldsymbol{\theta}%
:||\boldsymbol{\theta}-\widehat{\boldsymbol{\theta}}||<\delta\}$ verifies
$\left\Vert \alpha\sigma^{-\frac{2\alpha+1}{\alpha+1}}\sum_{i=1}%
^{n}\boldsymbol{\phi}_{1,\alpha}(r_{i}(\boldsymbol{\theta}))\boldsymbol{x}%
_{2i}\right\Vert _{\infty}<p_{\lambda}^{\prime}(\delta).$

Reducing the ball if it is necessary, we assume that $\mathcal{N}_{1}%
\subset\mathbb{B}(\widehat{\boldsymbol{\theta}},\delta),$ and therefore
$|\gamma_{0,j}|<\delta, s<j<p+1$. Now, taking into account that $p_{\lambda
}^{\prime}$ is decreasing, we have $Q_{n}^{\alpha}(\boldsymbol{\gamma}%
_{2})-Q_{n}^{\alpha}(\boldsymbol{\gamma}_{1})<p_{\lambda}^{\prime}%
(\delta)||\boldsymbol{\gamma}_{12}||_{1}-p_{\lambda}^{\prime}(\delta
)||\boldsymbol{\gamma}_{12}||_{1}=0.$ This complete the proof.

\subsubsection{Proof Proposition 7}

Let $Z_1,..,Z_n$ be independent bounded random variables with $Z_i \in [a,b]$ for all $i$, where $-\infty < a \leq b < \infty$, the Hoeﬀding’s inequality establishes
$$\mathbb{P}\left(|S_n - \mathbb{E}(S_n) | \geq \varepsilon \right) \leq 2\exp\left(\frac{-2\varepsilon^2}{n(b-a)^2}\right) \hspace{0.3cm} \forall \varepsilon \geq 0.$$
We define, $$Z_i = \alpha \sigma^{-\frac{2\alpha+1}{\alpha+1}} \phi_{1,\alpha}(r_i(\boldsymbol{\theta}_0))$$
where $\phi_{1,\alpha}(u) = u \exp\left(\frac{-\alpha}{2}u^2\right)$. It can be shown that the function $\phi_{1,\alpha}(u)$ is bounded, 
$$-\sqrt{\frac{1}{\alpha}} \exp\left(-0.5\right) \leq \phi_{1,\alpha}(u) \leq \sqrt{\frac{1}{\alpha}} \exp\left(-0.5\right)$$
and so are the variables $Z_i$. 

On the other hand,  $$\mathbb{E}(S_n) = \sum_{i=1}^{n} \mathbb{E}_{\boldsymbol{X}}  \left[  \mathbb{E}_{Y|\boldsymbol{X}}  \left[\alpha \sigma^{-\frac{2\alpha+1}{\alpha+1}} \phi_{1,\alpha}(r_i(\boldsymbol{\theta}_0)) \right] \right] = 0.$$
Now, for any $\boldsymbol{a}=(a_1,..,a_n) \in \mathbb{R}^n$, $(a_1Z_i,..,a_nZ_n)$ are $n$ independent bounded random variables.  Applying Hoeﬀding’s inequality, we have
$$\mathbb{P}\left(\bigg| \sum_{i=1}^{n} \alpha \sigma^{-\frac{2\alpha+1}{\alpha+1}} \phi_{1,\alpha}(r_i(\boldsymbol{\theta}_0)) a_i \bigg| \geq \varepsilon \right) \leq 2\exp\left(-2c_1\varepsilon^2\right) \hspace{0.3cm} \forall \varepsilon \geq 0$$
with
$c_1 = \frac{\alpha \exp(1)}{4n||\boldsymbol{a}||_2^2}$, or equivalently, using that $\left(\frac{a_1}{||\boldsymbol{a}||_2}Z_i,..,\frac{a_n}{||\boldsymbol{a}||_2}Z_n\right)$ have the same bounds,
$$\mathbb{P}\left(\bigg| \sum_{i=1}^{n} \alpha \sigma^{-\frac{2\alpha+1}{\alpha+1}} \phi_{1,\alpha}(r_i(\boldsymbol{\theta}_0)) a_i \bigg| \geq \varepsilon ||\boldsymbol{a}||_2  \right) \leq 2\exp\left(-2c_1\varepsilon^2\right) \hspace{0.3cm} \forall \varepsilon \geq 0$$
with
$c_1 = \frac{\alpha \exp(1)}{4n}$.

\subsubsection{Proof Theorem 8}

Let $\boldsymbol{\theta}_{0}^{T}=\left(  \boldsymbol{\beta}_{0},\sigma
_{0}\right)  $ the true value of the parameter and $\boldsymbol{\xi
}=(\boldsymbol{\xi}_{\mathcal{S}}^{T},\boldsymbol{\xi}_{\mathcal{N}}^{T}%
,\xi_{p+1})^{T}=\sum_{i=1}^{n}\boldsymbol{\Psi}_{\alpha}\left(  (y_{i}%
,\boldsymbol{x}_{i}),\boldsymbol{\theta}_{0}\right)  $, where $\boldsymbol{\xi
}_{\mathcal{S}}=\left(  \xi_{1},..,\xi_{s}\right)  ^{t}$ and $\boldsymbol{\xi
}_{\mathcal{N}}=\left(  \xi_{s+1},..,\xi_{p}\right)  ^{T}$ and we also
consider \ the events,%
\[
\zeta_{1}=\left\{  ||\boldsymbol{\xi}_{\mathcal{S}}||_{\infty}\leq\sqrt
{c_{1}^{-1}n\log n}\right\}  ;\text{ }\zeta_{2}=\left\{  ||\boldsymbol{\xi
}_{\mathcal{N}}||_{\infty}\leq u_{n}\sqrt{n}\right\}  \text{ and }\zeta
_{3}=\left\{  |\xi_{p+1}|\leq\sqrt{c_{1}^{-1}n\log n}\right\}  ,
\]
where $u_{n}=c_{1}^{-1/2}n^{1/2-\tau^{\ast}}(\log n)^{1/2}$ is a divergence
sequence, $\tau^{\ast}$ is considered in Assumption (A4) and $c_{1}$ in
Proposition 7 of the main paper, respectively. Applying Bonferroni`s inequality and
Proposition 7 of the main paper with $\boldsymbol{a=}\left(  1,...,1\right)  ^{T},$
we have
\begin{align*}
\Pr\left(  \zeta_{1}\cap\zeta_{2}\cap\zeta_{3}\right)   &  \geq1-\Pr\left(
\zeta_{1}^{C}\right)  -\Pr\left(  \zeta_{2}^{C}\right)  -\Pr\left(  \zeta
_{3}^{C}\right)\\  & \geq1-%
{\textstyle\sum_{j\in\mathcal{S}\cup{p+1}}}
\mathbb{P}\left(  |\xi_{j}|>\sqrt{c_{1}^{-1}n\log n}\right)  -%
{\textstyle\sum_{j\in\mathcal{S}^{c}}}
\mathbb{P}\left(  |\xi_{j}|>u_{n}\sqrt{n}\right) \\
&  =1-2\left[  (s+1)n^{-1}+(p-s)\exp(-c_{1}u_{n}^{2})\right]  =1-2\left[
(s+1)n^{-1}+(p-s)\exp(-n^{1-2\tau^{\ast}}\log n)\right]  .
\end{align*}
In our case $\varepsilon$ appearing in Proposition 7 is given by
$\varepsilon=u_{n}$ or $\sqrt{c_{1}^{-1}\log n}$ and it is necessary to see
that $0<\varepsilon<\frac{||\boldsymbol{a}||_{2}}{||\boldsymbol{a}||_{\infty}%
}=\sqrt{n}.$ It is clear that $\sqrt{c_{1}^{-1}\log n}<\sqrt{n}$ and
$u_{n}=c_{1}^{-1/2}n^{1/2-\tau^{\ast}}(\log n)^{1/2}=n^{1/2}\frac{1}%
{c_{1}^{1/2}}\frac{(\log n)^{1/2}}{n^{\tau^{\ast}}}<n^{1/2}\frac{1}%
{c_{1}^{1/2}}\frac{1}{\max_{1\leq j\leq p}||\boldsymbol{x}^{(j)}||_{\infty}%
}<n^{1/2}.$

Under the event $\boldsymbol{\zeta}=\zeta_{1}\cap\zeta_{2}\cap\zeta_{3}$ we
shall show that there exists a solution $\widehat{\boldsymbol{\theta}}%
^{T}=(\widehat{\boldsymbol{\beta}},\widehat{\sigma})$ to (18) and
(20) of the main paper. First we establish that for sufficiently large $n,$
(18) and (20) have a solution inside the hypercube in
$\mathbb{R}^{s}\times\mathbb{R}^{+}$
\[
\mathcal{N}=\left\{  (\boldsymbol{\delta},\sigma)\in\mathbb{R}^{s}%
\times\mathbb{R}^{+}:||\boldsymbol{\delta}-\boldsymbol{\beta}_{S_{0}%
}||_{\infty}=n^{-\tau}\log n,|\sigma-\sigma_{0}|=n^{-\tau}\log n\right\}  .
\]
Let $\boldsymbol{\delta=}\left(  \delta_{1},...,\delta_{s}\right)$ and
$\sigma$ $\in\mathcal{N}$. Since $n^{-\tau}\log n\leq d_{n}=\min
_{j\in\mathcal{S}}|\beta_{0j}|/2$,
\begin{equation}
\min_{1\leq j\leq s}|\delta_{j}|\geq\min_{j\in\mathcal{S}}|\beta_{0j}%
|-d_{n}=d_{n},\hspace{0.3cm}j=1,..,s, \label{A2.3}%
\end{equation}
$\operatorname{sg}(\delta_{j})=\operatorname{sg}(\beta_{j0}),$ $j=1,..,s,$ and
$\operatorname{sg}(\sigma)=\operatorname{sg}(\sigma_{0}).$ The last inequality
follows, by definition of $\mathcal{N\ }$, because $|\delta_{j}%
|\geq|\beta_{0j}|-n^{-\tau}\log n\geq|\beta_{0j}|-d_{n},\hspace{0.3cm}%
j=1,..,s.$

Let $\boldsymbol{\eta}=n\boldsymbol{\tilde{p}}_{\lambda}^{\ast}%
(\boldsymbol{\delta},\boldsymbol{0}_{p-s})$. Using that $p_{\lambda}^{\prime}$
is decreasing and inequality (\ref{A2.3}), we have $$||\boldsymbol{\eta
}||_{\infty}=np_{\lambda}^{\prime}(\min_{j=1,..s}|\delta_{j}|)\leq
np_{\lambda}^{\prime}(d_{n})$$ which jointly with the definition of $\zeta_{1}$
entails,
\begin{equation}
||\boldsymbol{\xi}_{\mathcal{S}}+\boldsymbol{\eta}||_{\infty}\leq\sqrt
{c_{1}^{-1}n\log n}+np_{\lambda}^{\prime}(d_{n}). \label{cotaep+n}%
\end{equation}
We define the two following functions for all $\boldsymbol{\delta}%
\in\mathbb{R}^{s}$ and $\sigma\in\mathbb{R}^{+},$ $\boldsymbol{\gamma
}(\boldsymbol{\delta},\sigma)=\left(  \gamma_{1}(\boldsymbol{\delta}%
,\sigma),..,\gamma_{p}(\boldsymbol{\delta},\sigma),\gamma_{p+1}%
(\boldsymbol{\delta},\sigma)\right)  ^{T}=\sum_{i=1}^{n}\boldsymbol{\Psi
}_{\alpha}\left(  (y_{i},\boldsymbol{x}_{i}),(\boldsymbol{\delta}_{\ast}%
)^{T}\right)  $ and $\boldsymbol{\Phi}(\boldsymbol{\delta},\sigma
)=\boldsymbol{\gamma}_{\mathcal{S}}^{\ast}(\boldsymbol{\delta},\sigma
)-\boldsymbol{\gamma}_{\mathcal{S}}^{\ast}(\boldsymbol{\beta}_{\mathcal{S}%
	0},\sigma_{0})+\boldsymbol{\xi}_{\mathcal{S}}^{\ast}+\boldsymbol{\eta}^{\ast
},$ where $\boldsymbol{a}_{\mathcal{S}}^{\ast}=(a_{1},..,a_{s},a_{p+1})^{T}$
for any $(p+1)-$dimensional vector and and $\boldsymbol{\eta}^{\ast
}=(\boldsymbol{\eta}^{T},0)^{T}.$ The Equations (18) and (20) of the main paper
are equivalent to $\boldsymbol{\Phi}(\boldsymbol{\delta},\sigma
)=\boldsymbol{0}_{s+1}$ and then we need to prove that it has a solution inside the
hypercube $\mathcal{N}.$

The function $\boldsymbol{\gamma}(\boldsymbol{\delta},\sigma)$ is twice
differentiable in $\mathcal{N}_{\text{ }}$and a second order Taylor expansion
gives
\begin{equation}
\boldsymbol{\gamma}_{\mathcal{S}}^{\ast}(\boldsymbol{\delta},\sigma
)=\boldsymbol{\gamma}_{\mathcal{S}}^{\ast}(\boldsymbol{\beta}_{\mathcal{S}%
	0},\sigma_{0})+\left(  \mathbb{X}_{\mathcal{S}}^{\ast T}\boldsymbol{\Sigma
}_{\alpha}(\boldsymbol{\beta}_{\mathcal{S}0},\sigma_{0})\mathbb{X}%
_{\mathcal{S}}^{\ast}\right)  \left[  (\boldsymbol{\delta},\sigma
)-(\boldsymbol{\beta}_{\mathcal{S}0},\sigma_{0})\right]  +\boldsymbol{r},
\label{L1}%
\end{equation}
with $\boldsymbol{r}=(r_{1},..,r_{s+1})^{T}$ and $r_{j}=\frac{1}{2}\left[
(\boldsymbol{\delta},\sigma)-(\boldsymbol{\beta}_{\mathcal{S}0},\sigma
_{0})\right]  ^{T}\nabla^{2}\gamma_{j}(\boldsymbol{\delta}^{\ast},\sigma
^{\ast})\left[  (\boldsymbol{\delta},\sigma)-(\boldsymbol{\beta}%
_{\mathcal{S}0},\sigma_{0})\right]  ,$ with $(\boldsymbol{\delta}^{\ast
},\sigma^{\ast})$ some vector lying on the line segment joining
$(\boldsymbol{\delta},\sigma)$ and $(\boldsymbol{\beta}_{\mathcal{S}0}%
,\sigma_{0}).$ We are going to get a bound for $||\boldsymbol{r}||_{\infty}$,
\begin{equation}
||\boldsymbol{r}||_{\infty}\leq\frac{1}{2}\left(  \max_{(\boldsymbol{\delta
	},\sigma)\in\mathcal{N}_{0}}\max_{1\leq j\leq p+1}\Lambda_{\max}(\nabla
^{2}\gamma_{j}(\boldsymbol{\delta}^{\ast},\sigma^{\ast}))\right)  \left(
(s+1)||(\boldsymbol{\delta},\sigma)-(\boldsymbol{\beta}_{\mathcal{S}0}%
,\sigma_{0})||_{2}^{2}\right).  \label{L2}%
\end{equation}
By Equation (24) in Assumption (A2) of the main paper, $\max_{(\boldsymbol{\delta},\sigma
	)\in\mathcal{N}_{0}}\max_{1\leq j\leq p+1}\left\{  \Lambda_{\max}\left(
\nabla^{2}\gamma_{j}(\boldsymbol{\delta},\sigma)\right)  \right\}  =O(n).$ At
the same time $||(\boldsymbol{\delta},\sigma)-(\boldsymbol{\beta}%
_{\mathcal{S}0},\sigma_{0})||_{2}^{2}=%
{\textstyle\sum_{j=1}^{s}}
\left(  \delta_{j}-\beta_{S0j}\right)  ^{2}+\left(  \sigma-\sigma_{0}\right)
^{2},$ but $||(\boldsymbol{\delta}-\boldsymbol{\beta}_{\mathcal{S}0}%
)||_{\infty}=\max_{j}\left\vert \delta_{j}-\beta_{S0j}\right\vert =n^{-\tau
}\log n$ and $\left(  \delta_{j}-\beta_{S0j}\right)  ^{2}=O(n^{-2\tau}(\log
n)^{2}).$ On the other hand $\left(  \sigma-\sigma_{0}\right)  ^{2}%
=O(n^{-2\tau}(\log n)^{2}).$ Finally,%
\begin{equation}
||\boldsymbol{r}||_{\infty}\leq O\left(  (s+1)n^{1-2\tau}(\log n)^{2}\right)
. \label{cotar}%
\end{equation}
Now, let $\boldsymbol{\Phi}^{\ast}(\boldsymbol{\delta},\sigma):=\left(
\mathbb{X}_{\mathcal{S}}^{\ast T}\boldsymbol{\Sigma}_{\alpha}%
(\boldsymbol{\theta}_{0})\mathbb{X}_{\mathcal{S}}^{\ast}\right)
^{-1}\boldsymbol{\Phi}(\boldsymbol{\delta},\sigma).$ Applying definition of
$\boldsymbol{\Phi}(\boldsymbol{\delta},\sigma)$ and (\ref{L1}) we have
\begin{align}
\boldsymbol{\Phi}^{\ast}(\boldsymbol{\delta},\sigma)  &  =\left(
\mathbb{X}_{\mathcal{S}}^{\ast T}\boldsymbol{\Sigma}_{\alpha}%
(\boldsymbol{\theta}_{0})\mathbb{X}_{\mathcal{S}}^{\ast}\right)  ^{-1}\left(
\boldsymbol{\gamma}_{\mathcal{S}}^{\ast}(\boldsymbol{\delta},\sigma
)-\boldsymbol{\gamma}_{\mathcal{S}}^{\ast}(\boldsymbol{\beta}_{\mathcal{S}%
	0},\sigma_{0})+\boldsymbol{\xi}_{\mathcal{S}}^{\ast}+\boldsymbol{\eta}^{\ast
}\right) \nonumber\\
&  =\left(  \mathbb{X}_{\mathcal{S}}^{\ast T}\boldsymbol{\Sigma}_{\alpha
}(\boldsymbol{\theta}_{0})\mathbb{X}_{\mathcal{S}}^{\ast}\right)  ^{-1}\left(
\left(  \mathbb{X}_{\mathcal{S}}^{\ast T}\boldsymbol{\Sigma}_{\alpha
}(\boldsymbol{\beta}_{\mathcal{S}0},\sigma_{0})\mathbb{X}_{\mathcal{S}}^{\ast
}\right)  \left[  (\boldsymbol{\delta},\sigma)-(\boldsymbol{\beta
}_{\mathcal{S}0},\sigma_{0})\right]  +\boldsymbol{r}+\boldsymbol{\xi
}_{\mathcal{S}}^{\ast}+\boldsymbol{\eta}^{\ast}\right) \nonumber\\
&  =\left[  (\boldsymbol{\delta},\sigma)-(\boldsymbol{\beta}_{\mathcal{S}%
	0},\sigma_{0})\right]  +\left(  \mathbb{X}_{\mathcal{S}}^{\ast T}%
\boldsymbol{\Sigma}_{\alpha}(\boldsymbol{\theta}_{0})\mathbb{X}_{\mathcal{S}%
}^{\ast}\right)  ^{-1}\left(  \boldsymbol{r}+\boldsymbol{\xi}_{\mathcal{S}%
}^{\ast}+\boldsymbol{\eta}^{\ast}\right)  =\left[  (\boldsymbol{\delta}%
,\sigma)-(\boldsymbol{\beta}_{0},\sigma_{0})\right]  +\boldsymbol{u,}
\label{phi}%
\end{align}

where $\boldsymbol{u}:=\left(  \mathbb{X}_{\mathcal{S}}^{\ast T}%
\boldsymbol{\Sigma}_{\alpha}(\boldsymbol{\theta}_{0})\mathbb{X}_{\mathcal{S}%
}^{\ast}\right)  ^{-1}\left[  \boldsymbol{\xi}_{\mathcal{S}}^{\ast
}+\boldsymbol{\eta}^{\ast}+\boldsymbol{r}\right]  $.

It follows from Assumption (A2) of the main paper, inequalities (\ref{cotaep+n}), (\ref{cotar})
and the condition on $b_{s}$ given in (9) of Assumption (A3) of the main paper that
\begin{equation}
\begin{aligned}
||\boldsymbol{u}||_{\infty}\leq &||\left(  \mathbb{X}_{\mathcal{S}}^{\ast
	T}\boldsymbol{\Sigma}_{\alpha}(\boldsymbol{\theta}_{0})\mathbb{X}%
_{\mathcal{S}}^{\ast}\right) ^{-1}||_{\infty}\left\{  |\boldsymbol{\xi
}_{\mathcal{S}}^{\ast}+\boldsymbol{\eta}^{\ast}||_{\infty}+||\boldsymbol{r}%
||_{\infty}\right\}\\
=&o\left(  b_{s}\sqrt{n^{-1}\log n}+b_{s}p_{\lambda
}^{\prime}(d_{n})+b_{s}sn^{-2\tau}(\log n)^{2}\right)  \\
=&o\left(  n^{-\tau}\log
n\right). \label{cotau}%
\end{aligned}
\end{equation}
Taking a vector $\boldsymbol{k}=\left[  (\boldsymbol{\delta},\sigma
)-(\boldsymbol{\beta}_{0},\sigma_{0})\right]  \in\mathbb{R}^{s+1}$ and
$(\boldsymbol{\delta},\sigma)\in\mathcal{N}$, we have by (\ref{cotau}), that for
all $j=1,..,s+1,$
\[%
\begin{split}
\boldsymbol{\Phi}^{\ast}(\boldsymbol{\delta},\sigma)_{j}  &  \geq n^{\tau
}\sqrt{\log n}-||\boldsymbol{u}||_{\infty}\geq0,\text{ if }k_{j}=n^{\tau}%
\sqrt{\log n}\\
\boldsymbol{\Phi}^{\ast}(\boldsymbol{\delta},\sigma)_{j}  &  \leq-n^{\tau
}\sqrt{\log n}+||\boldsymbol{u}||_{\infty}\leq0,\text{ if }k_{j}=-n^{\tau
}\sqrt{\log n}%
\end{split}
\]
for sufficiently large $n$. By the continuity of $\boldsymbol{\Phi}^{\ast
}(\boldsymbol{\delta},\sigma)$ and applying Miranda's existence Theorem, the
equation $\boldsymbol{\Phi}^{\ast}(\boldsymbol{\delta},\sigma)=\boldsymbol{0}%
_{s+1}^{T}$ has a solution, $(\widehat{\boldsymbol{\beta}}_{1},\boldsymbol{0}%
_{p-s},\widehat{\sigma})^{T},$ in the interior of $\mathcal{N}$ and therefore
$(\widehat{\boldsymbol{\beta}}_{1},\boldsymbol{0}_{p-s},\widehat{\sigma}%
)^{T}$ is a solution for $\boldsymbol{\Phi}(\boldsymbol{\delta}%
,\sigma)=\boldsymbol{0}_{s+1}$ too. Therefore, there exists
$(\widehat{\boldsymbol{\beta}}_{1},\boldsymbol{0}_{p-s},\widehat{\sigma})$
verifying (18) and (20) of Theorem 6 of the main paper.

Now we prove the verification of (19) and (21) of Theorem 6 of the main paper. Condition (19) is verified in
$\mathcal{N}_{0}$ by assumption (A3) of the main paper, therefore it is necessary to establish inequality
(21). Let
\[
\boldsymbol{z}:=\frac{1}{n\lambda}\alpha\sigma^{-\frac{2\alpha+1}{\alpha+1}%
}\sum_{i=1}^{n}\phi_{1,\alpha}\left(  r_{i}(\widehat{\boldsymbol{\theta}%
})\right)  \boldsymbol{x}_{2,i}=\frac{1}{n\lambda}\left[  \boldsymbol{\xi
}_{\mathcal{N}}+\boldsymbol{\gamma}_{\mathcal{N}}(\widehat{\boldsymbol{\beta}%
}_{\mathcal{S}},\widehat{\sigma})-\boldsymbol{\gamma}_{\mathcal{N}%
}(\boldsymbol{\beta}_{\mathcal{S}0},\sigma_{0})\right]  .
\]
On the event $\boldsymbol{\zeta}_{2}$ and by Assumption (A2) of the main paper, $\lambda
\geq(\log n)^{2}/n^{\tau^{\ast}}$. Thus, we have
\[
\left\Vert n^{-}\lambda^{-1}\boldsymbol{\xi}_{\mathcal{N}}\right\Vert
_{\infty}\leq o\left(  n^{-1/2}\lambda^{-1}u_{n}\right)  =o\left(
c_{1}^{-1/2}n^{1/2-\tau^{\ast}}(\log n)^{1/2}n^{-1/2}\lambda^{-1}\right)
=o\left(  (\log n)^{-3/2}\right)  \leq o(1).
\]
A second order Taylor expansion of $\boldsymbol{\gamma}_{\mathcal{N}}$ around
$(\boldsymbol{\beta}_{\mathcal{S}0},\sigma_{0})$, gives
\[
\boldsymbol{\gamma}_{\mathcal{N}}(\widehat{\boldsymbol{\beta}}_{\mathcal{S}%
},\widehat{\sigma})=\boldsymbol{\gamma}_{\mathcal{N}}(\boldsymbol{\beta
}_{\mathcal{S}0},\sigma_{0})+\left(  \mathbb{X}_{\mathcal{N}}^{\ast
	T}\boldsymbol{\Sigma}_{\alpha}(\boldsymbol{\beta}_{0},\sigma_{0}%
)\mathbb{X}_{\mathcal{S}}^{\ast}\right)  \left[  (\widehat{\boldsymbol{\beta}%
}_{\mathcal{S}},\widehat{\sigma})-(\boldsymbol{\beta}_{\mathcal{S}0}%
,\sigma_{0})\right]  +\boldsymbol{\omega},
\]
with $\boldsymbol{\omega}=(\omega_{s+1},..,\omega_{p})^{T}$ and $\omega
_{j}=\frac{1}{2}\left[  (\widehat{\boldsymbol{\beta}}_{\mathcal{S}%
},\widehat{\sigma})-(\boldsymbol{\beta}_{\mathcal{S}0},\sigma_{0})\right]
^{T}\nabla^{2}\gamma_{j}(\boldsymbol{\delta}^{\ast\ast},\sigma^{\ast\ast
})\left[  (\widehat{\boldsymbol{\beta}}_{\mathcal{S}},\widehat{\sigma
})-(\boldsymbol{\beta}_{\mathcal{S}0},\sigma_{0})\right]  ,$ being
$(\boldsymbol{\delta}^{\ast\ast},\sigma^{\ast\ast})$ some vector lying on the
line segment connecting $(\widehat{\boldsymbol{\beta}}_{\mathcal{S}%
},\widehat{\sigma})$ and $(\boldsymbol{\beta}_{\mathcal{S}0},\sigma_{0}).$ By
Equation (24) in Assumption (A2) of the main paper and taking into account that
$(\widehat{\boldsymbol{\beta}}_{\mathcal{S}},\widehat{\sigma})\in\mathcal{N}$,
we could argue similarly to (\ref{L2}) to obtain $||\boldsymbol{\omega}||_{\infty}\leq
O\left(  sn^{1-2\tau}(\log n)^{2}\right)  .$ Since,
$(\widehat{\boldsymbol{\beta}}_{\mathcal{S}},\widehat{\sigma})$ satisfies the
equation $\boldsymbol{\Phi}^{\ast}\left(  \boldsymbol{\delta},\sigma\right)
=\boldsymbol{0}_{s+1}$, we have $(\widehat{\boldsymbol{\beta}}_{\mathcal{S}%
},\widehat{\sigma})-(\boldsymbol{\beta}_{0},\sigma_{0})=-\left(
\mathbb{X}_{\mathcal{S}}^{\ast T}\boldsymbol{\Sigma}_{\alpha}%
(\boldsymbol{\theta}_{0})\mathbb{X}_{\mathcal{S}}^{\ast}\right)  ^{-1}\left(
\boldsymbol{\xi}_{\mathcal{S}}^{\ast}+\boldsymbol{\eta}^{\ast}+\boldsymbol{r}%
\right)  $ and it is possible to get a bound for the norm of $\boldsymbol{z}$ by%
\begin{align*}
\left\Vert \boldsymbol{z}\right\Vert _{\infty}  &  \leq o(1)+\frac{1}%
{n\lambda}||\boldsymbol{\gamma}_{\mathcal{N}}(\widehat{\boldsymbol{\beta}%
}_{\mathcal{S}},\widehat{\sigma})-\boldsymbol{\gamma}_{\mathcal{N}%
}(\boldsymbol{\beta}_{\mathcal{S}0},\sigma_{0})||_{\infty}\\
&  \leq o(1)+\frac{1}{n\lambda}\left\Vert \left(  \mathbb{X}_{\mathcal{N}%
}^{\ast T}\boldsymbol{\Sigma}_{\alpha}(\boldsymbol{\beta}_{0},\sigma
_{0})\mathbb{X}_{\mathcal{S}}^{\ast}\right)  \left(  \mathbb{X}_{\mathcal{S}%
}^{\ast T}\boldsymbol{\Sigma}_{\alpha}(\boldsymbol{\beta}_{0},\sigma
_{0})\mathbb{X}_{\mathcal{S}}^{\ast}\right)  ^{-1}\right\Vert _{\infty}\left(
||\boldsymbol{\xi}_{\mathcal{S}}^{\ast}+\boldsymbol{\eta}^{\ast}||_{\infty
}+||\boldsymbol{r}||_{\infty}\right)  +\frac{1}{n\lambda}||\boldsymbol{\omega
}||_{\infty}\\
&  \leq o(1)+\frac{1}{n\lambda}\left\Vert \left(  \mathbb{X}_{\mathcal{N}%
}^{\ast T}\boldsymbol{\Sigma}_{\alpha}(\boldsymbol{\beta}_{0},\sigma
_{0})\mathbb{X}_{\mathcal{S}}^{\ast}\right)  \left(  \mathbb{X}_{\mathcal{S}%
}^{\ast T}\boldsymbol{\Sigma}_{\alpha}(\boldsymbol{\beta}_{0},\sigma
_{0})\mathbb{X}_{\mathcal{S}}^{\ast}\right)  ^{-1}\right\Vert _{\infty
}O\left(  \sqrt{c_{1}^{-1}n\log n}+np_{\lambda}^{\prime}(d_{n})+sn^{1-2\tau
}(\log n)^{2}\right) \\
&  +\frac{1}{n\lambda}O(sn^{1-2\tau}(\log n)^{2})\\
&  \leq o(1)+\frac{1}{n\lambda}\left\Vert \left(  \mathbb{X}_{\mathcal{N}%
}^{\ast T}\boldsymbol{\Sigma}_{\alpha}(\boldsymbol{\beta}_{0},\sigma
_{0})\mathbb{X}_{\mathcal{S}}^{\ast}\right)  \left(  \mathbb{X}_{\mathcal{S}%
}^{\ast T}\boldsymbol{\Sigma}_{\alpha}(\boldsymbol{\beta}_{0},\sigma
_{0})\mathbb{X}_{\mathcal{S}}^{\ast}\right)  ^{-1}\right\Vert _{\infty
}np_{\lambda}^{\prime}(d_{n})\\
&  +\frac{1}{n\lambda}O\left(  n^{\tau_{1}}\sqrt{n\log n}+sn^{1-2\tau+\tau
	_{1}}(\log n)^{2}\right)  +\frac{1}{n\lambda}O\left(  sn^{1-2\tau}(\log
n)^{2}\right) \\
&  \leq o(1)+\frac{1}{\lambda}\frac{Cp_{\lambda}^{\prime}(0+)}{p_{\lambda
	}^{\prime}(d_{n})}p_{\lambda}^{\prime}(d_{n})+\frac{1}{n\lambda}O\left(
n^{\tau_{1}}\sqrt{n\log n}+sn^{1-2\tau+\tau_{1}}(\log n)^{2}+sn^{1-2\tau}(\log
n)^{2}\right) \\
&  \leq o(1)+\rho(p_{\lambda})\leq\rho(p_{\lambda})
\end{align*}
for sufficiently large $n.$ Therefore we have condition (19) of the Theorem 6 of the main paper and
$(\widehat{\boldsymbol{\beta}}_{\mathcal{S}},\widehat{\sigma})$ is a strict
minimizer of $Q_{n}^{\alpha}(\boldsymbol{\theta})$ on $\boldsymbol{\zeta}$
with probability at least $1-2\left[  (s+1)n^{-1}+(p-s)\exp(-n^{1-2\tau^{\ast
}}\log n)\right]  ,$ with the last $p-s$ components of
$\widehat{\boldsymbol{\beta}}_{\mathcal{S}}$ non null and
$(\widehat{\boldsymbol{\beta}}_{\mathcal{S}},\widehat{\sigma})$ is in the
interior of $\mathcal{N}.$

\subsubsection{Proof Theorem 9}

First we study the consistency in the $\left(  s+1\right)  $-dimensional
subspace $\mathcal{B}=\{(\boldsymbol{\beta},\sigma)\in\mathbb{R}^{p}%
\times\mathbb{R}^{+}:\boldsymbol{\beta}_{\mathcal{N}}=\boldsymbol{0}\}$. The
first step will be to see that $Q_{n}^{\alpha}(\boldsymbol{\theta})$
constrained to $\mathcal{B}$ has a strict local minimizer. The constrained
objective function is given by
\[
Q_{n,\mathcal{B}}^{\alpha}(\boldsymbol{\delta},\sigma)=L_{n,\mathcal{B}%
}^{\alpha}(\boldsymbol{\delta},\sigma)+%
{\textstyle\sum_{j=1}^{s}}
p_{\lambda}(|\delta_{j}|),
\]
with $\boldsymbol{\delta}=(\delta_{1},..,\delta_{s})^{T}$ and
$L_{n,\mathcal{B}}^{\alpha}(\boldsymbol{\delta},\sigma)$ obtained form Equation
(8) of the main paper, replacing $\boldsymbol{\beta}$ by $\left(  \delta_{1}%
,..,\delta_{s},0,.^{(p-s},0\right)$ and $\boldsymbol{x}_{i}$ by
$\boldsymbol{x}_{i,s}=\left(  x_{1,s},...,x_{i,s}\right)  ^{T}.$ Now we will
prove that there exists a strict local minimizer $(\widehat{\boldsymbol{\beta
}}_{1},\widehat{\sigma})^{T}$ of $Q_{n,\mathcal{B}}^{\alpha}%
(\boldsymbol{\delta},\sigma)$ verifying $\left\Vert \widehat{\boldsymbol{\beta
}}_{1}-\boldsymbol{\beta}_{\mathcal{S}0}\right\Vert =O_{p}(\sqrt{s/n})$ and
$\left\Vert \widehat{\sigma}^{\alpha}-\sigma_{0}\right\Vert =O_{p}(n^{-1/2}).$
For $r\in\left(  0,\infty\right),$ we define the closet set $\mathcal{N}%
_{r}=\left\{  (\boldsymbol{\delta},\sigma)\in\mathbb{R}^{s}\times
\mathbb{R}^{+}:||\boldsymbol{\delta}-\boldsymbol{\beta}_{\mathcal{S}0}%
||_{2}\leq\sqrt{\frac{s}{n}}r,|\sigma-\sigma_{0}|\leq\frac{r}{\sqrt{n}%
}\right\}  $ and the event$,$%
\[
\zeta_{n}=\left\{  Q_{n,\mathcal{B}}^{\alpha}(\boldsymbol{\beta}%
_{\mathcal{S}0},\sigma_{0})<\min_{(\boldsymbol{\delta},\sigma)\in
	\partial\mathcal{N}_{r}}Q_{n,\mathcal{B}}^{\alpha}(\boldsymbol{\delta}%
,\sigma)\right\}
\]
where $\partial\mathcal{N}_{r}$ denotes the boundary of $\partial
\mathcal{N}_{r}.$ It is clear that on $\zeta_{n}$ there exists a local
minimizer $(\widehat{\boldsymbol{\beta}}_{1},\widehat{\sigma})$ of
$Q_{n,\mathcal{B}}^{\alpha}(\boldsymbol{\delta},\sigma)$ in $\mathcal{N}_{r}.$
Therefore we only need to show that $\Pr(\zeta_{n})\rightarrow1$ as
$n\rightarrow\infty$ when $r$ is large. We need to analyze the function
$Q_{n,\mathcal{B}}^{\alpha}(\boldsymbol{\delta},\sigma)$ on the boundary
$\partial\mathcal{N}_{r}.$ Let $n$ be sufficiently large such that $\sqrt
{n/n}r\leq d_{n}.$ This is possible because by assumption (A3)$^{\ast}$ of the main paper we
have $d_{n}\gg\sqrt{s/n}.$ In the same way that in the proof of Theorem
8, for $\boldsymbol{\delta}\in\mathcal{N}_{r}$ entails
$\operatorname{sg}(\boldsymbol{\delta})=\operatorname{sg}(\boldsymbol{\beta
}_{\mathcal{S}0}),$ $||\boldsymbol{\delta}-\boldsymbol{\beta}_{\mathcal{S}%
	0}||_{\infty}\leq d_{n},|\sigma-\sigma_{0}|\leq d_{n}$, $\min_{j}%
|\delta_{j}|\geq d_{n}$ and $\operatorname{sg}(\boldsymbol{\delta
})=\operatorname{sg}(\boldsymbol{\beta}_{\mathcal{S}0}).$ A second order
Taylor expansion of $Q_{n,\mathcal{B}}^{\alpha}(\boldsymbol{\delta},\sigma)$
gives
\begin{equation}\label{z}
Q_{n}^{\alpha},_{\mathcal{B}}(\boldsymbol{\delta},\sigma)=Q_{n}^{\alpha
},_{\mathcal{B}}(\boldsymbol{\beta}_{\mathcal{S}0},\sigma_{0}%
)+[(\boldsymbol{\delta},\sigma)-(\boldsymbol{\beta}_{\mathcal{S}0},\sigma
_{0})]^{T}\boldsymbol{d}+\frac{1}{2}[(\boldsymbol{\delta},\sigma
)-(\boldsymbol{\beta}_{\mathcal{S}0},\sigma_{0})]^{T}\boldsymbol{D}%
[(\boldsymbol{\delta},\sigma)-(\boldsymbol{\beta}_{\mathcal{S}0},\sigma_{0})]
\end{equation}
where $\boldsymbol{d}=(\boldsymbol{d}_{1}^{T},d_{2})^{T}$ $\ $with
$\boldsymbol{d}_{1}=\alpha\sigma_{0}^{-\frac{2\alpha+1}{\alpha+1}}\sum
_{i=1}^{n}\phi_{1,\alpha}(\boldsymbol{\beta}_{0},\sigma_{0})\boldsymbol{x}%
_{i,\mathcal{S}}+\boldsymbol{\tilde{p}}_{\lambda}^{\ast}(\boldsymbol{\beta
}_{0}),$ $d_{2}=\alpha\sigma_{0}^{-\frac{2\alpha+1}{\alpha+1}}\sum_{i=1}%
^{n}\phi_{2,\alpha}(\boldsymbol{\beta}_{0},\sigma_{0})$ and $\boldsymbol{D}%
=\frac{1}{n}\left(  \mathbb{X}_{\mathcal{S}}^{\ast T}\boldsymbol{\Sigma
}_{\alpha}(\tilde{\boldsymbol{\delta}},\tilde{\sigma})\mathbb{X}_{\mathcal{S}%
}^{\ast}+\boldsymbol{\tilde{p}}_{\lambda}^{\ast\ast}(\tilde{\boldsymbol{\delta
}},\boldsymbol{0})\right)  $. The vector $(\tilde{\boldsymbol{\delta}}%
,\tilde{\sigma})$ lies in the line segment joining $(\boldsymbol{\delta
},\sigma)$ and $(\boldsymbol{\beta}_{\mathcal{S}0},\sigma_{0}),$ and then
$(\tilde{\boldsymbol{\delta}},\tilde{\sigma})\in\mathcal{N}_{0}.$ More
generally, when the second derivative of the penalty function does not exist, it is not difficult to see that the second part of the
matrix $\boldsymbol{D}$ can be replaced by a diagonal matrix with maximum
absolute element bounded by $\max_{(\boldsymbol{\delta},\sigma)\in
	\mathcal{N}_{0}}\xi(p_{\lambda},\boldsymbol{\delta}).$

By (A2)$^{\ast}$ of the main paper we have,
\[
\Lambda_{\min}(\boldsymbol{D})=\Lambda_{\min}\left(  \frac{1}{n}\left(
\mathbb{X}_{\mathcal{S}}^{\ast T}\boldsymbol{\Sigma}_{\alpha}(\tilde
{\boldsymbol{\delta}},\tilde{\sigma})\mathbb{X}_{\mathcal{S}}^{\ast
}+\boldsymbol{\tilde{p}}_{\lambda}^{\ast\ast}(\tilde{\boldsymbol{\delta}%
},\boldsymbol{0})\right)  \right)  \geq c+\Lambda_{\min}\left(
\boldsymbol{\tilde{p}}_{\lambda}^{\ast\ast}(\tilde{\boldsymbol{\delta}%
},\boldsymbol{0})\right)  \geq c+\max\left(  -p_{\lambda}^{\prime\prime
}(|\boldsymbol{\delta}|\right)  \geq\frac{c}{2}.
\]
But $[(\boldsymbol{\delta},\sigma)-(\boldsymbol{\beta}_{\mathcal{S}0}%
,\sigma_{0})]^{T}\boldsymbol{d}+\frac{1}{2}[(\boldsymbol{\delta}%
,\sigma)-(\boldsymbol{\beta}_{\mathcal{S}0},\sigma_{0})]^{T}\boldsymbol{D}%
[(\boldsymbol{\delta},\sigma)-(\boldsymbol{\beta}_{\mathcal{S}0},\sigma_{0})]$
can be written as $$\left(  \boldsymbol{\delta}-\boldsymbol{\beta}%
_{\mathcal{S}0}\right)  \boldsymbol{d}_{1}+\frac{1}{2}(\boldsymbol{\delta
}-\boldsymbol{\beta}_{\mathcal{S}0})^{T}\boldsymbol{D}(\boldsymbol{\delta
}-\boldsymbol{\beta}_{\mathcal{S}0})+\left(  \sigma-\sigma_{0}\right)
d_{2}+\frac{1}{2}\left(  \sigma-\sigma_{0}\right)  ^{T}\boldsymbol{D}\left(
\sigma-\sigma_{0}\right)  =\boldsymbol{Y}+\boldsymbol{X}+\boldsymbol{Z}%
+\boldsymbol{V}.$$ Now, we apply that \ \ $\left\Vert \boldsymbol{Y}%
+\boldsymbol{X}+\boldsymbol{Z}+\boldsymbol{V}\right\Vert \geq\left\Vert
\boldsymbol{X}\right\Vert -\left\Vert \boldsymbol{Y}\right\Vert -\left\Vert
\boldsymbol{Z}\right\Vert +\left\Vert V\text{ }\right\Vert $ and we get%
\begin{align*}
\left\Vert \boldsymbol{Y}+\boldsymbol{X}+\boldsymbol{Z}+\boldsymbol{V}%
\right\Vert  &  \geq\frac{1}{2}\left\Vert (\boldsymbol{\delta}%
-\boldsymbol{\beta}_{\mathcal{S}0})^{T}\boldsymbol{D}(\boldsymbol{\delta
}-\boldsymbol{\beta}_{\mathcal{S}0})\right\Vert -\left\Vert \left(
\boldsymbol{\delta}-\boldsymbol{\beta}_{\mathcal{S}0}\right)  \boldsymbol{d}%
_{1}\right\Vert -\left\Vert \left(  \sigma-\sigma_{0}\right)  d_{2}\right\Vert
+\frac{1}{2}\left\Vert \left(  \sigma-\sigma_{0}\right)  ^{T}\boldsymbol{D}%
\left(  \sigma-\sigma_{0}\right)  \right\Vert \\
&  =\frac{1}{2}\frac{c}{2}\left(  \sqrt{\frac{s}{n}}r\right)  ^{2}-\sqrt
{\frac{s}{n}}r\left\Vert \boldsymbol{d}_{1}\right\Vert -n^{-1/2}r\left\vert
d_{2}\right\vert +\frac{1}{2}\frac{c}{2}n^{-1/2}r=\sqrt{\frac{s}{n}}r\left(
-||\boldsymbol{d}_{1}||_{2}+\frac{c}{4}\sqrt{\frac{s}{n}}r\right) \\
&  +\frac{r}{\sqrt{n}}\left(  -|d_{2}|+\frac{c}{4}\frac{r}{\sqrt{n}}\right)  .
\end{align*}
Therefore, $\min_{(\boldsymbol{\delta},\sigma)\in\partial\mathcal{N}_{r}%
}Q_{n,\mathcal{B}}^{\alpha}(\boldsymbol{\delta},\sigma)-Q_{n}^{\alpha
},_{\mathcal{B}}(\boldsymbol{\beta}_{\mathcal{S}0},\sigma_{0})\geq\sqrt
{\frac{s}{n}}r\left(  -||\boldsymbol{d}_{1}||_{2}+\frac{c}{4}\sqrt{\frac{s}%
	{n}}r\right)  +\frac{r}{\sqrt{n}}\left(  -|d_{2}|+\frac{c}{4}\frac{r}{\sqrt
	{n}}\right).$ 

We consider the events, $A=\left\{  ||\boldsymbol{d}%
_{1}||_{2}^{2}<\left(  \frac{c}{4}\sqrt{\frac{s}{n}}r\right)  ^{2}\right\}  $
and $B=\left\{  |\boldsymbol{d}_{2}|^{2}<\left(  \frac{c}{4}\frac{r}{\sqrt{n}%
}\right)  ^{2}\right\}.$ It is clear that $A\cap B\subset\zeta_{n}.$
Then,
\[
\mathbb{P}(\zeta_{n})\geq\mathbb{P}(A\cap B)\geq1-\mathbb{P}(A)-\mathbb{P}%
(B)\geq1-\mathbb{P}\left(  ||\boldsymbol{d}_{1}||_{2}^{2}\geq\frac{c^{2}%
	s}{16n}r^{2}\right)  -\mathbb{P}\left(  |d_{2}|^{2}<\frac{c^{2}r^{2}}%
{16n}\right)  \geq1-\frac{16n}{c^{2}sr^{2}}\mathbb{E}\left[  ||\boldsymbol{d}%
_{1}||_{2}^{2}\right]  -\frac{16n}{c^{2}r^{2}}\mathbb{E}\left[  |d_{2}%
|^{2}\right]  .
\]
The last inequality follows by Markov inequality.

Using triangular inequality, Assumptions (A2)$^{\ast}$ and (A3)$^{\ast}$ of the main paper as
well as that the function $p_{\lambda}^{\prime}$ is an increasing function we
have,
\begin{align*}
\mathbb{E}\left[  ||\boldsymbol{d}_{1}||_{2}^{2}\right]   &  \leq
\mathbb{E}\left[  \left\Vert \alpha\sigma_{0}^{-\frac{2\alpha+1}{\alpha+1}}%
{\textstyle\sum_{i=1}^{n}}
\phi_{1,\alpha}(\boldsymbol{\beta}_{0},\sigma_{0})\boldsymbol{x}%
_{i,\mathcal{S}}\right\Vert _{2}^{2}+\left\Vert \boldsymbol{\tilde{p}%
}_{\lambda}^{\ast}(\boldsymbol{\beta}_{0})\right\Vert _{2}^{2}\right] \\
&  \leq\mathbb{E}\left[  \left\Vert \alpha\sigma_{0}^{-\frac{2\alpha+1}%
	{\alpha+1}}%
{\textstyle\sum_{i=1}^{n}}
\phi_{1,\alpha}(\boldsymbol{\beta}_{0},\sigma_{0})\boldsymbol{x}%
_{i,\mathcal{S}}\right\Vert _{2}^{2}\right]  +\mathbb{E}\left[  \left\Vert
\boldsymbol{\tilde{p}}_{\lambda}^{\ast}(\boldsymbol{\beta}_{0})\right\Vert
_{2}^{2}\right]  \leq O\left(  \frac{s}{n}\right)  +sp_{\lambda}^{\prime
}(d_{n})=O\left(  sn^{-1}\right)  .
\end{align*}
In a similar way, it is possible to see that $\mathbb{E}\left[  |d_{2}%
|^{2}\right]  =O\left(  n^{-1}\right)  .$ Therefore $\mathbb{P}(\zeta_{n}%
)\geq1-O(r^{-2})-O(r^{-2})=1-O(r^{-2})$ and we have established the
convergence in probability \ of $\zeta_{n}.$ Then $\left\Vert
(\widehat{\boldsymbol{\beta}}_{1},\widehat{\sigma})-\boldsymbol{\beta}%
_{1}\right\Vert =O_{p}(\sqrt{s/n})$ and $\left\Vert \widehat{\sigma}^{\alpha
}-\sigma_{0}\right\Vert =O_{p}(n^{-1/2}).$ 

Now, we are going to establish the
sparsity. We are going to see that $\widehat{\boldsymbol{\beta}}%
^{T}:=(\widehat{\boldsymbol{\beta}}_{1},\boldsymbol{0},\widehat{\sigma})$
$\ $\ is a minimizer of $Q_{n}^{\alpha}(\boldsymbol{\theta})$. From the proof
of Theorem 8 it is only necessary to establish the inequality
(19) of the main paper. We consider the vector $\boldsymbol{\xi}=\sum_{i=1}%
^{n}\boldsymbol{\Psi}_{\alpha}$ and the event $\zeta_{2}=\{||\boldsymbol{\xi
}_{\mathcal{N}}||_{\infty}\leq u_{n}\sqrt{n}\},$ where $u_{n}=c_{1}%
^{-1/2}n^{\tau^{\ast}/2}\sqrt{\log n}.$ In the same way that in Theorem
8, $\Pr(\zeta_{2})\leq1-2(p-s)\exp(-c_{1}u_{n}^{2})\leq
1-2(p-s)\exp(-n^{1-2\tau^{\ast}}\log n)$ that tends to $1$ when $n\rightarrow
\infty$, because $\log p=O(n^{\alpha}).$ A second order Taylor expansion like
in (\ref{z}), jointly with the assumptions corresponding the the design matrix
given in (A2)$^{\ast}$ of the main paper and the bound given for the regularization parameter,
$\lambda,$ we have,
\begin{align*}
\left\Vert \frac{1}{n\lambda}\alpha\sigma^{-\frac{2\alpha+1}{\alpha+1}}%
{\textstyle\sum_{i=1}^{n}}
\phi_{1,\alpha}\left(  r_{i}(\widehat{\boldsymbol{\theta}})\right)
\boldsymbol{x}_{2,i}\right\Vert \infty &  \leq o(1)\\
&  +\frac{1}{n\lambda}\left\Vert \left(  \mathbb{X}_{\mathcal{N}}^{\ast
	T}\boldsymbol{\Sigma}_{\alpha}(\boldsymbol{\beta}_{0},\sigma_{0}%
)\mathbb{X}_{\mathcal{S}}^{\ast}\right)  \left[  (\boldsymbol{\delta}%
,\sigma)-(\boldsymbol{\beta}_{\mathcal{S}0,\sigma_{0}})\right]  \right\Vert
_{\infty}+\frac{1}{n\lambda}||\boldsymbol{\omega}||_{\infty}\\
&  \leq o(1)+\frac{O(n)}{n\lambda}||\left[  (\boldsymbol{\delta}%
,\sigma)-(\boldsymbol{\beta}_{\mathcal{S}0,\sigma_{0}})\right]  ||_{2}%
+\frac{O(n)}{n\lambda}||\left[  (\boldsymbol{\delta},\sigma
)-(\boldsymbol{\beta}_{\mathcal{S}0,\sigma_{0}})\right]  ||_{2}^{2}\\
&  \leq o(1)+O\left(  \lambda^{-1}\sqrt{s/n}\right)  =o(1)
\end{align*}
which shows that inequality.

\subsubsection{Proof Theorem 10}

On the event $\zeta_{n}$ defined in the proof of Theorem 9 it has
been shown that $\widehat{\boldsymbol{\theta}}=(\widehat{\boldsymbol{\beta}%
},\widehat{\sigma})\in\mathcal{N}_{r}\subset\mathcal{N}_{0}$ with
$\widehat{\boldsymbol{\beta}}^{T}=(\widehat{\boldsymbol{\beta}}%
,\widehat{\boldsymbol{\beta}}_{\mathcal{N}})$ $\in\mathcal{N}_{r}%
\subset\mathcal{N}_{0}$ and $\widehat{\boldsymbol{\beta}}_{\mathcal{N}%
}=\boldsymbol{0}$ is a strict minimizer of $Q_{n,\mathcal{B}}^{\alpha
}(\boldsymbol{\delta},\sigma)$ and $~\mathbb{P}(\zeta_{n})\rightarrow1.$
Therefore only it is necessary to establish the asymptotic distribution of
$\widehat{\boldsymbol{\theta}}=(\widehat{\boldsymbol{\beta}},\widehat{\sigma
}).$ We have, $\boldsymbol{0}=\nabla Q_{n,\mathcal{B}}^{\alpha}%
(\widehat{\boldsymbol{\beta}}_{\mathcal{S}}^{\alpha},\widehat{\sigma})=\nabla
L_{n,\mathcal{B}}^{\alpha}(\widehat{\boldsymbol{\beta}}_{\mathcal{S}%
},\widehat{\sigma})+\tilde{\boldsymbol{p}}_{\lambda}^{\ast}%
(\widehat{\boldsymbol{\beta}}_{\mathcal{S}})$ and a second order Taylor
expansion of $\nabla L_{n}^{\alpha},_{\mathcal{B}}$ around $(\boldsymbol{\beta
}_{\mathcal{S}0},\sigma_{0})\ $gives
\begin{align*}
\nabla L_{n}^{\alpha},_{\mathcal{B}}(\widehat{\boldsymbol{\beta}}%
_{\mathcal{S}},\widehat{\sigma})=  &  \nabla L_{n,\mathcal{B}}^{\alpha
}(\boldsymbol{\beta}_{\mathcal{S}0},\sigma_{0})+\frac{1}{n}\left(
\mathbb{X}_{\mathcal{S}}^{\ast T}\boldsymbol{\Sigma}_{\alpha}%
(\boldsymbol{\theta}_{0})\mathbb{X}_{\mathcal{S}}^{\ast}\right)  \left[
(\widehat{\boldsymbol{\beta}}_{\mathcal{S}},\widehat{\sigma}%
)-(\boldsymbol{\beta}_{\mathcal{S}0},\sigma_{0})\right] \\
&  +\frac{1}{2}\left[  (\widehat{\boldsymbol{\beta}}_{\mathcal{S}%
},\widehat{\sigma})-(\boldsymbol{\beta}_{\mathcal{S}0},\sigma_{0})\right]
^{T}\nabla^{3}L_{n,\mathcal{B}}^{\alpha}(\boldsymbol{\delta}^{\ast}%
,\sigma^{\ast})\left[  (\widehat{\boldsymbol{\beta}}_{\mathcal{S}%
},\widehat{\sigma})-(\boldsymbol{\beta}_{\mathcal{S}0},\sigma_{0})\right]  ,
\end{align*}
being $(\boldsymbol{\delta}^{\ast},\sigma^{\ast})$ some vector lying on the
line segment jointly $(\widehat{\boldsymbol{\beta}}_{\mathcal{S}%
},\widehat{\sigma})$ and $(\boldsymbol{\beta}_{\mathcal{S}0},\sigma_{0}).$ By
Equation (28) in (A2)$^\ast$ of the main paper,
\[
\nabla L_{n,\mathcal{B}}^{\alpha}(\widehat{\boldsymbol{\beta}}_{\mathcal{S}%
},\widehat{\sigma})=\nabla L_{n}^{\alpha}|_{\mathcal{B}}(\boldsymbol{\beta
}_{\mathcal{S}0},\sigma_{0})+\frac{1}{n}\left(  \mathbb{X}_{\mathcal{S}}^{\ast
	T}\boldsymbol{\Sigma}_{\alpha}(\boldsymbol{\theta}_{0})\mathbb{X}%
_{\mathcal{S}}^{\ast}\right)  \left[  (\widehat{\boldsymbol{\beta}%
}_{\mathcal{S}},\widehat{\sigma})-(\boldsymbol{\beta}_{\mathcal{S}0}%
,\sigma_{0})\right]  +O(1)\sqrt{s}||(\widehat{\boldsymbol{\beta}}%
_{\mathcal{S}},\widehat{\sigma})-(\boldsymbol{\beta}_{\mathcal{S}0},\sigma
_{0})||_{2}^{2},
\]
and%
\begin{align}
\boldsymbol{0}  &  =\nabla L_{n,\mathcal{B}}^{\alpha}(\boldsymbol{\beta
}_{\mathcal{S}0},\sigma_{0})+\frac{1}{n}\left(  \mathbb{X}_{\mathcal{S}}^{\ast
	T}\boldsymbol{\Sigma}_{\alpha}(\boldsymbol{\theta}_{0})\mathbb{X}%
_{\mathcal{S}}^{\ast}\right)  \left[  (\widehat{\boldsymbol{\beta}%
}_{\mathcal{S}},\widehat{\sigma})-(\boldsymbol{\beta}_{\mathcal{S}0}%
,\sigma_{0})\right]  +O(\sqrt{s})||(\widehat{\boldsymbol{\beta}}_{\mathcal{S}%
},\widehat{\sigma})-(\boldsymbol{\beta}_{\mathcal{S}0},\sigma_{0})||_{2}%
^{2},+\tilde{\boldsymbol{p}}_{\lambda}^{\ast}(\widehat{\boldsymbol{\beta}%
}_{\mathcal{S}})\nonumber\\
&  =\nabla L_{n,\mathcal{B}}^{\alpha}(\boldsymbol{\beta}_{\mathcal{S}0}%
,\sigma_{0})+\frac{1}{n}\left(  \mathbb{X}_{\mathcal{S}}^{\ast T}%
\boldsymbol{\Sigma}_{\alpha}(\boldsymbol{\theta}_{0})\mathbb{X}_{\mathcal{S}%
}^{\ast}\right)  \left[  (\widehat{\boldsymbol{\beta}}_{\mathcal{S}%
},\widehat{\sigma})-(\boldsymbol{\beta}_{\mathcal{S}0},\sigma_{0})\right]
+O_{P}\left(  s^{3/2}n^{-1}\right)  +\tilde{\boldsymbol{p}}_{\lambda}^{\ast
}(\widehat{\boldsymbol{\beta}}_{\mathcal{S}}). \label{Last1}%
\end{align}

By condition (A5) of the main paper, $p_{\lambda}^{\prime}(d_{n})=O(\sqrt{ns}^{-1})$, as
$(\widehat{\boldsymbol{\beta}}_{\mathcal{S}},\widehat{\sigma})\in
\mathcal{S}_{0}$ and by the monocity of $p_{\lambda}^{\prime}$ we have%
\begin{equation}
||\tilde{\boldsymbol{p}}_{\lambda}^{\ast}(\widehat{\boldsymbol{\beta}%
}_{\mathcal{S}})||_{2}\leq\sqrt{s}p_{\lambda}^{\prime}(d_{n})=O_{P}\left(
n^{-1/2}\right)  . \label{Last2}%
\end{equation}
Combining (\ref{Last1}) and (\ref{Last2}), and using $s=O(n^{1/3})$ we have
\begin{equation}
\left(  \mathbb{X}_{\mathcal{S}}^{\ast T}\boldsymbol{\Sigma}_{\alpha
}(\boldsymbol{\theta}_{0})\mathbb{X}_{\mathcal{S}}^{\ast}\right)  \left[
(\widehat{\boldsymbol{\beta}}_{\mathcal{S}},\widehat{\sigma}%
)-(\boldsymbol{\beta}_{\mathcal{S}0},\sigma_{0})\right]  =-\nabla
L_{n}^{\alpha},_{\mathcal{B}}(\boldsymbol{\beta}_{\mathcal{S}0},\sigma
_{0})-O_{P}\left(  \sqrt{n}\right)  . \label{AAA}%
\end{equation}

\bigskip\ Now we define the matrices $\boldsymbol{S}_{1,n}=\left(
\mathbb{X}_{\mathcal{S}}^{\ast T}\boldsymbol{\Sigma}_{\alpha}%
(\boldsymbol{\theta}_{0})\mathbb{X}_{\mathcal{S}}^{\ast}\right)  $ and
$\boldsymbol{S}_{2,n}=\left(  \mathbb{X}_{\mathcal{S}}^{\ast T}%
\boldsymbol{\Sigma}_{\alpha}^{\ast}(\boldsymbol{\theta}_{0})\mathbb{X}%
_{\mathcal{S}}^{\ast}\right)  $. Multiplying the two members of (\ref{AAA}) by
$\boldsymbol{S}_{2,n}^{-\frac{1}{2}}$ and using condition (A5) of the main paper, we have
\[
\boldsymbol{S}_{2,n}^{-\frac{1}{2}}\boldsymbol{S}_{1,n}\left[
(\widehat{\boldsymbol{\beta}}_{\mathcal{S}},\widehat{\sigma}%
)-(\boldsymbol{\beta}_{\mathcal{S}0},\sigma_{0})\right]  =-\boldsymbol{S}%
_{2,n}^{-\frac{1}{2}}\nabla L_{n,\mathcal{B}}^{\alpha}(\boldsymbol{\beta
}_{\mathcal{S}0},\sigma_{0})-O_{P}\left(  1\right)  ,
\]
and multiplying the two members by the matrix $\boldsymbol{A}_{n}$ given in
the statement
\[
\boldsymbol{A}_{n}\boldsymbol{S}_{2,n}^{-\frac{1}{2}}\boldsymbol{S}%
_{1,n}\left[  (\widehat{\boldsymbol{\beta}}_{\mathcal{S}},\widehat{\sigma
})-(\boldsymbol{\beta}_{\mathcal{S}0},\sigma_{0})\right]  =-\boldsymbol{A}%
_{n}\boldsymbol{S}_{2,n}^{-\frac{1}{2}}\nabla L_{n,\mathcal{B}}^{\alpha
}(\boldsymbol{\beta}_{\mathcal{S}0},\sigma_{0})-O_{P}\left(  1\right)  .
\]
Thus, by Slutsky's lemma it is enough to prove that $\boldsymbol{u}%
_{n}:=-\boldsymbol{A}_{n}\boldsymbol{S}_{2,n}^{-\frac{1}{2}}\nabla
L_{n,\mathcal{B}}^{\alpha}(\boldsymbol{\beta}_{\mathcal{S}0},\sigma_{0})$
converge in law to a normal random variable, $\mathcal{N}(\boldsymbol{0}%
,\boldsymbol{G}),$ to get the result.

For any unit vector $\boldsymbol{a}\in\mathbb{R}^{q},$ we consider the
asymptotic distribution of the linear combination $\boldsymbol{a}%
^{T}\boldsymbol{u}_{n}=-\boldsymbol{a}^{T}\boldsymbol{A}_{n}\boldsymbol{S}%
_{2,n}^{-\frac{1}{2}}\nabla L_{n}^{\alpha}|_{\mathcal{B}}(\boldsymbol{\beta
}_{\mathcal{S}0},\sigma_{0})=\sum_{i=1}^{n}\xi_{i}$ where
\[
\xi_{i}=-\boldsymbol{a}^{T}\boldsymbol{A}_{n}\boldsymbol{S}_{2,n}^{-\frac
	{1}{2}}\left[
\begin{matrix}
\phi_{1,\alpha}(r_{i}(\boldsymbol{\theta}_{0}))\boldsymbol{x}_{i}\\
\phi_{2,\alpha}(r_{i}(\boldsymbol{\theta}_{0}))
\end{matrix}
\right]  .
\]
The random variables $r_{i}(\boldsymbol{\theta}_{0})=\left(  \frac
{y_{i}-\boldsymbol{x}_{i}^{T}\boldsymbol{\beta}_{0}}{\sigma_{0}}\right)  $ are
independent, therefore the random variables $\xi_{i}$ are  independent too with
mean $\boldsymbol{0}$ and 
\begin{align*}%
{\textstyle\sum_{i=1}^{n}}
\operatorname{var}(\xi_{i})  &  =\operatorname{var}\left(  -\boldsymbol{a}%
^{T}\boldsymbol{A}_{n}\boldsymbol{S}_{2,n}^{-\frac{1}{2}}%
{\textstyle\sum_{i=1}^{n}}
\left[
\begin{matrix}
\phi_{1,\alpha}(r_{i}(\boldsymbol{\theta}_{0}))\boldsymbol{x}_{i}\\
\phi_{2,\alpha}(r_{i}(\boldsymbol{\theta}_{0}))
\end{matrix}
\right]  \right)\\  &=-\boldsymbol{a}^{T}\boldsymbol{A}_{n}\boldsymbol{S}%
_{2,n}^{-\frac{1}{2}}\operatorname{var}\left(
{\textstyle\sum_{i=1}^{n}}
\left[
\begin{matrix}
\phi_{1,\alpha}(r_{i}(\boldsymbol{\theta}_{0}))\boldsymbol{x}_{i}\\
\phi_{2,\alpha}(r_{i}(\boldsymbol{\theta}_{0}))
\end{matrix}
\right]  \right)  \boldsymbol{S}_{2,n}^{-\frac{1}{2}}\boldsymbol{A}_{n}%
^{T}\boldsymbol{a}\\
&  =\boldsymbol{a}^{T}\boldsymbol{A}_{n}\boldsymbol{S}_{2,n}^{-\frac{1}{2}%
}\boldsymbol{S}_{2,n}\boldsymbol{S}_{2,n}^{-\frac{1}{2}}\boldsymbol{A}_{n}%
^{T}\boldsymbol{a}.
\end{align*}
By hypothesis $\sum_{i=1}^{n}\operatorname{var}(\xi_{i})\underset{n\rightarrow
	\infty}{\rightarrow}\boldsymbol{a}^{T}\boldsymbol{G}\boldsymbol{a}.$ Finally,
by Assumption (A5) of the main paper and using the Cauchy-Schwarz inequality, we have
\begin{align*}%
{\textstyle\sum_{i=1}^{n}}
\mathbb{E}\left[  |\xi_{i}|^{3}\right]   &  \leq%
{\textstyle\sum_{i=1}^{n}}
\left\vert \boldsymbol{a}^{T}\boldsymbol{A}_{n}\boldsymbol{S}_{2,n}^{-\frac
	{1}{2}}\boldsymbol{x}_{\mathcal{S}i}^{\ast}\right\vert ^{3}\max_{k=1,2}%
|\phi_{k,\alpha}(r_{i}(\boldsymbol{\theta}_{0}))|=O(1)%
{\textstyle\sum_{i=1}^{n}}
\left\vert \boldsymbol{a}^{T}\boldsymbol{A}_{n}\boldsymbol{S}_{2,n}^{-\frac
	{1}{2}}\boldsymbol{x}_{\mathcal{S}i}^{\ast}\right\vert ^{3}\\
&  \leq O(1)%
{\textstyle\sum_{i=1}^{n}}
\left\Vert \boldsymbol{a}^{T}\boldsymbol{A}_{n}\right\Vert _{2}^{3}\left\Vert
\boldsymbol{S}_{2,n}^{-\frac{1}{2}}\boldsymbol{x}_{\mathcal{S}i}^{\ast
}\right\Vert _{2}^{3}\leq O(1)\left\Vert \boldsymbol{a}^{T}\boldsymbol{A}%
_{n}\right\Vert _{2}^{3}%
{\textstyle\sum_{i=1}^{n}}
\left\Vert \boldsymbol{x}_{\mathcal{S}i}^{\ast}\boldsymbol{S}_{2,n}%
^{-1}\boldsymbol{x}_{\mathcal{S}i}^{\ast}\right\Vert _{2}^{3/2}=o\left(
1\right)  .
\end{align*}
where $\boldsymbol{x}_{\mathcal{S}i}^{\ast}=(\boldsymbol{x}_{\mathcal{S}i}%
^{T},1)^{T}$ . Applying Lyaupunov's theorem we have, $\boldsymbol{a}%
^{T}\boldsymbol{u}_{n}\underset{n\rightarrow\infty}{\overset{\mathcal{L}%
	}{\rightarrow}}\mathcal{N}(\boldsymbol{0},\boldsymbol{a}^{T}\boldsymbol{G}%
\boldsymbol{a}).$ Thus, this asymptotic normality holds for any vector
$\boldsymbol{a}\in\mathbb{R}^{q},$ 
\[
\boldsymbol{u}_{n}\underset{n\rightarrow\infty}{\overset{\mathcal{L}%
	}{\rightarrow}}\mathcal{N}(\boldsymbol{0},\boldsymbol{G}).
\]

\subsection{Additional Numerical Results}
Tables \ref{p100e0signal1}–\ref{p200e1signal0x1} present the simulation results, under the set-up discussed in Section 6 of the main paper, for $p = 100$ and  $p=200$ covariates. For each number of covariates, we consider pure data, 
$10\%$ $Y$ outliers and $\boldsymbol{X}$ outliers respectively. 
\begin{table}[H]
	\centering
	\caption{Performance measures obtained by different methods for $p=100$, strong signal and no outliers}
	\begin{tabular}{l|rrr|rrr|r}
		\hline
		Method	& MS($\widehat{\boldsymbol{\beta}}$) & TP($\widehat{\boldsymbol{\beta}}$) & TN($\widehat{\boldsymbol{\beta}}$) & MSES($\widehat{\boldsymbol{\beta}}$) &  MSES($\widehat{\boldsymbol{\beta}}$)  &  EE($\widehat{\sigma}$) & APrB($\widehat{\boldsymbol{\beta}}$)  \\ 
		& & & &  $(10^{-2})$ & $(10^{-5})$ &  $(10^{-2})$ &  $(10^{-2})$ \\
		\hline
		LS-LASSO & 6.45 & 1.00 & 0.98 & 1.91 & 3.10 & 32.79 & 4.93 \\ 
		LS-SCAD & 4.90 & 0.98 & 1.00 & 15.95 & 0.00 & 70.66 & 6.59\\ 
		LS-MCP & 4.90 & 0.98 & 1.00 & 11.25 & 0.00 & 55.29 & 5.87\\ 
		LAD-Lasso & 6.14 & 1.00 & 0.99 & 2.96 & 6.79 & 32.78 & 5.06  \\ 
		RLARS & 8.81 & 1.00 & 0.96 & 0.94 & 30.80 & 7.46 & 4.29 \\ 
		sLTS & 5.76 & 1.00 & 0.99 & 6.59 & 4.23 & 25.17 & 6.54  \\ 
		RANSAC & 9.13 & 1.00 & 0.96 & 3.05 & 28.48 & 9.93 & 4.97 \\ 
		\hline
		DPD-lasso $\alpha = $ 0.1 & 14.66 & 1.00 & 0.90 & 4.15 & 16.53 & 18.62 & 5.61 \\ 
		DPD-lasso $\alpha = $ 0.3 & 9.08 & 1.00 & 0.96 & 5.98 & 7.35 & 24.33 & 6.57 \\ 
		DPD-lasso $\alpha = $ 0.5 & 6.29 & 1.00 & 0.99 & 7.11 & 4.37 & 26.61 & 6.66 \\ 
		DPD-lasso $\alpha = $ 0.7 & 5.36 & 1.00 & 1.00 & 8.39 & 3.98 & 29.28 & 6.90 \\ 
		DPD-lasso $\alpha = $ 1 & 7.73 & 1.00 & 0.97 & 33.15 & 2173.95 & 27.90 & 8.95 \\ 
		\hline
		LDPD-lasso $\alpha = $ 0.1 & 5.29 & 1.00 & 1.00 & 7.19 & 1.55 & 29.56 & 6.90 \\ 
		LDPD-lasso $\alpha = $ 0.2 & 5.28 & 1.00 & 1.00 & 7.35 & 1.72 & 29.73 & 6.88  \\ 
		LDPD-lasso $\alpha = $ 0.3 & 5.28 & 1.00 & 1.00 & 7.81 & 2.07 & 30.36 & 6.91  \\ 
		LDPD-lasso $\alpha = $ 0.4 & 5.26 & 1.00 & 1.00 & 10.31 & 2.52 & 34.86 & 7.40  \\ 
		LDPD-lasso $\alpha = $ 0.5 & 5.24 & 1.00 & 1.00 & 18.53 & 3.50 & 47.82 & 8.89 \\ 
		\hline
		DPD-ncv $\alpha = $ 0.1 & 5.00 & 1.00 & 1.00 & 0.44 & 0.00 & 4.44 & 4.03  \\ 
		DPD-ncv $\alpha = $ 0.3 & 5.00 & 1.00 & 1.00 & 0.68 & 0.00 & 8.34 & 4.08  \\ 
		DPD-ncv $\alpha = $ 0.5 & 5.00 & 1.00 & 1.00 & 1.01 & 0.00 & 11.64 & 4.13  \\ 
		DPD-ncv $\alpha = $ 0.7 & 4.99 & 1.00 & 1.00 & 1.50 & 0.00 & 14.12 & 4.15 \\ 
		DPD-ncv $\alpha = $ 1 & 4.97 & 0.99 & 1.00 & 2.00 & 0.00 & 17.02 & 4.15 \\ 
		\hline
		MNPRPE-ncv $\alpha = $ 0.1 & 5.02 & 1.00 & 1.00 & 0.34 & 0.07 & 3.10 & 3.99  \\ 
		MNPRPE-ncv $\alpha = $ 0.2 & 5.01 & 1.00 & 1.00 & 0.34 & 0.04 & 3.21 & 3.98  \\ 
		MNPRPE-ncv $\alpha = $ 0.3 & 5.01 & 1.00 & 1.00 & 0.36 & 0.03 & 3.42 & 3.97 \\ 
		MNPRPE-ncv $\alpha = $ 0.4 & 5.01 & 1.00 & 1.00 & 0.37 & 0.02 & 3.64 & 3.96 \\ 
		MNPRPE-ncv $\alpha = $ 0.5 & 5.00 & 1.00 & 1.00 & 0.38 & 0.00 & 3.87 & 3.95 \\ 
		\hline
	\end{tabular}
	\label{p100e0signal1}
\end{table}
\begin{table}[H]
	\centering
	\caption{Performance measures obtained by different methods for $p=100$, weak signal and no outliers}
	\begin{tabular}{l|rrr|rrr|r}
		\hline
		Method	& MS($\widehat{\boldsymbol{\beta}}$) & TP($\widehat{\boldsymbol{\beta}}$) & TN($\widehat{\boldsymbol{\beta}}$) & MSES($\widehat{\boldsymbol{\beta}}$) &  MSES($\widehat{\boldsymbol{\beta}}$)  &  EE($\widehat{\sigma}$) & APrB($\widehat{\boldsymbol{\beta}}$)  \\ 
		& & & &  $(10^{-2})$ & $(10^{-5})$ &  $(10^{-2})$ &  $(10^{-2})$ \\
		\hline
		LS-LASSO & 7.81 & 1.00 & 0.97 & 1.56 & 4.98 & 28.65 & 4.66  \\ 
		LS-SCAD & 6.73 & 1.00 & 0.98 & 0.35 & 1.85 & 19.91 & 3.98  \\ 
		LS-MCP & 6.05 & 1.00 & 0.99 & 0.34 & 3.53 & 19.92 & 3.94 \\ 
		LAD-Lasso & 6.13 & 1.00 & 0.99 & 2.99 & 7.29 & 32.66 & 5.10 \\ 
		RLARS & 9.42 & 1.00 & 0.95 & 0.48 & 46.47 & 8.56 & 4.05 \\ 
		sLTS & 19.36 & 1.00 & 0.85 & 3.52 & 48.90 & 11.70 & 5.09 \\ 
		RANSAC & 12.33 & 1.00 & 0.92 & 2.87 & 69.31 & 19.22 & 5.05 \\ 
		\hline
		DPD-lasso $\alpha = $ 0.1 & 14.99 & 1.00 & 0.89 & 4.03 & 16.59 & 18.27 & 5.68 \\ 
		DPD-lasso $\alpha = $ 0.3 & 8.74 & 1.00 & 0.96 & 6.09 & 6.80 & 24.69 & 6.52 \\ 
		DPD-lasso  $\alpha = $ 0.5 & 6.08 & 1.00 & 0.99 & 7.17 & 3.96 & 26.72 & 6.68 \\ 
		DPD-lasso $\alpha = $ 0.7 & 5.36 & 1.00 & 1.00 & 8.48 & 3.94 & 29.50 & 6.92 \\ 
		DPD-lasso  $\alpha = $ 1 & 5.66 & 1.00 & 0.99 & 9.43 & 8.26 & 27.68 & 7.10 \\ 
		\hline
		LDPD-lasso $\alpha = $ 0.1 & 5.29 & 1.00 & 1.00 & 7.19 & 1.53 & 29.56 & 6.90  \\ 
		LDPD-lasso $\alpha = $ 0.2 & 5.28 & 1.00 & 1.00 & 7.35 & 1.71 & 29.73 & 6.88  \\ 
		LDPD-lasso $\alpha = $ 0.3 & 5.28 & 1.00 & 1.00 & 7.81 & 2.05 & 30.36 & 6.91  \\ 
		LDPD-lasso $\alpha = $ 0.4 & 5.23 & 0.99 & 1.00 & 9.88 & 2.50 & 34.48 & 7.35  \\ 
		LDPD-lasso $\alpha = $ 0.5 & 5.14 & 0.98 & 1.00 & 14.25 & 2.88 & 44.30 & 8.21  \\ 
		\hline
		DPD-ncv $\alpha = $ 0.1 & 5.07 & 1.00 & 1.00 & 0.76 & 0.19 & 4.78 & 4.11 \\ 
		DPD-ncv $\alpha = $ 0.3 & 5.05 & 1.00 & 1.00 & 0.88 & 0.21 & 9.13 & 4.09 \\ 
		DPD-ncv $\alpha = $ 0.5 & 5.02 & 0.99 & 1.00 & 1.43 & 0.26 & 12.79 & 4.23   \\ 
		DPD-ncv $\alpha = $ 0.7 & 4.99 & 0.98 & 1.00 & 1.71 & 0.29 & 15.67 & 4.19  \\ 
		DPD-ncv $\alpha = $ 1 & 4.91 & 0.97 & 1.00 & 2.92 & 0.27 & 18.94 & 4.46 \\ 
		\hline
		MNPRPE-ncv $\alpha = $ 0.1 & 5.08 & 1.00 & 1.00 & 0.53 & 0.17 & 3.24 & 4.01 \\ 
		MNPRPE-ncv $\alpha = $ 0.2 & 5.03 & 1.00 & 1.00 & 0.53 & 0.17 & 3.38 & 4.00 \\ 
		MNPRPE-ncv $\alpha = $ 0.3 & 5.01 & 1.00 & 1.00 & 0.52 & 0.18 & 3.58 & 3.98 \\ 
		MNPRPE-ncv $\alpha = $ 0.4 & 5.01 & 1.00 & 1.00 & 0.55 & 0.19 & 3.79 & 3.98  \\ 
		MNPRPE-ncv $\alpha = $ 0.5 & 5.01 & 1.00 & 1.00 & 0.56 & 0.20 & 4.02 & 3.96  \\ 
		\hline
	\end{tabular}
	
	\label{p100e0signal0}
\end{table}

\begin{table}[H]
	\centering
	\caption{Performance measures obtained by different methods for $p=100$, strong signal and $Y-$outliers}
	\begin{tabular}{l|rrr|rrr|r}
		\hline
		Method	& MS($\widehat{\boldsymbol{\beta}}$) & TP($\widehat{\boldsymbol{\beta}}$) & TN($\widehat{\boldsymbol{\beta}}$) & MSES($\widehat{\boldsymbol{\beta}}$) &  MSES($\widehat{\boldsymbol{\beta}}$)  &  EE($\widehat{\sigma}$) & APrB($\widehat{\boldsymbol{\beta}}$)  \\ 
		& & & &  $(10^{-2})$ & $(10^{-5})$ &  $(10^{-2})$ &  $(10^{-2})$ \\
		\hline
		LS-LASSO & 6.44 & 1.00 & 0.98 & 1.91 & 3.10 & 32.79 & 4.93  \\ 
		LS-SCAD & 4.90 & 0.98 & 1.00 & 15.95 & 0.00 & 70.66 & 6.59 \\ 
		LS-MCP & 4.90 & 0.98 & 1.00 & 11.25 & 0.00 & 55.29 & 5.87 \\ 
		LAD-Lasso & 6.13 & 1.00 & 0.99 & 2.95 & 6.97 & 32.77 & 5.09  \\ 
		RLARS & 8.81 & 1.00 & 0.96 & 0.94 & 30.80 & 7.46 & 4.29  \\ 
		sLTS & 5.76 & 1.00 & 0.99 & 6.59 & 4.23 & 25.17 & 6.54 \\ 
		RANSAC & 9.44 & 1.00 & 0.95 & 3.73 & 32.80 & 11.77 & 5.39 \\ 
		\hline
		DPD-lasso $\alpha = $ 0.1 & 16.18 & 1.00 & 0.88 & 3.65 & 20.16 & 18.25 & 5.52 \\ 
		DPD-lasso $\alpha = $ 0.3 & 9.29 & 1.00 & 0.95 & 5.53 & 8.42 & 23.70 & 6.14 \\ 
		DPD-lasso $\alpha = $ 0.5 & 6.20 & 1.00 & 0.99 & 6.68 & 3.92 & 27.26 & 6.53 \\ 
		DPD-lasso $\alpha = $ 0.7 & 5.50 & 1.00 & 0.99 & 7.51 & 3.93 & 29.44 & 6.64 \\ 
		DPD-lasso $\alpha = $ 1 & 5.80 & 1.00 & 0.99 & 7.80 & 6.88 & 28.06 & 6.49 \\ 
		\hline
		LDPD-lasso $\alpha = $ 0.1 & 5.28 & 1.00 & 1.00 & 10.32 & 2.71 & 35.42 & 6.95 \\ 
		LDPD-lasso $\alpha = $ 0.2 & 5.30 & 1.00 & 1.00 & 7.41 & 1.79 & 29.62 & 6.86  \\ 
		LDPD-lasso $\alpha = $ 0.3 & 5.35 & 1.00 & 1.00 & 7.97 & 2.10 & 30.44 & 6.93 \\ 
		LDPD-lasso $\alpha = $ 0.4 & 5.34 & 1.00 & 1.00 & 10.80 & 2.59 & 35.02 & 7.48  \\ 
		LDPD-lasso $\alpha = $ 0.5 & 5.28 & 0.99 & 1.00 & 37.95 & 3.79 & 54.34 & 9.90 \\ 
		\hline
		DPD-ncv $\alpha = $ 0.1 & 4.99 & 1.00 & 1.00 & 2.24 & 0.00 & 7.30 & 4.72  \\ 
		DPD-ncv  $\alpha = $ 0.3 & 4.99 & 1.00 & 1.00 & 2.87 & 0.00 & 9.45 & 4.77 \\ 
		DPD-ncv $\alpha = $ 0.5 & 4.99 & 1.00 & 1.00 & 3.42 & 0.00 & 11.65 & 4.82  \\ 
		DPD-ncv $\alpha = $ 0.7 & 4.98 & 1.00 & 1.00 & 4.18 & 0.00 & 13.36 & 4.99  \\ 
		DPD-ncv  $\alpha = $ 1 & 4.96 & 0.99 & 1.00 & 4.98 & 0.00 & 15.33 & 5.04  \\ 
		\hline
		MNPRPE-ncv $\alpha = $ 0.1 & 5.03 & 1.00 & 1.00 & 0.37 & 0.17 & 3.17 & 3.99 \\ 
		MNPRPE-ncv $\alpha = $ 0.2 & 5.01 & 1.00 & 1.00 & 0.38 & 0.06 & 3.24 & 3.99 \\ 
		MNPRPE-ncv $\alpha = $ 0.3 & 5.01 & 1.00 & 1.00 & 0.39 & 0.04 & 3.44 & 3.98 \\ 
		MNPRPE-ncv $\alpha = $ 0.4 & 5.01 & 1.00 & 1.00 & 0.41 & 0.03 & 3.73 & 3.96  \\ 
		MNPRPE-ncv $\alpha = $ 0.5 & 5.00 & 1.00 & 1.00 & 0.58 & 0.00 & 4.22 & 4.00  \\ 
		\hline
	\end{tabular}
	
	\label{p100e1signal1}
\end{table}
\begin{table}[H]
	\centering
	\caption{Performance measures obtained by different methods for $p=100$, weak signal and $Y-$outliers}
	\begin{tabular}{l|rrr|rrr|r}
		\hline
		Method	& MS($\widehat{\boldsymbol{\beta}}$) & TP($\widehat{\boldsymbol{\beta}}$) & TN($\widehat{\boldsymbol{\beta}}$) & MSES($\widehat{\boldsymbol{\beta}}$) &  MSES($\widehat{\boldsymbol{\beta}}$)  &  EE($\widehat{\sigma}$) & APrB($\widehat{\boldsymbol{\beta}}$)  \\ 
		& & & &  $(10^{-2})$ & $(10^{-5})$ &  $(10^{-2})$ &  $(10^{-2})$ \\
		\hline
		LS-LASSO & 7.78 & 1.00 & 0.97 & 1.56 & 4.94 & 28.75 & 4.67  \\ 
		LS-SCAD & 6.64 & 1.00 & 0.98 & 0.35 & 1.74 & 20.00 & 3.98  \\ 
		LS-MCP & 5.96 & 1.00 & 0.99 & 0.34 & 3.32 & 20.10 & 3.94  \\ 
		LAD-Lasso & 6.12 & 1.00 & 0.99 & 3.00 & 7.42 & 32.76 & 5.13  \\ 
		RLARS & 9.42 & 1.00 & 0.95 & 0.48 & 46.47 & 8.56 & 4.05\\ 
		sLTS & 19.36 & 1.00 & 0.85 & 3.52 & 48.90 & 11.70 & 5.09\\ 
		RANSAC & 12.09 & 1.00 & 0.93 & 2.71 & 63.20 & 18.40 & 4.88 \\ 
		\hline
		DPD-lasso $\alpha = $ 0.1 & 16.41 & 1.00 & 0.88 & 3.53 & 21.25 & 18.49 & 5.65 \\ 
		DPD-lasso $\alpha = $ 0.3 & 8.67 & 1.00 & 0.96 & 5.70 & 7.66 & 24.21 & 6.13 \\ 
		DPD-lasso $\alpha = $ 0.5 & 6.02 & 1.00 & 0.99 & 6.74 & 3.50 & 27.54 & 6.53 \\ 
		DPD-lasso $\alpha = $ 0.7 & 5.50 & 1.00 & 0.99 & 7.55 & 3.89 & 29.55 & 6.65 \\ 
		DPD-lasso $\alpha = $ 1 & 5.80 & 1.00 & 0.99 & 7.82 & 6.83 & 28.15 & 6.50 \\ 
		\hline
		LDPD-lasso $\alpha = $ 0.1 & 5.30 & 1.00 & 1.00 & 7.30 & 1.62 & 29.61 & 6.92 \\ 
		LDPD-lasso $\alpha = $ 0.2 & 5.30 & 1.00 & 1.00 & 7.44 & 1.77 & 29.69 & 6.87 \\ 
		LDPD-lasso $\alpha = $ 0.3 & 5.34 & 1.00 & 1.00 & 7.97 & 2.07 & 30.44 & 6.93  \\ 
		LDPD-lasso $\alpha = $ 0.4 & 5.32 & 1.00 & 1.00 & 10.05 & 2.56 & 34.40 & 7.38  \\ 
		LDPD-lasso $\alpha = $ 0.5 & 5.15 & 0.97 & 1.00 & 15.20 & 2.97 & 44.88 & 8.37  \\ 
		\hline
		DPD-ncv $\alpha = $ 0.1 & 4.99 & 0.99 & 1.00 & 2.01 & 0.04 & 3.94 & 4.30 \\ 
		DPD-ncv $\alpha = $ 0.3 & 4.97 & 0.99 & 1.00 & 2.18 & 0.06 & 5.19 & 4.32  \\ 
		DPD-ncv $\alpha = $ 0.5 & 4.90 & 0.97 & 1.00 & 2.32 & 0.08 & 7.09 & 4.35  \\ 
		DPD-ncv $\alpha = $ 0.7 & 4.87 & 0.97 & 1.00 & 2.40 & 0.06 & 9.09 & 4.37  \\ 
		DPD-ncv $\alpha = $ 1 & 4.78 & 0.95 & 1.00 & 2.76 & 0.08 & 11.79 & 4.44  \\ 
		\hline
		MNPRPE-ncv $\alpha = $ 0.1 & 5.16 & 1.00 & 1.00 & 0.74 & 0.22 & 3.52 & 4.12  \\ 
		MNPRPE-ncv $\alpha = $ 0.2 & 5.04 & 0.99 & 1.00 & 0.90 & 0.09 & 3.80 & 4.12 \\ 
		MNPRPE-ncv $\alpha = $ 0.3 & 4.99 & 0.99 & 1.00 & 0.99 & 0.05 & 3.99 & 4.11  \\ 
		MNPRPE-ncv $\alpha = $ 0.4 & 4.95 & 0.99 & 1.00 & 1.20 & 0.03 & 4.41 & 4.15 \\ 
		MNPRPE-ncv $\alpha = $ 0.5 & 4.91 & 0.98 & 1.00 & 1.45 & 0.02 & 4.86 & 4.16  \\ 
		\hline
	\end{tabular}
	
	\label{p100e1signal0}
\end{table}

\begin{table}[H]
	\centering
	\caption{Performance measures obtained by different methods for $p=100$, strong signal and $\boldsymbol{X}-$outliers}
	\begin{tabular}{l|rrr|rrr|r}
		\hline
		\multicolumn{8}{c}{Strong signal} \\
		\hline
		Method	& MS($\widehat{\boldsymbol{\beta}}$) & TP($\widehat{\boldsymbol{\beta}}$) & TN($\widehat{\boldsymbol{\beta}}$) & MSES($\widehat{\boldsymbol{\beta}}$) &  MSES($\widehat{\boldsymbol{\beta}}$)  &  EE($\widehat{\sigma}$) & APrB($\widehat{\boldsymbol{\beta}}$)  \\ 
		& & & &  $(10^{-2})$ & $(10^{-5})$ &  $(10^{-2})$ &  $(10^{-2})$ \\
		\hline
		LS-LASSO & 6.44 & 1.00 & 0.98 & 1.91 & 3.10 & 32.79 & 4.93 \\ 
		LS-SCAD & 4.90 & 0.98 & 1.00 & 15.95 & 0.00 & 70.66 & 6.59  \\ 
		LS-MCP & 4.90 & 0.98 & 1.00 & 11.25 & 0.00 & 55.29 & 5.87  \\ 
		LAD-Lasso & 6.13 & 1.00 & 0.99 & 2.95 & 6.97 & 32.77 & 5.09\\ 
		RLARS & 8.81 & 1.00 & 0.96 & 0.94 & 30.80 & 7.46 & 4.29 \\ 
		sLTS & 5.76 & 1.00 & 0.99 & 6.59 & 4.23 & 25.17 & 6.54  \\ 
		RANSAC & 9.25 & 1.00 & 0.96 & 3.30 & 39.16 & 11.09 & 5.01 \\ 
		\hline
		DPD-lasso $\alpha = $ 0.1 & 5.29 & 1.00 & 1.00 & 7.36 & 1.53 & 30.22 & 6.92 \\ 
		DPD-lasso $\alpha = $ 0.3 & 5.30 & 1.00 & 1.00 & 7.97 & 2.07 & 31.06 & 6.99 \\ 
		DPD-lasso $\alpha = $ 0.5 & 5.24 & 1.00 & 1.00 & 18.66 & 3.47 & 48.24 & 8.89 \\ 
		DPD-lasso $\alpha = $ 0.7 & 5.36 & 1.00 & 1.00 & 8.39 & 3.98 & 29.30 & 6.91 \\ 
		DPD-lasso $\alpha = $ 1 & 5.67 & 1.00 & 0.99 & 9.36 & 7.39 & 27.71 & 7.05 \\  
		\hline
		LDPD-lasso  $\alpha = $ 0.1 & 5.29 & 1.00 & 1.00 & 7.19 & 1.55 & 29.56 & 6.90 \\ 
		LDPD-lasso  $\alpha = $ 0.2 & 5.28 & 1.00 & 1.00 & 7.35 & 1.72 & 29.73 & 6.88  \\ 
		LDPD-lasso  $\alpha = $ 0.3 & 5.28 & 1.00 & 1.00 & 7.81 & 2.07 & 30.36 & 6.91  \\ 
		LDPD-lasso  $\alpha = $ 0.4 & 5.26 & 1.00 & 1.00 & 10.31 & 2.52 & 34.86 & 7.40 \\ 
		LDPD-lasso  $\alpha = $ 0.5 & 5.24 & 1.00 & 1.00 & 18.53 & 3.50 & 47.82 & 8.89 \\ 
		\hline
		DPD-ncv $\alpha = $ 0.1 & 5.02 & 1.00 & 1.00 & 0.34 & 0.15 & 4.53 & 3.98  \\ 
		DPD-ncv $\alpha = $ 0.3 & 5.02 & 1.00 & 1.00 & 0.38 & 0.15 & 8.22 & 3.98  \\ 
		DPD-ncv $\alpha = $ 0.5 & 5.09 & 1.00 & 1.00 & 0.44 & 0.74 & 11.48 & 3.99 \\ 
		DPD-ncv $\alpha = $ 0.7 & 5.08 & 1.00 & 1.00 & 0.57 & 0.62 & 14.00 & 3.96  \\ 
		DPD-ncv $\alpha = $ 1 & 5.02 & 1.00 & 1.00 & 0.73 & 0.11 & 16.93 & 3.92 \\ 
		\hline
		MNPRPE-ncv $\alpha = $ 0.1 & 5.02 & 1.00 & 1.00 & 0.34 & 0.07 & 3.10 & 3.99  \\ 
		MNPRPE-ncv $\alpha = $ 0.2 & 5.01 & 1.00 & 1.00 & 0.34 & 0.04 & 3.21 & 3.98  \\ 
		MNPRPE-ncv $\alpha = $ 0.3 & 5.01 & 1.00 & 1.00 & 0.36 & 0.03 & 3.41 & 3.97  \\ 
		MNPRPE-ncv $\alpha = $ 0.4 & 5.01 & 1.00 & 1.00 & 0.37 & 0.02 & 3.63 & 3.95 \\ 
		MNPRPE-ncv $\alpha = $ 0.5 & 5.00 & 1.00 & 1.00 & 0.38 & 0.00 & 3.86 & 3.95 \\ 
		\hline
		
	\end{tabular}
	\label{p100e1signal1x1}
\end{table}

\begin{table}[H]
	\centering
	\caption{Performance measures obtained by different methods for $p=100$, weak signal and $\boldsymbol{X}-$outliers}
	\begin{tabular}{l|rrr|rrr|r}
		\hline
		Method	& MS($\widehat{\boldsymbol{\beta}}$) & TP($\widehat{\boldsymbol{\beta}}$) & TN($\widehat{\boldsymbol{\beta}}$) & MSES($\widehat{\boldsymbol{\beta}}$) &  MSES($\widehat{\boldsymbol{\beta}}$)  &  EE($\widehat{\sigma}$) & APrB($\widehat{\boldsymbol{\beta}}$)  \\ 
		& & & &  $(10^{-2})$ & $(10^{-5})$ &  $(10^{-2})$ &  $(10^{-2})$ \\
		\hline
		LS-LASSO & 7.78 & 1.00 & 0.97 & 1.56 & 4.94 & 28.75 & 4.67 \\ 
		LS-SCAD & 6.64 & 1.00 & 0.98 & 0.35 & 1.74 & 20.00 & 3.98  \\ 
		LS-MCP & 5.96 & 1.00 & 0.99 & 0.34 & 3.32 & 20.10 & 3.94  \\ 
		LAD-Lasso & 6.12 & 1.00 & 0.99 & 3.00 & 7.42 & 32.76 & 5.13  \\ 
		RLARS & 9.42 & 1.00 & 0.95 & 0.48 & 46.47 & 8.56 & 4.05 \\ 
		sLTS & 19.36 & 1.00 & 0.85 & 3.52 & 48.90 & 11.70 & 5.09  \\ 
		RANSAC & 12.72 & 1.00 & 0.92 & 2.88 & 70.86 & 20.46 & 4.99  \\ 
		\hline
		DPD-lasso $\alpha = $ 0.1 & 5.29 & 1.00 & 1.00 & 7.36 & 1.52 & 30.22 & 6.92 \\ 
		DPD-lasso $\alpha = $ 0.3 & 5.30 & 1.00 & 1.00 & 7.97 & 2.04 & 31.07 & 6.99 \\ 
		DPD-lasso $\alpha = $ 0.5 & 5.18 & 0.99 & 1.00 & 12.52 & 2.96 & 41.69 & 7.99 \\ 
		DPD-lasso $\alpha = $ 0.7 & 5.36 & 1.00 & 1.00 & 8.48 & 3.94 & 29.50 & 6.92 \\ 
		DPD-lasso $\alpha = $ 1 & 5.66 & 1.00 & 0.99 & 9.43 & 8.26 & 27.68 & 7.10 \\ 
		\hline
		LDPD-lasso  $\alpha = $ 0.1 & 5.29 & 1.00 & 1.00 & 7.19 & 1.53 & 29.56 & 6.90  \\ 
		LDPD-lasso  $\alpha = $ 0.2 & 5.28 & 1.00 & 1.00 & 7.35 & 1.71 & 29.73 & 6.88  \\ 
		LDPD-lasso  $\alpha = $ 0.3 & 5.28 & 1.00 & 1.00 & 7.81 & 2.05 & 30.36 & 6.91  \\ 
		LDPD-lasso  $\alpha = $ 0.4 & 5.23 & 0.99 & 1.00 & 9.88 & 2.50 & 34.48 & 7.35  \\ 
		LDPD-lasso  $\alpha = $ 0.5 & 5.14 & 0.98 & 1.00 & 14.25 & 2.88 & 44.30 & 8.21  \\ 
		\hline
		DPD-ncv $\alpha = $ 0.1 & 5.14 & 1.00 & 1.00 & 0.44 & 0.76 & 5.00 & 3.98 \\
		DPD-ncv $\alpha = $ 0.3 & 5.12 & 1.00 & 1.00 & 0.58 & 1.04 & 9.53 & 3.96 \\ 
		DPD-ncv $\alpha = $ 0.5 & 5.09 & 0.99 & 1.00 & 0.82 & 1.36 & 13.11 & 4.05  \\ 
		DPD-ncv $\alpha = $ 0.7 & 5.07 & 0.99 & 1.00 & 1.12 & 1.84 & 15.91 & 4.00  \\ 
		DPD-ncv $\alpha = $ 1 & 5.14 & 0.98 & 1.00 & 1.37 & 2.80 & 19.23 & 4.05  \\ 
		\hline
		MNPRPE-ncv $\alpha = $ 0.1 & 5.08 & 1.00 & 1.00 & 0.53 & 0.17 & 3.24 & 4.02\\ 
		MNPRPE-ncv $\alpha = $ 0.2 & 5.03 & 1.00 & 1.00 & 0.52 & 0.17 & 3.39 & 4.00  \\ 
		MNPRPE-ncv $\alpha = $ 0.3 & 5.01 & 1.00 & 1.00 & 0.52 & 0.18 & 3.58 & 3.98 \\ 
		MNPRPE-ncv $\alpha = $ 0.4 & 5.01 & 1.00 & 1.00 & 0.54 & 0.19 & 3.78 & 3.97\\ 
		MNPRPE-ncv $\alpha = $ 0.5 & 5.01 & 1.00 & 1.00 & 0.56 & 0.20 & 4.01 & 3.96 \\ 
		\hline
	\end{tabular}
	\label{p100e1signal0x1}
\end{table}

\begin{table}[H]
	\centering
	\caption{Performance measures obtained by different methods for $p=200$, strong signal and no outliers}
	\begin{tabular}{l|rrr|rrr|r}
		\hline
		Method	& MS($\widehat{\boldsymbol{\beta}}$) & TP($\widehat{\boldsymbol{\beta}}$) & TN($\widehat{\boldsymbol{\beta}}$) & MSES($\widehat{\boldsymbol{\beta}}$) &  MSES($\widehat{\boldsymbol{\beta}}$)  &  EE($\widehat{\sigma}$) & APrB($\widehat{\boldsymbol{\beta}}$)  \\ 
		& & & &  $(10^{-2})$ & $(10^{-5})$ &  $(10^{-2})$ &  $(10^{-2})$ \\
		\hline
		LS-LASSO & 6.90 & 1.00 & 0.99 & 2.13 & 1.67 & 34.00 & 4.72  \\ 
		LS-SCAD & 4.94 & 0.99 & 1.00 & 16.06 & 0.00 & 68.46 & 7.53  \\ 
		LS-MCP & 4.94 & 0.99 & 1.00 & 11.14 & 0.00 & 52.61 & 6.69 \\ 
		LAD-Lasso & 6.29 & 1.00 & 0.99 & 3.67 & 2.28 & 37.67 & 5.35 \\ 
		RLARS & 7.86 & 0.99 & 0.99 & 1.72 & 10.18 & 7.61 & 4.53  \\ 
		sLTS & 6.06 & 1.00 & 0.99 & 6.61 & 2.19 & 24.14 & 6.23  \\ 
		RANSAC & 9.94 & 1.00 & 0.97 & 4.45 & 21.32 & 10.05 & 5.35 \\
		\hline 
		DPD-lasso $\alpha = $ 0.1 & 11.99 & 1.00 & 0.96 & 4.15 & 4.55 & 18.44 & 5.29 \\ 
		DPD-lasso $\alpha = $ 0.3 & 15.65 & 1.00 & 0.95 & 17.90 & 294.60 & 20.86 & 6.62 \\ 
		DPD-lasso  $\alpha = $ 0.5 & 10.89 & 0.99 & 0.97 & 66.82 & 543.25 & 22.16 & 9.18 \\ 
		DPD-lasso  $\alpha = $ 0.7 & 5.73 & 1.00 & 1.00 & 7.46 & 1.45 & 26.35 & 6.46 \\ 
		DPD-lasso  $\alpha = $ 1 & 10.17 & 0.98 & 0.97 & 54.19 & 186.23 & 24.47 & 8.44 \\
		\hline
		LDPD-lasso $\alpha = $ 0.1 & 5.35 & 1.00 & 1.00 & 6.81 & 0.75 & 27.71 & 6.51  \\ 
		LDPD-lasso $\alpha = $ 0.2 & 5.36 & 1.00 & 1.00 & 6.75 & 0.79 & 27.22 & 6.43  \\ 
		LDPD-lasso $\alpha = $ 0.3 & 5.34 & 1.00 & 1.00 & 7.42 & 0.77 & 28.89 & 6.56  \\ 
		LDPD-lasso $\alpha = $ 0.4 & 5.31 & 1.00 & 1.00 & 10.18 & 0.84 & 34.52 & 7.25  \\ 
		LDPD-lasso $\alpha = $ 0.5 & 5.21 & 0.99 & 1.00 & 19.87 & 1.06 & 50.48 & 8.96 \\ 
		\hline
		DPD-ncv $\alpha = $ 0.1 & 5.00 & 1.00 & 1.00 & 0.38 & 0.00 & 4.67 & 3.98 \\ 
		DPD-ncv $\alpha = $ 0.3 & 5.00 & 1.00 & 1.00 & 0.53 & 0.00 & 8.62 & 3.99 \\ 
		DPD-ncv $\alpha = $ 0.5 & 5.00 & 1.00 & 1.00 & 0.80 & 0.00 & 11.95 & 4.06  \\ 
		DPD-ncv $\alpha = $ 0.7 & 5.00 & 1.00 & 1.00 & 0.97 & 0.00 & 14.53 & 4.13 \\ 
		DPD-ncv $\alpha = $ 1 & 4.98 & 1.00 & 1.00 & 1.42 & 0.00 & 17.56 & 4.23  \\ 
		\hline
		MNPRPE-ncv $\alpha = $ 0.1 & 5.04 & 1.00 & 1.00 & 0.34 & 0.06 & 3.18 & 3.97 \\ 
		MNPRPE-ncv $\alpha = $ 0.2 & 5.03 & 1.00 & 1.00 & 0.35 & 0.03 & 3.33 & 3.98  \\ 
		MNPRPE-ncv $\alpha = $ 0.3 & 5.01 & 1.00 & 1.00 & 0.36 & 0.00 & 3.47 & 3.98 \\ 
		MNPRPE-ncv $\alpha = $ 0.4 & 5.00 & 1.00 & 1.00 & 0.38 & 0.00 & 3.66 & 3.98\\ 
		MNPRPE-ncv $\alpha = $ 0.5 & 5.00 & 1.00 & 1.00 & 0.40 & 0.00 & 3.91 & 3.97 \\ 
		\hline
	\end{tabular}
	\label{p200e0signal1}
\end{table}
\begin{table}[H]
	\centering
	\caption{Performance measures obtained by different methods for $p=200$, weak signal and no outliers}
	\begin{tabular}{l|rrr|rrr|r}
		\hline
		Method	& MS($\widehat{\boldsymbol{\beta}}$) & TP($\widehat{\boldsymbol{\beta}}$) & TN($\widehat{\boldsymbol{\beta}}$) & MSES($\widehat{\boldsymbol{\beta}}$) &  MSES($\widehat{\boldsymbol{\beta}}$)  &  EE($\widehat{\sigma}$) & APrB($\widehat{\boldsymbol{\beta}}$)  \\ 
		& & & &  $(10^{-2})$ & $(10^{-5})$ &  $(10^{-2})$ &  $(10^{-2})$ \\
		\hline
		LS-LASSO & 8.53 & 1.00 & 0.98 & 1.85 & 2.85 & 30.20 & 4.59 \\ 
		LS-SCAD & 7.79 & 1.00 & 0.99 & 0.33 & 1.14 & 18.36 & 4.04\\ 
		LS-MCP & 6.22 & 1.00 & 0.99 & 0.33 & 1.66 & 19.39 & 4.10  \\ 
		LAD-Lasso & 6.28 & 1.00 & 0.99 & 3.56 & 2.34 & 37.63 & 5.29  \\ 
		RLARS & 11.06 & 1.00 & 0.97 & 0.47 & 28.64 & 9.93 & 4.24  \\ 
		sLTS & 27.42 & 1.00 & 0.89 & 5.05 & 31.63 & 16.90 & 5.17  \\ 
		RANSAC & 13.67 & 1.00 & 0.96 & 3.16 & 40.91 & 20.92 & 4.75  \\ 
		\hline
		DPD-lasso $\alpha = $ 0.1 & 11.17 & 1.00 & 0.97 & 4.45 & 4.14 & 19.24 & 5.51 \\ 
		DPD-lasso $\alpha = $ 0.3 & 12.87 & 1.00 & 0.96 & 4.38 & 5.40 & 19.52 & 5.58 \\ 
		DPD-lasso $\alpha = $ 0.5 & 10.21 & 1.00 & 0.97 & 5.44 & 4.68 & 22.36 & 5.80 \\ 
		DPD-lasso $\alpha = $ 0.7 & 5.52 & 1.00 & 1.00 & 7.60 & 1.32 & 26.98 & 6.56 \\ 
		DPD-lasso  $\alpha = $ 1 & 8.47 & 0.96 & 0.98 & 15.67 & 22.67 & 23.43 & 8.33 \\ 
		\hline
		LDPD-lasso $\alpha = $ 0.1 & 5.34 & 1.00 & 1.00 & 6.81 & 0.75 & 27.71 & 6.51  \\ 
		LDPD-lasso $\alpha = $ 0.2 & 5.34 & 1.00 & 1.00 & 6.76 & 0.78 & 27.27 & 6.43  \\ 
		LDPD-lasso $\alpha = $ 0.3 & 5.33 & 1.00 & 1.00 & 7.44 & 0.76 & 28.97 & 6.56  \\ 
		LDPD-lasso $\alpha = $ 0.4 & 5.29 & 1.00 & 1.00 & 10.10 & 0.83 & 34.45 & 7.23  \\ 
		LDPD-lasso $\alpha = $ 0.5 & 5.18 & 0.98 & 1.00 & 17.53 & 1.03 & 48.27 & 8.68  \\ 
		\hline
		DPD-ncv $\alpha = $ 0.1 & 5.11 & 1.00 & 1.00 & 0.83 & 0.14 & 4.79 & 3.88  \\ 
		DPD-ncv $\alpha = $ 0.3 & 5.12 & 1.00 & 1.00 & 0.88 & 0.20 & 9.57 & 3.89  \\ 
		DPD-ncv $\alpha = $ 0.5 & 5.07 & 0.99 & 1.00 & 1.02 & 0.19 & 13.22 & 3.88  \\ 
		DPD-ncv $\alpha = $ 0.7 & 5.02 & 0.99 & 1.00 & 1.37 & 0.17 & 16.03 & 4.02 \\ 
		DPD-ncv $\alpha = $ 1 & 4.92 & 0.97 & 1.00 & 2.72 & 0.09 & 19.29 & 4.35  \\ 
		\hline
		MNPRPE-ncv $\alpha = $ 0.1 & 5.21 & 1.00 & 1.00 & 0.54 & 0.21 & 3.22 & 3.92 \\ 
		MNPRPE-ncv $\alpha = $ 0.2 & 5.13 & 1.00 & 1.00 & 0.56 & 0.18 & 3.47 & 3.91 \\ 
		MNPRPE-ncv $\alpha = $ 0.3 & 5.09 & 1.00 & 1.00 & 0.54 & 0.16 & 3.66 & 3.93 \\ 
		MNPRPE-ncv $\alpha = $ 0.4 & 5.07 & 1.00 & 1.00 & 0.56 & 0.16 & 3.79 & 3.92 \\ 
		MNPRPE-ncv $\alpha = $ 0.5 & 5.05 & 1.00 & 1.00 & 0.59 & 0.12 & 3.96 & 3.95\\ 
		\hline
	\end{tabular}
	\label{p200e0signal0}
\end{table}

\begin{table}[H]
	\centering
	\caption{Performance measures obtained by different methods for $p=200$, strong signal and $Y-$outliers}
	\begin{tabular}{l|rrr|rrr|r}
		\hline
		Method	& MS($\widehat{\boldsymbol{\beta}}$) & TP($\widehat{\boldsymbol{\beta}}$) & TN($\widehat{\boldsymbol{\beta}}$) & MSES($\widehat{\boldsymbol{\beta}}$) &  MSES($\widehat{\boldsymbol{\beta}}$)  &  EE($\widehat{\sigma}$) & APrB($\widehat{\boldsymbol{\beta}}$)  \\ 
		& & & &  $(10^{-2})$ & $(10^{-5})$ &  $(10^{-2})$ &  $(10^{-2})$ \\
		\hline
		LS-LASSO & 6.91 & 1.00 & 0.99 & 2.12 & 1.68 & 33.97 & 4.72  \\ 
		LS-SCAD & 4.94 & 0.99 & 1.00 & 16.06 & 0.00 & 68.46 & 7.53  \\ 
		LS-MCP & 4.94 & 0.99 & 1.00 & 11.14 & 0.00 & 52.61 & 6.69  \\ 
		LAD-Lasso & 6.29 & 1.00 & 0.99 & 3.68 & 2.27 & 37.69 & 5.35  \\ 
		RLARS & 7.86 & 0.99 & 0.99 & 1.72 & 10.18 & 7.61 & 4.53  \\ 
		sLTS & 6.06 & 1.00 & 0.99 & 6.61 & 2.19 & 24.14 & 6.23  \\ 
		RANSAC & 9.61 & 1.00 & 0.98 & 4.27 & 21.24 & 13.46 & 5.74  \\ 
		\hline
		DPD-lasso $\alpha = $ 0.1 & 12.81 & 1.00 & 0.96 & 4.05 & 5.58 & 17.46 & 5.64 \\ 
		DPD-lasso $\alpha = $ 0.3 & 14.81 & 1.00 & 0.95 & 3.79 & 8.05 & 18.64 & 5.00 \\ 
		DPD-lasso $\alpha = $ 0.5 & 12.89 & 1.00 & 0.96 & 71.87 & 866.25 & 21.71 & 8.40 \\ 
		DPD-lasso $\alpha = $ 0.7 & 5.88 & 1.00 & 1.00 & 10.28 & 9.21 & 27.17 & 6.72 \\ 
		DPD-lasso $\alpha = $ 1 & 9.28 & 0.98 & 0.98 & 64.87 & 382.50 & 23.48 & 9.65 \\ 
		\hline
		LDPD-lasso $\alpha = $ 0.1 & 5.30 & 1.00 & 1.00 & 11.87 & 1.13 & 33.22 & 7.18  \\ 
		LDPD-lasso $\alpha = $ 0.2 & 5.33 & 1.00 & 1.00 & 7.13 & 1.19 & 27.76 & 6.70  \\ 
		LDPD-lasso $\alpha = $ 0.3 & 5.34 & 1.00 & 1.00 & 7.71 & 1.23 & 28.92 & 6.82  \\ 
		LDPD-lasso $\alpha = $ 0.4 & 5.27 & 1.00 & 1.00 & 10.83 & 1.36 & 35.09 & 7.61  \\ 
		LDPD-lasso $\alpha = $ 0.5 & 5.15 & 0.97 & 1.00 & 51.08 & 1.58 & 62.24 & 10.31  \\ 
		\hline
		DPD-ncv $\alpha = $ 0.1 & 5.01 & 1.00 & 1.00 & 0.89 & 0.04 & 6.49 & 4.27  \\ 
		DPD-ncv $\alpha = $ 0.3 & 5.00 & 1.00 & 1.00 & 1.02 & 0.00 & 6.03 & 4.24  \\ 
		DPD-ncv $\alpha = $ 0.5 & 5.00 & 1.00 & 1.00 & 1.21 & 0.00 & 7.42 & 4.29  \\ 
		DPD-ncv $\alpha = $ 0.7 & 5.00 & 1.00 & 1.00 & 1.55 & 0.00 & 8.58 & 4.44 \\ 
		DPD-ncv $\alpha = $ 1 & 5.00 & 1.00 & 1.00 & 2.44 & 0.00 & 9.80 & 4.72  \\ 
		\hline
		MNPRPE-ncv $\alpha = $ 0.1 & 5.03 & 1.00 & 1.00 & 0.36 & 0.05 & 3.24 & 4.00 \\ 
		MNPRPE-ncv $\alpha = $ 0.2 & 5.02 & 1.00 & 1.00 & 0.37 & 0.04 & 3.36 & 3.99 \\ 
		MNPRPE-ncv $\alpha = $ 0.3 & 5.00 & 1.00 & 1.00 & 0.38 & 0.00 & 3.54 & 3.99 \\ 
		MNPRPE-ncv $\alpha = $ 0.4 & 5.00 & 1.00 & 1.00 & 0.40 & 0.00 & 3.74 & 3.99 \\ 
		MNPRPE-ncv $\alpha = $ 0.5 & 5.00 & 1.00 & 1.00 & 0.42 & 0.00 & 3.98 & 3.99  \\ 
		\hline
	\end{tabular}
	\label{p200e1signal1}
\end{table}
\begin{table}[H]
	\centering
	\caption{Performance measures obtained by different methods for $p=200$, weak signal and $Y-$outliers}
	\begin{tabular}{l|rrr|rrr|r}
		\hline
		Method	& MS($\widehat{\boldsymbol{\beta}}$) & TP($\widehat{\boldsymbol{\beta}}$) & TN($\widehat{\boldsymbol{\beta}}$) & MSES($\widehat{\boldsymbol{\beta}}$) &  MSES($\widehat{\boldsymbol{\beta}}$)  &  EE($\widehat{\sigma}$) & APrB($\widehat{\boldsymbol{\beta}}$)  \\ 
		& & & &  $(10^{-2})$ & $(10^{-5})$ &  $(10^{-2})$ &  $(10^{-2})$ \\
		\hline
		LS-LASSO & 8.52 & 1.00 & 0.98 & 1.84 & 2.74 & 30.25 & 4.57  \\ 
		LS-SCAD & 7.70 & 1.00 & 0.99 & 0.34 & 1.11 & 18.49 & 4.02  \\ 
		LS-MCP & 6.24 & 1.00 & 0.99 & 0.32 & 1.66 & 19.38 & 4.10 \\ 
		LAD-Lasso & 6.27 & 1.00 & 0.99 & 3.58 & 2.31 & 37.67 & 5.31  \\ 
		RLARS & 11.06 & 1.00 & 0.97 & 0.47 & 28.64 & 9.93 & 4.24  \\ 
		sLTS & 27.42 & 1.00 & 0.89 & 5.05 & 31.63 & 16.90 & 5.17 \\ 
		RANSAC & 12.85 & 1.00 & 0.96 & 3.13 & 37.87 & 19.90 & 5.34  \\ 
		\hline
		DPD-lasso $\alpha = $ 0.1 & 12.21 & 1.00 & 0.96 & 4.29 & 5.14 & 18.19 & 5.80 \\ 
		DPD-lasso $\alpha = $ 0.3 & 13.98 & 1.00 & 0.95 & 3.99 & 7.26 & 19.36 & 5.11 \\ 
		DPD-lasso $\alpha = $ 0.5 & 9.26 & 1.00 & 0.98 & 5.37 & 4.55 & 21.65 & 6.03 \\ 
		DPD-lasso $\alpha = $ 0.7 & 5.57 & 1.00 & 1.00 & 6.79 & 1.58 & 27.61 & 6.41 \\ 
		DPD-lasso $\alpha = $ 1 & 8.85 & 0.97 & 0.98 & 12.36 & 19.93 & 22.87 & 7.42 \\ 
		\hline
		LDPD-lasso $\alpha = $ 0.1 & 5.32 & 1.00 & 1.00 & 6.95 & 1.14 & 27.41 & 6.67  \\ 
		LDPD-lasso $\alpha = $ 0.2 & 5.32 & 1.00 & 1.00 & 7.13 & 1.19 & 27.76 & 6.71  \\ 
		LDPD-lasso $\alpha = $ 0.3 & 5.34 & 1.00 & 1.00 & 7.71 & 1.24 & 28.92 & 6.82  \\ 
		LDPD-lasso $\alpha = $ 0.4 & 5.27 & 1.00 & 1.00 & 10.93 & 1.35 & 35.20 & 7.63  \\ 
		LDPD-lasso $\alpha = $ 0.5 & 5.15 & 0.97 & 1.00 & 17.72 & 1.60 & 47.70 & 8.80  \\ 
		\hline
		DPD-ncv $\alpha = $ 0.1 & 5.25 & 1.00 & 1.00 & 0.69 & 0.35 & 4.22 & 3.98  \\ 
		DPD-ncv $\alpha = $ 0.3 & 5.16 & 1.00 & 1.00 & 0.71 & 0.37 & 6.62 & 3.99  \\ 
		DPD-ncv $\alpha = $ 0.5 & 5.09 & 0.99 & 1.00 & 0.83 & 0.32 & 8.85 & 4.01  \\ 
		DPD-ncv $\alpha = $ 0.7 & 5.04 & 0.99 & 1.00 & 0.96 & 0.31 & 10.82 & 4.00 \\ 
		DPD-ncv $\alpha = $ 1 & 4.99 & 0.99 & 1.00 & 1.14 & 0.15 & 13.36 & 4.00  \\ 
		\hline
		MNPRPE-ncv $\alpha = $ 0.1 & 5.17 & 0.99 & 1.00 & 0.87 & 0.14 & 3.34 & 3.91 \\ 
		MNPRPE-ncv $\alpha = $ 0.2 & 5.12 & 0.99 & 1.00 & 0.85 & 0.10 & 3.50 & 3.91 \\ 
		MNPRPE-ncv $\alpha = $ 0.3 & 5.05 & 0.99 & 1.00 & 1.09 & 0.06 & 4.11 & 4.00 \\ 
		MNPRPE-ncv $\alpha = $ 0.4 & 5.02 & 0.99 & 1.00 & 1.28 & 0.05 & 4.48 & 4.04 \\ 
		MNPRPE-ncv $\alpha = $ 0.5 & 4.92 & 0.98 & 1.00 & 1.46 & 0.00 & 4.87 & 4.10 \\ 
		\hline
	\end{tabular}
	\label{p200e1signal0}
\end{table}

\begin{table}[H]
	\centering
	\caption{Performance measures obtained by different methods for $p=200$, strong signal and $\boldsymbol{X}-$outliers}
	\begin{tabular}{l|rrr|rrr|r}
		\hline
		Method	& MS($\widehat{\boldsymbol{\beta}}$) & TP($\widehat{\boldsymbol{\beta}}$) & TN($\widehat{\boldsymbol{\beta}}$) & MSES($\widehat{\boldsymbol{\beta}}$) &  MSES($\widehat{\boldsymbol{\beta}}$)  &  EE($\widehat{\sigma}$) & APrB($\widehat{\boldsymbol{\beta}}$)  \\ 
		& & & &  $(10^{-2})$ & $(10^{-5})$ &  $(10^{-2})$ &  $(10^{-2})$ \\
		\hline
		LS-LASSO & 6.91 & 1.00 & 0.99 & 2.12 & 1.68 & 33.97 & 4.72  \\ 
		LS-SCAD & 4.94 & 0.99 & 1.00 & 16.06 & 0.00 & 68.46 & 7.53  \\ 
		LS-MCP & 4.94 & 0.99 & 1.00 & 11.14 & 0.00 & 52.61 & 6.69  \\ 
		LAD-Lasso & 6.29 & 1.00 & 0.99 & 3.68 & 2.27 & 37.69 & 5.35  \\ 
		RLARS & 7.86 & 0.99 & 0.99 & 1.72 & 10.18 & 7.61 & 4.53  \\ 
		sLTS & 6.06 & 1.00 & 0.99 & 6.61 & 2.19 & 24.14 & 6.23 \\ 
		RANSAC & 10.07 & 1.00 & 0.97 & 4.38 & 18.89 & 12.13 & 5.35\\ 
		\hline
		DPD-lasso  $\alpha = $ 0.1 & 5.31 & 1.00 & 1.00 & 6.93 & 0.74 & 28.16 & 6.50 \\ 
		DPD-lasso  $\alpha = $ 0.3 & 5.35 & 1.00 & 1.00 & 7.58 & 0.77 & 29.49 & 6.61 \\ 
		DPD-lasso  $\alpha = $ 0.5 & 5.21 & 0.99 & 1.00 & 19.94 & 1.05 & 51.10 & 8.99 \\ 
		DPD-lasso  $\alpha = $ 0.7 & 5.60 & 1.00 & 1.00 & 7.52 & 1.43 & 26.53 & 6.48 \\ 
		DPD-lasso  $\alpha = $ 1 & 8.01 & 0.99 & 0.98 & 21.71 & 58.84 & 24.48 & 7.52 \\ 
		\hline
		LDPD-lasso  $\alpha = $ 0.1 & 5.35 & 1.00 & 1.00 & 6.81 & 0.75 & 27.71 & 6.51 \\ 
		LDPD-lasso  $\alpha = $ 0.2 & 5.36 & 1.00 & 1.00 & 6.75 & 0.79 & 27.22 & 6.43  \\ 
		LDPD-lasso  $\alpha = $ 0.3 & 5.34 & 1.00 & 1.00 & 7.42 & 0.77 & 28.89 & 6.56 \\ 
		LDPD-lasso  $\alpha = $ 0.4 & 5.31 & 1.00 & 1.00 & 10.18 & 0.84 & 34.52 & 7.25  \\ 
		LDPD-lasso  $\alpha = $ 0.5 & 5.21 & 0.99 & 1.00 & 19.87 & 1.06 & 50.48 & 8.96  \\ 
		\hline
		DPD-ncv $\alpha = $ 0.1 & 5.01 & 1.00 & 1.00 & 0.35 & 0.02 & 4.63 & 3.93   \\ 
		DPD-ncv $\alpha = $ 0.3 & 5.00 & 1.00 & 1.00 & 0.41 & 0.00 & 8.44 & 3.94 \\ 
		DPD-ncv $\alpha = $ 0.5 & 5.00 & 1.00 & 1.00 & 0.51 & 0.00 & 11.67 & 3.98 \\ 
		DPD-ncv $\alpha = $ 0.7 & 5.00 & 1.00 & 1.00 & 0.61 & 0.00 & 14.26 & 4.03 \\ 
		DPD-ncv $\alpha = $ 1 & 5.00 & 1.00 & 1.00 & 0.83 & 0.00 & 17.31 & 4.11\\ 
		\hline
		MNPRPE-ncv $\alpha = $ 0.1 & 5.04 & 1.00 & 1.00 & 0.34 & 0.06 & 3.18 & 3.97  \\ 
		MNPRPE-ncv $\alpha = $ 0.2 & 5.03 & 1.00 & 1.00 & 0.35 & 0.03 & 3.33 & 3.98  \\ 
		MNPRPE-ncv $\alpha = $ 0.3 & 5.01 & 1.00 & 1.00 & 0.36 & 0.00 & 3.47 & 3.98  \\ 
		MNPRPE-ncv $\alpha = $ 0.4 & 5.00 & 1.00 & 1.00 & 0.38 & 0.00 & 3.66 & 3.98 \\ 
		MNPRPE-ncv $\alpha = $ 0.5 & 5.00 & 1.00 & 1.00 & 0.40 & 0.00 & 3.91 & 3.97 \\ 
		\hline
	\end{tabular}
	\label{p200e1signal1x1}
\end{table}

\begin{table}[H]
	\centering
	\caption{Performance measures obtained by different methods for $p=200$, weak signal and $\boldsymbol{X}-$outliers}
	\begin{tabular}{l|rrr|rrr|r}
		\hline
		Method	& MS($\widehat{\boldsymbol{\beta}}$) & TP($\widehat{\boldsymbol{\beta}}$) & TN($\widehat{\boldsymbol{\beta}}$) & MSES($\widehat{\boldsymbol{\beta}}$) &  MSES($\widehat{\boldsymbol{\beta}}$)  &  EE($\widehat{\sigma}$) & APrB($\widehat{\boldsymbol{\beta}}$)  \\ 
		& & & &  $(10^{-2})$ & $(10^{-5})$ &  $(10^{-2})$ &  $(10^{-2})$ \\
		\hline
		LS-LASSO & 8.52 & 1.00 & 0.98 & 1.84 & 2.74 & 30.25 & 4.57 \\ 
		LS-SCAD & 7.70 & 1.00 & 0.99 & 0.34 & 1.11 & 18.49 & 4.02  \\ 
		LS-MCP & 6.24 & 1.00 & 0.99 & 0.32 & 1.66 & 19.38 & 4.10 \\ 
		LAD-Lasso & 6.27 & 1.00 & 0.99 & 3.58 & 2.31 & 37.67 & 5.31  \\ 
		RLARS & 11.06 & 1.00 & 0.97 & 0.47 & 28.64 & 9.93 & 4.24 \\ 
		sLTS & 27.42 & 1.00 & 0.89 & 5.05 & 31.63 & 16.90 & 5.17 \\ 
		RANSAC & 13.68 & 1.00 & 0.96 & 3.16 & 39.58 & 21.22 & 5.03 \\ 
		\hline
		DPD-lasso $\alpha = $ 0.1 & 5.31 & 1.00 & 1.00 & 6.93 & 0.74 & 28.16 & 6.50 \\ 
		DPD-lasso $\alpha = $ 0.3 & 5.34 & 1.00 & 1.00 & 7.62 & 0.77 & 29.62 & 6.63 \\ 
		DPD-lasso $\alpha = $ 0.5 & 5.29 & 1.00 & 1.00 & 12.14 & 1.05 & 40.87 & 7.63 \\ 
		DPD-lasso  $\alpha = $ 0.7 & 5.52 & 1.00 & 1.00 & 7.60 & 1.32 & 26.98 & 6.56 \\ 
		DPD-lasso  $\alpha = $ 1 & 7.33 & 0.98 & 0.99 & 12.20 & 12.10 & 23.97 & 7.62 \\ 
		\hline
		LDPD-lasso  $\alpha = $ 0.1 & 5.34 & 1.00 & 1.00 & 6.81 & 0.75 & 27.71 & 6.51 \\ 
		LDPD-lasso  $\alpha = $ 0.2 & 5.34 & 1.00 & 1.00 & 6.76 & 0.78 & 27.27 & 6.43  \\ 
		LDPD-lasso  $\alpha = $ 0.3 & 5.33 & 1.00 & 1.00 & 7.44 & 0.76 & 28.97 & 6.56  \\ 
		LDPD-lasso  $\alpha = $ 0.4 & 5.29 & 1.00 & 1.00 & 10.10 & 0.83 & 34.45 & 7.23 \\ 
		LDPD-lasso  $\alpha = $ 0.5 & 5.18 & 0.98 & 1.00 & 17.53 & 1.03 & 48.27 & 8.68  \\ 
		\hline
		DPD-ncv $\alpha = $ 0.1 & 5.20 & 1.00 & 1.00 & 0.59 & 0.31 & 5.22 & 3.92  \\ 
		DPD-ncv $\alpha = $ 0.3 & 5.21 & 1.00 & 1.00 & 0.49 & 0.53 & 10.09 & 3.92\\ 
		DPD-ncv $\alpha = $ 0.5 & 5.17 & 1.00 & 1.00 & 0.58 & 0.62 & 13.66 & 3.93  \\ 
		DPD-ncv $\alpha = $ 0.7 & 5.15 & 0.99 & 1.00 & 0.78 & 0.88 & 16.43 & 4.02  \\ 
		DPD-ncv $\alpha = $ 1 & 5.10 & 0.99 & 1.00 & 1.23 & 0.93 & 19.60 & 4.22  \\ 
		\hline
		MNPRPE-ncv $\alpha = $ 0.1 & 5.21 & 1.00 & 1.00 & 0.54 & 0.21 & 3.22 & 3.92  \\ 
		MNPRPE-ncv $\alpha = $ 0.2 & 5.13 & 1.00 & 1.00 & 0.56 & 0.18 & 3.47 & 3.90  \\ 
		MNPRPE-ncv $\alpha = $ 0.3 & 5.09 & 1.00 & 1.00 & 0.54 & 0.17 & 3.66 & 3.93  \\ 
		MNPRPE-ncv $\alpha = $ 0.4 & 5.07 & 1.00 & 1.00 & 0.56 & 0.16 & 3.80 & 3.92 \\ 
		MNPRPE-ncv $\alpha = $ 0.5 & 5.05 & 1.00 & 1.00 & 0.59 & 0.12 & 3.97 & 3.95 \\ 
		\hline
	\end{tabular}
	\label{p200e1signal0x1}
\end{table}

\subsection{Example R Code for computation of the MNPRPE}

The present R code is provided to help the reader to implement the MNPRPE. This code has been used to obtain the results in the simulation study in Section 6 and to fit the model in the numerical example of glioblastoma gene expression analysis studied in Section 8 of the main paper. The code is inspired from the \textit{Robust and Sparse Regression via Gamma-Divergence} (\textit{gamreg}) package, created by Takayuki Kawashima (2017), and it uses \textit{ncvreg} (Breheny and Huang (2011)) and \textit{rqPen} (Sherwood and Maidman (2020)) packages.

\begin{lstlisting}[language=R]
#alpha: tuning parameter in the Renyi function
#penalty: 1 for SCAD, 2 for MCP
#lambda : Regularization parameter in the penalty function

library(ncvreg)
library(rqPen)
pr_ncv<- function(X, Y, beta,beta0, sigma, lambda,a lpha, inter,penalty){

#if every coeff on init beta is zero, stop
if(all(beta==0)){
stop("null beta init")
}

N = dim(X)[1]
p = dim(X)[2]

#create intercept term
tmp = rep(1, N)
tmp1 = inter*beta0*tmp
tmp2 = X%*%beta #X matrix doesnt contain ones column 
tmp3 = drop(tmp1 + tmp2) #y estimate

for (m in 1:5000){
#temporary copies 
beta0_tmp = beta0*inter
beta_tmp = beta
sigma_tmp = sigma
#weight
mu = exp(-(alpha/2)*((Y-tmp3)/sigma)^2)
mu = drop(mu/sum(mu))
#update beta0 
beta0 = drop(t(mu)%*%(Y - tmp2)*inter)
tmp1 = beta0*tmp
#weigthed matrices
Y_w = diag(sqrt(mu))%*%((Y-tmp1)/sigma) #Y contains incercept (mean)
X_w =  diag(sqrt(mu))%*%(X/sigma) #without intercept

#solve using coordinate descent now	
if (penalty==1){
estimate = ncvreg(X_w, Y_w, family="gaussian",
penalty="SCAD", lambda = lambda)
}
else{estimate = ncvreg(X_w, Y_w, family="gaussian",
penalty="MCP", lambda = lambda)}	

beta = estimate$beta[1:p+1]

#update tmp3 with new beta
tmp2 = X%*%beta 
tmp3 = drop(tmp1 + tmp2) 

#update sigma now
sigma = drop(sqrt((1+alpha)*t(mu)%*%((Y-tmp3)^2)))

#stopping criteria
#I put N*penalization to ensure enough penalization
if (penalty==1){
if(all(beta==0) |
abs((-alpha/(1+alpha))*log(sigma_tmp) -(-alpha/(1+alpha))*log(sigma)  
+ (-1/alpha)*log(sum(exp(-alpha*(Y-beta0_tmp*tmp-X%*%beta_tmp)^2/(2*sigma_tmp^2))*(2*pi*sigma_tmp^2)^(-alpha/2))) 
+ N*sum(scad(beta_tmp,lambda)) - N*sum(scad(beta,lambda))
-(-1/alpha)*log(sum( exp(-alpha*(Y-tmp3)^2/(2*sigma^2))*(2*pi*sigma^2)^(-alpha/2))))<= 1e-9 ){
break
} }
else{
if(all(beta==0) |
abs((-alpha/(1+alpha))*log(sigma_tmp) -(-alpha/(1+alpha))*log(sigma)  
+ (-1/alpha)*log(sum(exp(-alpha*(Y-beta0_tmp*tmp-X%*%beta_tmp)^2/(2*sigma_tmp^2))*(2*pi*sigma_tmp^2)^(-alpha/2))) 
+ N*sum(mcp(beta_tmp,lambda)) - N*sum(mcp(beta,lambda))
-(-1/alpha)*log(sum( exp(-alpha*(Y-tmp3)^2/(2*sigma^2))*(2*pi*sigma^2)^(-alpha/2))))<= 1e-9 ){
break
} 
}
} 
return(list("beta0"=beta0,"beta" = beta, "sigma"= sigma))
}
\end{lstlisting}

\end{document}